\newtheorem{thm}{Theorem}[section]
\newtheorem{lem}[thm]{Lemma}
\newtheorem{prop}[thm]{Proposition}
\newtheorem{cor}[thm]{Corollary}
\newtheorem{thmintro}{Theorem}
\theoremstyle{definition}
\newtheorem{defn}[thm]{Definition}
\newtheorem{que}[thm]{Question}
\newtheorem{rem}[thm]{Remark}
\newtheorem{ex}[thm]{Example}
\newcommand{\defbold}{\textbf}
\newcommand{\inv}{^{-1}}
\newcommand{\VA}{{[\mathrm{A}]}}
\newcommand{\QZ}{\mathrm{QZ}}
\newcommand{\QC}{\mathrm{QC}}
\newcommand{\CC}{\mathrm{C}}
\newcommand{\QQ}{\mathbf{Q}}
\newcommand{\N}{\mathrm{N}}
\newcommand{\R}{\mathrm{R}}
\newcommand{\Z}{\mathrm{Z}}
\newcommand{\tdlc}{t.d.l.c.\@\xspace}
\newcommand{\ldlat}{\mathcal{LD}}
\newcommand{\rist}{\mathrm{rist}}
\newcommand{\lnorm}{\mathcal{LN}}
\newcommand{\lcent}{\mathcal{LC}}
\newcommand{\Aut}{\mathrm{Aut}}
\newcommand{\Comm}{\mathrm{Comm}}
\newcommand{\Out}{\mathrm{Out}}
\newcommand{\LN}{\mathrm{LN}}
\newcommand{\bN}{\mathbf{N}}
\newcommand{\bQ}{\mathbf{Q}}
\newcommand{\bZ}{\mathbf{Z}}
\newcommand{\mcA}{\mathcal{A}}
\newcommand{\mcB}{\mathcal{B}}
\newcommand{\mcC}{\mathcal{C}}
\newcommand{\mcI}{\mathcal{I}}
\newcommand{\mcL}{\mathcal{L}}
\newcommand{\mcM}{\mathcal{M}}
\newcommand{\mcN}{\mathcal{N}}
\newcommand{\mcP}{\mathcal{P}}
\newcommand{\mcT}{\mathcal{T}}
\newcommand{\mfp}{\mathfrak{p}}
\newcommand{\mfS}{\mathfrak{S}}
\newcommand{\mfX}{\mathfrak{X}}
\newcommand{\mfY}{\mathfrak{Y}}
\begin{document}

\title{Locally normal subgroups  \\ of totally disconnected groups. \\ Part I: General theory}


\author[1]{Pierre-Emmanuel Caprace\thanks{F.R.S.-FNRS research associate, supported in part by the ERC (grant \#278469)}}
\author[2]{Colin D. Reid\thanks{Supported in part by ARC Discovery Project DP120100996}}
\author[2]{George A. Willis\thanks{Supported in part by ARC Discovery Project DP0984342}}

\affil[1]{Universit\'e catholique de Louvain, IRMP, Chemin du Cyclotron 2, bte L7.01.02, 1348 Louvain-la-Neuve, Belgique}
\affil[2]{School of Mathematical and Physical Sciences, University of Newcastle, Callaghan, NSW 2308, Australia}

\date{April 22, 2015}

\maketitle

\begin{abstract}
Let $G$ be a totally disconnected, locally compact group.  A closed subgroup of $G$ is \textbf{locally normal} if its normaliser is open in $G$.  We begin an investigation of the structure of the family of closed locally normal subgroups of $G$.  Modulo commensurability, this family forms a modular lattice $\lnorm(G)$, called the \textbf{structure lattice} of $G$.  We show that $G$ admits a canonical maximal quotient $H$ for which the quasi-centre and the abelian locally normal subgroups are trivial.  In this situation $\lnorm(H)$ has a canonical subset called the \textbf{centraliser lattice}, forming a Boolean algebra whose elements correspond to centralisers of locally normal subgroups.  If $H$ is second-countable and acts faithfully on its centraliser lattice, we show that the topology of $H$ is determined by its algebraic structure (and thus invariant by every abstract group automomorphism), and also that the action on the Stone space of the centraliser lattice is universal for a class of actions on profinite spaces. Most of the material is developed in the more general framework of Hecke pairs.
\end{abstract}

\tableofcontents

\newpage

\section{Introduction}

{
The aim of the present article is to establish foundations for a study of general non-discrete totally disconnected locally compact (\tdlc)\index{tdlc@\tdlc} groups in terms of their local structure. We define a canonical lattice in terms of this local structure that, in the case of a $p$-adic Lie group, is equivalent to the lattice of ideals in the Lie algebra. Inspired by earlier work of J.~Wilson~\cite{WilNH} on just-infinite groups and by Barnea--Ershov--Weigel~\cite{BEW} on abstract commensurators of profinite groups, we develop properties of this lattice. This article, which concerns general \tdlc groups, is the first in a series. The second paper in the series \cite{CRW-Part2} uses the tools developed here to study the more special class of compactly generated \tdlc groups with few normal subgroups and, in particular, those that are topologically simple.  
}

Most of the definitions and results in this first part apply more generally to a \defbold{Hecke pair} $(G,U)$\index{Hecke pair}, that is a group $G$ with a commensurated subgroup $U$, in a way that depends only on the (abstract) commensurability class of $U$ as a subgroup of $G$.  Although our principal interest is in \tdlc groups, we will state and prove results in this more general framework in situations where it is unnecessary to use the topology of the group.  For example, in the case $G = U$, most of the results we obtain will be valid for residually finite groups in general, with profinite groups as a special case.

\addtocontents{toc}{\protect\setcounter{tocdepth}{1}}
\subsection{Background}

Since the solution of Hilbert's fifth problem by Gleason, Montgomery--Zippin and Yamabe, connected locally compact groups are known to have a far-reaching structure theory, ensuring notably that they are all projective limits of connected Lie groups. This deep result extends the powerful machinery of Lie theory to all connected locally compact groups, thereby yielding  decomposition theorems and classification results.

In contrast, the solution of Hilbert's fifth problem yields no information about \defbold{totally disconnected}\index{totally disconnected locally compact group} locally compact groups, that is, those in which the connected component of each element is a singleton. Understanding this case would go a long way towards an understanding of general locally compact groups because each general group is an extension of a totally disconnected group by a connected one. While a rather rich and deep theory was developed for some special classes of \tdlc groups (profinite groups, reductive groups over local fields, $p$-adic analytic groups, \dots), for a long time the only general result known was van Dantzig's theorem, which asserts that compact open subgroups exist and form a basis of identity neighbourhoods. Since the class of \tdlc groups includes all discrete groups, it seemed during this time that no meaningful structure theory could be developed beyond the connected or almost connected case. 

Three developments over the last twenty years have changed the situation. First, new general results were obtained with the aid of the \defbold{scale function}\index{scale function},  introduced by Willis in \cite{Willis94}. The scale function on the  \tdlc group~$G$ measures the effect on compact open subgroups of conjugation by elements of $G$; it takes positive-integer values and is non-trivial only when $G$ is not discrete. The scale function and associated ideas elicit significant new features of general non-discrete \tdlc groups such as {tidy} subgroups, contraction groups, parabolic subgroups, {\it etc\/}. and have resolved several open questions.  

A complementary approach was initiated by Barnea--Ershov--Weigel \cite{BEW}, based on the following observation: since any two compact open subgroups of a \tdlc group $G$ are commensurate, $G$ admits a natural homomorphism to the group of abstract  commensurators of its compact open subgroups. By definition, abstract commensurators can be recovered from any compact open subgroup and are thus purely local objects depending only on arbitrarily small identity neighbourhoods of $G$. This observation thus produces a \textbf{local approach}\index{local approach} to \tdlc groups. 

The third development is that techniques for isolating the non-discrete case from the discrete one have been found. At the global level, Caprace--Monod \cite{CM} identified simple pieces of a \tdlc group $G$ under the hypothesis that $G$ has no discrete quotients while, at the local level, Burger--Mozes \cite{BurgerMozes} introduced, in analogy with the kernel of the adjoint representation of a Lie group, the \defbold{quasi-centre}\index{quasi-centre} of $G$, which contains in particular any discrete normal subgroup of $G$. The former of the two motivates the focus of our next paper~\cite{CRW-Part2} on simple groups, while the latter was important in the approach introduced in~\cite{BEW} and is developed further in Sections~\ref{sec:QZmain} and~\ref{radsec} below as part of the foundation for a general theory of the local structure of \tdlc groups. 

\subsection{Locally normal subgroups and their centralisers}

The central concept in the general local theory is that of a \defbold{locally normal}\index{locally normal subgroup} subgroup of a \tdlc group, that is, a subgroup whose normaliser has finite index in an open subgroup.  A major part of \cite{BEW} focuses on \tdlc groups in which every open compact subgroup is (hereditarily) just infinite; this corresponds exactly to the condition that every non-trivial compact locally normal subgroup is open.  Away from this case, the arrangement of (closed) locally normal subgroups becomes more complex.

The intersection of two locally normal subgroups is locally normal. Moreover, if $H$ and $K$ are  closed locally normal subgroups of $G$, then they admit  respective compact open subgroups $H' < H$ and $K' < K$ which normalise each other, and such that the product $H'  K'$, which is also a compact subgroup, is locally normal. It is thus natural to consider the collection of all  compact locally normal subgroups modulo the equivalence relation given by \defbold{local equivalence}\index{local equivalence}: two subgroups $H$ and $K$ are \defbold{locally equivalent} if their intersection $H \cap K$ is open in both $H$ and $K$. In this way, the collection of all equivalence classes of  compact locally normal subgroups of $G$, which is denoted by $\lnorm(G)$\index{LN(G)@$\lnorm(G)$}, endowed with the partial ordering induced by the inclusion of  subgroups, is easily seen to form a modular lattice.  We call it the \defbold{structure lattice}\index{structure lattice}\index{lattice!structure} of $G$. There are two elements that we regard as trivial (assuming $G$ is non-discrete): the minimum element, which is the class of the trivial subgroup, and the maximum element, which is the class of compact open subgroups. The symbols $0$ and $\infty$ will be used to denote the minimum and maximum elements respectively of any bounded lattice, including $\lnorm(G)$. A non-trivial subset of a bounded lattice is one that contains elements other than $0$ and $\infty$. 

Its structure lattice is a \emph{local} invariant of a \tdlc group $G$, since it is determined by any open compact subgroup, which admits a natural action of the topological automorphism group of $G$.  {In case $G$ is a $p$-adic Lie group\index{p-adic Lie group@$p$-adic Lie group}, the structure lattice $\lnorm(G)$ is canonically isomorphic to the lattice of ideals in the $\QQ_p$-Lie algebra of $G$ (see Proposition~\ref{prop:p-adicLie} below). Hence the structure lattice may be viewed as a group theoretic analogue for \emph{arbitrary} \tdlc groups of the lattice of ideals in the $p$-adic case. Although it is difficult to determine the structure lattice of a particular \tdlc group $G$ explicitly, the action of $G$ on $\lnorm(G)$ provides an important tool for elucidating the overall structure of $G$. Moreover, under the right conditions we obtain two natural $G$-invariant subsets of $\lnorm(G)$ that are Boolean algebras when equipped with the induced order and, when non-trivial, constitute valuable additional tools.  The first is the \defbold{local decomposition lattice}\index{local decomposition lattice}\index{lattice!local decomposition} $\ldlat(G)$\index{LD(G)@$\ldlat(G)$}, which consists of all direct factors of open compact subgroups of $G$ up to equivalence.  The second is the \defbold{centraliser lattice}\index{centraliser lattice}\index{lattice!centraliser} $\lcent(G)$\index{LC(G)@$\lcent(G)$}: an element of $\lcent(G)$ is given by the equivalence class of the centraliser $\CC_U(K)$\index{CU(K)@$\CC_U(K)$} for some open compact subgroup $U$ of $G$ and closed normal subgroup $K$ of $U$.  In general, the structure lattice depends on the topology of the open compact subgroups of $G$, but the centraliser and decomposition lattices depend only on the Hecke pair structure $(G,U)$, where $U$ is any open compact subgroup of $G$.  

Following Burger--Mozes \cite{BurgerMozes}, we define the \defbold{quasi-centre}\index{quasi-centre} $\QZ(G)$\index{QZ(G)@$\QZ(G)$} of a locally compact group $G$ to be the collection of all elements whose centraliser is open (see Lemma~\ref{lem:QZ} below for its basic properties).

\begin{thmintro}\label{intro:boolean}
Let $G$ be a \tdlc group.
\begin{enumerate}[(i)]

\item \emph{(See Theorem~\ref{dfaclat})} Suppose that $G$ has trivial quasi-centre. Then $\ldlat(G)$ is a Boolean algebra.

\item \emph{(See Theorem~\ref{centlat})} Suppose in addition that $G$ has no non-trivial abelian  locally normal subgroups.  Then $\lcent(G)$ is a Boolean algebra and $\ldlat(G)$ is a subalgebra of $\lcent(G)$.

\end{enumerate}
\end{thmintro}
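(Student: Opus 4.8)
The plan is to run both parts off a single device, the \emph{centraliser operation} on classes. Given a locally normal subgroup $K$, pick a compact open $U$ with $K \le U$ inside the (open) normaliser of $K$, and assign to $K$ the class of $\CC_U(K)$. The first task is routine bookkeeping: the resulting class is independent of the choice of $U$ and invariant under commensuration, so we obtain a map $\alpha \mapsto \alpha'$ on $\lnorm(G)$ which is order-reversing and satisfies $\alpha \le \alpha''$. Formally this makes $\alpha \mapsto \alpha''$ a closure operator whose closed classes are exactly the members of $\lcent(G)$; on $\lcent(G)$ the operation $'$ restricts to an order-reversing involution with $0' = \infty$ and $\infty' = 0$, interchanging meets with the joins of $\lcent(G)$ (De Morgan). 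Throughout, two locally normal subgroups will be arranged to be normal in a common compact open $U$, which is always possible by intersecting normalisers and passing to finite-index subgroups.

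For part (i), the decisive computation is that a trivial quasi-centre forces the centraliser of a direct factor to be its complement. If $U = A \times B$ and $z \in \Z(A)$, then $z$ centralises $A$ and, lying in $A$, also centralises $B$, hence centralises $U$; thus $z \in \QZ(G)$, so $\Z(A)$ is trivial and $\CC_U(A) = \Z(A)\times B = B$, and symmetrically $\CC_U(B) = A$. This yields three things at once: every direct-factor class is centraliser-closed, so $\ldlat(G) \subseteq \lcent(G)$; the classes $[A]$ and $[B]$ are complementary, with $A \cap B = 1$ and $AB = U$; and the complement of $[A]$ is the canonical class $[A]' = [B]$. One then checks that meets and joins of direct-factor classes are again direct-factor classes, the join of two such classes with commuting representatives being represented by the product $A\times B$, so that $\ldlat(G)$ is a sublattice of the modular lattice $\lnorm(G)$ and is a complemented modular lattice on which $'$ is an orthocomplementation.

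For part (ii) the extra hypothesis enters through the identity $\alpha \wedge \alpha' = 0$. If $K$ is normal in $U$ then $K \cap \CC_U(K) = \Z(K)$, an abelian subgroup that is compact and has open normaliser, hence an abelian locally normal subgroup; by hypothesis it is trivial, so $\alpha \wedge \alpha' = 0$ for every $\alpha \in \lcent(G)$. Applying $'$ and De Morgan gives $\alpha \vee \alpha' = (\alpha' \wedge \alpha)' = 0' = \infty$, so $'$ is a genuine orthocomplementation of $\lcent(G)$. The elementary engine behind the commutation phenomena is that, for $K, L$ both normal in $U$, one has $[K,L] \le K \cap L$; hence whenever $\alpha \wedge \beta = 0$ the commutator $[K,L]$ is finite and $K, L$ virtually commute.

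The main obstacle is distributivity: a modular orthocomplemented lattice need not be Boolean (the subspace lattice of a finite-dimensional Euclidean space is the cautionary example), so distributivity must be extracted from the group theory rather than from lattice axioms. The route I would take is a \textbf{splitting lemma}: for closed $\alpha = [K]$ and any closed $\beta = [L]$, the subgroup $L$ should, up to finite index, be the direct product of $L \cap K''$ and $L \cap \CC_U(K)$, so that $K$ acts as a central idempotent and $\beta = (\beta \wedge \alpha)\vee(\beta\wedge\alpha')$ for all $\beta$, the commutativity condition characterising Boolean algebras among orthocomplemented lattices. This is exactly where the absence of non-trivial abelian locally normal subgroups is essential: the obstruction to such a splitting is measured by commutators which, by the containment $[K,L]\le K\cap L$ and its iterates, land in abelian locally normal subgroups and must therefore vanish. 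Granting the splitting lemma, $\lcent(G)$ is distributive and hence a Boolean algebra; the same splitting, specialised to direct factors, gives distributivity in part (i) and shows that the inclusion $\ldlat(G) \subseteq \lcent(G)$ respects complements and joins, so that $\ldlat(G)$ is a subalgebra. I expect the proof of the splitting lemma, and the precise commutator bookkeeping that forces the obstructions to be abelian and locally normal, to be the technical heart of the argument.
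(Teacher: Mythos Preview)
Your overall architecture is right --- set up the centraliser map as an order-reversing involution and then extract distributivity --- but there are two substantive problems, and the paper's route around the second is worth knowing.

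First, the claim that $[\CC_U(K)]$ is independent of the choice of $U$ and of the representative of $[K]$ is \emph{not} routine bookkeeping. Under the sole hypothesis of part~(i) (trivial quasi-centre) it is simply false in general: the stability $\CC_U(K\cap U)=\CC_U(K\cap V)$ for all $V\in\mcB(G)$ is the content of the paper's C-stability theory (Proposition~\ref{prop:C-stability} and Lemma~\ref{goodqc}), and its proof genuinely uses the absence of non-trivial abelian locally normal subgroups. What survives under trivial quasi-centre alone is that any compact open $U$ is C-stable (since $\QC_G(U)=1$), and that C-stability passes to direct factors (Lemma~\ref{dirqf}); this is enough to run part~(i), but you must restrict the domain of $\alpha\mapsto\alpha'$ accordingly rather than define it on all of $\lnorm(G)$.

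Second, and more seriously, your group-theoretic splitting lemma is false as stated. Take $\beta=\infty$, i.e.\ $L=U$: your claim becomes that $U$ is, up to finite index, $K\times\CC_U(K)$ for \emph{every} centraliser-closed $K\unlhd U$. That forces $K\CC_U(K)$ to be open, i.e.\ $[K]\in\ldlat(G)$, and hence $\lcent(G)=\ldlat(G)$. But the whole point of distinguishing the two lattices is that this fails in general; indeed the join $\vee_c$ in $\lcent(G)$ is \emph{defined} as $(\alpha'\wedge\beta')'$ precisely because $[K\CC_U(K)]$ need not equal $\infty$. The lattice identity $\beta=(\beta\wedge\alpha)\vee_c(\beta\wedge\alpha')$ is true in the end, but it cannot be read off from a subgroup decomposition of $L$, and the ``obstructions are abelian'' heuristic does not produce it.

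The paper bypasses the splitting lemma entirely via a short lattice-theoretic criterion (Lemma~\ref{boolflip}): if $\bot$ is an order-reversing involution on a meet-semilattice with least element $0$ and
\[
\alpha\wedge\beta=0 \iff \beta\le\alpha^\bot,
\]
then the lattice is automatically Boolean, with join $(\alpha^\bot\wedge\beta^\bot)^\bot$. Distributivity falls out of the equivalence $\alpha\wedge\beta\le\gamma\Leftrightarrow\alpha\le\beta^\bot\vee\gamma$, which is immediate from the displayed biconditional. You already have the group-theoretic input for this biconditional: the forward direction is your observation that $[K,L]\le K\cap L$ together with trivial quasi-centre forces $K$ and $L$ to virtually commute when $[K]\wedge[L]=0$ (this is Lemma~\ref{finint}), and the reverse direction is $\QZ(K)=1$. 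So the ingredients in your proposal are sufficient; what is missing is the recognition that the biconditional alone already yields distributivity, without any splitting of subgroups.
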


This opens up a new perspective: using the Stone representation theorem, a group action on a Boolean algebra $\mcA$ can equivalently be regarded as a group action by homeomorphisms on a topological space $\mfS(\mcA)$, called the \defbold{Stone space}\index{Stone space} of $\mcA$, that is \defbold{profinite}\index{profinite space}, in other words, compact and totally disconnected. An important property of the $G$-actions on the Stone spaces of $\ldlat(G)$ and $\lcent(G)$ is encapsulated in the following definition. Let $G$ be a locally compact group and  $\mfX$ be a profinite space on which $G$ acts faithfully by homeomorphisms. The $G$-action is called \textbf{weakly decomposable}\index{weakly decomposable action}\index{action!weakly decomposable} if for every non-empty clopen subset $\alpha \subsetneq \mfX$, the stabiliser of $\alpha$ in $G$ is open, and the pointwise stabiliser of $\alpha$ is non-trivial.  Say that $G$ is \defbold{faithful weakly decomposable}\index{faithful weakly decomposable} if it admits a faithful weakly decomposable action on some profinite space. 

\begin{thmintro}[See Theorem~\ref{latdecomp}]\label{intro:weakdecomp}
Let $G$ be a \tdlc group such that $\QZ(G)=1$.
\begin{enumerate}[(i)]
\item Suppose that the only abelian locally normal subgroup of $G$ is trivial, and suppose that $G$ acts faithfully on $\lcent(G)$.  Then the induced action of $G$ on $\mfS(\lcent(G))$ is a faithful weakly decomposable action.
\item Suppose $\mfX$ is a profinite space with a faithful weakly decomposable action of $G$.  Then the only abelian locally normal subgroup of $G$ is trivial and there exists a $G$-equivariant continuous surjective map from $\mfS(\lcent(G))$ to $\mfX$.
\end{enumerate}
\end{thmintro}

In particular, if $G$ is faithful weakly decomposable, then 
$\mfS(\lcent(G))$ is the unique maximal compact $G$-space for which the action is faithful and weakly decomposable.

\subsection{Radical theory}

The hypotheses of Theorem~\ref{intro:boolean} are satisfied by all compactly generated, topologically simple \tdlc groups: for (i) this is Theorem 4.8 of \cite{BEW}, and for (ii) this will be proved in \cite[Theorem~A]{CRW-Part2}. We will show, by developing a suitable `radical theory', that an \emph{arbitrary} \tdlc group has a canonical maximal quotient which satisfies the hypotheses of Theorem~\ref{intro:boolean}(i) or (ii). 

\begin{thmintro}\label{intro:radical}
Let  $G$ be a \tdlc group.

Then $G$ has closed characteristic subgroups $\QZ^\infty(G) \leq \R_\VA(G)  \leq G$\index{QZinfty(G)@$\QZ^\infty(G)$}\index{RA(G)@$\R_\VA(G)$} with the following properties:

\begin{enumerate}[(i)]
\item \emph{(See Theorem~\ref{qzinf})} $\QZ^\infty(G)$ is the unique smallest closed normal subgroup $N$ of $G$ such that $G/N$ has trivial quasi-centre. 

\item \emph{(See Theorem~\ref{radtdlc})} $\R_\VA(G) $ is the unique smallest closed normal subgroup $M$ of $G$ such that $G/M$ has trivial quasi-centre and no non-trivial abelian locally normal subgroups. 
\end{enumerate}

\end{thmintro}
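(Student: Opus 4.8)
The plan is to construct both radicals by transfinite iteration of a one-step operator and to verify minimality by transfinite induction. The whole argument rests on two elementary pushforward observations. First, if $\pi\colon G \to G/N$ is the quotient map onto a Hausdorff quotient and $x \in \QZ(G)$, then $\pi(\CC_G(x))$ is an open subgroup of $G/N$ (quotient maps are open) contained in $\CC_{G/N}(\pi(x))$, so $\pi(\QZ(G)) \leq \QZ(G/N)$. Second, if $K$ is an abelian locally normal subgroup of $G$, then $\pi(K)$ is compact and abelian and its normaliser contains the open subgroup $\pi(\N_G(K))$, so $\pi(K)$ is again an abelian locally normal subgroup of $G/N$ (possibly trivial). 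Both observations say that the defining properties descend to quotients, which is exactly what makes a smallest-such-quotient radical exist.

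For (i), I would set $N_0 = 1$ and, given a closed normal subgroup $N_\alpha$, let $N_{\alpha+1}$ be the preimage in $G$ of $\overline{\QZ(G/N_\alpha)}$; at limit ordinals $\lambda$ put $N_\lambda = \overline{\bigcup_{\alpha < \lambda} N_\alpha}$. Each $N_\alpha$ is then a closed characteristic subgroup, and the chain is non-decreasing. Since a strictly increasing transfinite chain of distinct closed subgroups of $G$ cannot have length exceeding the cardinality of the set of closed subgroups of $G$, the chain stabilises: there is an ordinal $\alpha_0$ with $N_{\alpha_0 + 1} = N_{\alpha_0}$, and we define $\QZ^\infty(G) = N_{\alpha_0}$. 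Stabilisation forces $\overline{\QZ(G/\QZ^\infty(G))} = 1$, so $G/\QZ^\infty(G)$ has trivial quasi-centre. For minimality, suppose $N$ is closed normal with $G/N$ of trivial quasi-centre. I claim $N_\alpha \leq N$ for all $\alpha$, by induction: the base and limit cases are immediate, and for the successor step the first pushforward observation applied to the further quotient $G/N_\alpha \to G/N$ gives $\QZ(G/N_\alpha) \leq N/N_\alpha$, whence $\overline{\QZ(G/N_\alpha)} \leq N/N_\alpha$ as $N/N_\alpha$ is closed, and taking preimages yields $N_{\alpha+1} \leq N$. Thus $\QZ^\infty(G) \leq N$.

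For (ii), the two obstructions interact --- killing abelian locally normal subgroups can create new quasi-central elements and vice versa --- so I would iterate a combined operator rather than treating the conditions separately. Define the one-step subgroup $\R_1(G)$ to be the closed normal subgroup generated by $\QZ(G)$ together with all abelian locally normal subgroups of $G$; the latter set is conjugation-invariant (indeed invariant under all topological automorphisms), so $\R_1(G)$ is closed and characteristic. Iterate transfinitely as in (i): let $M_{\alpha+1}$ be the preimage of $\R_1(G/M_\alpha)$, let $M_\lambda = \overline{\bigcup_{\alpha<\lambda} M_\alpha}$ at limits, and set $\R_\VA(G) = M_{\alpha_0}$ at the stabilising ordinal. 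Stabilisation now means $\R_1(G/\R_\VA(G)) = 1$, which is precisely the statement that $G/\R_\VA(G)$ has trivial quasi-centre and no non-trivial abelian locally normal subgroup. Minimality follows by the same induction using both pushforward observations: if $G/M$ has trivial quasi-centre and no non-trivial abelian locally normal subgroup, then both $\QZ(G/M_\alpha) \leq M/M_\alpha$ and every abelian locally normal subgroup of $G/M_\alpha$ lies in $M/M_\alpha$, so $\R_1(G/M_\alpha) \leq M/M_\alpha$ and hence $M_{\alpha+1} \leq M$. Finally, since $\R_1(G) \geq \overline{\QZ(G)}$, an easy induction gives $N_\alpha \leq M_\alpha$ for all $\alpha$, yielding $\QZ^\infty(G) \leq \R_\VA(G)$.

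The main obstacle I anticipate is not the inductive bookkeeping but the careful verification of the one-step operators: one must check that $\R_1(G)$ is genuinely closed and normal, and, more delicately for part (ii), confirm that the pushforward observations are robust enough that no surviving quasi-central element or abelian locally normal subgroup remains past the stable ordinal. Establishing the termination of the transfinite chain is routine once phrased as a cardinality argument on the lattice of closed subgroups of $G$; the genuinely structural input is the pair of pushforward lemmas, which must be proved carefully from the definitions of quasi-centre and of locally normal subgroup together with the openness of quotient maps.
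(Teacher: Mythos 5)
Your proposal is correct --- the two pushforward observations, the stabilisation argument, and the minimality inductions all check out --- but it follows a genuinely different route from the paper. The paper works top-down rather than bottom-up: $\QZ^\infty(G)$ is \emph{defined} in Theorem~\ref{qzinf} as the intersection of all closed normal subgroups $N$ with $\QZ(G/N)=1$, and your first pushforward observation (which is Lemma~\ref{lem:QZ}(iii) of the paper) already shows in a single step that the quotient by this intersection has trivial quasi-centre; no ordinals or cardinality bounds on chains of closed subgroups are needed. The same remark applies to your part (ii): your second pushforward observation makes the one-step intersection argument work there as well (the image of an abelian locally normal subgroup in each $G/N$ is abelian locally normal, hence trivial), so the transfinite iteration, while correct, is avoidable machinery. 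For part (ii) the paper actually proves something stronger and more general (Theorem~\ref{radtdlc}): it introduces stable classes $\mcC$, handles the profinite case by an intersection argument resting on Lemma~\ref{ssnorm} (whose proof needs Dietzmann's Lemma, Lemma~\ref{lem:Dietzmann}, to pass from normal to locally normal $\mcC$-subgroups), and then constructs the radical of a general \tdlc group \emph{locally}, as $\R_\mcC(G) = \QC_G(U/\R_\mcC(U))$ for a compact open subgroup $U$ (Lemma~\ref{qzradlem}). What that extra work buys, and what your construction does not give, is the characterisation of $\R_\mcC(G)$ as the largest $\mcC$-regular closed normal subgroup, the identity $\R_\mcC(G)\cap U = \R_\mcC(U)$, and hence the stability under passage to open subgroups (Proposition~\ref{radtdlcstab}) advertised in the introduction alongside Theorem~\ref{intro:radical}. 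Conversely, what your route buys is economy: a self-contained proof of exactly the stated theorem from the two pushforward lemmas alone, with no stable-class formalism and no appeal to Dietzmann's Lemma. One small simplification you may as well adopt: the containment $\QZ^\infty(G)\le\R_\VA(G)$ follows immediately from the minimality in part (i), since $G/\R_\VA(G)$ has trivial quasi-centre, so the parallel induction $N_\alpha \le M_\alpha$ is unnecessary.
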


In case $G$ is a $p$-adic Lie group\index{p-adic Lie group@$p$-adic Lie group}, $\R_\VA(G) $ is the unique largest closed normal subgroup whose Lie algebra is the soluble radical of $\mathrm{Lie}(G)$; see Proposition~\ref{prop:Lieradical} below.

Both $\QZ^\infty(G)$ and $\R_\VA(G)$ are well behaved with respect to open subgroups, in the sense that $\QZ^\infty(H) = \QZ^\infty(G) \cap H$ and $\R_\VA(H) = \R_\VA(G) \cap H$ for any open subgroup $H$ of $G$ (see Propositions \ref{qzinfstab} and \ref{radtdlcstab}).  Thus if the canonical quotients provided by Theorem~\ref{intro:radical} turn out to be trivial, we have additional local information about the structure of $G$.  For instance it is easily seen that if $\R_\VA(G) = G$, then the open compact subgroups of $G$ are Fitting-regular in the sense of \cite{ReiF}.

\subsection{Group topologies}

We now specialise to the case of first-countable \tdlc groups and give some results on the relationship between the commensurated compact subgroups and the compact locally normal subgroups. For instance (Lemma~\ref{profcomm}), a closed subgroup $K$ of a first-countable profinite group $U$ is commensurated by $U$ if and only if $U$ normalises an open subgroup of $K$. Hence, any compact commensurated subgroup of a first-countable \tdlc group $G$ is commensurate with a locally normal subgroup, whose commensurability class is thus a $G$-fixed point in $\lnorm(G)$. As a consequence, the commensurated compact locally normal subgroups have an interpretation in terms of group topologies.

\begin{thmintro}[See \S~\ref{subsec:refine}]\label{intro:refine}
Let $G$ be a first-countable \tdlc group.  Then there is a natural bijection between the elements of $\lnorm(G)$ that are fixed by the action of $G$, and refinements of the topology of $G$ that are locally compact and compatible with the group structure.
\end{thmintro}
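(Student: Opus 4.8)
The plan is to realise both sides of the asserted bijection as parametrised by a single intermediate set: the commensurability classes of compact subgroups of $G$ that are commensurated by $G$. On the one hand, an element of $\lnorm(G)$ fixed by the $G$-action is a commensurability class $[K]$ of a locally normal subgroup $K$ with $gKg\inv$ commensurate to $K$ for every $g\in G$, i.e.\ $K$ is commensurated by $G$. Conversely, as already noted just before the statement, Lemma~\ref{profcomm} --- which is exactly where first-countability is used --- shows that any compact commensurated subgroup $K$ is commensurate to a locally normal one: applying the lemma to the closed subgroup $K\cap U$ of a compact open subgroup $U$ (a first-countable profinite group) yields an open subgroup $K_0\le K\cap U$ normalised by $U$, so $K_0$ is locally normal and $[K_0]$ is a $G$-fixed point. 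Thus the first step is to record that the $G$-fixed points of $\lnorm(G)$ are in canonical bijection with the commensurability classes of compact commensurated subgroups, after which it remains to match the latter with the locally compact group-topology refinements of the given topology $\tau$ on $G$.

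For the forward map $\Psi$, given a compact subgroup $K$ commensurated by $G$ I would define a topology $\tau_K$ by declaring the family $\mcB=\{\,V\cap K : V \text{ a compact open subgroup of } (G,\tau)\,\}$ to be a neighbourhood basis of the identity. Each member is a subgroup and $\mcB$ is closed under finite intersection, so the only delicate axiom is continuity of conjugation. Here I would argue that, for $g\in G$ and $V\cap K\in\mcB$, the subgroup $W=g\inv Vg\cap g\inv Kg\cap K$ satisfies $gWg\inv\subseteq V\cap K$; and because $K$ is commensurated, $g\inv Kg\cap K$ has finite index in $K$, so $W$ is a $\tau$-open subgroup of $K$ and therefore contains some $V'\cap K\in\mcB$, giving the required containment $g(V'\cap K)g\inv\subseteq V\cap K$. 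This makes $\tau_K$ a group topology; it refines $\tau$ since $V\cap K\subseteq V$, it is Hausdorff since $\bigcap_V(V\cap K)=\{e\}$, and as $\tau_K|_K=\tau|_K$ the subgroup $K$ is $\tau_K$-compact and $\tau_K$-open, so $(G,\tau_K)$ is a locally compact \tdlc refinement of $(G,\tau)$ in which $K$ is compact open. A routine commensuration argument then shows $\tau_K$ depends only on the class $[K]$, so $\Psi\colon[K]\mapsto\tau_K$ is well defined.

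For the inverse map $\Phi$, starting from a locally compact group topology $\tau'$ refining $\tau$, I would first note that $(G,\tau')$ is still totally disconnected (continuity of $\mathrm{id}\colon(G,\tau')\to(G,\tau)$ forces $\tau'$-connected sets to be singletons) and Hausdorff, so by van Dantzig it has a compact open subgroup $V$. Continuity of the same map makes $V$ a $\tau$-compact subgroup, and for each $g$ the subgroup $V\cap gVg\inv$ is $\tau'$-open in the compact group $V$, hence of finite index, so $V$ is commensurated in $(G,\tau)$; since any two compact open subgroups of $(G,\tau')$ are commensurate, $\Phi\colon\tau'\mapsto[V]$ is well defined. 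The composite $\Phi\circ\Psi$ is the identity because $K$ is $\tau_K$-compact-open. For $\Psi\circ\Phi$, the key device is a rigidity observation: $\tau'|_V$ refines $\tau|_V$ and both are compact Hausdorff, so the continuous bijection $(V,\tau'|_V)\to(V,\tau|_V)$ is a homeomorphism and hence $\tau'|_V=\tau|_V$. Consequently the $\tau'$-open subgroups of $V$, which form a neighbourhood basis of the identity for $\tau'$, coincide near the identity with the finite-index subgroups of $V$ that define $\tau_V$, giving $\tau'=\tau_V=\Psi(\Phi(\tau'))$. As a sanity check, this correspondence sends $\infty$ (the class of compact open subgroups) to $\tau$ itself and $0$ (the trivial class) to the discrete topology.

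I expect the main obstacle to be the verification, in the construction of $\Psi$, that $\tau_K$ is genuinely a group topology: the only real difficulty is continuity of conjugation, where the obstruction is precisely the discrepancy between $K$ and its conjugates $gKg\inv$, and it is resolved exactly by the commensuration hypothesis; the well-definedness of $\Psi$ on commensurability classes is of the same flavour. The rigidity statement that a compact Hausdorff group topology refining another must equal it is the clean tool powering the bijectivity, and I would isolate it as a short auxiliary lemma. Everything else is bookkeeping, and first-countability is needed only for the identification in the first step via Lemma~\ref{profcomm}.
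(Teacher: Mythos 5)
Your proposal is correct and follows essentially the same route as the paper: both pass through the intermediate set of $G$-invariant commensurability classes of compact subgroups, invoke Lemma~\ref{profcomm} (the only place first-countability is used) to produce a locally normal representative, and realise each refinement as the topology in which the members of such a class are declared compact open --- the paper's localised topology $\mcT_{(\alpha)}$, your $\tau_K$. The details you supply explicitly (the verification that $\tau_K$ is a group topology via the commensuration hypothesis, and the compact-Hausdorff rigidity argument giving $\mcT' = \mcT_{(\alpha)}$) are precisely what the paper's definition of the localisation and its unproved remark that refinements are exactly the $(G,\mcT_{(\mu)})$ leave implicit.
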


When $G$ is faithful weakly decomposable, we obtain a topological rigidity result of a similar kind to those for semisimple Lie groups over real and $p$-adic fields given in \cite{Kra} and \cite{McC}.

\begin{thmintro}[See Theorem~\ref{refpropcent}]\label{intro:rigid}Let $G$ be a second-countable \tdlc group with trivial quasi-centre, and suppose that $G$ is faithful weakly decomposable.  Then the topology of $G$ is the coarsest group topology on $G$ for which it is a Baire space. In particular, it is preserved by every automorphism of $G$ as an abstract group.  Consequently, the topology of $G$ is the unique $\sigma$-compact locally compact group topology on $G$, and the set of all locally compact group topologies of $G$ is in natural bijection with the set of fixed points of $G$ acting on $\lnorm(G)$ by conjugation.
\end{thmintro}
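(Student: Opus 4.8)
The plan is to prove the central assertion --- that the given topology $\sigma$ on $G$ is the coarsest group topology making $G$ a Baire space --- and to derive the remaining statements from it together with Theorem~\ref{intro:refine}. Since a locally compact Hausdorff group is a Baire space, $\sigma$ is itself a Baire group topology, so the real content is minimality: for every Hausdorff group topology $\tau$ on $G$ with $(G,\tau)$ Baire I must show $\sigma\subseteq\tau$, i.e.\ that $\id\colon(G,\tau)\to(G,\sigma)$ is continuous. As $(G,\sigma)$ is second countable and totally disconnected it has a neighbourhood basis of the identity consisting of compact open subgroups $\{V_n\}$, and it suffices to show each $V_n$ is $\tau$-open. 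I would isolate two properties of a subgroup $V\le G$ that together force $\tau$-openness: that $V$ has countable index, and that $V$ has the Baire property in $\tau$. Indeed, countable index makes $G$ a countable union of $\tau$-translates of $V$, so by the Baire property of $(G,\tau)$ the subgroup $V$ is $\tau$-non-meagre; if moreover $V$ has the $\tau$-Baire property, Pettis' theorem shows $V=V\cdot V^{-1}$ contains a $\tau$-neighbourhood of $1$, whence $V$ is $\tau$-open.

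Countable index is immediate, since by second countability of $\sigma$ every compact open subgroup has countably many cosets. The heart of the argument is therefore to produce a $\sigma$-neighbourhood basis of the identity consisting of subgroups that have the Baire property in \emph{every} Hausdorff group topology on $G$; this is exactly where hypotheses (a)/(b), the Boolean algebra structure of Theorem~\ref{intro:boolean}, and the faithful weakly branch action of Theorem~\ref{intro:weakbranch} enter. Writing $\mfX=\mfS(\lcent(G))$ (resp.\ $\mfS(\ldlat(G))$ in case (a)), I would use that, because $\sigma$ is second countable, $\mfX$ is metrizable and carries a decreasing sequence of finite clopen partitions refining to points. To each such partition $\{\alpha_1,\dots,\alpha_k\}$ I would attach the subgroup generated by the rigid stabilisers $\rist_G(\alpha_i)$, show via the weakly branch property (stabilisers of clopens are $\sigma$-open) that these subgroups are $\sigma$-open, and, by faithfulness of the $G$-action on the Boolean algebra, that they shrink to the identity. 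The decisive point --- and the main obstacle --- is to certify that these subgroups, which are defined from the $G$-action on $\mfX$, possess the Baire property in an \emph{arbitrary} Baire group topology $\tau$. I would obtain this by anchoring the construction in data that is closed in every group topology: centralisers $\CC_G(S)$ are $\tau$-closed for any Hausdorff $\tau$, and once a locally normal representative $K$ is shown to be $\tau$-closed its normaliser $N_G(K)$ is $\tau$-closed as well. Bootstrapping from the $\sigma$-open, countable-index (hence $\tau$-non-meagre) stabilisers of clopens, one upgrades ``non-meagre'' to ``open'' and thereby forces the branch action to be $\tau$-continuous, which in turn returns the $\tau$-closedness of all the rigid stabilisers in play.

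With such a basis in hand, Pettis' theorem yields $\sigma\subseteq\tau$ and hence that $\sigma$ is the coarsest Baire group topology. Automorphism invariance then follows formally: an abstract automorphism $\phi$ of $G$ transports any Baire group topology to a Baire group topology and preserves the refinement order, so it must fix the coarsest one; thus $\phi_*\sigma=\sigma$ and $\phi$ is a homeomorphism of $(G,\sigma)$.

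Finally I would deduce the two corollaries. Any $\sigma$-compact locally compact group topology $\tau$ is Baire, so $\sigma\subseteq\tau$; then $\id\colon(G,\tau)\to(G,\sigma)$ is a continuous bijective homomorphism from a $\sigma$-compact locally compact group onto a locally compact group, hence open by the open mapping theorem, so $\tau=\sigma$ --- giving uniqueness of the $\sigma$-compact locally compact topology. More generally, every locally compact group topology $\tau$ on $G$ is Baire, so again $\sigma\subseteq\tau$, i.e.\ $\tau$ is a locally compact refinement of $\sigma$; by Theorem~\ref{intro:refine} such refinements are in natural bijection with the fixed points of $G$ acting on $\lnorm(G)$, and conversely each fixed point yields such a refinement, which establishes the claimed bijection.
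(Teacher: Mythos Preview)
Your overall plan is sound and close to the paper's, but the step you flag as ``the main obstacle'' is a genuine gap as written. To upgrade ``$\tau$-non-meagre'' to ``$\tau$-open'' via Pettis (or just Baire category) you need the subgroup in question to have the $\tau$-Baire property, and for that you need it to be $\tau$-closed. You propose to get this from ``a locally normal representative $K$ shown to be $\tau$-closed'', but a $\sigma$-compact locally normal $K$ has no reason to be closed in an \emph{arbitrary} Hausdorff group topology $\tau$; the bootstrap you sketch (non-meagre $\Rightarrow$ open $\Rightarrow$ action $\tau$-continuous $\Rightarrow$ $K$ $\tau$-closed) is circular, because the first arrow already presupposes what you are trying to prove. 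The detour through subgroups generated by rigid stabilisers of a partition adds complication without resolving this.

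The paper short-circuits the difficulty with Proposition~\ref{latrist}(i): for $\alpha$ in $\ldlat(G)$ or $\lcent(G)$ one has $\rist_G(\alpha)=\CC_G(\QC_G(\alpha))$, a centraliser in $G$, hence closed in \emph{every} Hausdorff group topology. Consequently $G_\alpha=\N_G(\rist_G(\alpha))$ is Markov-closed as well, and since $G_\alpha$ has countable index (second countability of $\sigma$), Baire category makes $G_\alpha$ $\tau$-open outright --- no Pettis, no bootstrap. The paper then avoids constructing an explicit basis: the topology $\mcT_\mcA$ generated by the cosets $gG_\alpha$ is Hausdorff (faithfulness) and contained in every Baire $\tau$; any $\sigma$-compact open $U$ is $\mcT_\mcA$-compact, hence $\mcT_\mcA$-closed, hence $\tau$-closed, hence (countable index, Baire) $\tau$-open. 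This gives $\sigma\subseteq\tau$ directly. Your derivations of automorphism-invariance, of the uniqueness of the $\sigma$-compact locally compact topology via the open mapping theorem, and of the bijection with fixed points in $\lnorm(G)$ via Theorem~\ref{intro:refine}, are correct.
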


\subsection*{Acknowledgements}
We express our gratitude to John Wilson for numerous comments which helped in improving the presentation of this paper. We thank Corina Ciobotaru, David Hume, Thierry Stulemeijer and Phillip Wesolek for their careful reading of an earlier draft of the manuscript.

\addtocontents{toc}{\protect\setcounter{tocdepth}{2}}
\section{Locally normal subgroups}

\subsection{Definition and first properties}\label{sec:BasicDefs}

Let $G$ be a \tdlc group, which is assumed to be Hausdorff by convention.  Then van Dantzig's theorem \cite[Theorem~II.7.7]{Hewitt&Ross} asserts that the open compact subgroups of $G$ form a base of identity neighbourhoods. Denote this set by $\mcB(G)$\index{B(G)@$\mcB(G)$}. Any two elements of $\mcB(G)$ are commensurate, as an elementary topological argument shows.  Hence, letting $U$ be any open compact subgroup of $G$, the pair $(G,U)$ satisfies the following condition.  

A \defbold{Hecke pair (of groups)}\index{Hecke pair|textbf} $(G,U)$ is a group $G$ together with a commensurated\index{commensurated subgroup} subgroup $U$ of $G$, that is, $U \cap gUg\inv$ has finite index in $U$ for all $g \in G$.

Most of our arguments will be made in terms of general Hecke pairs, as the results then apply directly to residually finite groups and not just to their profinite completions. When specialising to the case that $U$ is an open compact subgroup of the \tdlc group $G$, we will say that $(G,U)$ is a \textbf{\tdlc}\index{Hecke pair!\tdlc}\index{tdlc@\tdlc!Hecke pair} Hecke pair.


Given a Hecke pair $(G,U)$, a \defbold{locally normal}\index{locally normal subgroup!of a Hecke pair} subgroup of $(G,U)$ is a subgroup $K$ of $G$ such that the normaliser $\N_U(K)$\index{NU(K)@$\N_U(K)$} has finite index in $U$.  A subgroup $K$ of $G$ is \defbold{bounded}\index{bounded subgroup!of a Hecke pair} if $K\cap U$ is commensurate with $K$. When $(G, U)$ is \tdlc, subgroups that are bounded and closed are precisely the compact ones. 

The most elementary properties of locally normal subgroups collected in the following will frequently be used without comment. 

\begin{lem}\label{lem:LN:basic}
Let $(G,U)$ be a Hecke pair and $K, L$ be subgroups of $G$. 
\begin{enumerate}[(i)]
\item If $K$ and $L$ are locally normal, so is $K \cap L$. 

\item If $(G,U)$ is \tdlc and $K$ is compact and locally normal, so is every open subgroup of $K$.

\item If $K$ is locally normal, so is the centraliser $\CC_V(K)$ for any $V\leq G$ commensurate with $U$.

\item If $K$ is bounded, then there is $V\leq G$ commensurate with $U$ that contains $K$. If $K$ is also locally normal, then $V$ may be chosen to normalise $K$. When $(G, U)$ is \tdlc and $K$ is compact and locally normal, then we can take $V \in \mcB(G)$.

\end{enumerate}
\end{lem}

\begin{proof}
(i) is clear since $K \cap L$ is normalised by $\N_U(K) \cap \N_U(L)$. 

(ii) An open subgroup of $K$ may be written as $K \cap U$ for some $U \in \mcB(G)$.  Since any element of $\mcB(G)$ is locally normal, (ii) follows from part (i).

(iii) Let $W = U \cap V$.  If $K$ is locally normal, then $\CC_V(K)$ is a subgroup of $V$, so that $\N_W(K) \le \N_W(\CC_V(K))$ and we have 
$$ |U:\N_U(\CC_V(K))| \le |U:W| |W:\N_{W}(\CC_V(K))| \le |U:W| |W:\N_W(K)| < \infty;$$
hence $\CC_V(K)$ is locally normal.

(iv) If $K$ is bounded, then $W = \bigcap_{k\in K} kUk^{-1}$ is commensurate with $U$ and normalised by $K$, so that $V = KW$ is a subgroup commensurate with $U$ and containing $K$. If $K$ locally normal, then $W = \N_U(K)$ is commensurate with $U$ and $V = KW$ is a subgroup commensurate with $U$ and containing $K$ as a normal subgroup.  When $(G, U)$ is \tdlc and $K$ is compact and locally normal, then $W$ open and compact with either definition, and so $V$ is an open compact subgroup of $G$.
\end{proof}

Notice that the converse to Lemma~\ref{lem:LN:basic}(ii) does not hold:  namely, the fact that  $K$ is compact and contains a locally normal subgroup as an open subgroup does not generally imply that $K$ itself is locally normal, since otherwise all finite subgroups of $G$ would be locally normal and hence contained in the quasi-centre $\QZ(G)$. 

\subsection{Local equivalence and the structure lattice}

Let $(G,U)$ be a Hecke pair and let $H$ be a subgroup of $G$. Write $[H]$ for the class of all subgroups $K$ of $G$ such that $K \cap U$ is commensurate with $H \cap U$ and  say that $H$ and $K$ are \defbold{locally equivalent}\index{local equivalence|textbf} if $[H]=[K]$. Although $H$ might not be bounded, and such will be the case in later arguments, it is the bounded subgroups of $[H]$ that are of interest. There is a natural partial ordering on local equivalence classes, given by $[H] \ge [K]$ if $H \cap K \cap U$ has finite index in $K \cap U$. Note that, when $(G,U)$ is \tdlc and $H$ is closed, the closed subgroups in $[H]$ are those that have an open subgroup in common with $H$.

A subgroup $H$ in $G$ is locally equivalent to the trivial subgroup if and only if $H \cap U$ is finite; in the \tdlc case, this occurs if and only if $H$ is discrete. We shall later be interested in subgroups $H$ of \tdlc groups $G$ such that $H \cap U =1$ for all $U \in \mcB(G)$. Since every compact (and hence every finite) subgroup of $G$ is contained in some open compact subgroup, the latter property holds if and only if $H$ is discrete and torsion-free.

For a given a Hecke pair $(G,U)$, let $\lnorm(G,U)$ denote the set of local equivalence classes of subgroups of $G$ that have locally normal representatives.  Unless otherwise stated, when choosing a representative of $\alpha \in \lnorm(G,U)$, we only take representatives that are bounded and locally normal. Then the partially ordered set $\lnorm(G,U)$ is a lattice. The greatest lower bound and least upper bound of $\alpha, \beta \in \lnorm(G,U)$ are respectively 
\[\alpha \wedge \beta = [H \cap K]\hbox{ and } \alpha \vee \beta = [HK],\]
where representatives $H \in \alpha$ and $K \in \beta$ are chosen such that $H,K \unlhd V$ for some $V\leq U$ of finite index. 
That $[H \cap K]$ and $[HK]$ are independent of how $H$, $K$ and $V$ are chosen and that they are respectively the greatest lower bound and least upper bound of $\alpha$ and $\beta$ is easily verified. We call $(\lnorm(G,U),\leq)$\index{LN(G,U)@$\lnorm(G,U)$} the \defbold{abstract structure lattice}\index{abstract structure lattice}\index{structure lattice!abstract} of $(G,U)$.

If $(G,U)$ is a \tdlc Hecke pair, write $\lnorm(G)$ for the set of elements of $\lnorm(G,U)$ that have compact locally normal representatives.  Then $\lnorm(G)$ is a sublattice of $\lnorm(G,U)$ that does not depend on the choice of $U$ so long as it is compact and open.  We define $\lnorm(G)$ with the induced operations to be the \defbold{structure lattice} of $G$.

It is clear that $\lnorm(G,U)$ is determined entirely by the group structure of $U$, so effectively $\lnorm(G,U) = \lnorm(U,U)$.  We retain the notation $\lnorm(G,U)$ in order to distinguish the abstract (Hecke pair) structure lattice from the topological structure lattice $\lnorm(G)$, which is more useful in the study of \tdlc groups.

\begin{ex} \

\begin{enumerate}[(a)]
\item Let $T$ be a regular tree of finite degree $d$, with $d \ge 3$, and let $G = \Aut(T)$.  Then $G$ is a \tdlc group, with topology generated by the pointwise stabilisers of finite subtrees of $T$.  Let $F$ be a finite subtree of $T$, let $U$ be the pointwise stabiliser of $F$, and let $T_1,\dots,T_n$ be the connected components of $T - F$.  Then $U$ decomposes as a direct product
\[ U = K_1 \times K_2 \times \dots \times K_n\]
where $K_i$ is the pointwise stabiliser in $G$ of all vertices outside $T_i$.  Evidently $K_i$ is compact and normal in $U$, so it is locally normal in $G$, and since each tree $T_i$ is infinite and $G$ induces an infinite group of automorphisms on it, each group $K_i$ is infinite.  Thus the given decomposition of $U$ provides $2^n$ distinct elements of the structure lattice of $G$.  By varying the finite tree $F$, we see that $\lnorm(G)$ is infinite in this case.

\item Let $G = \bQ^n_p$ for $n \ge 2$.  Then the every subgroup of $G$ is locally normal, since $G$ is abelian.  The compact subgroups of $G$ are all isomorphic to $\bZ^m_p$ where $0 \le m \le n$.  It is easily seen that two compact subgroups of $G$ are commensurate if and only if they span the same subspace of $G$ regarded as a $\bQ_p$-vector space, and conversely every $\bQ_p$-subspace of $G$ is spanned by a compact subgroup.  Consequently, the structure lattice of $G$ is naturally isomorphic to the lattice of $\bQ_p$-subspaces of $G$.  In particular, $\lnorm(G)$ is uncountable.

\item At the other extreme, let $G$ be any \tdlc group with a hereditarily just infinite open subgroup $U$, for instance $G = \bQ_p$ or $G = \mathrm{PSL}_n(\bQ_p)$.  Then given a closed locally normal subgroup $K$, the intersection $K \cap U$ is normal in an open subgroup of $U$, and hence $K$ is either finite or open.  Hence $\lnorm(G)$ is the two-element lattice.
\end{enumerate}
\end{ex}

When $G$ is a \tdlc group, the lattice structure of $\lnorm(G)$ is preserved by the topological automorphism group $\Aut(G)$ of $G$.  More generally, the structure of $\lnorm(G,U)$ is preserved by those automorphisms of $G$ that preserve the commensurability class of $U$.  In particular, $G$ acts on $\lnorm(G)$ (when defined) and $\lnorm(G,U)$ by conjugation.  If $U$ is infinite, there are two `trivial' elements of $\lnorm(G,U)$ and of $\lnorm(G)$, namely the class of the trivial group, denoted by $0$, and the class of the subgroups containing a subgroup of $U$ of finite index, denoted by $\infty$.  

The use of the term \emph{structure lattice} is borrowed from J.~Wilson (\cite{WilNH}), whose ideas developed in the context of just infinite groups were an important source of inspiration for the present work.  The structure lattice is a purely `local' invariant of a \tdlc group, that is, it is completely determined by any open compact subgroup.  It is also built out of lattices of normal subgroups of open compact subgroups in a natural sense.  Similar observations apply more generally to the abstract structure lattice.

\begin{lem}Let $(G,U)$ be a Hecke pair.
\begin{enumerate}[(i)]
\item Let $X$ be a finite subset of $\lnorm(G,U)$, and let $\mcL$ be the sublattice generated by $X$.  Then there is a subgroup $V$ of $U$ of finite index and a choice of representatives $Y$ for the elements of $X$, so that every element of $Y$ is a normal subgroup of $V$, and $\mcL$ is isomorphic to the lattice of subgroups of $V$ generated by $Y$ modulo commensuration.  If $\mcL \subseteq \lnorm(G)$, then we can take $V \in \mcB(G)$.

\item Let $I$ be a lattice identity that is satisfied by the normal subgroup lattice of $V$ (modulo commensuration), for every finite index subgroup $V$ of $U$.  Then $I$ is satisfied by $\lnorm(G,U)$.  In particular, $\lnorm(G,U)$ is a modular lattice.

\end{enumerate}
\end{lem}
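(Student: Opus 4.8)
The plan is to prove (i) first by producing a single open compact subgroup that simultaneously normalises representatives of every class in $X$, and then to deduce (ii) formally from (i) together with the fact that a lattice identity is inherited both by sublattices and by congruence quotients.

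For (i), write $X = \{\alpha_1, \dots, \alpha_n\}$ and choose a locally normal representative $H_i$ of each $\alpha_i$. Since each $H_i$ is locally normal, each $\N_G(H_i)$ is open, so $\bigcap_{i=1}^n \N_G(H_i)$ is open and, by van Dantzig's theorem, contains some $V \in \mcB(G)$. I would then replace $H_i$ by $H_i \cap V$: as $V$ is open and $H_i$ is compact, $H_i \cap V$ is an open, hence finite-index, subgroup of $H_i$, so it is commensurate with $H_i$ and still represents $\alpha_i$; and since $V$ normalises both $H_i$ and itself, $H_i \cap V \unlhd V$. Taking $U := V$ and $Y := \{H_1 \cap V, \dots, H_n \cap V\}$ then gives representatives that are all normal in the single group $U$, as required.

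With this common $U$ in hand, let $S$ denote the sublattice of the subgroup lattice of $U$ generated by $Y$. Because the elements of $Y$ are normal in $U$, and the join and meet of normal subgroups of $U$ are their product and intersection respectively, every element of $S$ is a normal (hence compact, hence locally normal) subgroup of $U$. The assignment $N \mapsto [N]$ then defines a map $\phi \colon S \to \lnorm(G)$ whose image is exactly $\mcL$; it is a lattice homomorphism precisely because the operations $\wedge, \vee$ on $\lnorm(G)$ are computed, by definition, from representatives that are normal in a common member of $\mcB(G)$, which is the situation we have arranged. Two elements of $S$ have the same image under $\phi$ if and only if the corresponding compact normal subgroups are commensurate, i.e.\@ are identified modulo commensuration, so $\phi$ descends to a lattice isomorphism from $S$ modulo commensuration onto $\mcL$. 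The one point requiring care — and the main technical obstacle — is to confirm that commensuration is a lattice congruence on $S$, so that ``the lattice of subgroups of $U$ generated by $Y$ modulo commensuration'' is genuinely a lattice. For the meet this is routine; for the join I would use the standard fact that if $A_0 \le A$ has finite index and $A, B \unlhd U$, then $[AB : A_0 B] \le [A:A_0]$, which shows that replacing $A$ and $B$ by commensurate normal subgroups changes $AB$ only up to commensuration.

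For (ii), let $I$ be the identity $p(x_1,\dots,x_m) = q(x_1,\dots,x_m)$, assumed to hold in the normal subgroup lattice modulo commensuration of every member of $\mcB(G)$. Given arbitrary $\alpha_1,\dots,\alpha_m \in \lnorm(G)$, I would apply (i) to $X = \{\alpha_1,\dots,\alpha_m\}$ to obtain $U$, representatives $Y$, and the isomorphism $\phi$ above. The same congruence argument applies to the whole normal subgroup lattice of $U$, so $S$ modulo commensuration is a sublattice of the normal subgroup lattice of $U$ modulo commensuration; since $I$ holds in the latter by hypothesis, it holds in this sublattice, and transporting along $\phi$ gives $p(\alpha_1,\dots,\alpha_m) = q(\alpha_1,\dots,\alpha_m)$ in $\mcL \subseteq \lnorm(G)$. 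As the $\alpha_i$ were arbitrary, $I$ holds throughout $\lnorm(G)$. Finally, modularity follows as a special case: the normal subgroup lattice of any group is modular, modularity is expressible as a lattice identity, and lattice identities pass to homomorphic images and hence to congruence quotients; so the modular law holds in each normal subgroup lattice modulo commensuration, and therefore, by the case just proved, in $\lnorm(G)$.
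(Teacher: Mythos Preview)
Your proof is correct and follows essentially the same approach as the paper: choose locally normal representatives, intersect with a common open compact subgroup $U$ in $\bigcap_i \N_G(H_i)$ to obtain normal subgroups of $U$, and use the map $N \mapsto [N]$ as a surjective lattice homomorphism onto $\mcL$. The paper's proof is terser and does not explicitly verify that commensuration is a lattice congruence on normal subgroups of $U$; your additional care on this point (particularly the index bound $[AB:A_0B] \le [A:A_0]$ for the join) is a welcome clarification rather than a different argument.
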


\begin{proof}(i) Let $X = \{[K_1],\dots,[K_n]\}$, let $V = \bigcap \N_U(K_i)$, and let $L_i = V \cap K_i$ for $i \in \{1,\dots,n\}$.  Then $Y = \{L_1,\dots,L_n\}$ consists of normal subgroups of $V$, and the meet and join operations in $\mcL$ can evidently be given by $[K_i] \wedge [K_j] = [L_i \cap L_j]$ and $[K_i] \vee [K_j] = [L_iL_j] = [\langle L_i,L_j\rangle]$.  Thus the map $L \mapsto [L]$ gives a surjective map from the lattice generated by $Y$ to $\mcL$, and two subgroups have the same image if and only if they are commensurate.  If $(G,U)$ is \tdlc and $\mcL \subseteq \lnorm(G)$, then we can choose $K_1,\dots,K_n$ to be closed, so $V$ is closed, and hence $V \in \mcB(G)$.

(ii) This is immediate from (i), recalling that the lattice of normal subgroups in any group is modular (see \cite[Theorem~8.3]{JacobsonBAI}).
\end{proof}

If $G$ is a \tdlc group and $U$ is an open compact subgroup, we see that $\lnorm(G)$ is a sublattice of $\lnorm(G,U)$.  (In particular, $\lnorm(G)$ inherits the modular property from $\lnorm(G,U)$.)  In general, elements of $\lnorm(G)$ may have non-closed representatives, since a finite index subgroup of a profinite group is not necessarily closed.  However, we can recognise the elements of $\lnorm(G)$ inside $\lnorm(G,U)$ by the existence of closed representatives (without reference to normalisers).

\begin{prop}\label{prop:closed_rep}Let $(G,U)$ be a \tdlc Hecke pair and let $\alpha \in \lnorm(G,U)$.  Then $\alpha \in \lnorm(G)$, that is, $\alpha$ has a compact locally normal representative, if and only if there is closed subgroup $H$ of $G$ such that $[H] = \alpha$.
\end{prop}

\begin{proof}
If $\alpha \in \lnorm(G)$, then certainly $\alpha$ has a closed representative.  It remains to show the converse.  Suppose $H$ is a closed subgroup of $G$ such that $[H] = \alpha$.  Since $\alpha \in \lnorm(G,U)$, there is some bounded locally normal subgroup $K$ of $G$ such that $[K] = \alpha$.  We see that $H \cap U$ is closed and $[H \cap U] = \alpha$, so we may assume $H \le U$.  Now $H \cap K$ has finite index in $K$, so the image of $K$ in $U/H$ is a finite set of cosets, say $k_1 H, \dots, k_n H$. Since $H$ is closed, there is a sufficiently small open normal subgroup $V$ of $U$ such that  the cosets $k_1 H, \dots, k_n H$ remain pairwise distinct modulo $V$. It then follows that
$$ H \cap K \cap V = K \cap V.$$
Note that $K \cap V$ is locally normal and $[K \cap V] = \alpha$.  So by replacing $K$ by $K \cap V$, we may assume $K \le H$.  Since $H$ is a closed subgroup of $G$, we in fact have $\overline{K} \le H$.  Now $K$ has finite index in $\overline{K}$, since $K$ is commensurate with $H$, so $[\overline{K}] = \alpha$; moreover, the normaliser of $\overline{K}$ in $G$ is a closed subgroup containing $\N_G(K)$, so the normaliser of $\overline{K}$ is open.  Hence $\overline{K}$ is the desired compact locally normal representative of $\alpha$.\end{proof}

{We have seen that the structure lattice $\lnorm(G)$ of a \tdlc group $G$ is a modular lattice that can be recovered from any identity neighbourhood in $G$. In the special case where $G$ is a $p$-adic Lie group, the structure lattice can be described in terms of the $\QQ_p$-Lie algebra of $G$, as follows. \index{p-adic Lie group@$p$-adic Lie group}

\begin{prop}\label{prop:p-adicLie}
Let $G$ be a $p$-adic Lie group. Then the structure lattice $\lnorm(G)$ is $G$-equivariantly isomorphic to the lattice of ideals in the $\QQ_p$-Lie algebra $\mathrm{Lie}(G)$.
\end{prop}

Indeed, that proposition readily follows from  the following known properties of the Lie correspondence in $p$-adic Lie groups. 

\begin{prop}\label{prop:p-adicLieCorrespondence}
Let $G$ be a $p$-adic Lie group, and $\mathfrak g= \mathrm{Lie}(G)$ denote its $\QQ_p$-Lie algebra.
\begin{enumerate}[(i)]
\item For any two closed subgroups $H_1, H_2 \leq G$, we have $[H_1] = [H_2]$ if and only if $\mathrm{Lie}(H_1) = \mathrm{Lie}(H_2)$.

\item If $H\leq G$ is closed with $\N_G(H)$ open, then $\mathrm{Lie}(H)$ is an ideal. 

\item If $\mathfrak h$ is an ideal of $\mathfrak g$, then there exists a compact locally normal subgroup $H$ with $\mathrm{Lie}(H) = \mathfrak h$. 
\end{enumerate}
\end{prop}

\begin{proof}
We first  recall that every closed subgroup of $G$ is a Lie group, whose Lie algebra  is a subalgebra of $\mathfrak g$, see  \cite[Ch.~III, \S8, Th.~2]{Bbki}. 
 
We next observe that every vector subspace of $\mathfrak g$ is closed: this is easily seen by induction on dimension, using the fact that every proper open subgroup of $\QQ_p$ is compact. Therefore, every Lie subalgebra of $\mathfrak g$ is the Lie algebra of a closed subgroup of $G$ by  \cite[Ch.~III, \S7, Th.~2]{Bbki}. 

This being recorded, assertion (i) now follows from  \cite[Ch.~III, \S7, Th.~2]{Bbki}, while assertions (ii) and (iii) follow from \cite[Ch.~III, \S7, Prop.~2]{Bbki}. 
\end{proof}
}

\section{Quasi-centralisers and their stability properties}
\label{sec:QZmain}

\subsection{Motivation}
\label{sec:QCmotiv}

Let $(G,U)$ be a Hecke pair.  Our principal aim in the present section and Sections \ref{sec:ldlat} and \ref{sec:lcent} is to obtain two subsets of the structure lattice of $(G,U)$, namely the \defbold{local decomposition lattice}\index{local decomposition lattice!of a Hecke pair} $\ldlat(G,U)$\index{LD(G,U)@$\ldlat(G, U)$} and the \defbold{centraliser lattice}\index{centraliser lattice!of a Hecke pair} $\lcent(G,U)$\index{LC(G,U)@$\lcent(G, U)$} that are Boolean algebras, provided that $(G,U)$ satisfies some unavoidable hypotheses.  The local decomposition lattice will have as representatives the direct factors of finite index subgroups of $U$; more generally, one can define the local decomposition lattice $\ldlat(G,U;K)$\index{LD(G,U;K)@$\ldlat(G,U;K)$} relative to a locally normal subgroup $K$, with representatives given by those direct factors of finite index subgroups of $K$ that are locally normal in $G$.  The centraliser lattice will have as representatives the centralisers of all locally normal subgroups.  Here are some observations that will lead us to the required hypotheses.

\begin{enumerate}
\item
Given a direct factor $K$ of a group $U$, observe that there may be many different subgroups $L$ such that $K \times L = U$: consider for instance the direct factorisations of $U = \bZ^2_p$ as $\bZ_p \times \bZ_p$.  However, if the centre of $U$ is trivial, then $K$ has a unique complement, namely $L = \CC_U(K)$.

\item
\emph{A priori}, the map
$$\lnorm(G,U) \rightarrow \lnorm(G,U); \; [K] \mapsto [\CC_G(K)]$$
is not well-defined: given a locally normal subgroup $K$ and a finite index subgroup $V$ of $U$, it could be the case that $\CC_G(K \cap V)$ lies in a local equivalence class that is strictly above the local equivalence class of $\CC_G(K)$.  So instead, we will define a subgroup $\QC_G([K])$ that is the union of all centralisers of representatives of $[K]$, and then define $[K]^\bot$ to be $[\QC_G([K])]$.  We will see that under suitable conditions, every representative of $[K]$ actually has the same centraliser, so the map $[K] \mapsto [\CC_G(K)]$ is in fact well-defined.

\item
There is a natural approach to obtaining a Boolean algebra from a bounded lattice $\mcL$, provided that $\mcL$ is \emph{pseudocomplemented}.  A \defbold{pseudocomplementation}\index{pseudocomplementation}\index{perp@$\bot$} is a function $\bot: \mcL \rightarrow \mcL$ such that:

$(*)$ For all $\alpha,\beta \in \mcL$, we have $\alpha \wedge \beta = 0$ if and only if $\beta \le \alpha^\bot$.

If a pseudocomplementation exists, it is necessarily unique.  Moreover, the set $\mcA = \{\alpha^\bot \mid \alpha \in \mcL\}$ forms a Boolean algebra, with the restriction of $\bot$ serving as the complementation map on $\mcA$ (in particular, $\bot^3 = \bot$).  We will obtain hypotheses such that $\lnorm(G,U)$ is pseudocomplemented, and the pseudocomplementation will be given by $[K] \mapsto [\CC_G(K)]$.
\end{enumerate}

These observations lead naturally to the definitions of the \emph{quasi-centre} (see \ref{sec:QZ}), \emph{quasi-centralisers} (see \ref{sec:QC}) and \emph{C-stability} (see \ref{sec:Cstable}).  The purpose of the present section (Section~\ref{sec:QZmain}) is to perform the technical groundwork needed to verify that the definitions have the properties we desire of them.

Since the lattices we are describing are `local' invariants of the Hecke pair $(G,U)$, for the proofs, there will be no loss of generality in assuming $G=U$, and we may freely replace $G$ or $U$ by a finite index subgroup when it is convenient to do so.

\subsection{The quasi-centre}\label{sec:QZ}

\index{quasi-centre} As observed in \S\ref{sec:QCmotiv}, to ensure unique complementation in the local decomposition lattice of the Hecke pair $(G,U)$, we need the finite index subgroups of $U$ (or at least all sufficiently small finite index subgroups of $U$) to have trivial centre.  When $(G, U)$ is a \tdlc Hecke pair, this becomes the question as to whether the \textbf{quasi-centre} of $G$, defined as the set of all elements whose centraliser is open, is discrete. The notion of quasi-centre is due to Burger--Mozes \cite{BurgerMozes}; its basic properties are as follows. 

\begin{lem}\label{lem:QZ}
Let $G$ be a locally compact group. Then:
\begin{enumerate}[(i)]
\item $\QZ(G)$ is a (not necessarily closed) characteristic subgroup of $G$. 

\item $\QZ(O) \leq \QZ(G)$ for any open subgroup $O \leq G$. 

\item For any closed normal subgroup $N$ of $G$, the image of $\QZ(G)$ under the projection to $G/N$ is contained in $\QZ(G/N)$. 

\item $\QZ(G)$ contains every discrete subgroup of $G$ whose normaliser in $G$ is open. In particular, all discrete normal subgroups and all finite locally normal subgroups are contained in $\QZ(G)$. 

\item For any discrete normal subgroup $N$ of $G$, we have $\QZ(G)/N = \QZ(G/N)$. In particular, if $\QZ(G)$ is discrete, then $\QZ(G/\QZ(G)) =1$. 
\end{enumerate}
\end{lem}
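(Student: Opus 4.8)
The statement to prove is Lemma~\ref{lem:QZ}, establishing four basic properties of the quasi-centre $\QZ(G)$ of a locally compact group $G$, where $\QZ(G)$ consists of all elements whose centraliser is open.

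\textbf{Overall strategy.} Each part follows directly from unwinding the definition, so the plan is to verify each claim by elementary point-set and group-theoretic arguments, being careful only about the interplay between openness of centralisers and the topology. The one genuinely substantive point is verifying that $\QZ(G)$ is closed under multiplication in part (i); the rest is routine.

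\textbf{Part (i).} To show $\QZ(G)$ is a subgroup, I would first note that $\CC_G(1) = G$ is open, so the identity lies in $\QZ(G)$. For closure under inverses, observe $\CC_G(x) = \CC_G(x^{-1})$, so $x \in \QZ(G)$ iff $x^{-1} \in \QZ(G)$. For closure under products, if $x, y \in \QZ(G)$ then $\CC_G(x) \cap \CC_G(y)$ is an open subgroup centralising both $x$ and $y$, hence centralising $xy$; thus $\CC_G(xy) \supseteq \CC_G(x) \cap \CC_G(y)$ is open, so $xy \in \QZ(G)$. For the characteristic property, if $\phi \in \Aut(G)$ is a topological automorphism and $x \in \QZ(G)$, then $\CC_G(\phi(x)) = \phi(\CC_G(x))$ is the image of an open set under a homeomorphism, hence open, so $\phi(x) \in \QZ(G)$.

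\textbf{Parts (ii)--(iv).} For (ii), if $O \leq G$ is open and $x \in \QZ(O)$, then $\CC_O(x)$ is open in $O$, hence open in $G$ (as $O$ is open); since $\CC_O(x) = \CC_G(x) \cap O \leq \CC_G(x)$ and an open subgroup has open overgroups, $\CC_G(x)$ is open, giving $x \in \QZ(G)$. For (iii), let $\pi\colon G \to G/N$ be the quotient map and $x \in \QZ(G)$; then $\pi(\CC_G(x))$ centralises $\pi(x)$, and since $\pi$ is an open map and $\CC_G(x)$ is open, $\pi(\CC_G(x))$ is an open subgroup of $G/N$ contained in $\CC_{G/N}(\pi(x))$, so the latter is open and $\pi(x) \in \QZ(G/N)$. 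For (iv), suppose $D$ is a discrete subgroup with $\N_G(D)$ open and let $x \in D$; conjugation gives a continuous action of the open group $\N_G(D)$ on the discrete group $D$, so the stabiliser of $x$, namely $\CC_{\N_G(D)}(x)$, is open in $\N_G(D)$ and hence in $G$, whence $x \in \QZ(G)$. The final two assertions specialise this: a discrete normal subgroup has $\N_G(D) = G$ open, and a finite locally normal subgroup is discrete with open normaliser by definition.

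\textbf{Anticipated obstacle.} There is no serious obstacle here; the statement is foundational. The only points requiring a moment's care are ensuring that the relevant centralisers and images are genuinely \emph{open} (not merely subgroups) — in particular using that open maps send open sets to open sets in (iii), and that a discrete group acted on continuously by a topological group has open point-stabilisers in (iv). I would present the argument compactly, treating (ii) and (iii) as one-line consequences of the definitions.
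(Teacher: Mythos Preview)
Your proposal is correct and follows essentially the same approach as the paper: the closure of $\QZ(G)$ under products via $\CC_G(xy)\supseteq\CC_G(x)\cap\CC_G(y)$, the openness of the quotient map for (iii), and the orbit--stabiliser argument for the conjugation action in (iv) are exactly what the paper does. Your treatment is in fact slightly more explicit (you spell out the identity, inverses, and the characteristic property in (i), and handle (iv) directly in $\N_G(D)$ rather than first forming $H\N_G(H)$, which equals $\N_G(H)$ anyway), but the substance is identical.
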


\begin{proof}
(i) For all $g, h \in G$, we have  $\CC_G(gh) \geq  \CC_G(g) \cap \CC_G(h)$. This implies that $gh \in \QZ(G)$ as soon as  $g, h \in \QZ(G)$. Part (i) follows. 

(ii) Let $O \leq G$ be open. Since $\CC_G(g) \geq \CC_O(g)$ for any $g \in G$, so that $\CC_G(g)$ is open as soon as $\CC_O(g)$ is so. Part (ii) follows. 

(iii) Clear since the projection $G \to G/N$ is an open map. 

(iv) Let $H$ be a discrete subgroup of $G$ such that $\N_G(H)$ is open and let $V$ be an open compact subgroup of $\N_G(H)$. Consider $h\in H$. Since the $V$-conjugacy class of $h$ is compact, thus finite, $h$ is centralised by an open subgroup of $V$ and so $h \in QZ(G)$ as claimed. 

(v) Let $N \leq G$ be a discrete normal subgroup. We have $N \leq \QZ(G)$ by (iv) and  $\QZ(G)/N \leq \QZ(G/N)$ by (iii). Let now $g \in G$ such that $gN \in \QZ(G/N)$. Thus there is $U \in \mcB(G)$ such that $[g, U] \subset N$. Since $N$ is discrete, there exists also $V  \in \mcB(G)$  with $V \cap N =1$. Since the commutator map $x \mapsto [g, x]$ is continuous, we may, upon replacing $U$ by a sufficiently small open subgroup, assume that $[g, U] \subset V$. We deduce that $[g, U]=1$, whence $g \in \QZ(G)$. This confirms that  $\QZ(G)/N \geq \QZ(G/N)$, and the required assertion follows.
\end{proof}

Given a general Hecke pair $(G,U)$, we define the \defbold{quasi-centre of $G$ with respect to $U$}\index{quasi-centre!of a Hecke pair}, denoted by $\QZ(G, U)$, to be the set of all elements of $G$ that have finitely many $U$-conjugates.  When $(G,U)$ is a \tdlc pair, this is equal to the quasi-centre of $G$ as defined previously and we will adopt the notation $\QZ(G)$ for the quasi-centre of $G$ with respect to $U$ in the general case as well when there is no ambiguity about the choice of $U$. Subgroups $H$ of $G$ are understood to have the Hecke pair structure $(H,H \cap U)$, so $\QZ(H)$ is the set of all elements of $H$ that have finitely many $(H \cap U)$-conjugates.  Note that $\QZ(H)$ is a normal subgroup of $H$, and agrees with the usual definition of the quasi-centre for topological groups in the case that $(G,U)$ is \tdlc and $H$ is closed.  When $H$ is bounded with respect to $(G,U)$, observe that $\QZ(H)$ is just the union of all finite conjugacy classes of $H$.

Note that a version of Lemma~\ref{lem:QZ}(iv) also holds for Hecke pairs $(G,U)$:

\begin{lem}\label{lem:QZ:discrete}Let $(G,U)$ be a Hecke pair and let $H \le G$.  Suppose that there is a finite index subgroup $V$ of $U$ such that $V \le \N_G(H)$ and $H \cap V = 1$.  Then $H \le \QZ(G)$.\end{lem}

\begin{proof}Let $x \in H$.  Then there is a finite index subgroup $W$ of $V$ such that $xWx\inv \le V$, since $V$ is commensurated by $G$.  Given $w \in W$, we have $[x,w] \in H$, since $H$ is normalised by $W$, and also $[x,w] \in xWx\inv W \subseteq V$.  Thus $[x,w] \in H \cap V$, so $[x,w] = 1$.  Hence $x \in \CC_G(W) \le \QZ(G)$.\end{proof}

The quasi-centre has a useful characterisation in the case that it is locally equivalent to the trivial group.

\begin{cor}\label{cor:QZ:discrete}Let $(G,U)$ be a Hecke pair.  Suppose that $\QZ(G) \in [1]$, that is, there exists a finite index subgroup $V$ of $U$ such that $\QZ(G) \cap V = 1$.  Then $\QZ(G)$ is the unique largest locally normal representative of $[1]$.  In particular, if $\QZ(G)=1$, then the trivial group is the only locally normal representative of $[1]$ in $G$.\end{cor}

\begin{proof}Follows immediately from Lemma~\ref{lem:QZ:discrete}.\end{proof}

Observe that if $V$ is a finite index subgroup of $U$, then $\QZ(V)=\QZ(G) \cap V$.  So in taking the local approach, to prove results about Hecke pairs $(G,U)$ with $\QZ(G) \in [1]$, we are usually free to assume that $\QZ(G)=1$.

\subsection{Quasi-centralisers}\label{sec:QC}

Generalising the quasi-centre, we introduce a notion of the centraliser of a subgroup that depends only on its local equivalence class. 

\begin{defn}\label{def:QC}
Let $(G,U)$ be a Hecke pair, let $H$ and $K$ be subgroups of $G$ and let $\mcC$ be the class of subgroups of $K \cap U$ of finite index.  The \defbold{quasi-centraliser}\index{quasi-centraliser}\index{QCH(K)@$\QC_H(K)$} of $K$ in $H$ is given by
\[ \QC_H(K) : = \bigcup_{L \in \mcC} \CC_H(L).\]
\end{defn}

Clearly the quasi-centraliser $\QC_H(K)$ contains the centraliser $\CC_H(K)$.  Moreover, the quasi-centraliser is a subgroup of $G$, since $\CC_H(L_1)\CC_H(L_2) \subseteq \CC_H(L_1 \cap L_2)$. Similarly we have $\QC_H(K) = \QC_H(L)$ for any finite index subgroup $L$ of $K \cap U$, so that $\QC_H(K)$ depends only on the local equivalence class $\alpha = [K]$ and we can write $\QC_H(\alpha)$  without ambiguity.  Some caution is required in saying that two subgroups `quasi-commute', as the relation \emph{A is contained in the quasi-centraliser of B} is not symmetric.  We see that $\QZ(H) = \QC_H(H)$.

We emphasise that the quasi-centraliser of a subgroup need not be closed, even in the context of profinite groups. For example, if $G = \prod_{\mathbf N} S_3$, then $\QC_G(G)$ is the dense subgroup $\coprod_{\mathbf N} S_3$.  However, it is the case that $\QC_G(H) = \QC_G(\overline{H})$ in this context.

\begin{lem}\label{lem:quasicent:dense}
Let $(G,U)$ be a Hecke pair, such that $G$ is a topological group and $U$ is open in $G$.  Let $H$ be a subgroup of $G$.  Then
$$\QC_G(H) = \QC_G(\overline{H}).$$
\end{lem}

\begin{proof} It may be assumed that ${H} \le U$ and hence, since $U$ is an open subgroup and therefore closed,  that $\overline{H} \le U$. Hence, to compute the quasi-centralisers, we may work with finite index subgroups of $H$ and $\overline{H}$. 

Let $K\leq \overline{H}$ have finite index. Then $K \cap H$ has finite index in $H$ and $\CC_G(K \cap H) \ge \CC_G(K)$.  Hence $\QC_G(H) \ge \QC_G(\overline{H})$.  On the other hand, if $L$ has finite index in $H$, then $\overline{L}$ has finite index in $\overline{H}$ and $\CC_G(L) = \CC_G(\overline{L})$.  Hence $\QC_G(H) \le \QC_G(\overline{H})$
\end{proof}

In the context of \tdlc groups, the definition of the quasi-centraliser can be reformulated as follows.

\begin{lem}\label{lem:QC:tdlc}
Let $(G, U)$ be a \tdlc Hecke pair and $H, K$ be subgroups of $G$. Then we have
\[ \QC_H(K) : = \bigcup_{V  \in \mcB(G)} \CC_H(K \cap V).\]
\end{lem}
\begin{proof}
In view of Lemma~\ref{lem:quasicent:dense}, we have $\QC_H(K) = \QC_H(\overline K)$. Moreover, for any $V \in \mcB(G)$ we have $\overline K \cap V = \overline{K \cap V}$ because $V$ is open, which yields  $\CC_H(K \cap V) = \CC_H(\overline{K \cap V}) = \CC_H(\overline K \cap V)$. We may therefore assume that $K$ is closed. 

For any $V \in \mcB(G)$, we have $\CC_H(K \cap V) \geq \CC_H(K \cap U \cap V)$, so that $\bigcup_{V  \in \mcB(G)} \CC_H(K \cap V) = \bigcup_{V  \in \mcB(G)} \CC_H(K \cap U \cap V)$. Given a finite index subgroup $L \leq K \cap U$, we have $\CC_H(L) = \CC_H(\overline L)$. Moreover, since $K$ is closed and since $\overline L$ is an open subgroup  of $K \cap U$ there exists $V \in \mcB(G)$ with $K \cap U \cap V = \overline L$. Denoting by $\mcC$ be the class of subgroups of $K \cap U$ of finite index, we infer that 
$$\QC_H(K) = \bigcup_{L \in \mcC} \CC_H(L) = \bigcup_{V  \in \mcB(G)} \CC_H(K \cap U \cap V)$$
as required.
\end{proof}

The following observation is borrowed from \cite[Prop.~2.6]{BEW}.

\begin{lem}\label{bewlem} 
Let $G$ be a group, let $U$ and $V$ be subgroups of $G$, and let $W \le U \cap V$.  Suppose $\theta: U \rightarrow V$ is an (abstract) isomorphism such that $\theta(x) = x$ for all $x \in W$, and suppose also that $\CC_G(W \cap g^{-1}Wg)=1$ for all $g \in U$.  Then $U = V$ and $\theta = \mathrm{id}_U$.
\end{lem}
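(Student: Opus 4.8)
The plan is to prove directly that $\theta$ fixes every element of $U$; both conclusions then follow at once, since $\theta = \id_U$ is immediate and $U = \theta(U) = V$ because $\theta$ is an isomorphism onto $V$. So I would fix an arbitrary $g \in U$ and aim to show $\theta(g) = g$.

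The key idea is to exploit the hypothesis $\theta|_W = \id$ by feeding $\theta$ elements of $W$ obtained by conjugating $W$ by $g$. Concretely, for any $w \in W \cap g^{-1}Wg$ the conjugate $gwg^{-1}$ again lies in $W$. I would then compute $\theta(gwg^{-1})$ in two ways. On one hand, since $gwg^{-1} \in W$ and $\theta$ fixes $W$ pointwise, $\theta(gwg^{-1}) = gwg^{-1}$. On the other hand, the homomorphism property together with $\theta(w) = w$ gives $\theta(gwg^{-1}) = \theta(g)\,\theta(w)\,\theta(g)^{-1} = \theta(g)\,w\,\theta(g)^{-1}$. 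Equating the two expressions yields the relation $\theta(g)\,w\,\theta(g)^{-1} = g\,w\,g^{-1}$.

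Rearranging, the element $h := g^{-1}\theta(g)$ satisfies $h\,w\,h^{-1} = w$, i.e. $h$ centralises $w$. As this holds for every $w \in W \cap g^{-1}Wg$, I conclude $h \in \CC_G(W \cap g^{-1}Wg)$, which is trivial by hypothesis. Hence $h = 1$, that is $\theta(g) = g$. Since $g \in U$ was arbitrary, $\theta = \id_U$, and therefore $U = \theta(U) = V$.

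This argument is short, and the only point requiring care is the bookkeeping about which conjugates remain inside $W$: the relation $\theta(g)\,w\,\theta(g)^{-1} = gwg^{-1}$ is only legitimate when $gwg^{-1} \in W$, which happens precisely when $w \in g^{-1}Wg$. This is exactly why the centraliser hypothesis is phrased in terms of the intersection $W \cap g^{-1}Wg$ rather than all of $W$ — that intersection is not an artefact but is the precise set of $w$ for which the two-way computation of $\theta(gwg^{-1})$ is available. I do not anticipate any genuine obstacle beyond making this restriction explicit; the rest is formal manipulation.
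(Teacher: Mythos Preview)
Your proof is correct and follows essentially the same argument as the paper's: both compute $\theta(gwg^{-1})$ two ways for $w \in W \cap g^{-1}Wg$ to conclude that $g^{-1}\theta(g)$ centralises $W \cap g^{-1}Wg$, hence is trivial. Your write-up is slightly more explicit about why $U = V$ follows from $\theta = \id_U$, but otherwise the reasoning is identical.
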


\begin{proof}
Let $g \in U$ and let $x \in W \cap g^{-1}Wg$.  Then
\[ gxg^{-1} = \theta(gxg^{-1}) = \theta(g)\theta(x)\theta(g^{-1}) = \theta(g)x\theta(g^{-1}),\]
so $g^{-1}\theta(g)$ centralises $x$. This shows that  $g^{-1}\theta(g) \in \CC_G(W \cap g^{-1}Wg) = 1$.  Thus $g = \theta(g)$.
\end{proof}

In the following lemmas, we obtain some consequences of the hypothesis that a given subgroup $H$ of $G$ has trivial quasi-centre.

\begin{lem}\label{bewcor}
Let $(G,U)$ be a Hecke pair and $H$ be a subgroup of $G$ such that $\QZ(H)=1$.  Then:
\begin{enumerate}[(i)]
\item $\QC_G(H) \cap \N_G(H) = \CC_G(H)$;
\item Every finite locally normal subgroup of $G$ has trivial intersection with $H$.
\end{enumerate}
\end{lem}

\begin{proof} For part~(i), let $t \in \QC_G(H) \cap \N_G(H)$.  Then conjugation by $t$ induces an automorphism, $\theta_t$, of $H$ that fixes pointwise some finite index subgroup, $K$, of $H \cap U$. Since $K$ is commensurated by $H$ and $\QZ(H, H\cap U)=1$, we have $\CC_H(K \cap g\inv Kg) = 1$ for all $g \in H$ and so, by Lemma~\ref{bewlem},  $\theta_t = \mathrm{id}_H$, that is, $t \in \CC_G(H)$. The reverse inclusion is clear.

For part~(ii), let $K$ be a finite locally normal subgroup of $G$.  Then there is a subgroup $V$ of $G$ that is commensurate to $U$ such that $K \le V \le \N_G(K)$.  Let $H' = H \cap V$ and let $K' = H \cap K$.  Then $K'$ is finite and normal in $H'$, and therefore contained in the quasi-centre of $H'$. However, $H'$ has trivial quasi-centre, and so $K'$ is trivial.
\end{proof}

\begin{lem}\label{finint}
Let $(G,U)$ be a Hecke pair and let $H$ and $K$ be subgroups of $G$ such that $\QZ(H)=1$. Suppose that $|H \cap K|$, $|H:\N_H(K)|$ and $|K:\N_K(H)|$ are all finite.  Then $\N_K(H) = \CC_K(H)$.\end{lem}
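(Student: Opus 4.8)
The plan is to replace $H$ and $K$ by the finite-index subgroups $H_0 := \N_H(K)$ and $K_0 := \N_K(H)$ (of finite index by hypothesis) and to analyse how they commute. First I would record that $H_0$ and $K_0$ normalise each other: for $h \in H_0$ a direct computation gives $h K_0 h^{-1} = \N_{hKh^{-1}}(hHh^{-1}) = \N_K(H) = K_0$, using $hKh^{-1}=K$ and $hHh^{-1}=H$, and symmetrically $k H_0 k^{-1} = H_0$ for $k \in K_0$. Hence $H_0$ and $K_0$ are both normal in $P := H_0 K_0$, so that $[H_0, K_0] \leq H_0 \cap K_0 =: F$; since $F \leq H \cap K$, the group $F$ is finite.

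The first key point is that $F$ is trivial. Indeed $F$ is normalised by $H_0$, so $\N_H(F) \supseteq H_0$ has finite index in $H$; moreover $\N_H(F) = H \cap \N_G(F)$ is closed in $H$, because the normaliser of a finite subgroup is always closed. A closed subgroup of finite index is open, so $F$ is a finite locally normal subgroup of $H$, whence $F \leq \QZ(H) = 1$ by Lemma~\ref{lem:QZ}(iv) (applied with ambient group $H$). Therefore $[H_0, K_0] = 1$: the finite-index subgroups $H_0$ and $K_0$ centralise each other.

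The main obstacle is then to upgrade this to the full group $H$ and to an honestly \emph{open} subgroup of $K$; the difficulty is precisely that $H_0$ is only of finite index in $H$, not a priori open, so $\QZ(H)=1$ (which controls elements with \emph{open} centraliser) does not bear on it directly. The device that bridges this gap is to pass to $H_1 := \CC_H(K_0)$. On the one hand $H_1 \supseteq H_0$ has finite index in $H$ and is closed, being $H \cap \CC_G(K_0)$, so $H_1$ is \emph{open} in $H$; on the other hand every element of $K_0$ centralises $H_1$ by construction. Thus each $k \in K_0$ centralises some $H \cap U$ with $U \in \mcB(G)$ and $H \cap U \leq H_1$, so that $K_0 \leq \QC_G(H)$. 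Since also $K_0 \leq \N_G(H)$, Lemma~\ref{bewcor}(i) yields $K_0 \leq \QC_G(H) \cap \N_G(H) = \CC_G(H)$; that is, $K_0$ centralises all of $H$.

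Finally I would conclude by observing that $\CC_K(H) \supseteq K_0$ has finite index in $K$ and equals $K \cap \CC_G(H)$, hence is closed and therefore open in $K$; as $H$ centralises it by definition, this is the required open subgroup of $K$. I expect the only delicate verifications to be the closedness of $\N_G(F)$ for finite $F$ and the repeated passage from ``closed of finite index'' to ``open'', both of which are routine in the topological-group setting.
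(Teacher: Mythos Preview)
Your proof is correct and follows the paper's overall strategy: produce finite-index subgroups of $H$ and $K$ that commute, then invoke Lemma~\ref{bewcor}(i) to upgrade to all of $H$. The paper reaches the commuting pair more directly, by shrinking an open compact $U$ until $U\cap H\cap K=1$, $U\cap H\le \N_H(K)$ and $U\cap K\le \N_K(H)$; then $U\cap H$ and $U\cap K$ already normalise one another (each normalises both $U$ and the other of $H,K$) with trivial intersection, so they commute and one applies Lemma~\ref{bewcor}(i) immediately. This bypasses your separate argument that $F=H_0\cap K_0$ vanishes and the passage through $H_1=\CC_H(K_0)$. Conversely, your route has the mild advantage that it never needs $\N_H(K)$ or $\N_K(H)$ themselves to be open in $H$ or $K$: you only use closedness of $\N_G(F)$ (for finite $F$) and of centralisers, both of which hold for arbitrary subgroups. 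The paper's shrinking step tacitly uses that $\N_H(K)$ is open in $H$, which is clear once $K$ is closed---as it is in every application of the lemma in the paper.
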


\begin{proof} 
Set $A = \N_H(K)$ and $B = \N_K(H)$, so that $A$ and $B$ normalise each other. We have $A \cap B \leq H \cap K$, so that $A \cap B$ is finite by hypothesis. Since $A \cap B$ is normal in $A$, we infer that $A \cap B$ is centralised by a finite index subgroup of $A$, hence by a finite index subgroup of $H$ since $|H : A|$ is finite by hypothesis. Therefore $A \cap B \leq \QZ(H) = 1$. Since   $A$ and $B$ normalise each other, it follows that they commute. Hence $B$ centralises a finite index subgroup of $H$, which implies that $B \leq \QC_G(H)$. Since $B$ also normalises $H$, we deduce that $B \leq \CC_G(H)$  by Lemma~\ref{bewcor}(i). 
\end{proof}

\begin{lem}\label{lnorm:trivialqz}
Let $(G,U)$ be a Hecke pair and $H$ be a subgroup of $G$ such that $\N_U(H)$ has finite index in $U$.

\begin{enumerate}[(i)]
\item Suppose $(G,U)$ is \tdlc (as defined in \S\ref{sec:BasicDefs} above), $H$ is closed and $\QZ(H)$ is discrete and torsion-free.  Then ${\QC_G(H)}$ is closed and $H \cap {\QC_G(H)}$ is discrete and torsion-free. In particular, $\QC_G(H)$ cannot be dense in $G$ unless $H$ is discrete and torsion-free. 

\item Let $V$ be a subgroup of $G$ that is commensurate with $U$.  If $\QZ(H) \cap V=1$, then  $\QC_G(H)$ is locally equivalent to $\CC_G(H \cap V)$.  In particular, we have 
$$
\QC_G(\QC_G(H)) = \QC_G(\CC_G(H \cap V)) =  \QC_G(\CC_V(H \cap V)).
$$
\end{enumerate}
\end{lem}

\begin{proof}(i) Let $V$ be any open compact subgroup of $G$.  Then $\QZ(H \cap V)=1$ because $\QZ(H)$ is discrete and torsion-free and $\QZ(H \cap V)\leq \QZ(H)$. Therefore, by Lemma~\ref{bewcor}(i)  and because $\QC_G(H \cap V) = \QC_G(H)$, 
\begin{equation}
\label{eq:lnorm:trivialqz}
\QC_G(H) \cap  \N_G(H \cap V) = \CC_G(H \cap V).
\end{equation}
In particular, ${\QC_G(H)}  \cap (H \cap V) \le \Z(H \cap V) = 1$.  Hence $H \cap {\QC_G(H)}$ intersects trivially any open compact subgroup of $G$ and is therefore a discrete and torsion-free. That ${\QC_G(H)}$ is closed follows from~\eqref{eq:lnorm:trivialqz} by noting that $\N_G(H \cap V)$ is open because it contains $\N_U(H)\cap V$ and that $\CC_G(H \cap V)$ is closed.

(ii) Let $W = \N_V(H)$.  Since $\QZ(H) \cap V = 1$, it follows that $\QZ(H \cap V) = 1$, and Lemma~\ref{bewcor}(i) implies that 
$$
\QC_G(H \cap V) \cap W \leq \QC_G(H \cap V) \cap \N_G(H \cap V) = \CC_G(H \cap V).
$$ 
On the other hand, the inclusion $\CC_G(H \cap V)\cap W \leq \QC_G(H \cap V) \cap W$ holds because $ \CC_G(H \cap V) \leq \QC_G(H\cap V)$. Since $\QC_G(H\cap V) = \QC_G(H)$, we infer that $\QC_G(H) \cap W  = \CC_G(H \cap V) \cap W$ and hence that $[\QC_G(H)] =[\CC_G(H \cap V)]$. 
\end{proof}

\subsection{C-stability}\label{sec:Cstable}

Recall that one of our aims is to use the map $K \mapsto \QC_G(K)$ to define a pseudocomplementation on $\lnorm(G,U)$, or at least on a subset of $\lnorm(G,U)$.  Observe that any pseudocomplementation $\bot$ on a lattice $\mcL$ satisfies the following identity:
\begin{equation}\label{identity:Cstable:bot}
 \alpha^\bot \wedge \alpha^{\bot^2} = 0 \quad \forall \alpha \in \mcL.
 \end{equation}

The following property applied to locally normal subgroups, closely related to the identity (\ref{identity:Cstable:bot}), will prove to be the key criterion for our purposes.

\begin{defn}
\label{defn:Cstable}
Let $(G,U)$ be a Hecke pair and let $H$ be a subgroup of $G$.  Say $H$ is \defbold{C-stable}\index{C-stable} in $G$ if
$$R(H):= \QC_G(H) \cap \QC_G(\CC_G(H)) \in [1].$$\index{R(H)@$R(H)$}
\end{defn}
 
We shall mostly be interested in the case that $H$ is contained in a subgroup commensurate with $U$. The following shows that, in that case, C-stability could have equivalently been defined to mean that $\QC_V(H\CC_G(H)) = 1$ for some finite index subgroup $V$ of $U$.  In the \tdlc case, this is equivalent to requiring that $\QC_G(H\CC_G(H))$ be discrete.
 
 \begin{lem}\label{cstabequiv}
 Let $(G,U)$ be a Hecke pair and let $H$ be a bounded subgroup of $G$.  Then
 \[R(H)= \QC_G(H) \cap \QC_G(\CC_G(H)) = \QC_G(H\CC_G(H)). \]
 \end{lem}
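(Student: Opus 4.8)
The plan is to set $L := \CC_G(H)$ and prove the two inclusions separately, the first being essentially formal and the second requiring a short topological argument. Observe first that, since every element of $L$ commutes with every element of $H$, the product $HL$ is a subgroup of $G$ in which both $H$ and $L$ are normal; moreover $HL$ is closed, being the product of the compact set $H$ with the closed set $L$, and hence is itself a \tdlc group whose subgroups of the form $(HL) \cap W$ with $W \in \mcB(G)$ are compact and open and constitute a base of identity neighbourhoods.

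For the inclusion $\QC_G(HL) \subseteq \QC_G(H) \cap \QC_G(L)$ I would note that $H \subseteq HL$ and $L \subseteq HL$ (using $1 \in L$ and $1 \in H$), so that $H \cap U \subseteq (HL) \cap U$ and $L \cap U \subseteq (HL) \cap U$ for every $U \in \mcB(G)$. Hence $\CC_G((HL) \cap U) \subseteq \CC_G(H \cap U) \cap \CC_G(L \cap U)$, and taking the union over $U$ yields the containment directly from the definition of the quasi-centraliser.

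The reverse inclusion is the substantive part. Given $x \in \QC_G(H) \cap \QC_G(L)$, choose $U, V \in \mcB(G)$ with $x \in \CC_G(H \cap U)$ and $x \in \CC_G(L \cap V)$, and put $W = U \cap V$; then $x$ centralises both $H \cap W$ and $L \cap W$, hence centralises the subgroup $(H \cap W)(L \cap W)$. It therefore suffices to produce some $W' \in \mcB(G)$ with
\[ (HL) \cap W' \subseteq (H \cap W)(L \cap W), \]
for then $x \in \CC_G((HL) \cap W') \subseteq \QC_G(HL)$.

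To establish this covering property --- which I expect to be the main obstacle --- I would exploit the quotient map $q \colon HL \to HL/L$. Since $H$ is compact and $L$ is closed, $q$ restricted to $H$ is surjective with kernel $H \cap L$ and factors through a continuous bijection $H/(H \cap L) \to HL/L$ from a compact group to a Hausdorff group, hence a homeomorphism; so $q|_H$ is open. Consequently $q(H \cap W)$ is an open neighbourhood of the identity in $HL/L$, and its preimage $q^{-1}(q(H \cap W)) = (H \cap W)L$ is open in $HL$. Choosing $W' \in \mcB(G)$ with $W' \subseteq W$ and $(HL) \cap W' \subseteq (H \cap W)L$ (possible since the sets $(HL) \cap W'$ form a neighbourhood base), any $g \in (HL) \cap W'$ may be written $g = h\ell$ with $h \in H \cap W$ and $\ell \in L$; then $\ell = h^{-1}g \in W$ because $h, g \in W$ and $W$ is a subgroup, whence $\ell \in L \cap W$ and $g \in (H \cap W)(L \cap W)$, as required. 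The only delicate points are the openness of $q|_H$, where the compactness of $H$ is essential, and the bookkeeping $W' \subseteq W$ that forces the ambiguously-defined factor $\ell$ back into $W$.
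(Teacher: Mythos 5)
Your proof is correct, and while it shares the paper's overall skeleton --- the inclusion $\QC_G(H\CC_G(H)) \subseteq \QC_G(H) \cap \QC_G(\CC_G(H))$ is formal, and the substance is showing that an element centralising $H \cap W$ and $\CC_G(H) \cap W$ must centralise an open subgroup of $H\CC_G(H)$ --- the mechanism you use to obtain openness is genuinely different. The paper first chooses a compact open subgroup $U \in \mcB(G)$ \emph{containing} $H$ (invoking the standard fact that every compact subgroup of a \tdlc group lies inside a compact open one), notes that $H\CC_U(H) = (H\CC_G(H)) \cap U$ is open in $H\CC_G(H)$, and then observes that the group $K\CC_V(H)$ centralised by $x$ (with $K$ open in $H$ and $V$ open in $U$) is closed and of \emph{finite index} in the compact group $H\CC_U(H)$, hence open in it --- the finiteness of the indices being exactly where compactness enters. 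You instead work with an arbitrary $W \in \mcB(G)$, prove that $(H \cap W)L$ is open in $HL$ via the open-mapping property of $q|_H$ (the continuous bijection $H/(H \cap L) \to HL/L$ from a compact group to a Hausdorff one is a homeomorphism), and then trim a basic open subgroup $(HL) \cap W'$ with $W' \subseteq W$ into $(H \cap W)(L \cap W)$ by the observation $\ell = h^{-1}g \in W$. The paper's route buys brevity: once $U \supseteq H$ is in hand, closed-plus-finite-index-implies-open finishes matters in a line. Your route buys independence from that embedding fact: compactness of $H$ enters only through the homeomorphism $H/(H \cap L) \cong HL/L$, and your explicit bookkeeping with $W' \subseteq W$ replaces the paper's coset counting. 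Both arguments hinge, as they must, on the compactness of $H$.
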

 
 \begin{proof}
Let $x \in \QC_G(H\CC_G(H))$, so $x$ centralises some finite index subgroup $K$ of $H\CC_G(H) \cap U$.  Then $H \cap K$ has finite index in $H \cap U$ and $\CC_G(H) \cap K$ has finite index in $\CC_G(H) \cap U$ and $x$ centralises both subgroups. Hence $x \in \QC_G(H)$ and $x \in \QC_G(\CC_G(H))$.
 
Conversely, consider $x \in  \QC_G(H) \cap \QC_G(\CC_G(H))$ and let $V$ be a subgroup commensurate to $U$ such that $H \le V$.  Then there are subgroups $L$ of finite index in $H \cap U$ and $M$ of finite index in $\CC_{U \cap V}(H)$ such that 
$$x \in \CC_G(L) \cap \CC_G(M).$$
Hence $x$ centralises the group $LM$. Since $L$ has finite index in $H$ and $M$ has finite index in $\CC_V(H)$, we thus have that $x$ centralises a finite index subgroup of $H\CC_V(H)$.  Finally, the condition that $H \le V$ implies that  $H\CC_V(H) = H\CC_G(H) \cap V$, and the latter group is commensurate with $H\CC_G(H) \cap U$.  Hence $x$ centralises a finite index subgroup of $H\CC_G(H) \cap U$ as required.
 \end{proof}
 
 We also note the role of the quasi-centre of $(G,U)$ as a whole in C-stability.  Given any subgroup $H$ of $G$, then
 $$\QC_G(H) \cap \QC_G(\CC_G(H)) \ge \QZ(G),$$
and equality holds in the case $H = G$.  So if $\QZ(G) \in [1]$, then $G$ is C-stable in itself, and moreover, every subgroup of $G$ commensurate to $U$ is C-stable in $G$.  If instead $\QZ(G) \not\in [1]$, then $G$ does not have any C-stable subgroups.  So in proving properties of C-stable subgroups of a Hecke pair $(G,U)$, we may assume $\QZ(G) \in [1]$.

In particular, the following is an immediate consequence of Corollary~\ref{cor:QZ:discrete}:

\begin{cor}\label{cor:Cstable:QZ}
Let $(G,U)$ be a Hecke pair and let $H$ be a C-stable locally normal subgroup of $G$.  Then
$$R(H) = \QC_G(H) \cap \QC_G(\CC_G(H)) = \QZ(G).$$
\end{cor}

The next lemma records the most basic properties of C-stable subgroups. 

\begin{lem}\label{lem:CstableBasic}
Let $(G,U)$ be a Hecke pair and let $H$ be a C-stable subgroup of $G$.
\begin{enumerate}[(i)]
\item Let $K$ be a subgroup of $H$ that is commensurate with $H \cap U$.  Then $K$ is C-stable in $G$. 

\item $\QZ(H) \in [1]$; indeed, $\QZ(H) \leq \QC_G(H) \cap \QC_G(\CC_G(H))$.

\item If $H \cap U$ is finite, then $H \le \QC_G(H) \cap \QC_G(\CC_G(H))$. 
\end{enumerate}
\end{lem}

\begin{proof}Let $V$ be a subgroup of $G$ that is commensurate with $U$.

(i) Since $K$ is commensurate with $H \cap U$, we have $\QC_G(K) = \QC_G(H)$. Since $K\leq H$, we have $\CC_G(K) \geq \CC_G(H)$, so that $\QC_G(\CC_G(K)) \leq \QC_G(\CC_G(H))$. Therefore
\[ \QC_G(K) \cap \QC_G(\CC_G(K)) \leq \QC_G(H) \cap \QC_G(\CC_G(H)) \in [1].\]

(ii) We have $\QZ(H) = \QC_H(H) \leq \QC_G(H)$. Moreover, any $g \in \QZ(H)$ belongs to $H$ and therefore commutes with $\CC_G(H)$, so that $\QZ(H) \leq \CC_G(\CC_G(H)) \leq \QC_G(\CC_G(H))$. Therefore 
$$\QZ(H) \leq \QC_G(H) \cap \QC_G(\CC_G(H)) \in [1].$$

(iii) If $H \cap U$ is finite, then $\QZ(H)=H$, so $H \le \QC_G(H)$.  Clearly also $H$ is contained in $\QC_G(\CC_G(H))$. 
\end{proof}

For each integer $n >0$, we define inductively 
$$
\CC^{n+1}_H(K) = \CC_H(\CC^n_H(K)) 
\hspace{.5cm}\text{and} \hspace{.5cm}
\QC^{n+1}_H(\alpha) = \QC_H(\QC^n_H(\alpha)),
$$
where $H$ and $K$ are subgroups of $G$ and $\alpha$ is either a subgroup, or a local equivalence class of subgroups.  For convenience we also adopt the convention $\CC^0_H(K) = K = \QC^0_H(K)$.

 The following is elementary, and will be used frequently without comment.

\begin{lem}\label{elcent}
Let $G$ be a group and let $H$ be a subgroup of $G$.  Then
$$
\CC^2_G(H) \ge H
\hspace{.5cm}\text{and} \hspace{.5cm}
\CC^3_G(H) = \CC_G(H).  \qed
$$
\end{lem}

The following lemma gives a useful relationship between $n$-th centralisers and $n$-th quasi-centralisers in the context of C-stable subgroups.

\begin{lem}\label{goodqc}
Let $(G,U)$ be a Hecke pair, let $H$ be a subgroup of $G$. As before, we set $R(H) = \QC_G(H) \cap \QC_G(\CC_G(H))$.  Let $n \ge 0$ and let $V$ be a subgroup of $G$ commensurate with $U$.

\begin{enumerate}[(i)]

\item If $L$ is a subgroup of $G$ that contains a finite index subgroup of $U$, then $R(\CC^{n+1}_L(H)) \le R(\CC^n_L(H))$.  In particular, if $H$ is C-stable, then so is $\CC^n_L(H)$.

\item Suppose that $H$ is C-stable and locally normal in $G$.  Let $V$ be a subgroup of $G$ commensurate with $U$ such that $\QZ(G) \cap V = 1$.  Then we have 
$$\QC_V(\QC^n_G(H)) = \CC^{n+1}_V(H \cap V).$$
In particular, $\QC_V(H) = \CC_V(H \cap V)$.
\end{enumerate}
\end{lem}

\begin{proof}
(i) Fix $n \ge 0$, let $H' = \CC^n_L(H)$ and let $K = \CC^{n+1}_L(H) = \CC_L(H')$.  On the one hand  $H' \cap L$ is locally equivalent to $H'$ and contained in $\CC_G(K)$, so ${\QC_G(\CC_G(K)) \le \QC_G(H')}$; on the other hand we also have $\QC_G(K) = \QC_G(\CC_G(H'))$.  Hence
\[ \QC_G(K) \cap \QC_G(\CC_G(K)) \le \QC_G(H') \cap \QC_G(\CC_G(H')),\]
in other words, $R(\CC^{n+1}_L(H)) \le R(\CC^n_L(H))$.  Thus if $H$ is C-stable, then so are the groups $\CC^n_L(H)$ for $n \ge 0$.

(ii) Without loss of generality $H \le V$, since neither side of the desired equation is affected by replacing $H$ by $H \cap V$, and $H \cap V$ retains the properties of being C-stable and locally normal in $G$. Thus $\QZ(H)=1$ by Lemma~\ref{lem:CstableBasic}(ii) and Corollary~\ref{cor:Cstable:QZ}. 

First suppose that $n=0$: we must show that $\QC_V(H)$ commutes with $H$. Consider $x \in \QC_V(H)$, so that $x \in \CC_V(K)$ for some finite index subgroup $K$ of $H \cap U$.  Then $K$ has finite index in $H$ and, replacing $K$ by its core in $H$, we may assume that $K$ is normal in $H$.  

Moreover, $K$ is C-stable by Lemma~\ref{lem:CstableBasic}(i), and it follows from part (i) and Lemma~\ref{lem:CstableBasic}(ii) that $\QZ(\CC_V(K))=1$.

Now $\CC_V(K) \cap H= 1$ since $\QZ(H)=1$.  Also, $H$ normalises $\CC_V(K)$ since $H \leq V$ and since $H$ normalises $K$. Furthermore $H$ is locally normal, so that $H$ is normalised by some finite index subgroup $W$ of $U \cap V$ such that $W$ is normal in $V$. Hence $H$ is normalised by $\CC_W(K)$. Thus 
$$[H, \CC_W(K)] \leq H \cap \CC_W(K) \leq H \cap \CC_V(K) =1,$$ 
so that $H$ centralises $\CC_W(K)$.  As $\CC_W(K)$ is a subgroup of $\CC_V(K)$ of finite index, we see that $H$ centralises $\CC_V(K)$ by Lemma~\ref{bewcor}(i), and in particular $H$ centralises $x$. This concludes the proof for $n=0$.

For the general case, note first that $\CC^n_V(H)$ is C-stable by part (i), has trivial quasi-centre by Lemma~\ref{lem:CstableBasic}(ii) and is  also locally normal by Lemma~\ref{lem:LN:basic}(iii). Therefore, by repeated applications of Lemma~\ref{lnorm:trivialqz}(ii), recalling that $H\le V$, we have the equation
\[ \QC^{n+1}_V(H) = \QC_V(\QC^n_V(H)) =  \QC_V(\CC^n_V(H)).\]
The desired conclusion follows, since by the base case $n=0$,  we have
 \[\QC_V(\CC^n_V(H)) = \CC_V(\CC^n_V(H)). \qedhere \]
\end{proof}

We are now able to define a partial map $\alpha \mapsto [\QC_G(\alpha)]$ on the abstract structure lattice.

\begin{cor}\label{goodqc:lnorm}
Let $(G,U)$ be a Hecke pair and let $K$ be a C-stable locally normal subgroup of $G$.  Suppose $\QZ(G) \cap U =1$.
Then for every finite index subgroup $L$ of $K \cap U$, 
we have
$$
\QC_U(K) = \CC_U(L).
$$ 
In particular we have $[\CC_G(K \cap U)] = [\CC_U(K \cap U)]= [\QC_G(K)]$, so that the local equivalence class $[\CC_U(K)]$ depends only on $[K]$. 
\end{cor}

\begin{proof}
By Lemma~\ref{goodqc}(ii), we have $\QC_U(K) = \CC_U(K \cap U)$.

Let now $L$ be a finite index subgroup of $K \cap U$. By definition of the quasi-centraliser, we have $\QC_G(K) = \QC_G(K \cap U) =   \QC_G(L) \geq \CC_G(L)$. Since $L \leq K \cap U$, we also have $\CC_G(L) \geq \CC_G(K \cap U)$. Intersecting with $U$ and combining this with the observation above, we obtain
$$\QC_U(K) = \QC_U(L) \geq \CC_U(L)  \geq \CC_U(K \cap U) = \QC_U(K).$$
Therefore all those subgroups coincide, which yields the desired assertion. 
\end{proof}

\subsection{Local C-stability}\label{qstabsec}

The previous section motivates the following definition.  

\begin{defn}\label{botdef}
A Hecke pair $(G,U)$ is called \defbold{locally C-stable}\index{locally C-stable}\index{C-stable!locally} if all locally normal subgroups of $G$ are C-stable in $G$.
\end{defn}

We can now establish the following theorem, which characterises locally C-stable Hecke pairs under the assumption that $U$ is residually finite; in particular, we obtain a characterisation of those \tdlc Hecke pairs that are locally C-stable.  We also obtain several stability properties of quasi-centralisers; to avoid unnecessary complications, we state these under the assumption that $G$ has trivial quasi-centre. In view of Lemma~\ref{lem:QZ}(v) this hypothesis is harmless: indeed if $G$ is locally $C$-stable, then $\QZ(G)$ is discrete, so that  $G/\QZ(G)$ is locally C-stable and has trivial quasi-centre.

\begin{thm}\label{thm:locallyCstable}
Let $(G,U)$ be a Hecke pair such that $U$ is residually finite.

Then $G$ is locally C-stable if and only if $\QZ(G) \in [1]$ and every bounded abelian  locally normal subgroup of $G$ is contained in $\QZ(G)$ (see \S\ref{sec:BasicDefs} for the definition of the term bounded).

Moreover, if $G$ is locally C-stable and $\QZ(G)=1$, then every locally normal subgroup $H$ of $G$ has the following properties:
\begin{enumerate}[(i)]
\item $\QZ(H) = 1.$

\item $\QC_G(H) = \CC_G(H).$ In particular $\CC_G(H)$ depends only on $[H]$.
\label{thm:locallyCstableii}

\item Given a locally normal subgroup $K$ of $G$, then $H$ and $K$ commute if and only if $H \cap K = 1$.
\end{enumerate}

\end{thm}

The proof requires some properties of subnormal subgroups of residually finite groups, given in the following lemma.

\begin{lem}\label{lem:finitesubnormal}Let $G$ be a group and let $K$ be a normal subgroup of $G$.
\begin{enumerate}[(i)]
\item If $M$ is an abelian minimal normal subgroup of $K$, then the normal closure of $M$ in $G$ is abelian.
\item If $M$ is a finite perfect normal subgroup of $K$ and $G$ is residually finite, then $|G:\CC_G(M)|$ is finite.
\end{enumerate}
\end{lem}

\begin{proof}(i) Let $g \in G$.  Since $M$ is a minimal normal subgroup of $K$, then either $M = gMg\inv$, in which case $[M,gMg\inv] = 1$ since $M$ is abelian, or $M \cap gMg\inv = 1$, in which case $[M,gMg\inv] \le M \cap gMg\inv = 1$ since $M$ and $gMg\inv$ are normal subgroups of $K$.  In either case, we see that $M$ commutes with $gMg\inv$, so every $G$-conjugate of $M$ commutes with every $G$-conjugate of $M$.  Hence the normal closure of $M$ in $G$ is abelian.

(ii) Since $M$ is finite and $G$ is residually finite, there is a normal subgroup $H$ of $G$ of finite index such that the quotient map $G \rightarrow G/H$ restricts to an injective map $M \rightarrow G/H$.  In particular, $M \cap H = 1$.  Now consider $[H,K]$: this is a subgroup of $H \cap K$ that is normal in $G$.  Thus $[H,K]$ and $M$ are subgroups of $G$ that normalise each other and have trivial intersection, so $[H,K,M] = 1$, where $[H, K, M] = [[H,K], M]$\index{[H, K, M]@$[H, K, M]$}.  In particular, $[H,M,M] = 1$, since $M$ is a subgroup of $K$; equivalently $[M,H,M] = 1$.  By the Three Subgroups Lemma\index{Three Subgroups Lemma} (see \cite[Theorem~10.3.5]{HallBook}), we have $[M,M,H] = 1$.  But $M$ is perfect, so $M = [M,M]$, and hence $[M,H] = [M,M,H] = 1$, demonstrating that $H \le \CC_G(M)$.\end{proof}

\begin{proof}[Proof of Theorem~\ref{thm:locallyCstable}]
Suppose that $G$ has an abelian locally normal subgroup $L$ such that $L \not\le \QZ(G)$.  Then $L \not\in [1]$ by Lemma~\ref{lem:QZ:discrete}, and moreover
$$L \le \QC_G(L) \cap \QC_G(\CC_G(L)),$$
so $L$ is not C-stable in $G$.

From now on, we may suppose that $\QZ(G) \in [1]$ and every non-trivial abelian bounded locally normal subgroup $A$ of $G$ is contained in $\QZ(G)$; in particular, every such $A$ is finite.  If $(G,U)$ is \tdlc, it is enough to consider compact locally normal subgroups, since the closure of an abelian locally normal subgroup is an abelian locally normal subgroup.

We now claim that for every bounded locally normal subgroup $L$ of $G$, then $\QZ(L) \le \QZ(G)$. If $L$ is finite, then $L \le [1]$ by the fact that $U$ is residually finite, so $L \le \QZ(G)$ by Lemma~\ref{lem:QZ:discrete}.  Given a bounded locally normal subgroup $L$ of $G$, then the quasi-centre of $L$ is characteristic in $L$, so the quasi-centre is itself bounded locally normal.  Let $V$ be a finite index subgroup of $U$ such that $\QZ(G) \cap V = 1$.  To show $\QZ(L) \le \QZ(G)$, it is enough to show that $\QZ(L)$ is finite, and hence it is enough to show that $\QZ(L \cap V)$ is finite.  Thus we may assume that $L$ is an infinite subgroup of $V$ such that $L = \QZ(L)$.  By replacing $V$ with $\N_V(L)$, we may assume $L \unlhd V$.

Since $G$ has no infinite abelian bounded locally normal subgroup, the centre of $L$ is finite.  By (\cite{Baer}, Proposition 3 and Theorem 2), it follows that $L$ is the union of its finite normal subgroups, so there exists a minimal finite normal subgroup $M$ of $L$.  Then $M$ is characteristically simple, so either $M$ is abelian or $M$ is perfect.  If $M$ is abelian, then the normal closure $N$ of $M$ in $V$ is a bounded locally normal subgroup of $G$, and $N$ is abelian by Lemma~\ref{lem:finitesubnormal}(i), so $N = 1$; if $M$ is perfect, then $|V:\CC_V(M)|$ is finite by Lemma~\ref{lem:finitesubnormal}(ii), so $M \le \QZ(G) = 1$.  In either case, we have a contradiction to the assumption that $M$ is non-trivial.  This contradiction proves the claim that every bounded locally normal subgroup $L$ of $G$ has quasi-centre contained in $\QZ(G)$.

We next claim that for any bounded locally normal subgroup $L$ of $G$, we have
$${\QC_G(L \CC_G(L)) \le \QZ(G)}.$$
This implies that $L$ is C-stable in view of Lemma~\ref{cstabequiv}.

Let $V$ be a finite index subgroup of $U$ such that $\QZ(G) \cap V = 1$ and $V \le \N_G(L)$, and let $W$ be a subgroup of $G$ commensurate to $U$ such that $L \unlhd W$.  Then $L \cap V$ has finite index in $L$, so $L = \bigcup^n_{i=1}(L \cap V)l_i$ for some finite set of elements $\{l_1,\dots,l_n\}$ of $L$.  Hence
$$ (L\CC_G(L)) \cap W = L\CC_W(L) = \bigcup^n_{i=1}(L \cap V)\CC_W(L)l_i =  \bigcup^n_{i=1}\bigcup^m_{j=1}(L \cap V)\CC_V(L)l_iw_j,$$
where $\{w_1,\dots,w_m\}$ is a finite subset of $W$.  Consequently $\QC_G(L\CC_G(L)) = \QC_G(K)$, where $K = (L \cap V) \CC_V(L)$.

Notice that $\CC_V(K)  \le M \cap \CC_V(M)$, where $M = \CC_V(L)$.  In particular, $\CC_V(K)$ is abelian, and moreover it is bounded and locally normal in $G$ since $\CC_V(K) \unlhd V$. It must thus be contained in $\QZ(G)$ by hypothesis.  Hence $\CC_V(K) \le \QZ(G) \cap V = 1$.  On the other hand $\QZ(K)=1$ by the first part of the proof above. Therefore, by Lemma~\ref{bewcor}(i) we have $\QC_V(K) = \CC_V(K) =1$.  In particular, the group generated by $\QC_G(K)$ and $V$ is a semidirect product $\QC_G(K) \rtimes V$.  Given $g \in \QC_G(K)$, there is a finite index subgroup $Y$ of $V$ such that $YgYg\inv$ is contained in $Y$.  We see that $ygy\inv g\inv \in \QC_G(K) \cap V = 1$ for all $y \in Y$, so $g \in \CC_G(Y) \le \QZ(G)$.  Thus $\QC_G(K) \le \QZ(G)$, as desired, and the claim is proved.

It remains to prove that every locally normal subgroup $H$ satisfies the assertions (i), (ii) and (iii), under the assumption that $\QZ(G)=1$.  

Let $V = \N_U(H)$.  Then $H \cap V$ is bounded locally normal, so
$$\QZ(H) \cap V = \QZ(V) \le \QZ(G).$$
Hence $\QZ(H)$ is normalised by $V$ and has trivial intersection with $V$, so $\QZ(H) \le \QZ(G)$ by Lemma~\ref{lem:QZ:discrete}, which proves (i).

Let $\tilde H = \QC_G(H)$.  Since $\QZ(H)=1$, Lemma~\ref{bewcor}(i) ensures that
$$ \QC_G(H) \cap \N_G(H) = \CC_G(H),$$
in other words $\N_{\tilde H}(H) = \CC_G(H)$.  Now $\N_{\tilde H}(H)$ contains a finite index subgroup $L$ say of $\tilde H \cap U$, so
$$H \le \CC_G(L) \le \QC_G(\tilde H).$$
Moreover the definition $\tilde H = \QC_G(H)$ implies that  $H$ normalises $\tilde H$, and also that $\tilde H$ is locally normal.  Then $\QZ(\tilde H) = 1$ by (i), and we infer  from Lemma~\ref{bewcor}(i) that $H$ centralises $\tilde H$.  Hence $\tilde H = \CC_G(H)$ and (ii) holds.

Let $K$ be a locally normal subgroup of $G$.  If $K \le \CC_G(H)$, then certainly $H \cap K =1$ by property (i).  Conversely, suppose $H \cap K = 1$.  Then there is a finite index subgroup $W$ of $U$ such that $H \cap W$ normalises $K \cap W$, and hence $H \cap W \le \CC_G(K \cap W)$ by Lemma~\ref{finint}.  Hence $H \cap W \le \QC_G(K)$, but then by property (ii) we have $H \cap W \le \CC_G(K)$.  Hence by the same argument $K \le \QC_G(H) = \CC_G(H)$, proving (iii).
\end{proof}

\section{Direct decomposition of locally normal subgroups}
\label{sec:ldlat}

\subsection{Local decomposition lattices}

Let $(G,U)$ be a Hecke pair and let $H$ be a bounded C-stable locally normal subgroup of $G$.  In this section, we will obtain Boolean algebras corresponding to the decomposition of finite index subgroups of $H$ into direct factors.

\begin{lem}\label{dirqf}Let $(G,U)$ be a Hecke pair, let $H$ be a bounded locally normal subgroup of $G$ with trivial quasi-centre, let $K$ be a subgroup of $H$ of finite index and let $L$ be a direct factor of $K$.  Then $L$ is C-stable in $(H,H)$.  Moreover $\CC_K(L)$ is the unique direct complement of $L$ in $K$ and $L = \CC^2_K(L)$.

If $H$ is C-stable in $G$, then $L$ is also C-stable in $G$.\end{lem}

\begin{proof}
Let $K = L \times M$.  Then $L\CC_H(L)$ is a finite index subgroup of $H$, so 
$${\QC_H(L\CC_H(L)) = \QZ(H) = 1}.$$
Thus $L$ is C-stable as a subgroup of $H$ by Lemma~\ref{cstabequiv}.  In particular, $L$ has trivial centre; we see that $\CC_K(L) = M$, and similarly $\CC_K(M)=L$.  In addition $L\CC_G(L)$ contains $K$ and also contains $\CC_G(K)$, so $K\CC_G(K) \le L\CC_G(L)$.

If $H$ is C-stable in $G$, then $K$ is C-stable in $G$ by Lemma~\ref{lem:CstableBasic}(i), and thus by Lemma~\ref{cstabequiv},
$$\QC_V(L\CC_G(L)) \le \QC_V(K\CC_G(K)) = 1$$
for every subgroup $V$ of $G$ commensurate with $U$.  In this case $L$ is C-stable in $G$.
\end{proof}

\begin{lem}\label{dirfac}Let $G$ be a group such that $\Z(G)=1$, let $K$ be a direct factor of $G$, and let $H$ be a subgroup of $G$ such that $H =\CC^2_G(H)$.  Then $H = (H \cap K) \times \CC_H(K)$.\end{lem}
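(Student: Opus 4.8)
The plan is to exploit the hypothesis $H = \CC^2_G(H)$ by reading it as the assertion that $H$ is the centraliser in $G$ of the subset $M := \CC_G(H)$, and then to prove the general principle that centralisers of arbitrary subsets respect a direct product decomposition. So first I would fix a direct complement, writing $G = K \times L$, and record the elementary observation that an element $k\ell$ (with $k \in K$, $\ell \in L$) centralises an element $s = s_Ks_L$ (with $s_K \in K$, $s_L \in L$) if and only if $k$ centralises $s_K$ and $\ell$ centralises $s_L$. This follows from the uniqueness of the decomposition in $K \times L$ together with the fact that $K$ and $L$ commute elementwise.

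Second, I would use this to establish the key step: for every subset $S \subseteq G$ one has $\CC_G(S) = (\CC_G(S) \cap K) \times (\CC_G(S) \cap L)$. Indeed, given $g \in \CC_G(S)$ and writing $g = g_Kg_L$, the observation above shows that $g_K$ centralises every $s_K$ and $g_L$ centralises every $s_L$; then, using once more that $K$ and $L$ commute, each of $g_K$ and $g_L$ in fact centralises every $s \in S$, so both lie in $\CC_G(S)$. The product is direct since $K \cap L = 1$. Applying this with $S = M = \CC_G(H)$ and using $H = \CC_G(M)$ immediately yields $H = (H \cap K) \times (H \cap L)$.

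It then remains only to identify the complement $H \cap L$ with $\CC_H(K)$, and this is precisely where the hypothesis $\Z(G) = 1$ is needed: since $\CC_G(K) = \Z(K) \times L$ and $\Z(K) \le \Z(G) = 1$, we get $\CC_G(K) = L$, whence $\CC_H(K) = H \cap \CC_G(K) = H \cap L$. I expect the only genuine content to be the stability of centralisers under direct decomposition in the second paragraph; the hypothesis on $H$ serves merely to present $H$ as a centraliser, and the triviality of $\Z(G)$ is used solely at the end to pin down the complement as $\CC_H(K)$.
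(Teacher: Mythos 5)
Your proof is correct and follows essentially the same route as the paper: both arguments come down to showing that the coordinate projections of elements of $H$ centralise $\CC_G(H)$ and therefore lie in $\CC^2_G(H) = H$, which gives the splitting $H = (H \cap K) \times (H \cap L)$. The only difference is one of packaging — you isolate this as a general splitting principle for centralisers of arbitrary subsets and invoke $\Z(G)=1$ only at the end to identify the complement $L$ with $\CC_G(K)$, whereas the paper uses $\Z(G)=1$ at the outset to write $G = K \times \CC_G(K)$ and then works with the projections $\pi_1, \pi_2$ directly.
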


\begin{proof}We have $G = K \times \CC_G(K)$; let $\pi_1$ and $\pi_2$ be the projections onto $K$ and $\CC_G(K)$ respectively associated with this decomposition.  Then $H \le \pi_1(H) \times \pi_2(H)$.  Moreover $\pi_1(H)$ and $\pi_2(H)$ are centralised by $\CC_G(H)$, so contained in $H$.  Thus $H = \pi_1(H) \times \pi_2(H)$, which implies $\pi_1(H) = H \cap K$ and $\pi_2(H) = \CC_H(K)$.\end{proof}

\begin{defn}Given a group $H$ and a subgroup $K$ of $H$, say $K$ is an \defbold{almost direct factor} of $H$ if there is a finite index subgroup $L$ of $H$ such that $K$ is a direct factor of $L$.

Let $H$ be a bounded locally normal subgroup of $G$, such that the quasi-centre of $H$ is  finite.  Define the \defbold{local decomposition lattice} $\ldlat(G,U;H)$\index{LD(G,U;K)@$\ldlat(G,U;K)$}\index{local decomposition lattice!at a subgroup} of $G$ at $H$ to be the subset of $\lnorm(G,U)$ consisting of elements $[K]$ where $K$ is locally normal in $G$ and $K$ is an almost direct factor of $H$.\end{defn}

In the case that $G$ is a \tdlc group and $U$ is an open compact subgroup of $G$, we will write $\ldlat(G;H)$ to mean $\ldlat(G,U;H)$.

\begin{lem}Let $(G,U)$ be a Hecke pair, let $H$ be a bounded locally normal subgroup of $G$, and let $K$ be a locally normal subgroup of $G$ that is commensurate with $H$.  Then $\ldlat(G,U;H) = \ldlat(G,U;K)$ as subsets of $\lnorm(G,U)$.\end{lem}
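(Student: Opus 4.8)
The plan is to prove the two inclusions $\ldlat(G;K) \subseteq \ldlat(G;H)$ and $\ldlat(G;H) \subseteq \ldlat(G;K)$ separately, exploiting throughout that $H$ is compact (being locally normal), so that $K$, being open in $H$, is a closed subgroup of finite index in $H$; note also that $K$ is locally normal in $G$ by Lemma~\ref{lem:LN:basic}(ii). The first inclusion is the easy direction: if $[L] \in \ldlat(G;K)$, then $L$ is a direct factor of some closed finite-index subgroup $M$ of $K$, and since $[K:M]$ and $[H:K]$ are both finite, $M$ is also closed of finite index in $H$. Hence $L$ is an almost direct factor of $H$ and $[L] \in \ldlat(G;H)$.

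For the reverse inclusion, I would take $[L] \in \ldlat(G;H)$ with $L$ locally normal, and fix a closed subgroup $M$ of finite index in $H$ with $M = L \times L'$. By Lemma~\ref{lem:CstableBasic}(i) the group $M$ is C-stable, so Lemma~\ref{dirqf} shows that both $L$ and its complement $L' = \CC_M(L)$ are compact, in particular closed. I then set $L_0 = L \cap K$ and $L_0' = L' \cap K$. Since $K$ has finite index in $H$, each of $L_0$, $L_0'$ has finite index in $L$, $L'$ respectively; in particular $L_0$ is commensurate with $L$, so $[L_0] = [L]$, and $L_0$ is locally normal by Lemma~\ref{lem:LN:basic}. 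The key point is that $L_0 \times L_0'$ is a closed subgroup of finite index in $K$: it is a closed subgroup of $M \cap K$ (a product of commuting closed subgroups, one of them compact), and from the sandwich $L_0 \times L_0' \le M \cap K \le M = L \times L'$ together with the finiteness of $[L \times L' : L_0 \times L_0'] = [L:L_0]\,[L':L_0']$ we deduce that $[M \cap K : L_0 \times L_0']$ is finite; since $[K : M \cap K] \le [H : M]$ is finite as well, $L_0 \times L_0'$ has finite index in $K$. As $L_0$ is a direct factor of $L_0 \times L_0'$, it is an almost direct factor of $K$, whence $[L] = [L_0] \in \ldlat(G;K)$.

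The main obstacle here is conceptual rather than computational: the tempting route is to transport the decomposition $M = L \times L'$ down to $K$ by means of double centralisers, via Lemma~\ref{dirfac}. This fails, because a centraliser cannot detect finite-index subgroups; indeed $\CC_M(M \cap K) \le \QZ(M) = 1$ (as $M$ is a compact C-stable group), so the double-centraliser closure of $M \cap K$ inside $M$ is all of $M$ rather than $M \cap K$. The remedy is to work directly at the level of subgroups, intersecting each direct factor with $K$ and verifying finiteness of the relevant indices purely combinatorially through the tower $L_0 \times L_0' \le M \cap K \le L \times L'$, and then letting commensurability ($[L_0]=[L]$) do the work that the centraliser closure cannot.
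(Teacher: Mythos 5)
Your proof is correct and takes essentially the same approach as the paper's: the reverse inclusion is obtained, exactly as in the paper, by intersecting both factors of the decomposition $M = L \times L'$ with $K$ and observing that $(L \cap K) \times (L' \cap K)$ is a closed finite-index (equivalently, open) subgroup of $K$ in which $L \cap K$ is a locally normal direct factor commensurate with $L$. The differences are only presentational --- the paper checks openness of this product in $K$ topologically where you count indices, and your closing remark on why the double-centraliser route via Lemma~\ref{dirfac} must fail, while accurate, is commentary rather than a needed part of the argument.
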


\begin{proof}Without loss of generality, we may assume that $K$ is a finite index subgroup of $H$.

It is clear from the definition that $\ldlat(G,U;K) \subseteq \ldlat(G,U;H)$.  Conversely, suppose the finite index subgroup $L$ of $H$ decomposes as $L_1 \times L_2$ with $L_1$ and $L_2$ locally normal in $G$.  Then $L_i \cap K$ is locally normal, and has finite index in $L_i$ for $i \in \{1,2\}$.  Hence $(L_1 \cap K) \times (L_2 \cap K)$ has finite index in $K$, so $[L_1] = [L_1 \cap K]$ and $[L_2] = [L_2 \cap K]$ are contained in $\ldlat(G,U;K)$.\end{proof}

Consequently, it makes sense to write $\ldlat(G,U;\alpha)$  where $\alpha$ is any element of $\lnorm(G,U)$ that has a representative with trivial quasi-centre, and this lattice admits an action of $G_\alpha$ by conjugation.  The case $\alpha = [U]$ is of particular interest, and we will define $\ldlat(G,U):=\ldlat(G,U;[U])$.

As the name suggests, the local decomposition lattice is a sublattice of $\lnorm(G,U)$, but more is true:

\begin{thm}\label{dfaclat}Let $(G,U)$ be a Hecke pair and let $\alpha \in \lnorm(G,U)$ have a bounded locally normal representative $H$ with trivial quasi-centre.  Then $\ldlat(G,U;\alpha)$ is a sublattice of $\lnorm(G,U)$, and internally it is a Boolean algebra (relative to the maximum $\alpha$), with complementation map $\bot_\alpha: \beta \mapsto [\QC_H(\beta)]$.  If $(G,U)$ is \tdlc and $\alpha$ has a closed representative, then $\ldlat(G;\alpha)$ is a sublattice of $\lnorm(G)$.\end{thm}

We will prove the following separately, as it will be reused later.

\begin{lem}\label{boolflip}Let $\mcM$ be a meet-semilattice with a least element $0$.  Suppose $\bot \colon \mcM \mapsto \mcM$ is an order-reversing involution such that for all $\alpha,\beta \in \mcM$, $\alpha \wedge \beta = 0$ if and only if $\beta \le \alpha^\bot$.  Then $\mcM$ is a Boolean algebra, with minimum $0$ and maximum $0^\bot$, complementation given by $\bot$ and join operation given by $\alpha \vee \beta = (\alpha^\bot \wedge \beta^\bot)^\bot$.\end{lem}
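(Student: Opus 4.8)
The plan is to verify the four defining features of a Boolean algebra in turn: that $\mcM$ is a lattice, that it is bounded, that $\bot$ is a complementation, and finally---the substantive point---that $\mcM$ is distributive.

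First I would show that $\mcM$ is in fact a lattice, with join given by the stated formula. Since $\bot$ is an order-reversing involution, for any $\alpha,\beta$ the element $(\alpha^\bot \wedge \beta^\bot)^\bot$ is a common upper bound of $\alpha$ and $\beta$: from $\alpha^\bot \wedge \beta^\bot \le \alpha^\bot$ we get $\alpha \le (\alpha^\bot\wedge\beta^\bot)^\bot$, and symmetrically for $\beta$. Conversely, if $\delta$ is any common upper bound then $\delta^\bot \le \alpha^\bot$ and $\delta^\bot \le \beta^\bot$, so $\delta^\bot \le \alpha^\bot \wedge \beta^\bot$ and hence $(\alpha^\bot\wedge\beta^\bot)^\bot \le \delta$. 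Thus $\alpha \vee \beta := (\alpha^\bot\wedge\beta^\bot)^\bot$ is the least upper bound and $\mcM$ is a lattice. Applying $\bot$ to this definition and using involutivity yields the De Morgan laws $(\alpha\vee\beta)^\bot = \alpha^\bot\wedge\beta^\bot$ and $(\alpha\wedge\beta)^\bot = \alpha^\bot \vee \beta^\bot$. Since $0$ is least, $\alpha^\bot \le 0^\bot$ for every $\alpha$, and as $\bot$ is onto this makes $0^\bot$ the greatest element; so $\mcM$ is bounded with $1 := 0^\bot$. I would also record the dual form of the hypothesis: substituting $\beta^\bot$ for the first variable in the biconditional gives the order characterisation $\alpha \le \beta \iff \alpha \wedge \beta^\bot = 0$. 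That $\bot$ is a complementation is then immediate: taking $\beta = \alpha^\bot$ in the hypothesis gives $\alpha \wedge \alpha^\bot = 0$ (as $\alpha^\bot \le \alpha^\bot$), and then $\alpha \vee \alpha^\bot = (\alpha^\bot \wedge \alpha)^\bot = 0^\bot = 1$.

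The crux is distributivity, which is where the full strength of the hypothesis---that $\alpha^\bot$ is the \emph{largest} element disjoint from $\alpha$, not merely some complement---must be used; an order-reversing involution with $\alpha\wedge\alpha^\bot = 0$ alone only produces an ortholattice, which need not be distributive. Since $(\alpha\wedge\beta)\vee(\alpha\wedge\gamma) \le \alpha\wedge(\beta\vee\gamma)$ holds in every lattice, it suffices to prove $\alpha\wedge(\beta\vee\gamma) \le (\alpha\wedge\beta)\vee(\alpha\wedge\gamma)$. By the order characterisation and De Morgan this is equivalent to $w = 0$, where $w := \alpha\wedge(\beta\vee\gamma)\wedge(\alpha\wedge\beta)^\bot\wedge(\alpha\wedge\gamma)^\bot$. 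The key observation is that $\alpha\wedge(\alpha\wedge\beta)^\bot \le \beta^\bot$: meeting $\alpha\wedge(\alpha\wedge\beta)^\bot$ with $\beta$ gives $(\alpha\wedge\beta)\wedge(\alpha\wedge\beta)^\bot = 0$, so the hypothesis forces $\alpha\wedge(\alpha\wedge\beta)^\bot \le \beta^\bot$, and symmetrically $\alpha\wedge(\alpha\wedge\gamma)^\bot \le \gamma^\bot$. Hence $w \le \beta^\bot \wedge \gamma^\bot \wedge (\beta\vee\gamma) = (\beta\vee\gamma)^\bot \wedge (\beta\vee\gamma) = 0$, again using De Morgan and complementation.

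This establishes distributivity, and a bounded, complemented, distributive lattice is a Boolean algebra with minimum $0$, maximum $0^\bot$, complementation $\bot$, and join $\alpha\vee\beta = (\alpha^\bot\wedge\beta^\bot)^\bot$. I expect this final distributivity computation to be the only genuine obstacle; everything preceding it is a formal consequence of the involution together with the order characterisation.
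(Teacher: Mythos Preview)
Your proof is correct. The lattice, bound, and complementation parts match the paper's argument essentially verbatim; the distributivity argument is where you diverge.

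The paper establishes distributivity by first proving the adjunction
\[
\alpha \wedge \beta \le \gamma \ \Longleftrightarrow\ \alpha \le \beta^\bot \vee \gamma,
\]
and then uses it to show that $(\alpha \vee \delta)\wedge\beta$ and $(\alpha\wedge\beta)\vee(\delta\wedge\beta)$ have the same set of upper bounds. You instead reduce the inequality $\alpha\wedge(\beta\vee\gamma)\le(\alpha\wedge\beta)\vee(\alpha\wedge\gamma)$ directly to showing a specific element $w$ vanishes, via the key step $\alpha\wedge(\alpha\wedge\beta)^\bot\le\beta^\bot$. Both routes rest on the same order characterisation $\alpha\le\beta\iff\alpha\wedge\beta^\bot=0$, so they are close cousins; your computation is a little more hands-on, while the paper's adjunction is the more conceptual statement (it is exactly the residuation identifying $\beta^\bot\vee\gamma$ as a Heyting implication, from which distributivity is immediate). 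Either approach is fine here, and your remark that the ``largest disjoint element'' condition is what separates this from a mere ortholattice is a useful piece of context that the paper leaves implicit.
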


\begin{proof}Let $\gamma \in \mcM$.  The existence of an order-reversing involution ensures that $\mcM$ is a lattice, with maximum and join as given.  For all $\alpha \in \mcM$ we have $\alpha \vee \alpha^\bot = (\alpha^\bot \wedge \alpha)^\bot = 0^\bot$ and clearly $\alpha \wedge \alpha^\bot = 0$, so $\alpha^\bot$ is a complement of $\alpha$ relative to the maximum element $0^\bot$.

It remains to show that $\mcM$ is distributive.  We first show
\begin{equation}\label{eq:distrib}\alpha \wedge \beta \le \gamma \Leftrightarrow \alpha \le \beta^\bot \vee \gamma,\end{equation}
for all $\alpha, \beta, \gamma \in \mcM$.  Using the properties of $\bot$ in the hypothesis, we have $\alpha \wedge \beta \le \gamma$ if and only if $\alpha \wedge \beta \wedge \gamma^\bot = 0$, while $\alpha \le \beta^\bot \vee \gamma$ if and only if $\delta = 0$ where $\delta = \alpha \wedge (\beta^\bot \vee \gamma)^\bot$.  Now $\delta = \alpha \wedge \beta \wedge \gamma^\bot$, so (\ref{eq:distrib}) is proved.

Now let $\alpha,\beta,\gamma,\delta \in \mcM$.  By (\ref{eq:distrib}) we have
\[ (\alpha \vee \delta) \wedge \beta \le \gamma \Leftrightarrow \alpha \vee \delta \le \beta^\bot \vee \gamma \Leftrightarrow (\alpha \le \beta^\bot \vee \gamma \; \text{and} \; \delta \le \beta^\bot \vee \gamma).\]
Using (\ref{eq:distrib}) again we have ${\alpha \le \beta^\bot \vee \gamma \Leftrightarrow \alpha \wedge \beta \le \gamma}$ and ${\delta \le \beta^\bot \vee \gamma \Leftrightarrow \delta \wedge \beta \le \gamma}$.  Finally, the conjunction of the inequalities $\alpha \wedge \beta \le \gamma$ and $\delta \wedge \beta \le \gamma$ is given by the inequality $(\alpha \wedge \beta) \vee (\delta \wedge \beta) \le \gamma$, proving the distributive law.\end{proof}

\begin{proof}[Proof of Theorem~\ref{dfaclat}]Fix a bounded locally normal representative $H$ of $\alpha$, such that $H$ has trivial quasi-centre.  Then every finite index subgroup of $H$ has trivial quasi-centre.

Let $K$ and $L$ be almost direct factors of $H$ such that $K$ and $L$ are locally normal in $G$.  Then $K\CC_H(K)$ and $L\CC_H(L)$ both have finite index in $H$, and $K \cap \CC_H(K) = L \cap \CC_H(L) = 1$.  We may replace $K$ with $\CC^2_H(K)$ and so assume $K = \CC^2_H(K)$, since this substitution does not affect the commensurability class of $K$ or its status as an almost direct factor of $H$ that is locally normal in $G$.  Let $M = L\CC_H(L)$ and let $K_2 = K \cap M$.  Then $K_2 = \CC_M(\CC_H(K))$.  Since $K$ is C-stable in $H$ by Lemma~\ref{dirqf}, in fact $\CC_H(K) = \CC_H(K_2)$ by Corollary~\ref{goodqc:lnorm}; in turn, $\CC_H(K)$ has the same centraliser in $H$ as $\CC_M(K)$ for the same reason.  So in fact $K_2 = \CC^2_M(K_2)$.  It follows by Lemma~\ref{dirfac} that $K_2 \cap L$ is a direct factor of $K_2$.  In turn, $K_2$ is a direct factor of $K_2\CC_H(K)$, which has finite index in $H$.  Thus $K_2 \cap L$ is an almost direct factor of $H$.

By a similar argument, there is a finite index subgroup $R$ of $\CC_H(K)$ that factorises as ${(R \cap L) \times \CC_R(L)}$, so there is a finite index subgroup of $H$ that factorises as 
$${K_2 \times (R \cap L) \times \CC_R(L)}.$$  
The first two factors are contained in $K_2L$, while the last factor centralises $K_2L$; additionally $K_2L \cap \CC_R(L) \le \QZ(H) = 1$.  It follows that $K_2L$ is an almost direct factor of $H$.  Finally, note that $K_2$ and $L$ are both bounded and locally normal, and this property is inherited by both their intersection and their product.  We conclude that ${[K_2 \cap L], [K_2L] \in \ldlat(G,U;\alpha)}$, so $\ldlat(G, U ;\alpha)$ is a sublattice of $\lnorm(G, U)$.

By Lemma~\ref{lnorm:trivialqz}, $[\QC_H(K)]$ has a representative $\CC_H(K)$; this also represents an element of $\ldlat(G,U;\alpha)$, so $\bot_\alpha$ preserves $\ldlat(G,U;\alpha)$.  One has $[K] \wedge \beta = 0$ for $\beta \in \ldlat(G,U;\alpha)$ whenever $\beta \le [K]^{\bot_\alpha}$, since $\QZ(K)=1$, and conversely $[K] \wedge \beta = 0$ implies by Lemma~\ref{finint} that some representative of $\beta$ centralises a representative of $[K]$, so in this case $\beta \le [K]^{\bot_\alpha}$.  It is also clear that $\bot_\alpha$ is an order-reversing involution on $\ldlat(G,U;\alpha)$.  Thus $\ldlat(G,U;\alpha)$ is a Boolean algebra by Lemma~\ref{boolflip}.

Now suppose $(G,U)$ is \tdlc and that $\alpha$ has a closed representative.  Then by Proposition~\ref{prop:closed_rep}, $\alpha$ has a compact locally normal representative $H_2$, and moreover from the proof, $H_2$ can be taken to be the closure of a finite index subgroup of $H$.  We see that $H \cap H_2$ has trivial quasi-centre.  Since $H \cap H_2$ is a dense subgroup of $H_2$ of finite index, we have $\QC_G(H_2) = \QC_G(H \cap H_2)$.  In particular, $H_2$ has trivial quasi-centre.  We now follow the same argument as before, with $H_2$ in place of $H$, and see that every element of $\lnorm(G;\alpha)$ has a representative of the form $L = \CC^2_{H_2}(K)$, where $K$ is locally normal.  Since centralisers are closed, it follows that $L$ is compact and locally normal in $G$, so $\lnorm(G;\alpha) \subseteq \lnorm(G)$; since $\lnorm(G;\alpha)$ is a sublattice of $\lnorm(G,U)$, it is also a sublattice of $\lnorm(G)$.
\end{proof}

\begin{proof}[Proof of Theorem~\ref{intro:boolean} (i)] 
Since $G$ has trivial quasi-centre,  the quasi-centraliser $\QC_G(U)$ of any open compact subgroup of $G$ is trivial.  In particular, $U$ is C-stable in $G$, so by Theorem~\ref{dfaclat}, the lattice $\ldlat(G;[U]) = \ldlat(G)$ is a Boolean algebra.
\end{proof}

In the case of a topological group $G$ and a subgroup $H$, there naturally arises the question of how direct factors of $H$ relate to direct factors of $\overline{H}$.  Put another way, one can consider the \defbold{quasi-factors} of a topological group, meaning the direct factors of its dense subgroups.  If $\overline{H}$ is compact and centreless, there is good control of this situation.  As a result, we obtain a relationship between decomposition lattices of \tdlc groups and their dense subgroups.

\begin{lem}\label{lem:compact:quasifactor}Let $G$ be a compact topological group such that $\Z(G)=1$ and let $D$ be a dense subgroup of $G$.  Let $K$ be a direct factor of $D$.  Then
$$ G = \CC_G(K) \times \CC^2_G(K),$$
and $K = \CC^2_G(K) \cap D$.

In particular, the map $K \mapsto \CC^2_G(K)$ defines an injective homomorphism from $\mcL_D$ to $\mcL_G$, where $\mcL_D$ is the lattice of direct factors of $D$ and $\mcL_G$ is the lattice of direct factors of $G$.
\end{lem}

\begin{proof}Since centralisers are closed, we have $\CC_G(D) = 1$.

Let $D = K \times L$.  Then $\Z(L) \le \CC_G(D) = 1$, so $\CC_D(L) = K$.  Similarly, $\CC_D(K) = L$.  Thus $D$ is contained in the set $E = \CC_G(K)\CC_G(L)$.  We see that both $\CC_G(K)$ and $\CC_G(L)$ are closed in $G$, hence compact, so $E$ is compact, hence closed.  Since $E$ is also dense, it follows that $E = G$.  Each of the subgroups $\CC_G(K)$ and $\CC_G(L)$ is normalised by $D$, and hence by $G$, since the normaliser of a closed subgroup is closed.  Moreover, $\CC_G(K) \cap \CC_G(L) = \CC_G(D) = 1$.  Hence
$$ G  = \CC_G(K) \times \CC_G(L).$$
We see that $\Z(\CC_G(K)) \le \Z(G) = 1$, so in fact $\CC_G(L) = \CC^2_G(K)$.  It follows that
$$\CC^2_G(K) \cap D = \CC_D(L) = K.$$

The remaining conclusions are clear.
\end{proof} 

\begin{prop}\label{prop:ldlat:dense}Let $G$ be a \tdlc group such that $\QZ(G)=1$ and let $D$ be a dense subgroup of $G$.  Let $U$ be an open compact subgroup of $G$, and let $R$ be a commensurated subgroup of $D$ that is also a dense subgroup of $U$.  Let $H$ be a bounded locally normal subgroup of the Hecke pair $(D,R)$. Then $\QZ(D)=1$; moreover the following assertions hold.

\begin{enumerate}[(i)]
\item If $\overline{H}$ has trivial quasi-centre, then $H$ has trivial quasi-centre. Moreover the map ${[K] \mapsto [\CC^2_{\overline{H}}(K)]}$ defines an injective order-preserving map from $\ldlat(D,R;H)$ to $\ldlat(G,U,\overline{H})$.
\item If $H$ has trivial quasi-centre, then $\ldlat(G,U;H)$ is embedded in $\ldlat(D,R;H)$ as a sublattice in a natural way.  If $H$ is closed in $G$, then this embedding is an isomorphism.
\end{enumerate}
\end{prop}

\begin{proof}We have $\QC_G(U) \le \QZ(G) = 1$, so $\QC_G(R) = 1$ by Lemma~\ref{lem:quasicent:dense}.  In particular, every finite index subgroup of $R$ has trivial centraliser in $D$.  Thus $\QZ(D)=1$.

Since $H$ is locally normal in $(D,R)$, without loss of generality we may assume $H \le R$.

(i) We have $\QC_G(H) = \QC_G(\overline{H})$ by Lemma~\ref{lem:quasicent:dense}.  Suppose $\QZ(\overline{H}) = 1$.  Then $\QZ(H)=1$.  

Let $K$ be an almost direct factor of $H$ that is locally normal in $(D,R)$.  Then $\N_G(K)$ contains a finite index subgroup of $R$, so $\N_G(\overline{K})$ contains a finite index subgroup of $\overline{R} = U$; thus $\overline{K}$ is compact and locally normal in $(G,U)$.  We see from Lemma~\ref{lem:compact:quasifactor} that the map $\phi \colon [K] \mapsto [\CC^2_{\overline{H}}(K)]$ defines an order-preserving map from $\ldlat(D,R;H)$ to $\ldlat(G,U,\overline{H})$.  Moreover, we have $K = M \cap \CC^2_{\overline{H}}(K)$, where $M$ is any finite index subgroup of $H$ of which $K$ is a direct factor, so $\phi$ is injective.

(ii) If $K$ is an almost direct factor of $H$ that is locally normal in $(G,U)$, then it is also locally normal in $(D,R)$, since $R \le U$.  We can therefore regard $\ldlat(G,U;H)$ as a subset of $\ldlat(D,R;H)$, consisting of those elements $\alpha \in \ldlat(D,R;H)$ such that there exists a representative $K$ of $\alpha$ such that $\N_G(K)$ contains a finite index subgroup of $U$.  Moreover, $\ldlat(G,U;H)$ forms a sublattice of $\ldlat(D,R;H)$, since the meet and join operations for $\ldlat(G,U;H)$ and $\ldlat(D,R;H)$ are essentially the same (in both cases, the meet is the class of the intersection of representatives, and the join is the class of the product of suitably-chosen representatives).

If $H$ is closed in $G$, then every almost direct factor $K$ of $H$ has finite index in a closed almost direct factor $\CC^2_H(K)$, and we see that $\CC^2_H(K)$ is locally normal in $(G,U)$ if and only if it is locally normal in $(D,R)$, in other words $\ldlat(D,R;H) = \ldlat(G,U;H)$.\end{proof}

It is clear that the relation ``$A$ is a direct factor of $B$'' is transitive.  A similar transitivity applies to local decomposition lattices:

\begin{lem}\label{ldtrans}Let $(G,U)$ be a Hecke pair and let $\alpha, \beta \in \lnorm(G,U)$ have bounded locally normal representatives with trivial quasi-centre.  Then $\ldlat(G,U;\alpha) \subseteq \ldlat(G,U;\beta)$ if and only if $\alpha \in \ldlat(G,U;\beta)$.\end{lem}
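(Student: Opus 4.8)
The plan is to prove the two implications separately, treating the forward direction as essentially formal and concentrating the work on the reverse direction, which is a transitivity statement for the relation ``is an almost direct factor of''. For the forward implication, I would first observe that $\alpha$ always lies in $\ldlat(G;\alpha)$: if $H$ is a C-stable locally normal representative of $\alpha$, then $H = H \times \{1\}$ exhibits $H$ as a direct factor of itself, and $H$ is locally normal, so $\alpha = [H] \in \ldlat(G;\alpha)$. Consequently, if $\ldlat(G;\alpha) \subseteq \ldlat(G;\beta)$, then in particular $\alpha \in \ldlat(G;\beta)$, with no further argument needed.

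For the reverse implication, assume $\alpha \in \ldlat(G;\beta)$ and fix a C-stable locally normal representative $B$ of $\beta$. By hypothesis there is a locally normal representative $A$ of $\alpha$ that is an almost direct factor of $B$; concretely, there is a closed finite-index subgroup $B' \leq B$ with an internal direct decomposition $B' = A \times C$. Since $B'$ is open in the compact group $B$ it is locally equivalent to $B$, hence C-stable (the proof of Lemma~\ref{lem:CstableBasic}(i) uses only that $\QC_G(B') = \QC_G(B)$ and $\CC_G(B') \geq \CC_G(B)$, which hold for finite-index subgroups), and then $A$ is C-stable by Lemma~\ref{dirqf}. Thus $A$ is a legitimate representative with which to compute $\ldlat(G;\alpha) = \ldlat(G;A)$.

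Now I would take an arbitrary $\gamma \in \ldlat(G;\alpha)$ with a locally normal representative $K$ that is an almost direct factor of $A$, say $A' = K \times D$ with $A' \leq A$ closed of finite index. The key computation assembles the two decompositions: inside $B' = A \times C$, form the subgroup $A'C$. Because $C$ centralises $A$ and $A' \cap C \leq A \cap C = 1$, this is an internal direct product $A' \times C = K \times (D \times C)$; it is closed (being the continuous image of the compact group $A' \times C$ under multiplication) and has index $|B:B'|\,|A:A'| < \infty$ in $B$. This exhibits $K$ as a direct factor of a closed finite-index subgroup of $B$, i.e.\ as an almost direct factor of $B$. Since $K$ is locally normal, $\gamma = [K] \in \ldlat(G;B) = \ldlat(G;\beta)$, and as $\gamma$ was arbitrary we conclude $\ldlat(G;\alpha) \subseteq \ldlat(G;\beta)$.

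The main obstacle is bookkeeping rather than conceptual: one must verify that the composite $A'C$ is genuinely a closed, finite-index, internal direct-factor decomposition, and that passing between the various C-stable representatives of $\alpha$ (and finite-index subgroups thereof) is legitimate, using that $\ldlat(G;-)$ is unchanged on passing to open subgroups. Compactness of all the locally normal subgroups involved renders the topological subtleties (closedness of products, equivalence of algebraic and topological direct factors) routine, so the argument reduces to the elementary observation that almost direct factors compose transitively.
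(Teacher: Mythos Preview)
Your proposal is correct and follows essentially the same approach as the paper's proof: the forward direction is immediate from $\alpha \in \ldlat(G;\alpha)$, and the reverse direction is the transitivity of the ``almost direct factor'' relation, which you spell out explicitly while the paper compresses it into a single sentence. Your extra care in verifying that the intermediate subgroup $B'$ (and hence $A$) is C-stable, and that the product $A'C$ is a genuine closed finite-index direct-product decomposition, fills in details the paper leaves to the reader.
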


\begin{proof} Suppose $\alpha \in \ldlat(G,U;\beta)$.  Then there is a representative $H$ of $\alpha$ and a representative $K$ of $\beta$ such that $H$ is a direct factor of $K$.  It follows that all almost direct factors of $H$ are also almost direct factors of $K$.\end{proof}

\subsection{Locally finitely decomposable \tdlc groups}

The following are natural properties to consider in the context of local decomposition lattices:

\begin{defn}Let $H$ be a group.  Say $H$ is \defbold{finitely decomposable}\index{finitely decomposable} if
$$H = H_1 \times \dots \times H_n$$
for some $n \ge 1$, such that each $H_i$ is \defbold{indecomposable}\index{indecomposable}, i.e. all of whose direct product decompositions are trivial.  Given a \tdlc group $G$, say $G$ is \defbold{locally finitely decomposable}\index{locally finitely decomposable}\index{finitely decomposable!locally} if every open compact subgroup of $G$ is finitely decomposable.\end{defn}

\begin{prop}\label{infdecomp}Let $G$ be a group such that $\Z(G)=1$.
\begin{enumerate}[(i)]
\item Suppose $G = G_1 \times \dots \times G_n$ for some $n \ge 1$, such that each $G_i$ is indecomposable.  Then $\{G_1,\dots,G_n\}$ is precisely the set of indecomposable direct factors of $G$ and every direct factor of $G$ is a product of some subset of $\{G_1,\dots,G_n\}$.  In particular, $\Aut(G)$ acts on $\{G_1,\dots,G_n\}$ and there are exactly $2^n$ direct factors of $G$.
\item Suppose that $G$ is compact and not finitely decomposable.  Then $G$ is topologically isomorphic to an infinite Cartesian product of closed subgroups.  In particular, $G$ has uncountably many closed direct factors up to commensurability.
\end{enumerate}
\end{prop}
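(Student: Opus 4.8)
The plan is to analyse the poset $\mathfrak{D}(G)$ of direct factors of $G$, which is particularly rigid because $\Z(G)=1$: if $G=K\times L$ then $\CC_G(K)=L$ and $\CC_G(L)=K$, so every (abstract) direct factor is automatically closed, equals its own double centraliser, and has $\CC_G(K)$ as its unique complement. First I would verify that $\mathfrak{D}(G)$ is a Boolean algebra. Meet is intersection: Lemma~\ref{dirfac} gives $K_1=(K_1\cap K_2)\times\CC_{K_1}(K_2)$, so $K_1\cap K_2$ is a direct factor of $K_1$ and hence of $G$ by transitivity of "direct factor". The map $K\mapsto\CC_G(K)$ is an order-reversing involution with $0=1$ and $0^\bot=G$, and $K_1\cap K_2=1$ holds if and only if $K_2\le\CC_G(K_1)$ (one direction from $K_1\cap\CC_G(K_1)=\Z(K_1)=1$, the other again from Lemma~\ref{dirfac}). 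Lemma~\ref{boolflip} then shows $\mathfrak{D}(G)$ is a Boolean algebra whose atoms are exactly the indecomposable direct factors. The dichotomy is then governed by whether $G$ is finitely decomposable.

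For alternative (i), suppose $G=G_1\times\dots\times G_n$ with each $G_i$ indecomposable, i.e.\ an atom. Since $G_1\vee\dots\vee G_n=\CC^2_G(\langle G_i\rangle)=G$ is the top element, distributivity gives, for any direct factor $K$, the identity $K=\bigvee_i(K\cap G_i)$, and each $K\cap G_i$ is a direct factor of the indecomposable $G_i$, hence trivial or all of $G_i$. Thus $K=\prod_{i\in S}G_i$ for a unique $S\subseteq\{1,\dots,n\}$, so $\mathfrak{D}(G)$ is finite of size $2^n$, the indecomposable direct factors are precisely $\{G_1,\dots,G_n\}$, and $\Aut(G)$ permutes this set since automorphisms preserve $\mathfrak{D}(G)$ and send atoms to atoms.

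For alternative (ii), suppose $G$ is not finitely decomposable; in particular it is decomposable. I would build recursively non-trivial direct factors $A_1,A_2,\dots$ with $G=A_1\times\dots\times A_k\times B_k$ and $B_k$ never finitely decomposable: at each stage $B_k$ splits as $A_{k+1}\times B_{k+1}$ with $A_{k+1}\neq 1$, and since a product of two finitely decomposable groups would be finitely decomposable, we may relabel so that $B_{k+1}$ is again not finitely decomposable. Put $C_\infty=\overline{\langle A_k:k\rangle}$ and $B_\infty=\bigcap_kB_k=\CC_G(C_\infty)$. Projection onto the $k$-th factor is a continuous homomorphism $\rho_k\colon G\to A_k$ with kernel $\CC_G(A_k)$, so $\rho=(\rho_k)_k\colon G\to\prod_kA_k$ has kernel $\bigcap_k\CC_G(A_k)=B_\infty$. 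Its restriction to $C_\infty$ is injective (since $C_\infty\cap B_\infty=\Z(C_\infty)=1$) and has closed image containing every $A_k$, hence containing the dense subgroup $\bigoplus_kA_k$, so it is onto; as a continuous bijection of compact groups is a topological isomorphism, $C_\infty\cong\prod_kA_k$. Because $C_\infty$ and $B_\infty$ commute with $C_\infty\cap B_\infty=1$ and $C_\infty$ maps isomorphically onto $G/B_\infty$, we conclude $G=C_\infty\times B_\infty\cong\bigl(\prod_kA_k\bigr)\times B_\infty$, an infinite Cartesian product.

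Finally, for each $S\subseteq\mathbb{N}$ the subproduct $\prod_{k\in S}A_k$ is a direct factor of $C_\infty$, hence of $G$, and $\prod_{k\in S}A_k$ is commensurate with $\prod_{k\in S'}A_k$ exactly when $\prod_{k\in S\triangle S'}A_k$ is finite. If infinitely many $A_k$ are infinite I restrict $S,S'$ to that index set, where commensurability forces $S=S'$; otherwise I restrict to the infinite set of indices with $A_k$ finite non-trivial, where commensurability becomes $S\triangle S'$ finite. Either way $\mathcal{P}(\mathbb{N})/\mathrm{fin}$ embeds into the commensurability classes, yielding uncountably many. Since $2^n$ is finite while this is uncountable, the alternatives are mutually exclusive, and every $G$ realises exactly one according to whether it is finitely decomposable. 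The hard part is alternative (ii): propagating "not finitely decomposable" down the recursion to secure infinitely many non-trivial factors, and the compactness argument identifying $C_\infty$ with the \emph{full} topological product $\prod_kA_k$ (rather than a proper dense subproduct) through the kernel-$B_\infty$ projection.
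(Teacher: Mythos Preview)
Your overall strategy is correct and close in spirit to the paper's, but there is one logical gap in the order of your argument for alternative~(ii), and one point where your route genuinely differs from the paper's for alternative~(i).

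For alternative~(i), the paper does not set up the Boolean algebra $\mathfrak{D}(G)$ explicitly; instead it argues directly that if $K$ is an indecomposable direct factor then some $G_i$ fails to centralise $K$, whence $K\cap G_i>1$, and then Lemma~\ref{dirfac} exhibits $K\cap G_i$ as a direct factor of both the indecomposable groups $K$ and $G_i$, forcing $K=G_i$.  Your route via Lemma~\ref{boolflip} is a pleasant alternative: once $\mathfrak{D}(G)$ is a Boolean algebra with atoms $G_1,\dots,G_n$ joining to the top, distributivity immediately yields the full description $\mathfrak{D}(G)\cong 2^{\{1,\dots,n\}}$, from which the remaining assertions follow at once.

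For alternative~(ii), the recursive construction and the commensurability count are fine (your case split according to whether infinitely many $A_k$ are infinite is a harmless variant of the paper's single argument that each equivalence class under ``finite symmetric difference'' is countable).  The gap is in your justification of injectivity of $\rho|_{C_\infty}$: you assert $C_\infty\cap B_\infty=\Z(C_\infty)=1$, but at that stage you have not yet established $G=C_\infty B_\infty$, and without this there is no reason that $\Z(C_\infty)\le\Z(G)$ (the hypothesis $\Z(G)=1$ does not by itself forbid abelian normal subgroups).  The fix is simply to swap the order.  First prove that $\rho|_{C_\infty}$ is \emph{surjective} onto $\prod_k A_k$ --- your argument for this is correct and independent of injectivity: the image is compact, hence closed, and contains each $A_k$, hence the dense subgroup $\bigoplus_k A_k$.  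Since $\ker\rho=B_\infty$ and $\rho(C_\infty)=\rho(G)$, this gives $G=C_\infty B_\infty$.  Now $\Z(C_\infty)=C_\infty\cap\CC_G(C_\infty)=C_\infty\cap B_\infty$ is centralised by $C_\infty$ and by $B_\infty$, hence by $G$, so $\Z(C_\infty)\le\Z(G)=1$; injectivity of $\rho|_{C_\infty}$ and the product decomposition $G=C_\infty\times B_\infty$ follow.  This reordered argument is precisely the paper's, which works throughout with $\CC_G(B_\infty)$ rather than your $C_\infty=\overline{\langle A_k\rangle}$; a posteriori the two subgroups coincide.
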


\begin{proof}For any direct factor $H$ of $G$, then $\CC_G(H)$ is the complement of $H$ and thus also a direct factor, so $H = \CC_G(\CC_G(H))$.  Suppose $G = G_1 \times \dots \times G_n$ for some $n \ge 1$, such that each $G_i$ is indecomposable.  Let $K$ be a direct factor of $G$.  Then $G = K\CC_G(K)$, and $K \cap \CC_G(K) \le \Z(G) = 1$.  Let $I = \{ i \mid G_i \not\le \CC_G(K)\}$.  Then $K \le \prod_{i \in I} G_i$, since $K$ is centralised by the remaining factors.  Given $i \in I$, then $K \cap G_i$ is a direct factor of both $K$ and $G_i$, by Lemma~\ref{dirfac}, so $K=G_i$ by the fact that $K$ and $G_i$ are indecomposable.  Thus $K = \prod_{i \in I} G_i$.

Suppose $G$ is compact and not finitely decomposable.  We shall define a family $\{G_{i,j}\}_{1 \le j \le i}$ of non-trivial subgroups as follows, so that for all $n$, $G = G_{n,1} \times \dots \times G_{n,n}$ and $G_{n,n}$ is not finitely decomposable.

Set $G_{1,1} = G$.  Suppose we have $G = G_{n,1} \times \dots \times G_{n,n}$ for some $n$.  Choose some non-trivial direct decomposition $G_{n,n} = H_n \times K_n$, ensuring that $K_n$ is not finitely decomposable (this is always possible, as by induction $G_{n,n}$ is not finitely decomposable).  Now set $G_{n+1, n} = H_n$, $G_{n+1,n+1} = K_n$ and $G_{n+1,j} = G_{n,j}$ for $1 \le j < n$.

Let $K = \bigcap^\infty_{j=1} G_{j,j}$.  We claim now that
\[ G \cong K \times \prod^\infty_{j=1} G_{j+1,j}.\]
The decomposition $G = G_{n,1} \times \dots \times G_{n+1,n+1}$ gives a continuous epimorphism $G \rightarrow \prod^n_{j=1} G_{n+1,j}$ with kernel $G_{n+1,n+1}$, and we note from the construction that $\prod^n_{j=1} G_{n+1,j} =\prod^n_{j=1} G_{j+1,j}$.  These epimorphisms are compatible with each other as $n$ varies so that they generate an inverse system, with a limit homomorphism $\phi: G \rightarrow \prod^\infty_{j=1} G_{j+1,j}$.  We see that $\ker\phi$ is precisely $K$.  In addition $\phi(\CC_G(K))$ contains $\bigoplus^\infty_{j=1} G_{j+1,j}$, since $G_{j+1,j}$ centralises $G_{n+1,n+1}$ for all $1 \le j \le n$, so $\phi(\CC_G(K)) = \prod^\infty_{j=1} G_{j+1,j}$.  In particular $G = K\CC_G(K)$: this implies $K \cap \CC_G(K) \le \Z(G) = 1$, so in fact $G = K \times \CC_G(K)$.  The restriction of $\phi$ to $\CC_G(K)$ is consequently bijective, giving the desired isomorphism.  We now obtain uncountably many direct factors $G_J = \prod_{j \in J} G_{j+1,j}$ by letting $J$ range over all subsets of $\bN$.  For two such direct factors $\prod_{j \in J_1} G_{j+1,j}$ and $\prod_{j \in J_2} G_{j+1,j}$ to be commensurate, the symmetric difference of the sets $J_1$ and $J_2$ must be finite.  This defines an equivalence relation on the power set of $\bN$ whose equivalence classes are countable, so there must be uncountably many equivalence classes.\end{proof}

\begin{cor}\label{cor:LocallyFinitelyDec}
Let $G$ be a \tdlc group with $\QZ(G) =1$.
\begin{enumerate}[(i)]
\item $G$ is locally finitely decomposable if and only if $G$ has a basis of identity neighbourhoods consisting of open compact subgroups that are finitely decomposable.
\item Suppose $G$ is first-countable.  Then $G$ is locally finitely decomposable if and only if $\ldlat(G)$ is countable.
\end{enumerate}
\end{cor}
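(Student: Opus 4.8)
The plan is to reduce everything to the dichotomy of Proposition~\ref{infdecomp}, after recording two preliminary facts that hold for every open compact subgroup $U \in \mcB(G)$: first, $\Z(U) \le \QZ(U) \le \QZ(G) = 1$ by Lemma~\ref{lem:QZ}(ii), so Proposition~\ref{infdecomp} applies to each such $U$; second, $U$ is C-stable and $\ldlat(G;[U]) = \ldlat(G)$, as in the proof of Theorem~\ref{intro:boolean}(i). The forward implication of (i) is then immediate from van Dantzig's theorem, since the open compact subgroups already form a basis of identity neighbourhoods and are all finitely decomposable by hypothesis.

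For the converse of (i), I would argue by contradiction. If some $U \in \mcB(G)$ fails to be finitely decomposable, then by Proposition~\ref{infdecomp}(ii) we may identify $U$ topologically with an infinite Cartesian product $\prod_{i \in I} U_i$ of non-trivial closed subgroups. Choosing a finitely decomposable $V \in \mcB(G)$ with $V \le U$ (possible since such subgroups form a basis), openness of $V$ in the product topology forces $V$ to contain a tail subproduct $P = \prod_{i \notin F} U_i$ for some finite $F \subseteq I$; a short computation gives $V = P \times (V \cap \prod_{i \in F} U_i)$, so $P$ is a direct factor of $V$. Since $P$ is an infinite Cartesian product of non-trivial groups it is not finitely decomposable, contradicting the fact (Proposition~\ref{infdecomp}(i)) that every direct factor of the finitely decomposable group $V$ is finitely decomposable. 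The implication $\Leftarrow$ of (ii) is the same phenomenon read quantitatively: an open compact $U$ that is not finitely decomposable has, by Proposition~\ref{infdecomp}(ii), uncountably many direct factors up to commensuration, and each is locally normal (being normal in the open subgroup $U$) and an almost direct factor of $U$, hence yields a distinct element of $\ldlat(G;[U]) = \ldlat(G)$; thus $\ldlat(G)$ is uncountable. Neither of these two arguments needs first-countability.

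The remaining implication, $\Rightarrow$ of (ii), is where first-countability enters and is the main work. Fix $V \in \mcB(G)$, so $\ldlat(G) = \ldlat(G;[V])$. First-countability makes the profinite group $V$ metrizable, hence second countable, so $V$ has only countably many open subgroups. Given $\beta \in \ldlat(G;[V])$ with a locally normal, C-stable representative $K \le V$, I would attach to $\beta$ the subgroup $\hat\beta := \CC^2_V(K)$. Writing $K$ as a direct factor of an open subgroup $L = K \times B$ of $V$ (so $B = \CC_L(K)$), one checks using $\QZ(V)=1$ that $\CC_V(K) \cap L = B$ and $\CC^2_V(K) \cap L = K$; consequently $\hat\beta$ is commensurate with $K$, so $[\hat\beta] = \beta$, while $\hat\beta \cdot \CC_V(K)$ is an internal direct product containing $L$ and is therefore an open subgroup of $V$. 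Thus $\hat\beta$ is a direct factor of an open subgroup of $V$. Every such open subgroup is itself an element of $\mcB(G)$, hence finitely decomposable by hypothesis, hence has only finitely many direct factors by Proposition~\ref{infdecomp}(i). Therefore the direct factors of open subgroups of $V$ form a countable set, and since $[\hat\beta] = \beta$ the assignment $\beta \mapsto \hat\beta$ is injective; I conclude that $\ldlat(G;[V])$ is countable.

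The main obstacle is the bookkeeping in this last step: one must verify that $\CC^2_V(K)$ is a genuine direct factor of an honest open (finite-index) subgroup $\hat\beta \times \CC_V(K)$ of $V$, and that its commensurability class recovers $\beta$. The crucial structural point that makes the counting succeed — and which I would emphasise — is that open subgroups of the open compact subgroup $V$ are again open compact subgroups of $G$, so the hypothesis of local finite decomposability applies to all of them and bounds by Proposition~\ref{infdecomp}(i) the number of direct factors of each; combined with first-countability, which supplies only countably many such open subgroups, this exhibits a countable set into which $\ldlat(G)$ injects.
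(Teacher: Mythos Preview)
Your proof is correct and follows essentially the same route as the paper's. The one superfluous step is the detour through $\hat\beta = \CC^2_V(K)$ in the forward implication of (ii): since you already write your chosen representative $K$ as a direct factor of an open subgroup $L \le V$, the surjection from $\{\text{direct factors of open subgroups of }V\}$ onto $\ldlat(G;[V])$ is immediate without passing to $\CC^2_V(K)$. The paper simply records this observation (after noting $\ldlat(G) = \ldlat(U)$ and reducing to $G$ compact), and your arguments for (i) and for the reverse implication of (ii) match the paper's almost verbatim.
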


\begin{proof}
(i) The necessity of the condition is obvious. Conversely, if $G$ is not locally finitely decomposable, then $G$ has some open compact subgroup $V$ which is not finitely decomposable. Since $G$ has trivial quasi-centre, the group $V$ must be center-free, and is thus an infinite Cartesian product of closed subgroups by Proposition~\ref{infdecomp}. Now, any open subgroup $W$ of $V$ contains infinitely many direct factors of $V$; those factors are then also direct factors of $W$, and hence $W$ cannot be finitely decomposable. 

(ii) Suppose that $G$ is locally finitely decomposable.  Since $\ldlat(G) = \ldlat(U)$ for any open compact subgroup $U$, we may assume that $G$ is compact.  In this case $G$ has only countably many open subgroups, each with only finitely many direct factors, and thus $\ldlat(G)$ is countable.

Conversely, suppose $\ldlat(G)$ is countable.  Then given an open compact subgroup $U$ of $G$, the direct factors of $U$ fall into countably many commensurability classes.  Thus $U$ is finitely decomposable by Proposition~\ref{infdecomp}.
\end{proof}

\begin{prop}Let $G$ be a \tdlc group and let $H$ be a C-stable compact locally normal subgroup that is locally finitely decomposable.  Then $\ldlat(G;H) = \ldlat(H)$; indeed $\ldlat(G;H)$ contains $\ldlat(K)$ for every almost direct factor $K$ of $H$.\end{prop}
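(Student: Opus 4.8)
The plan is to show that the local decomposition data of $H$ computed inside $G$ agrees with the same data computed intrinsically, the crucial point being that under finite decomposability \emph{every} almost direct factor of $H$ is automatically locally normal in $G$. Throughout I identify commensurability classes of compact subgroups of $H$ with their images in both $\lnorm(H)$ and $\lnorm(G)$: since two compact subgroups are locally equivalent precisely when they are commensurate, and commensuration makes no reference to the ambient group, this identification is unambiguous and turns the asserted equality $\ldlat(G;H)=\ldlat(H)$ into a genuine equality of subsets.

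First I would record some structural facts. Being locally normal, $H$ is compact. Since $H$ is C-stable, $\QZ(H)$ is discrete and torsion-free by Lemma~\ref{lem:CstableBasic}(ii); as $\Z(H)\le\QZ(H)$ is then a discrete, hence finite, and torsion-free subgroup, we get $\Z(H)=1$. The same argument (using Lemma~\ref{lem:QZ}(ii)) shows $\Z(L)=1$ for every open subgroup $L\le H$, each of which is again compact, C-stable by Lemma~\ref{lem:CstableBasic}(i), locally normal in $G$ by Lemma~\ref{lem:LN:basic}(ii), and finitely decomposable as an open compact subgroup of the locally finitely decomposable group $H$. Proposition~\ref{infdecomp}(i) therefore applies to each such $L$; in particular $L$ has only finitely many direct factors, on which $\Aut(L)$ acts.

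The heart of the argument is the claim that every almost direct factor $K$ of $H$ is locally normal in $G$. Writing $K$ as a direct factor of some open $L\le H$, the group $\N_G(L)$ is open (as $L$ is locally normal in $G$) and acts on $L$ by conjugation, hence permutes the finitely many direct factors of $L$ guaranteed above. The kernel of this finite permutation action is an open subgroup of $\N_G(L)$ normalising $K$, so $\N_G(K)$ is open and $K$ is locally normal in $G$. Granting this, $\ldlat(G;H)=\ldlat(H)$ follows: if $K$ is an almost direct factor of $H$ that is locally normal in $G$ then $\N_H(K)=\N_G(K)\cap H$ is open in $H$, whence $[K]\in\ldlat(H)$ and $\ldlat(G;H)\subseteq\ldlat(H)$; conversely any $[K]\in\ldlat(H)$ has a representative that is an almost direct factor of $H$, hence locally normal in $G$ by the claim, so $[K]\in\ldlat(G;H)$.

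For the final assertion, let $K$ be an almost direct factor of $H$, say a direct factor of the open subgroup $L\le H$. By Lemma~\ref{dirqf} applied to $L$, the subgroup $K$ is C-stable in $G$, and the claim above gives $[K]\in\ldlat(G;H)$; Lemma~\ref{ldtrans} then yields $\ldlat(G;K)\subseteq\ldlat(G;H)$. It remains to identify $\ldlat(K)$ with $\ldlat(G;K)$ by applying the first part of the proposition to $K$ in place of $H$, which is legitimate once $K$ is seen to be locally finitely decomposable: writing $L=K\times M$ with $M=\CC_L(K)$, any open $W\le K$ gives the open subgroup $W\times M$ of $L$, which is center-free and finitely decomposable, so $W$—a direct factor of it—is a product of finitely many indecomposables by Proposition~\ref{infdecomp}(i) and hence finitely decomposable. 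Thus $\ldlat(K)=\ldlat(G;K)\subseteq\ldlat(G;H)$. I expect the main obstacle to be precisely the central claim that almost direct factors are locally normal in $G$: it is here that finite decomposability is indispensable, converting the \emph{a priori} non-local condition of being a direct factor into an open-normaliser condition via finiteness of the permutation action, while verifying center-freeness, the class identification, and the inheritance of local finite decomposability is routine bookkeeping needed to make the equalities literally correct.
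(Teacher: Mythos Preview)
Your proof is correct and follows essentially the same approach as the paper: the key step in both is that finite decomposability of an open $L\le H$ makes the conjugation action of $\N_G(L)$ on the (finitely many) direct factors of $L$ have an open kernel, forcing every almost direct factor of $H$ to be locally normal in $G$. Your treatment of the final clause---reducing to the first part by checking that $K$ inherits local finite decomposability via the open subgroup $W\times M$ of $L$---is likewise the same idea as the paper's, just phrased slightly differently.
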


\begin{proof}It is clear that $\ldlat(G;H) \subseteq \ldlat(H)$.  Let $L$ be an open subgroup of $H$, and let $K$ be a direct factor of $L$.  Then $\N_G(L)$ is open in $G$ by Lemma~\ref{lem:LN:basic}(ii), and it preserves the set $X$ of indecomposable direct factors of $L$; since $X$ is finite, there is an open subgroup $R$ of $\N_G(L)$ that normalises every indecomposable direct factor of $L$.  In turn $K$ is a product of indecomposable direct factors of $L$, so it is normalised by $R$.  Hence $K$ is locally normal, so represents an element of $\ldlat(G;H)$.  Hence $\ldlat(G;H) = \ldlat(H)$, and $\ldlat(H)$ contains $\ldlat(H;K)$ by Lemma~\ref{ldtrans}.  In turn, $K$ is locally finitely decomposable, because given an open subgroup $M$ of $K$, the direct factors of $M$ are also direct factors of $M\CC_H(K)$, which is an open subgroup of $H$.  Hence $\ldlat(H;K) = \ldlat(K)$.\end{proof}

\section{The centraliser lattice and weakly decomposable actions}
\label{sec:lcent}

\subsection{The centraliser lattice}\label{clatsect}

It would be useful to combine all the local decomposition lattices of $(G,U)$ (which are not necessarily individually preserved by the action of $G$) into a single Boolean algebra admitting a $G$-action.  There are two difficulties here: one is that the property of having trivial quasi-centre may not be well-behaved under intersections, and the other is that we need a unified notion of complementation.  To avoid the first difficulty and overcome the second, we restrict our attention to locally C-stable Hecke pairs $(G,U)$ in the sense of \S\ref{qstabsec}.  Recall from Theorem~\ref{thm:locallyCstable} that under the assumption that $U$ is residually finite, these are precisely the Hecke pairs with trivial quasi-centre and no non-trivial abelian locally normal subgroups.  The resulting lattice will not be a sublattice of $\lnorm(G,U)$ in general (because the join operation is different), but it arises from the structure lattice in a natural way.

\begin{defn}
Let $(G,U)$ be a locally C-stable Hecke pair.  Define the map 
$$\bot \colon \lnorm(G,U) \rightarrow \lnorm(G,U) : \alpha \mapsto [\QC_G(\alpha)].$$ 
This is well-defined by Theorem~\ref{thm:locallyCstable}, since there exists a finite index subgroup $V$ of $U$ such that $\QZ(V)=1$, and we have 
$$[\QC_G(\alpha)] = [\QC_V(\alpha)] = [\CC_G(K)]$$ 
for every locally normal representative $K$ of $\alpha$ contained in $V$.  The \defbold{centraliser lattice}\index{centraliser lattice}\index{centraliser lattice!of a Hecke pair} $\lcent(G,U)$\index{LC(G,U)@$\lcent(G, U)$} is defined to be the set $\{\alpha^\bot \mid \alpha \in \lnorm(G,U)\}$ together with the map $\bot$ restricted to $\lcent(G,U)$, the partial order inherited from $\lnorm(G,U)$ and the binary operations $\wedge_c$ and $\vee_c$ given by:
\[ \alpha \wedge_c \beta = \alpha \wedge \beta \]
\[ \alpha \vee_c \beta = (\alpha^\bot \wedge \beta^\bot)^\bot.\]
In general we will write $\vee$ instead of $\vee_c$ in contexts where it is clear that we are working inside the centraliser lattice.

Given a \tdlc group $G$, we define $\lcent(G) := \lcent(G,U)$, where $U$ is some open compact subgroup of $G$.  (The choice of $U$ here is inconsequential.)\end{defn}

\begin{thm}\label{centlat}
Let $(G,U)$ be a locally C-stable Hecke pair, such that $U$ is residually finite.
\begin{enumerate}[(i)]
\item The poset $\lcent(G,U)$ is a Boolean algebra and $\bot^2:\lnorm(G,U) \rightarrow \lcent(G,U)$ is an idempotent surjective lattice homomorphism.  If $G$ is \tdlc, then
$$ \lcent(G) = \{\alpha^\bot \mid \alpha \in \lnorm(G)\} \subseteq \lnorm(G).$$
\item Fix $\alpha \in \lnorm(G,U)$ and let $\theta$ be the map on $\lnorm(G,U)$ given by $\beta \mapsto \beta^{\bot^2} \wedge \alpha$.  Then $\theta$ is an idempotent order-preserving map, and $\theta(\lnorm(G,U)) = \theta(\lcent(G,U)) \supseteq \ldlat(G,U;\alpha)$.  Moreover, $\theta\inv(\ldlat(G,U;\alpha)) \cap \lcent(G,U)$ is a subalgebra of $\lcent(G,U)$.
\end{enumerate}
\end{thm}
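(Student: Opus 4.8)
The plan is to treat part (i) via Lemma~\ref{boolflip} and then bootstrap part (ii) from it. First I would record that $\bot$ is an antitone Galois connection on $\lnorm(G)$: since $\alpha\le\beta^\bot$ and $\beta\le\alpha^\bot$ both say ``a representative of $\alpha$ quasi-centralises a representative of $\beta$'', and since $\alpha\le\alpha^{\bot\bot}$ by Lemma~\ref{elcent}, one gets $\alpha\le\beta^\bot\Leftrightarrow\beta\le\alpha^\bot$. Combining Lemma~\ref{goodqc}(ii) with the triple-centraliser identity of Lemma~\ref{elcent} gives $\bot^3=\bot$, so $\bot^2$ is an idempotent, order-preserving closure operator whose fixed-point set is exactly $\lcent(G)$. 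As the closed elements of a closure operator, $\lcent(G)$ is closed under the meet $\wedge$ of $\lnorm(G)$ and contains $0=\infty^\bot$, and $\bot$ restricts to an order-reversing involution on it. It remains to verify the meet--zero hypothesis of Lemma~\ref{boolflip}: for $\alpha,\beta\in\lcent(G)$, the implication $\beta\le\alpha^\bot\Rightarrow\alpha\wedge\beta\le\alpha\wedge\alpha^\bot=0$ uses that every locally normal subgroup has trivial quasi-centre (Proposition~\ref{prop:locallyQstable}), while the converse $\alpha\wedge\beta=0\Rightarrow\beta\le\alpha^\bot$ is precisely Lemma~\ref{finint} applied to locally normal representatives. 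Lemma~\ref{boolflip} then yields that $\lcent(G)$ is a Boolean algebra with the stated operations.

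For the homomorphism statement, surjectivity is immediate by construction. Join preservation is formal: an antitone Galois connection sends joins to meets, so $(\alpha\vee\beta)^\bot=\alpha^\bot\wedge\beta^\bot$, whence $\bot^2(\alpha\vee\beta)=(\alpha^\bot\wedge\beta^\bot)^\bot=\bot^2(\alpha)\vee_c\bot^2(\beta)$ after using $\bot^3=\bot$. The hard part is meet preservation, $(\alpha\wedge\beta)^{\bot\bot}=\alpha^{\bot\bot}\wedge\beta^{\bot\bot}$, of which only the inequality $\ge$ is substantial. Taking locally normal representatives $A,B\trianglelefteq U$, this amounts to the group statement that $\CC_U(A\cap B)$ is commensurate with $\CC_U(A)\CC_U(B)$. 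The inclusion $\CC_U(A)\CC_U(B)\subseteq\CC_U(A\cap B)$ is clear; the reverse, that an element centralising $A\cap B$ decomposes locally as a product of one centralising $A$ and one centralising $B$, is the main obstacle. I expect to prove it by exploiting C-stability of $\CC_U(A\cap B)$ together with the Boolean structure of the local decomposition lattices (Theorem~\ref{dfaclat}); in the model case $U=P\times Q\times R$ with $A=P\times Q$ and $B=Q\times R$ the identity is transparent, and the general case should reduce to this via Lemma~\ref{finint} and the double-centraliser identities of Lemma~\ref{goodqc}.

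For part (ii), $\theta$ is order-preserving as a composite of the order-preserving maps $\bot^2$ and $(-)\wedge\alpha$. Idempotence follows from meet preservation established in (i): $\theta(\theta(\beta))=(\beta^{\bot\bot}\wedge\alpha)^{\bot\bot}\wedge\alpha=\beta^{\bot\bot}\wedge\alpha^{\bot\bot}\wedge\alpha=\beta^{\bot\bot}\wedge\alpha=\theta(\beta)$, using $\alpha\le\alpha^{\bot\bot}$. Since $\theta(\beta)=\theta(\beta^{\bot\bot})$ with $\beta^{\bot\bot}\in\lcent(G)$, we get $\theta(\lnorm(G))=\theta(\lcent(G))$ at once. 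For $\theta(\lnorm(G))\supseteq\ldlat(G;\alpha)$, I use that the image of an idempotent map is its fixed-point set and check each $\beta=[K]\in\ldlat(G;\alpha)$ is fixed: fixing a C-stable representative $H$ of $\alpha$ with $K$ an almost direct factor, one has $K=\CC^2_H(K)$ by Lemma~\ref{dirqf}, so $\beta\le\alpha$ and $\beta\le\beta^{\bot\bot}$ give $\beta\le\theta(\beta)$, while Lemma~\ref{dirfac} applied to $H=K\times\CC_H(K)$ identifies $\beta^{\bot\bot}\wedge\alpha$ with $[K]$, giving $\theta(\beta)=\beta$.

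Finally, for the subalgebra assertion I would show that $\theta$ restricted to $\lcent(G)$ is a surjective Boolean-algebra homomorphism onto $L_\alpha:=\theta(\lcent(G))$, so that $\theta^{-1}(\ldlat(G;\alpha))\cap\lcent(G)=(\theta|_{\lcent(G)})^{-1}(\ldlat(G;\alpha))$ is the preimage of a subalgebra, hence a subalgebra of $\lcent(G)$. On $\lcent(G)$ one has $\theta(\beta)=\beta\wedge\alpha$; this preserves $\wedge$ trivially (as $\beta\wedge\beta'\in\lcent(G)$) and sends $\beta^\bot$ to a complement of $\theta(\beta)$ relative to the top element $\theta(\infty)=\alpha$ of $L_\alpha$, for which the meet--zero criterion of (i) shows that $\theta(\beta)$ and $\theta(\beta^\bot)$ are disjoint with join $\alpha$; concretely $\theta|_{\lcent(G)}$ factors as the relativisation $\lcent(G)\to\lcent(G){\restriction}\alpha^{\bot\bot}$ followed by an order-isomorphism onto $L_\alpha$, using $\beta\wedge\alpha=0\Leftrightarrow\beta\wedge\alpha^{\bot\bot}=0$ for $\beta\in\lcent(G)$. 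It then remains to note that $\ldlat(G;\alpha)$ is genuinely a subalgebra of $L_\alpha$: it has the same top $\alpha$ and meet $\wedge$, and its complementation $\bot_\alpha\colon\beta\mapsto[\QC_G(\beta)]\wedge\alpha$ from Theorem~\ref{dfaclat} agrees with $\theta(\beta^\bot)$, so the Boolean operations coincide. The main obstacle throughout remains the meet-preservation step of part (i).
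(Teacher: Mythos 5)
Your formal framework is sound, and where your argument is complete it tracks the paper's own proof quite closely: the Galois-connection properties of $\bot$ (order reversal, $\bot^3=\bot$, joins-to-meets), the meet--zero criterion via Lemma~\ref{finint}, the appeal to Lemma~\ref{boolflip} for part (i), and your treatment of part (ii) (idempotence of $\theta$, the fixed-point verification for elements of $\ldlat(G;\alpha)$, the preimage-of-a-subalgebra argument) would all go through once part (i) is fully established. But the step you yourself flag as the ``main obstacle'' --- meet preservation of $\bot^2$ --- is genuinely missing, and it is the load-bearing step of the whole theorem: without it you get neither the homomorphism claim in (i) nor the idempotence of $\theta$ in (ii).

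Moreover, the reduction you propose aims at a statement that is false in general. You assert that $(\alpha\wedge\beta)^{\bot\bot}\ge\alpha^{\bot\bot}\wedge\beta^{\bot\bot}$ ``amounts to'' the commensurability of $\CC_U(A\cap B)$ with $\CC_U(A)\CC_U(B)$, i.e.\ to the identity $(\alpha\wedge\beta)^\bot=\alpha^\bot\vee\beta^\bot$, where $\vee$ is the join of $\lnorm(G)$. That identity is strictly stronger than meet preservation: applying $\bot$ to it does recover meet preservation, but conversely, once the theorem is known, one has $(\alpha\wedge\beta)^\bot=(\alpha^{\bot\bot}\wedge\beta^{\bot\bot})^\bot=\alpha^\bot\vee_c\beta^\bot$, so your identity asserts $\alpha^\bot\vee\beta^\bot=\alpha^\bot\vee_c\beta^\bot$ for all $\alpha,\beta$. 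Since every pair of elements of $\lcent(G)$ has the form $(\alpha^\bot,\beta^\bot)$, your claim (for all pairs) is equivalent to $\lcent(G)$ being a sublattice of $\lnorm(G)$ --- exactly what the paper warns against at the start of \S\ref{clatsect} (``the join operation is different''). So C-stability, Theorem~\ref{dfaclat} and Lemma~\ref{goodqc} cannot deliver it; your model case $U=P\times Q\times R$ is precisely a situation in which the two joins happen to coincide, and it is not representative. What is actually needed, and what the paper proves, is the \emph{double}-centraliser identity of Lemma~\ref{cclem}(i): if $A,B$ are normal in a group $V$ having no non-trivial abelian normal subgroups, then $\CC^2_V(A\cap B)=\CC^2_V(A)\cap\CC^2_V(B)$. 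This is weaker than your single-centraliser statement --- the outer centraliser absorbs the discrepancy between $\vee$ and $\vee_c$ --- and it admits a direct proof: one shows $M=\CC_V(A\cap B)\cap\CC^2_V(A)\cap\CC^2_V(B)$ is trivial by repeatedly using that centres of normal subgroups of $V$ vanish. Substituting this lemma (applied to representatives $A,B\unlhd V$) for your claimed reduction closes the gap, and the rest of your argument then stands.
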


We need the following basic group theoretic facts.
 
\begin{lem}\label{cclem}Let $G$ be a group such that every normal subgroup has trivial centre, in other words that $G$ has no non-trivial abelian normal subgroups.  Then the following equations are satisfied for any normal subgroups $A$ and $B$ of $G$.
\begin{enumerate}[(i)]
\item $\CC^2_G(A \cap B) = \CC^2_G(A) \cap \CC^2_G(B)$;
\item ${\CC_A(B) = \CC_A(A \cap B)}$;
\item ${\CC_A(\CC_G(B)) = \CC_A(\CC_A(B))}$.
\end{enumerate}\end{lem}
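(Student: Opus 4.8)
The plan is to establish the three identities in the order (ii), (iii), (i), since (iii) follows formally from (ii), and the substantive part of (i) also rests on (ii). Throughout I will use the hypothesis only in the form: every normal subgroup $N \trianglelefteq G$ satisfies $\Z(N) = N \cap \CC_G(N) = 1$, because $\Z(N)$ is characteristic in $N$, hence a normal abelian subgroup of $G$, hence trivial. I also note that all subgroups occurring below ($A$, $B$, $A \cap B$, their centralisers, and the double centralisers $\CC^2_G(A)$, $\CC^2_G(B)$) are normal in $G$; consequently any commutator of two of them lies in their intersection, and the three subgroups lemma applies without normal-closure subtleties.

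For (ii) the inclusion $\CC_A(B) \subseteq \CC_A(A \cap B)$ is trivial. For the reverse I would set $C = \CC_A(A \cap B)$, a normal subgroup, and $N = [C,B]$. Since $C \le A$ with both normal, $N \le [A,B] \le A \cap B$; and since $C$ centralises $A \cap B \supseteq N$, one has $[C,N] = 1$. Applying the three subgroups lemma to $C, B, N$: here $[[B,N],C] \le [A\cap B, C] = 1$ (using $[B,N] \le N \le A \cap B$) and $[[N,C],B] = 1$, whence $[N,N] = [[C,B],N] = 1$. Thus $N$ is an abelian normal subgroup, so $N = 1$, i.e. $C$ centralises $B$. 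The only real content is this three subgroups computation showing $N$ is abelian.

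Statement (iii) then drops out by substitution: applying (ii) with the normal subgroup $\CC_G(B)$ in place of $B$ gives $\CC_A(\CC_G(B)) = \CC_A(A \cap \CC_G(B)) = \CC_A(\CC_A(B))$, since $A \cap \CC_G(B) = \CC_A(B)$.

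For (i), writing $\overline{X} = \CC^2_G(X)$, the inclusion $\overline{A \cap B} \subseteq \overline A \cap \overline B$ is immediate from the fact that $X \mapsto \overline X$ is order-preserving. The reverse inclusion is the main obstacle and is where the hypothesis is indispensable. I would set $D = \overline A \cap \overline B$ and $E = \CC_G(A \cap B)$, both normal, and prove $Z := D \cap E = 1$. First I show $[Z,A] = 1$: since $Z \le \overline B$ with $\overline B$ normal, $[Z,A] \le \overline B$; on the other hand $Z \le E$ and $[E,A] \le E \cap A = \CC_A(A \cap B) = \CC_A(B) \le \CC_G(B)$ by (ii), so $[Z,A] \le \CC_G(B)$; hence $[Z,A] \le \overline B \cap \CC_G(B) = \Z(\overline B) = 1$. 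By symmetry $[Z,B] = 1$, so $Z \le \CC_G(A)$, and combined with $Z \le \overline A$ this forces $Z \le \overline A \cap \CC_G(A) = \Z(\overline A) = 1$. Finally, since $D$ and $E$ are both normal, $[D,E] \le D \cap E = Z = 1$, so $D \le \CC_G(E) = \CC_G(\CC_G(A \cap B)) = \overline{A \cap B}$, which gives equality. I expect this reduction to $Z = 1$, and in particular the identification $E \cap A = \CC_A(B)$ supplied by (ii), to be the crux; the formal centraliser manipulations alone are circular, so the genuine input is the triviality of the centres of the normal subgroups $\overline A$ and $\overline B$.
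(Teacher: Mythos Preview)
Your argument is correct and follows essentially the same strategy as the paper: for (i) both you and the paper reduce the nontrivial inclusion to showing that $Z := \CC^2_G(A) \cap \CC^2_G(B) \cap \CC_G(A\cap B)$ is trivial, and then exploit repeatedly that two normal subgroups with trivial intersection commute; (iii) is in both cases an immediate substitution using (ii).

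The one place worth commenting on is (ii), where your route via the three subgroups lemma is sound but more elaborate than necessary. The paper observes directly that $C := \CC_A(A\cap B)$ satisfies $C \cap B \le C \cap (A\cap B) = \Z(A\cap B) = 1$ (using $C \le A$), and since $C$ and $B$ are both normal this gives $[C,B] \le C \cap B = 1$ in one line. Conversely, in (i) you invoke (ii) to identify $E \cap A = \CC_A(B)$, which streamlines the step $[Z,A] \le \CC_G(B)$; the paper instead argues (i) independently of (ii) by first showing $Z \cap A \cap B = 1$, hence $Z \cap A \le \CC_G(B)$, hence $Z \cap A \le \Z(Z) = 1$, and then $Z \le \CC_G(A)$, $Z \le \Z(\CC_G(A)) = 1$. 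The two arguments differ only in bookkeeping; neither buys anything the other does not. (Incidentally, your sentence ``By symmetry $[Z,B]=1$'' is superfluous: once $[Z,A]=1$ you already have $Z \le \CC_G(A) \cap \overline A = \Z(\overline A) = 1$.)
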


\begin{proof}
(i) Let $K = \CC^2_G(A)$ and let $L = \CC^2_G(B)$. It is clear that
\[ \CC_G(\CC_G(A \cap B)) \le \CC_G(\CC_G(A)\CC_G(B)) = K \cap L,\]
so it suffices to show that $K \cap L$ centralises $\CC_G(A \cap B)$.  For this it is enough to show that $M = \CC_G(A \cap B) \cap K \cap L = 1$, since all subgroups under consideration are normal.  We see that $M \cap A \cap B \le \Z(A \cap B) = 1$, so $M \cap A$ centralises $B$, in other words $M \cap A \le \CC_G(B)$.  Then $M \cap A$ is centralised by $L$, so $M \cap A$ is central in $M$, so $M \cap A = 1$, and hence $M \le \CC_G(A)$.  Since $K$ centralises $\CC_G(A)$ and contains $M$, we see that $M$ is central in $\CC_G(A)$ and hence trivial, as required.

(ii) Notice that $\CC_A(A \cap B)$ is normal in $G$ and has trivial intersection with $A \cap B$ (since $\Z(A \cap B)=1$). Therefore $ [\CC_A(A \cap B), B] \subset  \CC_A(A \cap B) \cap B =1$, whence $ \CC_A(A \cap B) \leq \CC_A(B)$. The reverse inclusion is clear.

(iii) This a special case of (ii), using the normal subgroups $A$ and $\CC_G(B)$ instead of $A$ and $B$.
\end{proof}

\begin{proof}[Proof of Theorem \ref{centlat}]Since we are concerned only with properties of a local nature, we are free to replace $G$ and $U$ with a finite index subgroup $V$ of $U$ (which can be assumed to be open in the \tdlc case) such that $\QZ(V)=1$.  In other words, we may assume that $G$ has trivial quasi-centre.

(i) Let $A$ be a bounded locally normal subgroup of $G$, let $\alpha = [A]$, and let $V$ be a subgroup of $G$ commensurate with $U$ such that $A \unlhd V$.  By Lemma~\ref{goodqc}, $\CC^n_V(A)$ is then a representative of $\alpha^{\bot^n}$.  We deduce $\alpha^{\bot^2} \ge \alpha$ and $\alpha^{\bot^3} = \alpha^\bot$ from Lemma~\ref{elcent}.  In particular, $\bot$ acts as an involution on $\lcent(G,U)$, so $\bot^2(\lnorm(G,U)) = \lcent(G,U)$.  In the \tdlc case, we note that $\CC_U(\CC_V(A))$ is a closed representative of $\alpha^{\bot^2}$, so $\alpha^{\bot^2} \in \lnorm(G)$, and since $\bot^3 = \bot$, it follows that $\bot(\lnorm(G)) = \bot(\lnorm(G,U)) =: \lcent(G)$ and that $\lcent(G)$ is contained in $\lnorm(G)$.

It is clear that $\bot$ is order-reversing (so $\bot^2$ is order-preserving).  Let $\alpha,\beta \in \lnorm(G,U)$ and choose representatives $A$ and $B$ of $\alpha, \beta$ respectively that are normal in a subgroup $V$ that is commensurate with $U$. Then  we have $\CC_V(AB) = \CC_V(A) \cap \CC_V(B)$, so that 
\begin{equation}\label{eq:vee} (\alpha \vee \beta)^\bot = \alpha^\bot \wedge \beta^\bot.\end{equation}
This shows that $\lcent(G,U)$ is a meet-sublattice of $\lnorm(G,U)$.  Since the only abelian normal subgroup of $V$ is the trivial one, it also follows from Lemma~\ref{cclem}(i) that $\bot^2$ is compatible with the meet operation.

By Lemma~\ref{finint}, $\alpha \wedge \beta = 0$ if and only if $B$ centralises $A$, which amounts to saying that $\beta \le \alpha^\bot$.  Hence $\lcent(G,U)$ is a Boolean algebra by Lemma~\ref{boolflip}.

We see that $\bot^2$ is compatible with the join operations in passing from $\lnorm(G,U)$ to $\lcent(G,U)$ as follows, using (\ref{eq:vee}):
\[ \alpha^{\bot^2} \vee_c \beta^{\bot^2} = (\alpha^{\bot^3} \wedge \beta^{\bot^3})^\bot =   (\alpha^{\bot} \wedge \beta^{\bot})^\bot = (\alpha \vee \beta)^{\bot^2}.\]
Hence $\bot^2$ is a lattice homomorphism.

(ii) It is clear that $\theta(\lnorm(G,U)) = \theta(\lcent(G,U))$, since $\bot^2$ is a projection of $\lnorm(G,U)$ to $\lcent(G,U)$.  It is also clear that $\theta$ is order-preserving.  Given $\beta \in \lnorm(G,U)$ we have
\[ \theta^2(\beta) = (\beta^{\bot^2} \wedge \alpha)^{\bot^2} \wedge \alpha =  \beta^{\bot^4} \wedge \alpha^{\bot^2} \wedge \alpha = \beta^{\bot^2} \wedge \alpha,\]
so $\theta^2 = \theta$.

Let $A$ be a bounded locally normal representative of $\alpha$ and let $K$ be a direct factor of $A$.  Choose $V$ commensurate with $U$ such that both $A$ and $K$ are normal in $U$.  Then $K = \CC^2_A(K)$, so $K = \CC_A(\CC_V(K))$ by Lemma \ref{cclem}(iii).  In other words, $[K] = \beta^{\bot^2} \wedge \alpha$ where $\beta = \beta^{\bot^2}$ is the element $[\CC^2_V(K)] = [\QC^2_U(K)]$ of $\lcent(G,U)$.  Thus $\theta(\lcent(G,U))$ contains $\ldlat(G,U;\alpha)$.

Let $\mcL = \theta\inv(\ldlat(G,U;\alpha)) \cap \lcent(G,U)$.  Then $\mcL$ is closed under meets in $\lcent(G,U)$, by the fact that $\bot^2$ is compatible with the meet operation.  Furthermore $\mcL$ is closed under complementation in $\lcent(G,U)$, since the complement of $\beta \wedge \alpha$ in $\ldlat(G,U;\alpha)$ is $(\beta \wedge \alpha)^\bot \wedge \alpha$ by Theorem~\ref{dfaclat}. Using Lemma~\ref{cclem}(ii) we see that
\[ (\beta \wedge \alpha)^\bot \wedge \alpha = [\CC_A(A \cap B)] = [\CC_A(B)] = \beta^\bot \wedge \alpha,\]
where $A$ and $B$ are representatives of $\alpha$ and $\beta$ respectively that normalise each other.
So $\mcL$ is a subalgebra of $\lcent(G,U)$.\end{proof}

Theorem~\ref{intro:boolean} (ii) now follows from Theorem~\ref{centlat} together with Theorem~\ref{thm:locallyCstable}.

Let $(G,U)$ be a locally C-stable Hecke pair such that $\QZ(G)=1$.  Then each element $\alpha$ of the centraliser lattice has a canonical `global' representative, namely $\QC_G(\alpha^\bot)$.  This subgroup is the unique largest locally normal representative of $\alpha$.  If $(G,U)$ is \tdlc, then $\QC_G(\alpha^\bot)$ is closed, but not necessarily compact.

We can consider the set $\mcL = \{\QC_G(\alpha) \mid \alpha \in \lnorm(G,U)\}$, ordered by inclusion, as the \defbold{global centraliser lattice}\index{centraliser lattice!global}\index{global centraliser lattice} of $(G,U)$.  As a Boolean algebra, it is $G$-equivariantly isomorphic to $\lcent(G,U)$.  The elements of $\mcL$ also have a more direct characterisation using Theorem~\ref{thm:locallyCstable}, as follows:

\begin{prop}\label{prop:globalcent}
Let $(G,U)$ be a locally C-stable Hecke pair  such that $U$ is residually finite and $\QZ(G)=1$.  Then the set of subgroups $\mcL := \{\QC_G(\alpha) \mid \alpha \in \lnorm(G,U)\}$ enjoys the following properties:
\begin{enumerate}[(i)]
\item If $H \in \mcL$, then $H = \CC^2_G(H)$ and $H$ is locally normal.   Indeed $H \cap R = \CC^2_R(H)$, where $R$ is any locally normal subgroup of $G$.

\item Let $K$ be a  subgroup of $G$ such that $\CC_G(K)$ is locally normal.  Then $\CC_G(K) \in \mcL$.  If $K$ is locally normal, in fact $\CC_G(K) = \QC_G([K])$ and $\CC^2_G(K) = \QC_G([K]^\bot)$.
\end{enumerate}
\end{prop}

\begin{proof}
Let $\alpha \in \lnorm(G,U)$, let $H = \QC_G(\alpha)$.  Since $\alpha$ is an element of $\lnorm(G,U)$, then $|U:U_\alpha|$ is finite; in turn $U_\alpha$ normalises $H$, since $H$ is defined as a subgroup of $G$ in terms of $\alpha$.  Hence $|U:\N_U(H)|$ is finite, so that $H$ is locally normal.

Let $L = \QC_G(\alpha^\bot)$.  Then $L = \CC_G(H)$ by Theorem~\ref{thm:locallyCstable}.  The same argument shows that $H = \CC_G(L)$, so $H = \CC^2_G(H)$.   
Now let $M = \CC_G(H)$, and suppose $R$ is a locally normal subgroup of $G$.  Then $R \cap M$ is locally normal and hence centreless, since $G$ is locally C-stable and $\QZ(G)=1$.  Then $\CC_R(R \cap M) \ge H \cap R$, since $M$ centralises $H$.  Moreover, we have
$$\CC_R(R \cap M) \cap M = \CC_R(R \cap M) \cap (R \cap M) \le \Z(R \cap M) = 1.$$
Thus $\CC_R(R \cap M)$ and $M$ are locally normal subgroups that intersect trivially, so they commute by Theorem~\ref{thm:locallyCstable}(iii), so that   $\CC_R(R \cap M) = \CC_R(M)$, and hence $\CC_R(R \cap M) \le \CC^2_G(H) = H$.  This proves (i).

Now let $K$ be a locally normal subgroup of $G$.  Then $\CC_G(K) = \QC_G([K])$ by Theorem~\ref{thm:locallyCstable}, so in particular $\CC_G(K) \in \mcL$.  Setting $L = \CC_G(K)$, the same argument shows that $\CC_G(L) = \QC_G(\alpha^\bot)$.  More generally, if $K$ is a subgroup of $G$ such that $\CC_G(K)$ is locally normal, then $\CC_G(K) = \CC^3_G(K)$, and in turn $\CC^3_G(K)$ is the centraliser of a locally normal subgroup $\CC^2_G(K)$ of $G$, so $\CC_G(K) \in \mcL$.  This completes the proof of (ii).
\end{proof}

As a consequence of Proposition~\ref{prop:globalcent} and Lemma~\ref{dirfac}, we obtain information about the decomposability of centralisers in $G$.

\begin{cor}\label{globalcent:dirfac}
Let $(G,U)$ be a locally C-stable Hecke pair  such that $U$ is residually finite and $\QZ(G)=1$.  Let $A$ and $B$ be locally normal subgroups of $G$.  Suppose that $B = K \times L$ for some   subgroups $K$ and $L$.  Then
$$ \CC_B(A) = \CC_K(A) \times \CC_L(A).$$
\end{cor}

\begin{proof}Let $H = \CC_G(A)$ and let $R = \CC_B(A)$.  Then $R = \CC^2_B(H)$ by Proposition~\ref{prop:globalcent}; consequently $R \ge \CC^2_B(R)$, so in fact $R = \CC^2_B(R)$.  Thus by Lemma~\ref{dirfac}, $R \cap K$ and $R \cap L$ are complementary direct factors of $R$, giving the required factorisation.\end{proof}

Similar to Proposition~\ref{prop:ldlat:dense}, there is a relationship between the centraliser lattice of a \tdlc group and that of its dense subgroups, viewed as Hecke pairs.

\begin{prop}\label{prop:lcent:dense}Let $G$ be a locally C-stable \tdlc group such that $\QZ(G)=1$ and let $D$ be a dense subgroup of $G$.  Let $U$ be an open compact subgroup of $G$, and let $R$ be a commensurated subgroup of $D$ that is also a dense subgroup of $U$.
\begin{enumerate}[(i)]
\item The Hecke pair $(D,R)$ is locally C-stable, with $\QZ(D)=1$.
\item Let $\mcL$ be the global centraliser lattice of $(G,U)$ and let $\mcL'$ be the global centraliser lattice of $(D,R)$.  Then there is an injective $D$-equivariant order-preserving map from $\mcL'$ to $\mcL$ given by $K \mapsto \CC^2_G(K)$.
\end{enumerate}
\end{prop}

\begin{proof}
Given a locally normal subgroup $L$ of $D$, then $\overline{L}$ and $\CC_G(L)$ are both closed and locally normal in $G$; in particular, $\CC_G(L) \in \mcL$ by Proposition~\ref{prop:globalcent}.  Assuming $L$ is non-trivial, then since $\overline{L}$ cannot be finite or abelian, neither can $L$.  In addition, we have $\QZ(D)=1$ by Proposition~\ref{prop:ldlat:dense}.  This proves (i) in view of Theorem~\ref{thm:locallyCstable}.

Given $K \in \mcL'$, then $\CC_G(K) \in \mcL$ as above, hence $\CC^2_G(K) \in \mcL$.  Thus $\theta: K \mapsto \CC^2_G(K)$ is a map from $\mcL'$ to $\mcL$.    Clearly $\theta$ is $D$-equivariant and order-preserving.  Since $K = \CC^2_D(K)$, we see that ${K = \theta(K) \cap D}$ as follows:
$$K \le \theta(K) \cap D = \CC^2_G(K) \cap D \le \CC_G(\CC_D(K)) \cap D = \CC^2_D(K) = K.$$
Thus $\theta$ is injective.  This proves (ii).
\end{proof}

\begin{ex}In general, given a \tdlc group $G$, the centraliser lattice of a dense subgroup of $G$ does not determine the centraliser lattice of $G$.  Let $F$ be a non-abelian free group (with the discrete topology), and consider $F$ as a Hecke pair $(F,F)$.  It is clear that $F$ is locally C-stable, so we can define the centraliser lattice $\lcent(F,F)$.  Since $\CC_F(x)$ is cyclic for all $x \in F \smallsetminus \{1\}$, in fact $\lcent(F,F) = \{0, \infty\}$.  Likewise $\lcent(\hat{F}_p) = \{0,\infty\}$, where $\hat{F}_p$ is the pro-$p$ completion of $F$.  However, there are many pro-$p$ groups $P$ with dense free subgroups such that $\lcent(P)$ is large.  For instance, if $P$ is a pro-$p$ branch group, then by results of J. Wilson, $P$ acts faithfully on $\lcent(P)$, and $P$ contains a dense non-abelian free subgroup.  (See \cite{WilNH}.)\end{ex}

The global centraliser lattice is a natural frame of reference for considering quasi-factors of a locally C-stable \tdlc group that has trivial quasi-centre.  In contrast to the compact case considered in Lemma~\ref{lem:compact:quasifactor}, a closed quasi-factor of a non-compact \tdlc group $G$ need not be a direct factor of $G$.  (See \cite{CM}, Appendix II.)

\begin{prop}\label{prop:globalcent:quasifactor}Let $G$ be a locally C-stable \tdlc group with $\QZ(G)=1$.  Let $K$ be a closed normal subgroup of $G$, and suppose $K\CC_G(K)$ is dense in $G$.  Let $\alpha = [K]$.  Then there is a dense subgroup of $G$ of the form $\QC_G(\alpha) \times \QC_G(\alpha^\bot)$, where both direct factors are closed and normal in $G$.  In addition, the following are equivalent:
\begin{enumerate}[(i)]
\item $G = K \times \CC_G(K)$;
\item $G = \QC_G(\alpha) \times \QC_G(\alpha^\bot)$;
\item $\alpha \in \ldlat(G)$;
\item $\alpha^{\bot^2} \in \ldlat(G)$.
\end{enumerate}
\end{prop}

\begin{proof}We have $\QC_G(\alpha) = \CC_G(K)$ and $\QC_G(\alpha^\bot) = \CC^2_G(K)$ by Proposition~\ref{prop:globalcent}; in particular, both $\QC_G(\alpha)$ and $\QC_G(\alpha^\bot)$ are closed.  Since $K$ is normal, both $\QC_G(\alpha)$ and $\QC_G(\alpha^\bot)$ are normal in $G$.  Since $\Z(G)=1$, we see that both $K$ and $\CC_G(K)$ have trivial centre.  This ensures that the subgroup generated by $\QC_G(\alpha)$ and $\QC_G(\alpha^\bot)$ is a direct product.

Let $U$ be an open compact subgroup of $G$ and let $L$ be a direct factor of $G$.  Then $L = \CC^2_G(L)$, so $[L] = [L]^{\bot^2}$.  Moreover, the topology of $G$ is generated as a product topology by the topologies of $L$ and $\CC_G(L)$.  In particular
$$(L \cap U) \times \CC_U(L)$$
is open in $G$, hence has finite index in $U$.  Thus $L\cap U$ is an almost direct factor of $U$, so $\alpha = [L \cap U] \in \ldlat(G)$.  Thus (i) implies (iii) and (ii) implies (iv).

On the other hand, let $L$ be a closed normal subgroup of $G$ such that $L\CC_G(L)$ is dense and $[L] \in \ldlat(G)$.
$$(L \cap U) \times \CC_U(L \cap U)$$
is open in $U$, since it is a closed subgroup of finite index, and moreover $\CC_U(L \cap U) = \CC_U(L)$ by Theorem~\ref{thm:locallyCstable}.  Hence $L\CC_G(L)$ contains an open subgroup of $G$.  Since $L\CC_G(L)$ is also dense, we conclude that $G = L \times \CC_G(L)$.  So (iii) implies (i) and (iv) implies (ii).

It is clear that (i) implies (ii), since $K \le \QC_G(\alpha^\bot)$ and $\CC_G(K) = \QC_G(\alpha)$.  Conversely if (ii) holds, then $K \times \CC_G(K)$ is a closed subgroup of $G = \QC_G(\alpha) \times \QC_G(\alpha^\bot)$, since $K$ is closed in $\QC_G(\alpha^\bot)$ and $\CC_G(K) = \QC_G(\alpha)$.  Since $K \times \CC_G(K)$ is also dense, it follows that (i) holds.  This completes the proof that (i), (ii), (iii) and (iv) are equivalent.
\end{proof}

\begin{cor}Let $G$ be a locally C-stable \tdlc group such that $\lcent(G) =\ldlat(G)$ and such that $\QZ(G)=1$.  Then every closed quasi-factor of $G$ is a direct factor of $G$.\end{cor}

\subsection{Weakly decomposable actions}

Let $\mcA$ be a Boolean algebra.  Then by the Stone representation theorem, $\mcA$ defines a \defbold{profinite space}\index{profinite space} (that is, a compact totally disconnected space), the \defbold{Stone space}\index{Stone space} $\mfS(\mcA)$ of $\mcA$, whose points are the ultrafilters of $\mcA$, with clopen subsets corresponding to elements of $\mcA$.  Conversely, given a profinite space $\mfX$, the set $\mcA(\mfX)$ of clopen subsets of $\mfX$ forms a Boolean algebra.  This correspondence produces a natural isomorphism between $\Aut(\mcA)$ and $\Aut(\mfS(\mcA))$.  In practice we will often find it convenient to abuse notation and treat elements of $\mcA$ as subsets of $\mfS(\mcA)$, identifying $\alpha \in \mcA$ with 
\[\mfS(\alpha) := \{\mfp \in \mfS(\mcA) \mid \alpha \in \mfp\} \in \mcA(\mfS(\mcA)).\]
Given $\alpha \in \mcA$ and $\mfp \in \mfS(\mcA)$, the expressions `$\alpha \in \mfp$' and `$\mfp \in \alpha$' can therefore be taken to be synonymous.  A \defbold{partition}\index{partition} $\mcP$ of $\alpha \in \mcA$ is a finite subset of $\mcA$ such that the join of $\mcP$ is $\alpha$ and the meet of any two distinct elements of $\mcP$ is $0$; a partition of $\mcA$ is just a partition of $\infty$ in $\mcA$.

\begin{defn}
Let $G$ be a topological group acting on a set $\Omega$.  Say that the action of $G$ on $\Omega$ is {\defbold{smooth}}\index{smooth action}\index{action!smooth} if every point stabiliser is open; this implies that for every open compact subgroup $U$ of $G$, the orbits of $U$ on $\Omega$ are all finite.  Note that if the action is smooth, then $G$ is orbit-equivalent to any of its dense subgroups.
\end{defn}

The following fact shows the relevance of smooth actions for  \tdlc groups:

\begin{lem}\label{lem:SmoothActionContinuous}
Let $G$ be a \tdlc group, let $\mfX$ be a profinite space and  $\mcA$ a Boolean algebra.

\begin{enumerate}[(i)]
\item If $G$ acts on $\mfX$ by homeomorphisms, and if the $G$-action is continuous with respect to the topology of uniform convergence, then the corresponding $G$-action on the Boolean algebra of clopen subsets of $\mfX$ is smooth.

\item If $G$ acts on $\mcA$ by automorphisms, and if the $G$-action is smooth, then the corresponding $G$-action on the Stone space $\mfS(\mcA)$ is continuous with respect to the topology of uniform convergence.  In particular, the action map $(g,x) \mapsto gx$ is a continuous map from $G \times \mfS(\mcA)$ to $\mfS(\mcA)$.
\end{enumerate}
\end{lem}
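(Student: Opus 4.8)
The plan is to unwind the definition of the topology of uniform convergence on the homeomorphism group of a profinite space and then translate everything into the Boolean algebra via Stone duality. The one fact I would set up carefully at the start is the following description of the topology: for a profinite space $\mfX$ the topology of uniform convergence on $\mathrm{Homeo}(\mfX)$ admits a neighbourhood basis of the identity consisting of the open subgroups $V_\mcP = \{ h \in \mathrm{Homeo}(\mfX) \mid h(B) = B \text{ for every block } B \in \mcP \}$, as $\mcP$ ranges over the finite partitions of $\mfX$ into clopen sets. Indeed, the unique uniformity of $\mfX$ is generated by the equivalence relations ``lie in a common block of $\mcP$'', so $h$ is uniformly close to $\id$ precisely when $h$ moves no point out of its block. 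Since a homomorphism of topological groups is continuous as soon as it is continuous at the identity, a $G$-action by homeomorphisms will be continuous exactly when the preimage of each $V_\mcP$ is open in $G$.

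For part (i), I would fix a clopen set $\alpha \subseteq \mfX$ and apply this to the partition $\mcP = \{\alpha, \mfX \setminus \alpha\}$. Then $V_\mcP$ is precisely the setwise stabiliser of $\alpha$ in $\mathrm{Homeo}(\mfX)$ (fixing $\alpha$ setwise forces fixing its complement setwise), so that stabiliser is open. By the assumed continuity of $G \to \mathrm{Homeo}(\mfX)$ its preimage, namely the stabiliser of $\alpha$ in $G$, is open. As $\alpha$ is an arbitrary element of the algebra of clopen subsets, every point stabiliser for the induced $G$-action on that algebra is open, which is exactly the assertion that the action is smooth.

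For part (ii), Stone duality turns the $G$-action on $\mcA$ by automorphisms into a $G$-action on $\mfS(\mcA)$ by homeomorphisms, under which $g$ carries the clopen set $\mfS(\beta)$ to $\mfS(g\beta)$. A finite clopen partition $\mcP$ of $\mfS(\mcA)$ corresponds to a finite partition of $\infty$ in $\mcA$, and the preimage in $G$ of $V_\mcP$ is the finite intersection $\bigcap_{\beta \in \mcP} \mathrm{Stab}_G(\beta)$. Each factor is open by smoothness, so the intersection is open; hence $G \to \mathrm{Homeo}(\mfS(\mcA))$ is continuous at the identity, and therefore continuous.

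For the concluding joint-continuity claim I would argue directly, rather than invoking the general coincidence of the compact-open and uniform topologies on $\mathrm{Homeo}$ of a compact space. Fix $(g_0,\mfp_0)$ and a basic clopen neighbourhood $\mfS(\beta)$ of $g_0\mfp_0$, so that $g_0^{-1}\beta \in \mfp_0$. Since $\mathrm{Stab}_G(\beta)$ is open by smoothness, $N = \mathrm{Stab}_G(\beta)\, g_0$ is an open neighbourhood of $g_0$, and writing $g = s g_0$ with $s \in \mathrm{Stab}_G(\beta)$ gives $g^{-1}\beta = g_0^{-1} s^{-1}\beta = g_0^{-1}\beta$. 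Taking the clopen neighbourhood $C = \mfS(g_0^{-1}\beta)$ of $\mfp_0$, every $g \in N$ and $\mfp \in C$ then satisfy $g^{-1}\beta = g_0^{-1}\beta \in \mfp$, that is $g\mfp \in \mfS(\beta)$, so $N \times C$ maps into $\mfS(\beta)$. I expect the only genuinely delicate step to be the explicit identification of the uniform-convergence neighbourhood basis of $\id$ with the partition-stabilisers $V_\mcP$, which is where profiniteness (clopenness of the blocks) is used; once that is granted, both directions reduce to the formal facts that a finite intersection of open subgroups is open and that the local constancy of $g \mapsto g^{-1}\beta$ is a restatement of smoothness.
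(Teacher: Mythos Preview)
Your argument is correct and rests on the same underlying observation as the paper's proof: that on a profinite space the uniform-convergence topology on the homeomorphism group is governed by the stabilisers of finite clopen partitions. The paper establishes both directions by a net argument (for (i), a net $g_\alpha \to 1$ eventually preserves each block of a given partition, forcing the common block-stabiliser to be open; for (ii), smoothness makes that common stabiliser open, so $g_\alpha$ eventually lies in it), whereas you isolate the description of the identity-neighbourhood basis $\{V_\mcP\}$ once at the outset and then read off both directions as formal consequences. This packaging is a little cleaner and makes the logical dependence on profiniteness more visible. For the joint-continuity clause the paper simply invokes the general fact (citing Kelley) that continuity in the uniform topology gives a jointly continuous action on a compact space, while you supply a short direct verification using smoothness of the stabiliser of $\beta$; your argument is self-contained and avoids the external reference, at the cost of being specific to this setting.
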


\begin{proof}
Let  $U \in \mcB(G)$ and let $\mfX = \mfX_1 \cup \dots \cup \mfX_n$ be a clopen partition of $\mfS(\mcA)$. 

 If $G$ acts on $\mfX$ by homeomorphisms, and if the $G$-action is continuous with respect to the topology of uniform convergence, then for any net $(g_\alpha)$ converging to the identity in $G$, for all $i$ and each $x \in \mfX_i$, we have $g_\alpha(x) \in \mfX_i$ for all sufficiently large $\alpha$. Therefore $g_\alpha$ eventually belongs to the common stabiliser  $H = \bigcap_{i=1}^n G_{\mfX_i}$, which is thus an open subgroup of $G$. It follows that $H \cap U$ is open in $G$, and so it therefore  $U_{\mfX_i} = U \cap G_{\mfX_i}$. Thus (i) holds. 
 
 Assume now that  $G$ acts smoothly on $\mcA$ by automorphisms and set $\mfX = \mfS(\mcA)$. 
Since the action is smooth, the common stabiliser $V = \bigcap_{i=1}^n U_{\mfX_i}$ is open in $U$. Thus any net   $(g_\alpha)$ converging to the identity in $G$ eventually belongs to $V$ and thus stabilises each $\mfX_i$. Thus $(g_\alpha)$ uniformly converges to the identity in the space of homeomorphisms of $\mfX$. In particular the action of $G$ on $\mfS(\mcA)$ is jointly continuous (see \cite[Chapter 7]{Kelley}). This proves (ii).
\end{proof}

As before, let $G$ be a group acting on a set $\Omega$. Given $X \subseteq \Omega$, the \defbold{rigid stabiliser}\index{rigid stabiliser} of $X$ is the subgroup
\[ \rist_G(X) := \{ g \in G \mid gx = x  \text{ for all } x \in \Omega \smallsetminus X \}.\]

Similarly, if $G$ acts on a Boolean algebra $\mcA$, then given $\alpha \in \mcA$ we define\index{rist@$\rist_G(\alpha)$}
\[ \rist_G(\alpha) := \{ g \in G \mid g\beta = \beta  \text{ for all } \beta \le \alpha^\bot\}.\]
(Equivalently, one could define $\rist_G(\alpha)$ to be the rigid stabiliser of the subset of $\mfS(\mcA)$ corresponding to $\alpha$.)

Notice that if $\mcA$ is a subalgebra of $\lcent(G)$ or of $\ldlat(G)$, then $\QC_G(\alpha) \le \rist_G(\alpha^\bot)$ for all $\alpha \in \mcA$.  

\begin{lem}\label{clofix}
Let $G$ be a \tdlc group acting smoothly on a Boolean algebra $\mcA$ and let $\upsilon \subseteq \mfS(\mcA)$.  Then $\rist_G(\upsilon)$ is closed.
\end{lem}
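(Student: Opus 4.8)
The plan is to show that the complement of $\rist_G(\upsilon)$ in $G$ is open, by producing for each $g \notin \rist_G(\upsilon)$ an open neighbourhood of $g$ that avoids $\rist_G(\upsilon)$ entirely. The definition of $\rist_G(\upsilon)$ is that $g$ fixes every point of $\mfS(\mcA) \setminus \upsilon$; so if $g \notin \rist_G(\upsilon)$, there is some point $\mfp \in \mfS(\mcA) \setminus \upsilon$ with $g\mfp \neq \mfp$. The natural strategy is to separate $\mfp$ from $g\mfp$ by a clopen set and then use smoothness to find an open subgroup on the $g$-side that continues to move $\mfp$.

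\textbf{First} I would fix $g \notin \rist_G(\upsilon)$ and choose a witness point $\mfp \in \mfS(\mcA) \setminus \upsilon$ with $g\mfp \neq \mfp$. Since $\mfS(\mcA)$ is a profinite space (in particular Hausdorff and totally disconnected), the distinct points $\mfp$ and $g\mfp$ can be separated by a clopen set: there is some $\beta \in \mcA$ with $\mfp \in \beta$ but $g\mfp \notin \beta$, equivalently $\beta \in \mfp$ and $\beta \notin g\mfp$. \textbf{Next}, I would invoke Lemma~\ref{lem:SmoothActionContinuous}(ii): because the $G$-action on $\mcA$ is smooth, the action on $\mfS(\mcA)$ is continuous, and in particular the action map $(h,x) \mapsto hx$ is jointly continuous. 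Using this continuity at the pair $(g,\mfp)$, together with the fact that $\{x : \beta \in x\}$ and its complement are open, I can find an open neighbourhood $gV$ of $g$ (with $V$ an open subgroup of $G$) such that every $h \in gV$ still satisfies $h\mfp \neq \mfp$.

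\textbf{The key point} is then that every $h \in gV$ lies outside $\rist_G(\upsilon)$: each such $h$ moves the point $\mfp$, which lies in $\mfS(\mcA)\setminus \upsilon$, so $h$ cannot fix every point of the complement of $\upsilon$. Hence $gV$ is an open neighbourhood of $g$ disjoint from $\rist_G(\upsilon)$, showing the complement of $\rist_G(\upsilon)$ is open and therefore $\rist_G(\upsilon)$ is closed.

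\textbf{The main obstacle} is making the continuity argument in the second step precise, since smoothness gives continuity only for finite clopen partitions; the cleanest route is to appeal directly to the joint continuity of the action map furnished by Lemma~\ref{lem:SmoothActionContinuous}(ii), which already encapsulates the needed topological input. A subtle point to check is that the separating clopen set $\beta$ and the neighbourhood produced by continuity genuinely pin down the motion of a \emph{single} point $\mfp$ outside $\upsilon$, rather than merely the stabiliser of a clopen piece; but since moving any one point of $\mfS(\mcA)\setminus\upsilon$ already excludes membership in $\rist_G(\upsilon)$, this suffices and no uniformity over all of $\upsilon$ is required.
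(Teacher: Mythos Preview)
Your argument is correct and is essentially an unpacking of the paper's proof. The paper observes that $\rist_G(\upsilon) = \bigcap_{\mfp \notin \upsilon} G_\mfp$ and that each point stabiliser $G_\mfp$ is closed by the continuity provided by Lemma~\ref{lem:SmoothActionContinuous}; your version makes this explicit by picking a witness point $\mfp$ moved by $g$ and using the same continuity to produce an open neighbourhood of $g$ still moving $\mfp$.
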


\begin{proof}It suffices to show that $G_\mfp$ is closed for all $\mfp \in \mfS(\mcA)$. {This is immediate from Lemma~\ref{lem:SmoothActionContinuous}.}\end{proof}
%
%
%

Related to the notion of rigid stabilisers is that of \defbold{(weakly, locally) decomposable} actions.  The centraliser and local decomposition lattices account for all faithful actions of this kind of a \tdlc group (if any such actions exist).

\begin{defn}
Let $(G,U)$ be a Hecke pair such that $G$ acts on a Boolean algebra $\mcA$, with kernel $K$.  Say the action is \defbold{weakly decomposable}\index{weakly decomposable action}\index{action!weakly decomposable} if, for every $\alpha \in \mcA \smallsetminus \{0\}$, the $U$-orbit of $\alpha$ is finite and $\rist_G(\alpha)/K$ is non-trivial.  Note that the fact that $U$ has finite orbits ensures that $UK/K$ is residually finite. Moreover, taking $\alpha = \infty$, we see that $G/K$ must be non-trivial. Say the action is \defbold{locally decomposable}\index{locally decomposable action}\index{action!locally decomposable} if it is weakly decomposable and if, for all $\alpha \in \mcA$, the product $\rist_U(\alpha)\rist_U(\alpha^\bot)$ has finite index in $U$.  Given an action of $G$ by homeomorphisms on a profinite space $\mfX$, we say the action is weakly or locally decomposable if the action of $G$ on the algebra of clopen subsets of $\mfX$ is weakly or locally decomposable respectively.

Say a Hecke pair $(G,U)$ is \defbold{faithful weakly decomposable}\index{faithful weakly decomposable}\index{Hecke pair!faithful weakly decomposable} if $\QZ(G)=1$ and $G$ admits a faithful weakly decomposable action on a Boolean algebra.  (We require that $\QZ(G)=1$ to avoid degenerate situations, such as Hecke pairs of the form $(G,1)$.)
\end{defn}

A property closed related to the weakly decomposable property has been considered by R.~M\"{o}ller and J.~Vonk (\cite{MollerVonk}), referred to in \cite{MollerVonk} as property H.  In particular, the authors give a criterion for obtaining non-discrete topologically simple \tdlc groups, of a kind that we will study further in \cite{CRW-Part2}.  As can easily be seen from the definitions, Property H and weakly decomposable are related as follows:

\begin{prop}Let $G$ be a group of automorphisms of a thick locally finite tree $T$, let $v$ be a vertex of $T$ and let $U = G_v$.  Then $(G,U)$ is a Hecke pair, and the action of $G$ on $T$ has property H in the sense of \cite{MollerVonk} if and only if the action of $G$ on the space of ends of $T$ is weakly decomposable.\end{prop}

Here are some basic properties of weakly decomposable actions. We state them for actions on profinite spaces; of course these can be translated via the Stone correspondence into corresponding statements for actions on Boolean algebras. 

\begin{lem}\label{lem:LatDecompBasic}
Let $(G,U)$ be a Hecke pair such that $G$ acts by homeomorphisms on a profinite space $\mfX$.  Assume that the action is weakly decomposable with kernel $K$. Then:

\begin{enumerate}[(i)]
\item $\mfX$ has no isolated point. 

\item If $\QZ(G/K)=1$, then for every non-empty clopen set $\alpha \subseteq \mfX$, we have $[\rist_G(\alpha)] > [K]$.

\item Suppose $\QZ(G/K)=1$.  Then for any two clopen subsets $\alpha,\beta$, we have $\alpha \neq \beta$ if and only if $[\rist_G(\alpha) ] \neq [\rist_G(\beta)]$. 
\end{enumerate}
\end{lem}

\begin{proof} 
There is no loss of generality in replacing $(G,U)$ by the Hecke pair $(G/K,UK/K)$, so we may assume that the action is faithful.  Since the action is weakly decomposable, it is clear that $|\mfX| > 1$.  

(i) 
If a point $x \in \mfX$ is isolated, its complement is a non-empty clopen set whose rigid stabiliser is trivial, contradicting the assumption that the action is weakly decomposable. 

(ii)
Let $\alpha \subseteq \mfX$ be a non-empty clopen subset. Then $G_\alpha$ contains a finite index subgroup $V$ of $U$, and contains $\rist_G(\alpha)$ as a normal subgroup; by assumption $\rist_G(\alpha)$ is non-trivial.  Since $\QZ(G)=1$, it follows from Lemma~\ref{lem:QZ:discrete} that $\rist_U(\alpha)$ is infinite, so $[\rist_U(\alpha)] > 0$.

(iii) 
Assume that $\alpha \neq \beta$. Upon swapping $\alpha$ and $\beta$, we may assume that $\alpha \not\le \beta$.  Then $\alpha \wedge \beta^\bot \neq 0$ so that $\rist_U(\alpha \wedge \beta^\bot)$ is an infinite bounded locally normal subgroup contained in $\rist_U(\alpha)$.  On the other hand, $\rist_U(\alpha \wedge \beta^\bot) \cap \rist_U(\beta) = 1$.  Hence $[\rist_G(\alpha)] \neq [\rist_G(\beta)]$.
\end{proof}

\subsection{Faithful weakly decomposable groups}

We now study the rigid stabilisers of the $G$-action on the centraliser and local decomposition lattices.  Of particular interest is the case of faithful actions of $G$.

In general, if a group $G$ acts on a Boolean algebra $\mcA$ with an invariant subalgebra $\mcA'$, then $\rist_G(\alpha)$ could be larger when defined in terms of the action on $\mcA'$ than it is when defined in terms of the action on $\mcA$.  However, this ambiguity does not arise in the case of faithful actions on subalgebras of the centraliser lattice: the rigid stabilisers turn out to be exactly the corresponding elements of the global centraliser lattice discussed earlier.

\begin{prop}\label{latrist}
Let $(G,U)$ be a locally C-stable Hecke pair and let $\mcA$ be a $G$-invariant subalgebra of $\lcent(G,U)$ on which $G$ acts faithfully.
Then $\QZ(G)=1$.  Moreover, the following hold. 
\begin{enumerate}[(i)]
\item For each  $\alpha \in \mcA$, we have $\rist_G(\alpha) = \QC^2_G(\alpha) = \CC_G(\QC_G(\alpha)) = \QC_G(\alpha^\bot)$.

\item  If $G$ is a topological group and $U$ is open in $G$, then the action of $G$ on $\mcA$ is smooth.

\item For each $\alpha \in \mcA$, we have  $\alpha = [\rist_G(\alpha)]$.

\item The action of $G$ on $\mcA$ is weakly decomposable, and if $\mcA$ is a subalgebra of $\ldlat(G,U)$ the action is locally decomposable.

\end{enumerate}
\end{prop}

\begin{proof}
Observe that for any Hecke pair $(G,U)$, the quasi-centre of $G$ fixes every element of $\lnorm(G,U)$ by conjugation.  Thus to have a faithful action on a subalgebra of $\lcent(G,U)$, we must have $\QZ(G)=1$.  Note also that $U$ has finite orbits on $\mcA$.

(i) We see that $\QC_G(\alpha^\bot) \le \rist_G(\alpha)$ and $\QC_G(\alpha) \le \rist_G(\alpha^\bot)$.  In particular, $\rist_G(\alpha)$ centralises $\QC_G(\alpha)$, so $\rist_G(\alpha) \le \CC_G(\QC_G(\alpha))$.  By Proposition~\ref{prop:globalcent}, in fact
$$\CC_G(\QC_G(\alpha)) = \QC^2_G(\alpha) = \QC_G(\alpha^\bot).$$
Hence $\rist_G(\alpha) = \CC_G(\QC_G(\alpha)) = \QC^2_G(\alpha)$.  This proves (i).

(ii) For each $\alpha \in \mcA$, then $\rist_G(\alpha)$ is closed by (i), so $G_\alpha = \N_G(\rist_G(\alpha))$ is closed.  Since moreover $|U:U_\alpha|$ is finite, we conclude that $U_\alpha$ is open in $U$, and hence $G_\alpha$ is open in $G$. 

(iii) Follows from (i) and the fact that $\alpha = \alpha^{\bot^2}$, see Theorem~\ref{centlat}(i).  

(iv) It is clear from (iii) that the action is weakly decomposable.  If $\mcA$ is a subalgebra of $\ldlat(G,U)$, note that for all $\alpha \in \mcA$ there are representatives $K$ of $\alpha$ and $L$ of $\alpha^\bot$ such that $KL$ has finite index in $U$.  Consequently $\rist_G(\alpha)\rist_G(\alpha^\bot)$ contains a finite index subgroup of $U$.
\end{proof}

\begin{cor}\label{cor:rist:support}Let $(G,U)$ be a locally C-stable Hecke pair and let $\mcA$ be a $G$-invariant subalgebra of $\lcent(G,U)$ on which $G$ acts faithfully.  Let $K$ be a locally normal subgroup of $G$, and let $\alpha = [K]^{\bot^2}$.  Suppose $\alpha \in \mcA$.  Then the set of points in $\mfS(\mcA)$ moved by $K$ forms an open dense subset of $\alpha$.
\end{cor}

\begin{proof}Let $\upsilon$ be the set of points in $\mfS(\mcA)$ moved by $K$.  We have $K \le \rist_G(\alpha)$ by Proposition~\ref{prop:globalcent} and Proposition~\ref{latrist}(i), which ensures that $\upsilon \subseteq \alpha$.  Suppose $\upsilon$ is not dense in $\alpha$.  Then $\upsilon$ is contained in a proper clopen subset $\beta$ of $\alpha$.  But then $K \le \rist_G(\beta)$, so $[K] \le [\rist_G(\beta)] = \beta < \alpha$, a contradiction.\end{proof}

A Hecke pair $(G,U)$ with trivial quasi-centre that admits a faithful weakly decomposable action is locally C-stable.  Moreover, the faithful locally or weakly decomposable actions are controlled by $\ldlat(G,U)$ and $\lcent(G,U)$ respectively.  

\begin{thm}\label{latdecomp}Let $(G,U)$ be a Hecke pair.
\begin{enumerate}[(i)]
\item Suppose that $G$ has a faithful weakly decomposable action on some Boolean algebra $\mcA$.  Then the following are equivalent:
\begin{enumerate}[(a)]
\item $\QZ(G)=1$;
\item The action of $U$ is weakly decomposable, and every non-trivial normal subgroup of $G$ has non-trivial intersection with $U$;
\item $G$ is locally C-stable, and $\mcA$ is $G$-equivariantly isomorphic to a subalgebra of $\lcent(G,U)$ (indeed, the set of rigid stabilisers of $\mcA$ form a subalgebra of the global centraliser lattice).
\end{enumerate}
\item Every $G$-equivalence class of faithful locally decomposable actions of $G$ (if there are any) occurs as the action of $G$ on a subalgebra of $\ldlat(G,U)$.
\end{enumerate}\end{thm}

We begin the proof of Theorem~\ref{latdecomp} with two lemmas.

\begin{lem}\label{lem:derived}
Let $A,N$ be subgroups of a group $G$. Assume that $N$ is normal, and contains an element $x \in N$ such that $[A, xAx\inv]=1$. Then $N$ contains $[A, A]$. 
\end{lem}

\begin{proof}
Let $a, b, c \in A$. Since $a$ and $b$ commute with $xcx\inv$, we have $[a, b] = [a, b xcx\inv]$. Setting $c = b\inv$, we infer that $[a, b] = [a, [b, x]]$, which belongs to $N$ since $N$ is normal. 
\end{proof}

\begin{lem}\label{wb:vabelian}Let $U$ be a residually finite group acting faithfully on an atomless Boolean algebra $\mcA$.  Let $K$ be a non-trivial normal subgroup of $U$ and let $\alpha \in \mcA \smallsetminus \{0\}$ be such that $g\alpha \wedge \alpha =0$ for some $g \in K$.  
\begin{enumerate}[(i)]
\item If $K$ is abelian, then $\rist_U(\alpha)$ is abelian.
\item If $K$ is finite, then $\rist_U(\alpha)$ is finite-by-abelian (and hence centre-by-finite).
\end{enumerate}
In either case, there exists $\beta \in \mcA$ such that $0 < \beta \le \alpha$ and $\rist_U(\beta)=1$, so the action of $U$ on $\mcA$ is not weakly decomposable.
\end{lem}

\begin{proof}
We see that $K \cap \rist_U(\alpha)$ and $K \cap \rist_U(g\alpha)$ are conjugate subgroups of $K$, and also that $\rist_U(\alpha) \cap \rist_U(g\alpha) = 1$ since $U$ acts faithfully on $\mcA$. Moreover, by Lemma~\ref{lem:derived}, $K \cap \rist_U(\alpha)$ contains $R = [\rist_U(\alpha),\rist_U(\alpha)]$.

Suppose $K$ is abelian.  Then the conjugation action of $K$ on its subgroups is trivial, so $K \cap \rist_U(\alpha) = K \cap \rist_U(g\alpha)$; since $\rist_U(\alpha) \cap \rist_U(g\alpha) = 1$, it follows that $K \cap \rist_U(\alpha) = \rist_K(\alpha) = 1$.  Thus $R = [\rist_U(\alpha),\rist_U(\alpha)]$ is trivial   and $\rist_U(\alpha)$ is abelian.

Suppose instead that $K$ is finite.    In particular, $R = [\rist_U(\alpha),\rist_U(\alpha)]$ is finite, so $\rist_U(\alpha)$ has a finite index subgroup $Z$ such that $Z \cap R = 1$.  We see that $[\rist_U(\alpha),Z] \le Z \cap R = 1$, so $Z$ is central in $\rist_U(\alpha)$ and hence $\rist_U(\alpha)$ is centre-by-finite.

Since $\mcA$ is atomless, there is a strictly descending sequence $(\alpha_i)_{i \in \bN}$ of non-zero elements of $\mcA$ such that $\alpha_0 = \alpha$.  If $\rist_U(\alpha)$ is finite, then there exist $i,j \in \bN$ with $i < j$ such that $\rist_U(\alpha_i) = \rist_U(\alpha_j)$, and thus $\rist_U(\beta)$ is trivial, where $\beta = \alpha_i \wedge \alpha^\bot_j$.  Thus we may suppose that $\rist_U(\alpha)$ is infinite.  Let $Z$ be a central subgroup of $\rist_U(\alpha)$ of finite index.  Since $U$ acts faithfully on $\mcA$, there must be some $\beta \in \mcA \smallsetminus \{0\}$ and $z \in Z$ such that $z\beta \wedge \beta = 0$; it follows that $\beta \le \alpha$, since $\rist_U(\alpha)$ fixes $\alpha^\bot$ pointwise as a subspace of $\mfS(\mcA)$.  Now $\rist_U(\beta)$ and $\rist_U(z\beta)$  are subgroups of $\rist_U(\alpha)$ that are $Z$-conjugate, so $\rist_U(\beta) = \rist_U(z\beta)$, and hence $\rist_U(\beta) = 1$ since $z\beta \wedge \beta = 0$.\end{proof}

\begin{proof}[Proof of Theorem \ref{latdecomp}](i) Let $\mcA$ be a Boolean algebra equipped with a faithful weakly decomposable action of $G$.  Then $\mcA$ is atomless by Lemma~\ref{lem:LatDecompBasic}.  We also see that $U$ is residually finite, since $U$ has finite orbits on $\mcA$.

Suppose $\QZ(G)=1$.  Then given a non-trivial subgroup $K$ of $G$ such that $|U:\N_U(K)|$ is finite, then $K \cap U$ is infinite.  In particular, $\rist_U(\alpha)$ is infinite for every $\alpha \in \mcA \smallsetminus \{0\}$, so the action of $V$ on $\mcA$ is weakly decomposable, for every subgroup $V$ of $G$ commensurate with $U$.  Hence (a) implies (b).  Moreover, by Lemma~\ref{wb:vabelian}, $V$ has no non-trivial finite or abelian normal subgroups.  Hence $G$ is locally C-stable, by Theorem~\ref{thm:locallyCstable}.

Let $\alpha \in \mcA$ and let $\beta = \alpha^\bot$.  Then $\rist_G(\alpha)$ and $\rist_G(\beta)$ commute.  Let $x \in \CC_G(\rist_G(\beta))$.  We claim that $x \in \rist_G(\alpha)$, in other words, the set $\upsilon$ of points moved by $x$ is a subset of $\alpha$.  If not, then $\upsilon \cap \beta$ contains some $\gamma \in \mcA$, since $\upsilon$ is an open subset of $\mfS(\mcA)$.  Indeed there is some $\delta \in \mcA$ such that $0 < \delta \le \gamma$ and $x\delta \not= \delta$.  The fact that the action is weakly decomposable ensures that $\rist_G(\delta)$ and $\rist_G(x\delta)$ are not locally equivalent by Lemma~\ref{lem:LatDecompBasic}(iii), so $x$ does not commensurate $\rist_G(\delta)$, and in particular $x \not\in \QC_G(\rist_G(\delta))$.  Since $\rist_G(\delta) \le \rist_G(\beta)$, we have a contradiction.  This shows that
\begin{equation}\label{eq:ristcent}
\rist_G(\alpha) = \CC_G(\rist_G(\alpha^\bot)).
\end{equation}
By Proposition~\ref{prop:globalcent}, $\rist_G(\alpha)$ is an element of the global centraliser lattice of $G$.  We therefore have a map 
$$\theta: \mcA \rightarrow \lcent(G,U); \; \theta(\alpha) = [\rist_G(\alpha)].$$
By Lemma~\ref{lem:LatDecompBasic}, $\theta$ is injective, and (\ref{eq:ristcent}) shows that $\theta$ is compatible with complementation; $\theta$ is also compatible with meets.  Hence $\theta$ is a homomorphism of Boolean algebras.

From the above argument, we conclude that (a) implies (b) and (c).

In the other direction, it is clear that (c) implies (a).  Suppose instead that (b) holds.  By Lemma~\ref{wb:vabelian}, $U$ has no non-trivial virtually abelian normal subgroups.  This implies that the quasi-centre of $U$ is trivial, in other words, $\QZ(G) \cap U = 1$.  Since every non-trivial normal subgroup of $G$ intersects $U$ non-trivially, we conclude that $\QZ(G)=1$.  Hence (b) implies (a).  This completes the proof of (i).

(ii) If the action of $G$ on $\mcA$ is locally decomposable, then $\rist_U(\alpha) \times \rist_U(\beta)$ has finite index in $U$, so $\rist_U(\alpha)$ is an almost direct factor of $U$ for all $\alpha \in \mcA$.  So in this case $\theta(\mcA)$ is contained in $\ldlat(G,U)$.\end{proof}

\begin{cor}\label{cor:weakdecomp:smooth}Let $G$ be a \tdlc group with $\QZ(G)=1$ and let $U$ be an open compact subgroup of $G$.  Then every faithful weakly decomposable action of $G$ (if there are any) on a profinite space is continuous with respect to the topology of uniform convergence.\end{cor}

\begin{proof}By Theorem~\ref{latdecomp}, $G$ is locally C-stable and by Proposition~\ref{latrist}, the action of $G$ on $\lcent(G)$ is smooth.  The conclusion now follows from Theorem~\ref{latdecomp} and Lemma~\ref{lem:SmoothActionContinuous}.\end{proof}

\begin{proof}[Proof of Theorem~\ref{intro:weakdecomp}](i) This follows from Proposition~\ref{latrist}.

(ii) By Theorem~\ref{latdecomp}~(i), there is a $G$-equivariant embedding $\rho$ of the algebra $\mcA$ of clopen subsets of $\mfX$ into $\lcent(G)$.  This embedding $\rho$ then corresponds, via the contravariant functor provided by the Stone representation theorem, to a $G$-equivariant quotient map $\rho^*$ from $\mfS(\lcent(G))$ to $\mfX$.\end{proof}

Another consequence of Theorem~\ref{latdecomp} is that if $(G,U)$ is locally C-stable and has a faithful action on its centraliser lattice, then both properties are inherited by subgroups with large normalisers.

\begin{prop}\label{prop:latdecomp:hered}Let $(G,U)$ be a faithful weakly decomposable Hecke pair.  Let $H$ be a locally normal subgroup of $G$.  Then the Hecke pair $(H,H \cap U)$ is locally C-stable and $H$ has a faithful weakly decomposable action on a principal ideal of $\lcent(G,U)$; this ideal is $H$-equivariantly isomorphic to a subalgebra of $\lcent(H,H \cap U)$.\end{prop}

\begin{proof}The existence of a faithful weakly decomposable action ensures that $U$ is residually finite.  By Proposition~\ref{latrist} and Theorem~\ref{latdecomp}, $(G,U)$ is locally C-stable and has a faithful weakly decomposable action on $\mcA=\lcent(G,U)$.  Let $\alpha = [H]^{\bot^2}$ and let $\mcI$ be the ideal of $\mcA$ generated by $\alpha$.  By Corollary~\ref{cor:rist:support}, the set of points in $\mfS(\mcA)$ moved by $H$ forms an open dense subset of $\alpha$.  Since $G$ acts faithfully on $\mcA$, it follows that $H$ acts faithfully on $\mcI$.  Since $\mcI$ is a principal ideal, it is a Boolean algebra in its own right.  We have $\QZ(H):= \QZ(H, H \cap U)=1$   by Theorem~\ref{thm:locallyCstable}.  Given $0 < \beta \in \mcI$, then $\rist_G(\beta) = \QC_G(\beta^\bot)$.  Since ${\beta^\bot > \alpha^{\bot^3} = \alpha^\bot}$, we see that $[\rist_G(\beta)] > [\QC_G(H)]$.  If $\rist_H(\beta)=1$, then $0 = [\rist_G(\beta) \cap H] = [\rist_G(\beta)] \wedge [H] = \beta \wedge [H]$. Hence $[H] \le \beta^\bot$ so that $\alpha \le \beta^{\bot^3} = \beta^\bot$. Since $\beta \le \alpha$, we obtain $\beta = 0$, a contradiction.  Thus the action of $H$ on $\mcI$ is weakly decomposable.  Hence by Theorem~\ref{latdecomp}, $H$ is locally C-stable and $\mcI$ is $H$-equivariantly isomorphic to a subalgebra of $\lcent(H,H \cap U)$.\end{proof} 

\begin{cor}\label{latdecomp:hered:subnormal}Let $(G,U)$ be a faithful weakly decomposable Hecke pair and let $H$ be a subgroup of $G$.  Suppose there is a sequence of subgroups
$$ H = H_1 \le H_2 \le \dots \le H_n = G$$
such that
$$|H_{i+1} \cap U: \N_{H_{i+1}}(H_i) \cap U|$$
is finite for $1 \le i < n$.  Then either $H=1$ or $(H,H \cap U)$ is faithful weakly decomposable.  In particular, $\QZ(H)=1$.\end{cor}

\begin{proof}Follows from Proposition~\ref{prop:latdecomp:hered} by induction.\end{proof}

\subsection{Topologically free actions}

For groups acting on topological spaces, there is a natural notion of actions that are in some sense the opposite of weakly decomposable:

\begin{defn}Let $G$ be a group acting faithfully on a topological space $X$.  The action is \defbold{locally faithful}\index{locally faithful}\index{action!locally faithful} at $x \in X$ if for every neighbourhood $O$ of $x$, the pointwise stabiliser of $O$ in $G$ is trivial.  The action is \defbold{topologically free}\index{topologically free}\index{action!topologically free} if it is locally faithful at every point in $X$.\end{defn}

Let $(G,U)$ be a Hecke pair and let $G$ act by homeomorphisms on a profinite space $\mfX$, such that $U$ has finite orbits on the set of clopen subsets of $\mfX$.  Clearly, if the action of $G$ on $\mfX$ is weakly decomposable, then it is not locally faithful at any point.  In general, the action of $(G,U)$ on $\mfX$ may be neither weakly decomposable nor locally faithful at any point, even if the action is minimal.  For example, given a (faithful, minimal) weakly decomposable action of $(G,U)$ on $\mfX$, then ${(G \times \bZ/2\bZ, U)}$ has a (faithful, minimal) action on the disjoint union of two copies of $\mfX$ that is not weakly decomposable, but has no locally faithful point: this is achieved by letting $G$ act identically on each copy of $\mfX$ and $\bZ/2\bZ$ swap the two copies of $\mfX$.  If $G$ admits a  non-inner automorphism $\phi$ of order~$2$, then one could construct a similar example of an action of $(H,U)$, where this time $H = G \rtimes \langle \phi \rangle$ has trivial quasi-centre with respect to $U$.  As a concrete example, let $T$ be a regular rooted tree, let $G$ be the derived subgroup of $\Aut(T)$,  and let $\mfX$ be the set of ends of $T$. Then $(G,G)$ is a Hecke pair; the $G$-action on $\mfX$ is faithful, minimal and weakly decomposable, and the $G$-orbit of any clopen subset is finite. The outer automorphism group $\Out(G)$ contains (and is in fact equal to)  $\Aut(T)/G$, which is an infinite direct product of cyclic groups of order~$2$. In particular $\Out(G)$ contains many involutions, which can be lifted to involutions in $\Aut(G)$.

However, with a stronger restriction on the group structure of $U$, we can ensure the existence of locally faithful points.  This property may be regarded as the antithesis of the faithful weakly decomposable property.  The following definition and proposition are inspired by \cite{Nekra}, which uses similar methods to comprehend the actions of free groups on rooted trees. 

\begin{defn}Let $U$ be group.  Say $U$ has the \defbold{virtually normal intersection property}\index{virtually normal intersection property}\index{VNIP@(VNIP)} (VNIP) if for any two non-trivial subgroups $K,L$ of $U$ such that $\N_U(K)$ and $\N_U(L)$ have finite index in $U$, the intersection $K \cap L$ is infinite.\end{defn}

If $(G,U)$ is locally C-stable, one sees that $U$ has VNIP if and only if $\lcent(G,U)$ is trivial. 

\begin{prop}
Let $(G,U)$ be a Hecke pair, let $\mfX$ be a profinite space on which $G$ acts faithfully by homeomorphisms, let $\mcA$ be the Boolean algebra of clopen subsets of $\mfX$, and suppose that $U$ has finite orbits on $\mcA$.  Suppose that $\QZ(G)=1$ and that $U$ has VNIP.  Then the set of points in $\mfX$ at which $G$ acts locally faithfully is non-empty and closed.  In particular, if the action of $G$ on $\mfX$ is minimal, then it is topologically free.
\end{prop}

\begin{proof}
Let $\mfY$ be the set of points at which $G$ acts locally faithfully.  We see from the definition that $\mfX \smallsetminus \mfY$ is open and hence $\mfY$ is closed, since if $G$ does not act locally faithfully at some $\mfp \in \mfX$, then there is an open neighbourhood $O$ of $\mfp$ such that the pointwise stabiliser of $O$ is non-trivial, and hence $G$ does not act locally faithfully at any point in $O$.

Suppose $\mfY=\emptyset$.  Then for each $\mfp \in \mfX$, we can find $\alpha_\mfp \in \mcA$ such that $\mfp \in \alpha_\mfp$ and $\rist_G(\alpha^\bot_\mfp)$ is non-trivial.  Since $U$ has finite orbits on $\mcA$, the normaliser $V = \N_U(\rist_G(\alpha^\bot_\mfp))$ has finite index in $U$; the fact that $\QZ(G)=1$ then forces $\rist_V(\alpha^\bot_\mfp)$ to be infinite.  Since every point in $\mfX$ is covered by some $\alpha_\mfp$, by compactness we have $\bigvee^n_{i=1}\alpha_{\mfp_i} = \infty$ for some $\mfp_1,\dots,\mfp_n \in \mfX$.  Since $U$ has VNIP, the intersection $K = \bigcap^n_{i=1}\rist_U(\alpha^\bot_{\mfp_i})$ is non-trivial.  But $\bigvee^n_{i=1}\alpha_{\mfp_i} = \infty$, in other words $\bigwedge^n_{i=1}\alpha^\bot_{\mfp_i} = 0$, so $K$ acts trivially on $\mcA$, contradicting the hypothesis that $G$ acts faithfully on $\mcA$.  Hence $\mfY$ must be non-empty.

Clearly $\mfY$ is $G$-invariant, so if the action of $G$ on $\mfX$ is minimal, then $\mfY = \mfX$.
\end{proof}

\section{Some radicals of locally compact groups}\label{radsec}

\subsection{The quasi-hypercentre}

In previous sections, we found it useful to assume that the groups under consideration have trivial quasi-centre.  Of course, there is no reason in general for a \tdlc group to have this property; more interesting, however, is the question of which locally compact groups $G$ have a non-trivial Hausdorff quotient $G/K$ such that $\QZ(G/K)=1$.

To address this issue, we need to introduce the following definitions. 

\begin{defn}
Let $G$ be a locally compact group.  A subgroup $K$ of $G$ is \defbold{quasi-hypercentral}\index{quasi-hypercentral} in $G$ if, whenever $N$ is a closed normal subgroup of $G$ such that $K \not\le N$, then $KN/N$ has non-trivial intersection with $\QZ(G/N)$.

The \defbold{quasi-hypercentre}\index{quasi-hypercentre} of  $G$, denoted by $\QZ^\infty(G)$, is the intersection of all closed normal subgroups $N$ of $G$ such that $\QZ(G/N)=1$.
\end{defn}

By Lemma~\ref{lem:QZ}, we have $\QZ(G) \leq \QZ^\infty(G)$. If moreover $\QZ(G)$ is discrete, then Lemma~\ref{lem:QZ} ensures that $\QZ(G/\QZ(G))=1$ so that $\QZ(G) = \QZ^\infty(G)$ in this case. 

The main properties of the quasi-hypercentre are collected in the following.

\begin{thm}\label{qzinf}
Let $G$ be a locally compact group. Then:
\begin{enumerate}[(i)]
\item $\QZ^\infty(G)$ is a closed characteristic subgroup. 

\item The quotient $G/\QZ^\infty(G)$ has trivial quasi-centre. In particular $\QZ^\infty(G)$ is the unique minimal closed normal subgroup of $G$ with this property. 

\item $\QZ^\infty(G)$ is  the unique largest quasi-hypercentral subgroup of $G$.
\end{enumerate}
\end{thm}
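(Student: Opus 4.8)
The plan is to read everything off the defining family together with Lemma~\ref{lem:QZ}(iii), except for the quasi-hypercentrality assertion in (iii), which I would obtain from a transfinite iterated quasi-centre series. Write $\mathcal F$ for the family of closed normal subgroups $N \unlhd G$ with $\QZ(G/N)=1$, and put $R = \QZ^\infty(G) = \bigcap \mathcal F$; note $\mathcal F \neq \emptyset$ since $G \in \mathcal F$. For (i): an intersection of closed normal subgroups is closed and normal, and $\mathcal F$ is invariant under every topological automorphism $\phi$ of $G$ (as $\phi$ induces an isomorphism $G/N \to G/\phi(N)$, and the quasi-centre is preserved by isomorphisms), so $\phi(R)=R$ and $R$ is characteristic. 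For (ii): given $\bar g \in \QZ(G/R)$, I would apply Lemma~\ref{lem:QZ}(iii) to $G/R$ and its closed normal subgroup $N/R$, for each $N \in \mathcal F$: the image of $\bar g$ in $(G/R)/(N/R) \cong G/N$ lies in $\QZ(G/N)=1$, so any lift $g$ of $\bar g$ lies in $N$. As $N \in \mathcal F$ is arbitrary, $g \in \bigcap \mathcal F = R$, whence $\bar g = 1$; thus $\QZ(G/R)=1$, so $R \in \mathcal F$, and then $R \le N$ for all $N \in \mathcal F$ exhibits $R$ as the least, hence unique minimal, member of $\mathcal F$.

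For (iii) I would introduce the ascending transfinite series of closed normal subgroups $Z_0 = 1$, $Z_{\alpha+1}/Z_\alpha = \overline{\QZ(G/Z_\alpha)}$, and $Z_\lambda = \overline{\bigcup_{\alpha<\lambda} Z_\alpha}$ at limits. Taking closures at each stage keeps every $Z_\alpha$ closed and (characteristic, hence) normal, and a cardinality argument forces the chain to stabilise at some $Z_\infty$ with $\overline{\QZ(G/Z_\infty)}=1$, i.e.\ $\QZ(G/Z_\infty)=1$, so $Z_\infty \in \mathcal F$ and $R \le Z_\infty$. For the reverse inclusion I would show by transfinite induction that $Z_\alpha \le N$ for every $N \in \mathcal F$: the successor step uses Lemma~\ref{lem:QZ}(iii) to get $\QZ(G/Z_\alpha) \le N/Z_\alpha$, so $\overline{\QZ(G/Z_\alpha)} \le N/Z_\alpha$ (the latter being closed) and hence $Z_{\alpha+1} \le N$, while the limit step is immediate on taking closures. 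This yields $Z_\infty \le R$, so $R = Z_\infty$.

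It then remains to show $R$ is quasi-hypercentral and contains every quasi-hypercentral subgroup. The containment is immediate from the definition: if $K$ is quasi-hypercentral and $N \in \mathcal F$, then $K \not\le N$ would force the image of $K$ in $G/N$ to meet $\QZ(G/N)=1$ non-trivially, which is absurd; so $K \le N$ for all $N \in \mathcal F$ and $K \le R$. For quasi-hypercentrality of $R$, given a closed normal $N$ with $R \not\le N$, I would take the least $\alpha$ with $Z_\alpha \not\le N$; this $\alpha$ cannot be a limit (closures again), so $\alpha = \gamma+1$ with $Z_\gamma \le N$ and $Z_{\gamma+1} \not\le N$. Writing $q$ for the projection $G/Z_\gamma \to G/N$, the relation $Z_{\gamma+1}/Z_\gamma = \overline{\QZ(G/Z_\gamma)} \not\le \ker q$ gives $q(\overline{\QZ(G/Z_\gamma)}) \neq 1$, and since $q(\overline{\QZ(G/Z_\gamma)}) \subseteq \overline{q(\QZ(G/Z_\gamma))}$ by continuity, already $q(\QZ(G/Z_\gamma)) \neq 1$. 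By Lemma~\ref{lem:QZ}(iii) this non-trivial subgroup lies in $\QZ(G/N)$, and it is contained in the image of $Z_{\gamma+1} \le R$; hence the image of $R$ in $G/N$ meets $\QZ(G/N)$ non-trivially, as required.

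The main obstacle is exactly the verification that $R = \QZ^\infty(G)$ is itself quasi-hypercentral: whereas (i), (ii) and the containment half of (iii) fall out directly from the defining family and Lemma~\ref{lem:QZ}(iii), this direction requires the series construction and careful bookkeeping of closures — in particular the passage from $q(\overline{\QZ(G/Z_\gamma)}) \neq 1$ to $q(\QZ(G/Z_\gamma)) \neq 1$, which relies on continuity of $q$ in the form $q(\overline{X}) \subseteq \overline{q(X)}$, the point being that the quasi-centre need not be closed.
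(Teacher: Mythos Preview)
Your argument is correct throughout; parts (i), (ii) and the containment half of (iii) match the paper's proof essentially verbatim.

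For the quasi-hypercentrality of $R$ in (iii), however, the paper takes a much shorter and more direct route, avoiding the transfinite series entirely. Given a closed normal $M$ with $R \not\le M$, the paper sets $N = R \cap M$; since $N \subsetneq R$, the definition of $R$ as $\bigcap \mathcal F$ forces $N \notin \mathcal F$, i.e.\ $\QZ(G/N) \neq 1$. Pick any $x \notin N$ with $xN \in \QZ(G/N)$. Now use part (ii): since $G/R$ is a quotient of $G/N$, Lemma~\ref{lem:QZ}(iii) puts $xR \in \QZ(G/R) = 1$, so $x \in R$, and hence $x \in R \setminus N = R \setminus M$. One more application of Lemma~\ref{lem:QZ}(iii), this time to $G/N \twoheadrightarrow G/M$, gives $xM \in \QZ(G/M)$ with $xM \neq 1$, finishing the proof. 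The trick is to intersect with $R$ first, which converts ``$R \not\le M$'' into ``$N \notin \mathcal F$'' and lets you invoke (ii) directly.

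Your transfinite approach is longer but has its own merit: it realises $\QZ^\infty(G)$ explicitly as the terminal term of an ascending ``upper quasi-central series'', paralleling the classical hypercentre. This gives a constructive description from below, whereas the paper's definition is purely from above. The delicate point in your argument---passing from $q(\overline{\QZ(G/Z_\gamma)}) \neq 1$ to $q(\QZ(G/Z_\gamma)) \neq 1$ via $f(\overline{A}) \subseteq \overline{f(A)}$---is handled correctly, as is the successor/limit dichotomy using closedness of $N$. So nothing is wrong; the paper's argument is simply more economical because it bootstraps from (ii) rather than rebuilding $R$ from scratch.
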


\begin{proof}[Proof of Theorem~\ref{qzinf}]
(i) is clear by the definition of the quasi-hypercentre. 

(ii)  Set $H = \QZ^\infty(G)$ and let $\mcN$ be the set of all closed normal subgroups $N$ of $G$ such that $\QZ(G/N)=1$.  Let $x \in G$, and suppose $xH \in \QZ(G/H)$.  For any  $N \in \mcN$, the group $G/N$ is a quotient of $G/H$, so that  $xN \in \QZ(G/N) $ by Lemma~\ref{lem:QZ}(iii). Since $\QZ(G/N)=1$ by the definition of $\mcN$, we infer that $x \in N$. Thus $x \in \bigcap_{N \in \mcN} N = H$. In other words $G/H$ has trivial quasi-centre, as desired. 

(iii) Since $\QZ(G/H)=1$ by part (ii), it follows that every quasi-hypercentral subgroup of $G$ is contained in $H$.  It remains to show that $H$ itself is quasi-hypercentral. Let $M$ be a closed normal subgroup of $G$ not containing $H$.  Then  $N = H \cap M$ does not contain $H$. Therefore, by the definition of $H$, the quasi-centre of $G/N$ is non-trivial and, hence,  there is some element $x$ of $G$ such that $x \not\in N$ and $xN \in \QZ(G/N)$.  Since $\QZ(G/H)=1$ by part (ii), in fact we have $x \in H \smallsetminus N = H \smallsetminus M$.  Since $N\leq M$, the group $G/M$ is a quotient of $G/N$, and we deduce from Lemma~\ref{lem:QZ}(iii) that $x$ is an element of $H$ whose image in $G/M$ is non-trivial and contained in  $\QZ(G/M)$.  We conclude that $H$ is quasi-hypercentral in $G$, as desired.
\end{proof}

Recall that the quasi-centre $\QZ(G)$ is characteristic but need not be closed. Theorem~\ref{qzinf} ensures that if $\QZ(G)$ is non-trivial, then $\QZ^\infty(G)$ is a non-trivial closed characteristic subgroup (that could be the whole of $G$).

\subsection{Relation with the quasi-hypercentre of subgroups}

We now specialise to \tdlc groups. The goal is to establish the following   stability properties of the quasi-hypercentre by passage to subgroups.

\begin{prop}\label{qzinfstab}
Let $G$ be a \tdlc group and $H$ be a subgroup of $G$.
\begin{enumerate}[(i)]
\item If $H$ is closed and normal in $G$, then $\QZ^\infty(H) \ge \QZ^\infty(G) \cap H$.
\item If $H$ is open in $G$, then $\QZ^\infty(H) = \QZ^\infty(G) \cap H$.
\end{enumerate}
\end{prop}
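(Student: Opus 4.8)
The strategy is to reduce everything to the defining characterisations in Theorem~\ref{qzinf} --- that $\QZ^\infty$ is at once the smallest closed normal subgroup with trivial quotient quasi-centre and the largest quasi-hypercentral subgroup --- together with two soft facts about the quasi-centre. The first is elementary: for any subgroup $S\le P$ one has $\QZ(P)\cap S\le\QZ(S)$, since an open centraliser in $P$ meets $S$ in an open subgroup of $S$; dually $\QZ(O)\le\QZ(P)$ for $O$ open by Lemma~\ref{lem:QZ}(ii). The second is that quasi-hypercentrality passes to quotients: if $K$ is quasi-hypercentral in $P$ and $M\trianglelefteq P$ is closed, then the image of $K$ in $P/M$ is quasi-hypercentral in $P/M$, because a closed normal subgroup $\bar N\trianglelefteq P/M$ not containing the image of $K$ pulls back to a closed normal $N\trianglelefteq P$ not containing $K$, and $(P/M)/\bar N\cong P/N$ identifies the two quasi-centres.

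I would first settle the inclusion $\QZ^\infty(H)\le\QZ^\infty(G)$, which gives one half of (ii). Pass to $\bar G=G/\QZ^\infty(G)$, so $\QZ(\bar G)=1$ by Theorem~\ref{qzinf}(ii), and let $\bar H$ be the image of $H$; since the quotient map is open, $\bar H$ is an open subgroup of $\bar G$, whence $\QZ(\bar H)\le\QZ(\bar G)=1$ and therefore $\QZ^\infty(\bar H)=1$. The map $H\to\bar H$ is a quotient map with kernel $H\cap\QZ^\infty(G)$, so by quotient-compatibility the image of $\QZ^\infty(H)$ is quasi-hypercentral in $\bar H$, hence trivial. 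Thus $\QZ^\infty(H)\le H\cap\QZ^\infty(G)\le\QZ^\infty(G)$, as required.

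The two remaining inclusions $\QZ^\infty(G)\cap H\le\QZ^\infty(H)$ (the whole of (i), and the other half of (ii)) I would deduce from the following \emph{Sub-lemma}: if $M$ is a closed normal subgroup of a \tdlc group $P$ with $\QZ(M)=1$, then $\QZ^\infty(P)\cap M=1$. For (i), apply this with $P=G/W$ and $M=H/W$, where $W=\QZ^\infty(H)$ is characteristic in $H$ and hence normal in $G$; here $\QZ(M)=1$ by Theorem~\ref{qzinf}(ii), and the image $L$ of $\QZ^\infty(G)$ is quasi-hypercentral in $P$, so $L\le\QZ^\infty(P)$. The Sub-lemma gives $L\cap M=1$, and since $\QZ^\infty(G)\cap H$ maps into $L\cap M$, we obtain $\QZ^\infty(G)\cap H\le W=\QZ^\infty(H)$. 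For the open case in (ii) one shows instead that $\QZ^\infty(G)\cap H$ is quasi-hypercentral in $H$ (so that it lies in $\QZ^\infty(H)$ by Theorem~\ref{qzinf}(iii)); this is proved by the same local mechanism as the Sub-lemma, with the openness of $H$ and the inclusion $\QZ(O)\le\QZ(G)$ playing the role that normality plays in (i).

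The Sub-lemma is where the real work lies, and the main obstacle is that the quasi-centre is \emph{not closed}. The plan is as follows. Set $C=\QZ^\infty(P)\cap M\trianglelefteq P$ and suppose $C\neq1$. Choose a closed normal subgroup $N\trianglelefteq P$ maximal subject to $N\cap M=1$; then $MN/N\cong M$ is an essential closed normal subgroup of $P/N$ with trivial quasi-centre, and $C$ survives in it. Since $C\le\QZ^\infty(P)$ and $C\not\le N$, quasi-hypercentrality produces an element $zN\in\QZ(P/N)$ with $z\in\QZ^\infty(P)$ and $zN\neq N$; the closed normal subgroup $Z$ generated by the conjugacy class of $zN$ is non-trivial, so essentiality forces $Z\cap MN/N\neq1$. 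The difficulty is that $Z$, being a closure, need not lie inside $\QZ(P/N)$, so this intersection need not consist of quasi-central elements and cannot yet contradict $\QZ(M)=1$. I expect to overcome this by localising to a compact open subgroup: a quasi-central element of a compact (profinite) group has \emph{finite} conjugacy class, so the subgroup it generates, together with its closure, is centralised by an open subgroup and therefore genuinely lies in the quasi-centre. Running the essentiality argument inside a compact open subgroup of $P/N$ then yields a non-trivial quasi-central element of $M$, contradicting $\QZ(M)=1$. The points requiring care --- and where I expect the argument to be most delicate --- are the existence of the maximal $N$ (a Zorn argument complicated by the fact that closures of ascending chains of closed subgroups may acquire unwanted intersections) and the precise bookkeeping of the localisation, where reducing to the first- or second-countable case may be the cleanest route.
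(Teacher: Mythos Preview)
Your easy direction of (ii) --- $\QZ^\infty(H)\le\QZ^\infty(G)\cap H$ for $H$ open --- is exactly the paper's argument.

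For (i), your Sub-lemma is the correct abstraction, but you are making its proof far harder than necessary, and the gaps you flag are real: a chain of closed normals $N_i$ with $N_i\cap M=1$ can have closure meeting $M$, so the maximal $N$ need not exist. The paper proves the Sub-lemma in one stroke with a centraliser trick. Given $M\trianglelefteq P$ closed with $\QZ(M)=1$, set $K=\CC_P(M)$. Then $K$ is closed normal, $K\cap M=\Z(M)=1$, so $MK/K\cong M$, and $\CC_{P/K}(MK/K)=1$ by construction. Now $\QZ(P/K)\cap MK/K\le\QZ(MK/K)=1$, so $\QZ(P/K)$ centralises $MK/K$ and hence is trivial; therefore $\QZ^\infty(P)\le K$ and $\QZ^\infty(P)\cap M=1$. (In the paper this is run directly with $K=\CC_G(H/\QZ^\infty(H))$, but it is the same argument.) All the Zorn, essentiality and localisation machinery is unnecessary.

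For the hard direction of (ii) --- $\QZ^\infty(G)\cap H\le\QZ^\infty(H)$ when $H$ is merely open --- your sketch has a genuine gap. You propose to show that $\QZ^\infty(G)\cap H$ is quasi-hypercentral in $H$, but quasi-hypercentrality of $\QZ^\infty(G)$ is a statement about closed normal subgroups of $G$, and a closed normal subgroup of $H$ has no reason to extend to one of $G$; the inclusion $\QZ(O)\le\QZ(G)$ does not by itself bridge this. The paper takes a different route. First, (i) together with the easy direction gives equality when $H$ is open \emph{and normal}. Next, for compact open $V\le U$, comparison via the core of $V$ in $U$ (which is open normal in both) shows that $\QZ^\infty(U)$ and $\QZ^\infty(V)$ are commensurate. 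Then one sets $K=\QC_G(U/\QZ^\infty(U))$ and invokes Lemma~\ref{qzradlem}: $K$ is closed normal in $G$, independent of $U$, satisfies $\QZ(G/K)=1$ and $K\cap U=\QZ^\infty(U)$ for every compact open $U$; comparing with $\QZ^\infty(G)$ forces $K=\QZ^\infty(G)$. The conclusion for arbitrary open $H$ follows by choosing a compact open $V\le H$. This passage through commensurability of the local quasi-hypercentres and the quasi-centraliser-modulo-a-subgroup construction is the missing ingredient in your plan.
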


The proof of that proposition requires some preparations and will be given at the end of the subsection. We shall need additional definitions. 

Recall that given two normal subgroups $K \geq L$ of an abstract group $G$, the \textbf{centraliser of $K$ modulo $L$}\index{centraliser of $K$ modulo $L$} is defined by
$$
\CC_G(K/L) = \{g \in G \; | \; [g, K] \subset L\},
$$
\index{CG(K/L)@$\CC_G(K/L)$}
where we define $[u,g] = ugu^{-1}g^{-1}$.
The centraliser $\CC_G(K/L) $  is a normal subgroup of $G$, since it coincides with the preimage in $G$ of the centraliser $\CC_{G/L}(K/L)$. We shall need to quasify this notion as follows.  (Recall that $\mcB(G)$ denotes the set of open compact subgroups of $G$.)

\begin{defn}\label{cqdef}
Let $G$ be a \tdlc group and let $K$ and $L$ be closed subgroups.  The \defbold{quasi-centraliser  of $K$ modulo $L$}\index{quasi-centraliser!of $K$ modulo $L$} is defined by
$$\QC_G(K/L) = \{g  \in G \; | \; \text{there is } U \in \mcB(G) \text{ such that }
[g, U\cap K] \subset L.\}
$$\index{QCH(KL)@$\QC_H(K/L)$}
\end{defn}

The following shows that $\QC_G(K/L)$ is a normal subgroup of $G$ provided that $L$ is normal in $K$ and that $K$ and $L$ are both `quasi-normal' in $G$:

\begin{lem}\label{cqdeflem}Let $G$ be a \tdlc group and let $K$ and $L$ be non-trivial subgroups of $G$ such that $L \unlhd K$. Assume that $g Kg\inv \in [K]$ and $g Lg\inv \in [L]$ for all $g \in G$.    Then: 
\begin{enumerate}[(i)]
\item $\QC_G(K/L) = \QC_G(K'/L')$ for all $K' \in [K]$ and $L' \in [L]$. 

\item $\QC_G(K/L)$ is a non-trivial normal subgroup of $G$. 
\end{enumerate}
\end{lem}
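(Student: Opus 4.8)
The plan is to prove (i) first, then obtain normality in (ii) as a formal consequence of (i), and to treat the subgroup and non-triviality claims by direct computation. The recurring device will be the commutator identity
\[ [gh,x] = {}^g[h,x]\cdot[g,x], \qquad {}^gy := gyg^{-1}, \]
together with two continuity facts valid in any topological group: the maps $x \mapsto gxg^{-1}$ and $x \mapsto [g,x]$ are continuous and send the identity to the identity, so they carry a sufficiently small open compact subgroup into any prescribed identity neighbourhood.

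For (i) it suffices by symmetry to show $\QC_G(K/L) \subseteq \QC_G(K'/L')$. Since $K' \in [K]$ and $L' \in [L]$, I can fix $V \in \mcB(G)$ small enough that $K'\cap V = K\cap V$ and $L'\cap V = L\cap V$ simultaneously. Given $g \in \QC_G(K/L)$, choose $U_g \in \mcB(G)$ with $[g, U_g\cap K]\subset L$, and use continuity of $x \mapsto gxg^{-1}$ to find $U_0 \in \mcB(G)$ with $gU_0g^{-1}\subseteq V$. Setting $U'' = U_g\cap V\cap U_0$ one checks $U''\cap K' = U''\cap K$, so for $x \in U''\cap K'$ one has both $[g,x]\in L$ and $[g,x]\in V$; hence $[g,x]\in L\cap V = L'\cap V\subseteq L'$, giving $g\in\QC_G(K'/L')$.

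Turning to (ii), non-triviality is immediate: since $L\unlhd K$ we have $[L,K]\subseteq L$, so $L\subseteq\CC_G(K/L)\subseteq\QC_G(K/L)$, and $L\neq 1$ by hypothesis. For normality, let $a\in G$ and $g\in\QC_G(K/L)$; applying the identity $[aga^{-1}, axa^{-1}] = {}^a[g,x]$ and using that $aU_ga^{-1}\in\mcB(G)$ shows that $aga^{-1}$ quasi-centralises $aKa^{-1}$ modulo $aLa^{-1}$, i.e. $aga^{-1}\in\QC_G(aKa^{-1}/aLa^{-1})$. Since $aKa^{-1}\in[K]$ and $aLa^{-1}\in[L]$ by hypothesis, part (i) identifies this group with $\QC_G(K/L)$, so $aga^{-1}\in\QC_G(K/L)$.

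The main obstacle is the subgroup property, precisely because $L$ is only assumed normal in $K$, not in $G$, so $g$-conjugation need not preserve $L$ and the naive expansion $[gh,x]={}^g[h,x]\cdot[g,x]$ does not obviously remain inside $L$. This is exactly where the commensuration hypothesis $gLg^{-1}\in[L]$ is forced upon us: I pick $V_g\in\mcB(G)$ with $gLg^{-1}\cap V_g = L\cap V_g$, and use continuity to choose an open $U_1$ so small that $x\in U_1$ forces ${}^g[h,x]\in V_g$. For $x$ in a common open subgroup contained in $U_1$ and in the witnesses for $g$ and $h$, one then has $[g,x],[h,x]\in L$ and ${}^g[h,x]\in gLg^{-1}\cap V_g = L\cap V_g\subseteq L$, whence $[gh,x]\in L$; the inverse is handled identically via the relation ${}^g[g^{-1},x]=[g,x]^{-1}$, rewritten as $[g^{-1},x]={}^{g^{-1}}\bigl([g,x]^{-1}\bigr)$ and combined with the fact that $g^{-1}$ also commensurates $L$. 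I expect this interplay between continuity and commensuration to be the only genuinely delicate part of the argument, the rest being routine commutator bookkeeping.
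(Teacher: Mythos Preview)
Your proof is correct and follows essentially the same approach as the paper. The one minor difference is in the subgroup step of (ii): the paper computes $[u,g^{-1}h] = g^{-1}[g,u][u,h]g \in g^{-1}Lg$, concludes $g^{-1}h \in \QC_G(K/g^{-1}Lg)$, and then invokes part (i) directly (since $g^{-1}Lg \in [L]$) to identify this with $\QC_G(K/L)$; this handles product and inverse simultaneously. You instead treat $gh$ and $g^{-1}$ separately and re-run the continuity argument from (i) inline to force the commutator into $L$ itself. Both routes are valid and rest on the same idea; the paper's is just a bit more economical in that it reuses (i) as a black box rather than unpacking it again.
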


\begin{proof}
(i) It is clear from the definition that $\QC_G(K/L)$ is invariant under replacing $K$ with a subgroup of $G$ that is locally equivalent to $K$.

Let $M = V \cap L$ for some open compact subgroup $V$ of $G$.  Certainly $\QC_G(K/M) \subseteq \QC_G(K/L)$.  On the other hand, given $g \in \QC_G(K/L)$ and an open compact subgroup $U$ of $G$ such that $[u,g] \in L$ for all $u \in U \cap K$, there is an open subgroup $W$ of $V$ such that $WgWg^{-1}$ is a subset of $V$.  Thus for all $w \in W \cap K$ we have $[w,g] \in V \cap L = M$, so $\QC_G(K/M) = \QC_G(K/L)$.  Hence $ \QC_G(K/L)$ is invariant under replacing $L$ with a subgroup of $G$ that is locally equivalent to $L$.  This proves (i).

(ii) We see that $L \le \QC_G(K/L)$, since $[K,L] \le L$, so $\QC_G(K/L)$ is non-trivial.

Let $g,h \in \QC_G(K/L)$ and let $U$ and $V$ be open compact subgroups of $G$ such that $[u,g] \in L$ for all $u \in U \cap K$ and $[v,h] \in L$ for all $v \in V \cap K$.  Then for $u \in U \cap V$ we have
\[ [u,g^{-1}h] = ug^{-1}hu^{-1}h^{-1}g = g^{-1}[g,u][u,h]g \in g^{-1}Lg.\]
Since $g^{-1}Lg \in [L]$ by hypothesis, we deduce from (i) that $g^{-1}h \in \QC_G(K/g^{-1}Lg) = \QC_G(K/L)$, so $\QC_G(K/L)$ is a subgroup of $G$. By definition, we have 
$$g \QC_G(K/L) g\inv= \QC_G (gKg\inv/gLg\inv)$$ 
for all $g \in G$. Therefore $\QC_G(K/L)$ is normal in $G$ by (i).
\end{proof}

We note the following facts separately as they will be used again later.

\begin{lem}\label{qzradlem}
Let $G$ be a \tdlc group.  Suppose that to every open compact subgroup $U$ we have assigned a closed normal subgroup $\R(U)$, such that the following statements hold:
\begin{enumerate}[(a)]
\item Given two open compact subgroups $U$ and $V$ of $G$, then $\R(U)$ is commensurate with $\R(V)$.  If $V = gUg\inv$ then $\R(V) = g\R(U)g\inv$.
\item Given an open compact subgroup $U$ of $G$, then the quotient $U/\R(U)$ has trivial quasi-centre.
\end{enumerate}
Let $U \in \mcB(G)$ and set $K = \QC_G(U/\R(U))$. Then:
\begin{enumerate}[(i)]
\item $K = \QC_G(V/\R(V))$ for any $V \in \mcB(G)$. 

\item $K \cap V = \R(V)$ for every  $V \in \mcB(G)$.

\item $K$ is a closed normal subgroup of $G$.

\item $\QZ(G/K)=1$.

\end{enumerate}
\end{lem}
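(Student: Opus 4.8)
My plan is to extract all four assertions from the single relation $K\cap V=\R(V)$, proving them in the order (i)$\to$(ii)$\to$(iii)$\to$(iv); the only genuinely delicate point will be the conjugation-invariance of $\R$ needed for normality.

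For (i) the key observation is that $\QC_G(\cdot/\cdot)$ depends only on the local equivalence classes of its two arguments. This is precisely the invariance proved inside Lemma~\ref{cqdeflem}(i), and that part of the argument uses neither the normality of $L$ in $K$ nor the conjugation hypotheses stated there. Since any two open compact subgroups are commensurate we have $[U]=[V]$, and hypothesis (a) gives that $\R(U)$ is commensurate to $\R(V)$; feeding these two facts into the invariance yields $\QC_G(U/\R(U))=\QC_G(V/\R(V))$, which is (i). For (ii) I would use (i) to write $K=\QC_G(V/\R(V))$ and then work in $V/\R(V)$: an element $g\in K\cap V$ centralises, modulo $\R(V)$, some open subgroup $W\cap V$ of $V$, so its image in $V/\R(V)$ has open centraliser and hence lies in $\QZ(V/\R(V))$, which is trivial by (b). Thus $g\in\R(V)$; the reverse inclusion $\R(V)\le K$ is immediate because $\R(V)\unlhd V$.

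The closedness in (iii) follows cheaply from (ii): $K\cap V=\R(V)$ is compact (being closed in the compact group $V$) and open in $K$ (since $V$ is open in $G$), so $K$ carries a compact open subgroup, is therefore locally compact in the subspace topology, and a locally compact subgroup of a Hausdorff group is closed. For (iv), once $K$ is known to be a closed normal subgroup, $G/K$ is again a \tdlc group. Given $xK\in\QZ(G/K)$, the centraliser $C_{G/K}(xK)$ is open, so there is $V\in\mcB(G)$ with $[x,V]\subseteq K$; by continuity of $v\mapsto[x,v]$ (which sends $1$ to $1\in V$) I can shrink to an open compact $V'\le V$ with $[x,V']\subseteq V$, whence $[x,V']\subseteq K\cap V=\R(V)$. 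This exhibits $x\in\QC_G(V/\R(V))=K$ by (i), so $\QZ(G/K)=1$.

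The remaining and essential step is the normality in (iii). Conjugation is an automorphism of $G$ preserving $\mcB(G)$ and commutators, so $gKg^{-1}=g\,\QC_G(U/\R(U))\,g^{-1}=\QC_G(gUg^{-1}/g\R(U)g^{-1})$. Writing $V=gUg^{-1}\in\mcB(G)$ and invoking once more the invariance of $\QC_G(\cdot/\cdot)$ in its second argument, it suffices to know that $g\R(U)g^{-1}$ is commensurate to $\R(V)$; combined with (a) this is equivalent to the assertion that $g\R(U)g^{-1}$ is commensurate to $\R(U)$, after which (i) gives $gKg^{-1}=\QC_G(V/\R(V))=K$. This commensurability is exactly the hypothesis $g\R(U)g^{-1}\in[\R(U)]$ that Lemma~\ref{cqdeflem} requires in order to conclude normality, and I expect it to be the main obstacle: one cannot deduce it purely from (a) and (b), since in a profinite group two closed normal subgroups with trivial-quasi-centre quotient need not be commensurate. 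The route I would take is to compare $g\R(U)g^{-1}$ with $\R(gUg^{-1})$ directly, using the coherence of the assignment $\R$ across the whole family $\mcB(G)$ (in the intended applications $\R$ is defined canonically, e.g.\ as $\QZ^\infty$, so that $g\R(U)g^{-1}=\R(gUg^{-1})$ and the commensurability is automatic from (a)).
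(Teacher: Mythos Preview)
Your argument follows the paper's almost exactly for parts (i), (ii), (iv), and for closedness in (iii). The paper likewise extracts (i) from the invariance in Lemma~\ref{cqdeflem}(i), obtains (ii) by observing that elements of $K\cap V$ map into $\QZ(V/\R(V))$, and proves (iv) by the same computation you give (indeed your version is slightly more careful, explicitly shrinking $V$ so that the commutators land in $V$ before invoking $K\cap V=\R(V)$). For closedness the paper simply notes that $K\cap U=\R(U)$ is closed in the open set $U$, hence $K$ is closed in $G$; this is marginally more direct than your appeal to local compactness, but the content is the same.

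On normality in (iii) you have been \emph{more} careful than the paper. The paper invokes Lemma~\ref{cqdeflem}(ii) to conclude that $K$ is normal, but that lemma requires $g\R(U)g^{-1}\in[\R(U)]$ for all $g\in G$; the paper justifies this only with the remark that ``$[\R(U)]$ does not depend on $U$'', which, as you point out, follows from (a) but does not by itself say anything about $[g\R(U)g^{-1}]$. Your diagnosis and proposed remedy are exactly right: in both applications of the lemma (the proofs of Proposition~\ref{qzinfstab} and Theorem~\ref{radtdlc}) the assignment $\R$ is a characteristic construction ($\QZ^\infty$, respectively $\R_\mcC$), so $g\R(U)g^{-1}=\R(gUg^{-1})$ holds outright and (a) then supplies the commensurability needed for Lemma~\ref{cqdeflem}. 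Strictly speaking the lemma as stated is missing this equivariance hypothesis, and you have correctly located the gap and its fix.
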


\begin{proof}
(i) Condition (a) ensures that the set $\{ \R(U) \mid U \in \mcB(G)\}$ lies in a single commensurability class of subgroups of $G$, so that $\R(U)$ is a compact locally normal subgroup of $G$ such that $[\R(U)]$ does not depend on $U$.  It follows from Lemma~\ref{cqdeflem} that $K$ is normal in $G$ and does not depend on the choice of $U$.  Thus (i) holds.

(ii) By the definition of $K$, we see that $K \cap U$ consists of those elements of $U$ whose image in $U/\R(U)$ is quasi-central.  By condition (b), we thus have $K \cap U = \R(U)$. Hence (ii) follows from (i). 

(iii) We have already noted that $K$ is normal in $G$ as a consequence of Lemma~\ref{cqdeflem}. Since $K \cap U = \R(U)$ is closed in $U$ by (ii), it follows that $K$ is closed in $G$. 

(iv) Let $x \in G$ such that $xK \in \QZ(G/K)$; then $x$ centralises $VK/K$ for some open compact subgroup $V$ of $G$, but this ensures in turn that $x \in \QC_G(V/\R(V)) = K$ by (ii), so $xK$ is trivial.  Hence $\QZ(G/K)=1$.
\end{proof}

\begin{proof}[Proof of Proposition~\ref{qzinfstab}]

(i) We see that the quasi-hypercentre of a \tdlc group is a characteristic subgroup, so $\QZ^\infty(H)$ is normal in $G$.  Let $K = \CC_G(H/\QZ^\infty(H))$ be the centraliser of $H$ modulo $\QZ^\infty(H)$. Since $H$ and $\QZ^\infty(H)$ are closed normal subgroups of $G$, so is $K$. Moreover  $K \cap H = \QZ^\infty(H)$, since $H/\QZ^\infty(H)$ has trivial quasi-centre.  Consequently the quotient $G/K$ of $G$ has a normal subgroup $HK/K \cong H/\QZ^\infty(H)$ whose quasi-centre is trivial by  Theorem~\ref{qzinf}. 
Since 
$$\QZ(G/K) \cap HK/K \le \QZ(HK/K) = 1,$$
we infer that $\QZ(G/K)$ is a normal subgroup of $G/K$ with trivial intersection with $HK/K$. Therefore $HK/K$ and $\QZ(G/K)$ commute. On the other hand the definition of $K$ implies that $HK/K$ has trivial centraliser in $G/K$. It follows that $\QZ(G/K)=1$.  Therefore $K \ge \QZ^\infty(G)$, and thus $\QZ^\infty(H) = K \cap H \ge \QZ^\infty(G) \cap H$.

(ii) Let $Q = \QZ^\infty(G)$.  Note that $H/(Q \cap H) \cong HQ/Q$, and $HQ/Q$ is an open subgroup of $G/Q$; thus $\QZ(HQ/Q) \le \QZ(G/Q) = 1$, so $H/(Q \cap H)$ has trivial quasi-centre.  By Theorem~\ref{qzinf}, we conclude that $Q \cap H \ge \QZ^\infty(H)$.

Combining this with part (i), we see that $\QZ^\infty(O) = \QZ^\infty(G) \cap O$ in the case that $O$ is an open normal subgroup of $G$.  

Let now $V \leq U$ be compact open subgroups of $G$. Let $O$ be the core of $V$ in $U$, so that $O$ is an open normal subgroup of both $V$ and $U$. By the preceding paragraph, we have  
$$ \QZ^\infty(U) \cap O = \QZ^\infty(O) =\QZ^\infty(V) \cap O.$$ 
In particular $ \QZ^\infty(U)$ and $\QZ^\infty(V)$ are commensurate. This implies that for all $U, V \in \mcB(G)$, the groups $ \QZ^\infty(U)$ and $\QZ^\infty(V)$ are commensurate. 

Define $\R(U) := \QZ^\infty(U)$ for every profinite group $U$. We have just checked that condition (a) from Lemma~\ref{qzradlem} is satisfied, and so is condition (b) by Theorem~\ref{qzinf}. 
Hence, given $V \in \mcB(G)$, the group $K = \QC_G(V/\QZ^\infty(V))$ is a closed normal subgroup of $G$ that does not depend on the choice of $V$.  Moreover Lemma~\ref{qzradlem} ensures that $\QZ(G/K) = 1$, so $K \ge Q$. Also $K \cap V = \QZ^\infty(V) \leq Q \cap V$, where the latter containment was observed in the first paragraph above. It follows  that $Q$ contains an open subgroup of $K$, so $K/Q$ is a discrete normal subgroup of $G/Q$; since $\QZ(G/Q)=1$, this forces $K=Q$ by Lemma~\ref{lem:QZ}.

This shows that $\QZ^\infty(G) =  \QC_G(V/\QZ^\infty(V))$ for any open compact subgroup $V$ of $G$. In particular, if $H$ is any open subgroup of $G$, we may replace $V$ by $V \cap H$ and deduce that 
$$
\QZ^\infty(G) \cap H = \QC_G(V/\QZ^\infty(V)) \cap H =\QC_H(V/\QZ^\infty(V))   = \QZ^\infty(H).
$$
This confirms (ii). 
\end{proof}

\begin{rem}
We have shown that the local equivalence class of the quasi-hyper\-centre is unchanged by passing to an open subgroup.  In particular for any open subgroup $H$ of $G$ and any $V \in \mcB(H)$, the group $\QZ^\infty(H)$ is locally equivalent to $\QZ^\infty(V)$, so we have
$$\QZ^\infty(G)= \QC_G(V/\QZ^\infty(V)) = \QC_G(H/\QZ^\infty(H)).$$
\end{rem}

\begin{rem}In general, the quasi-hypercentre of an arbitrary closed subgroup of a \tdlc group bears no relation to that of the group itself.  For instance, any profinite group $H$ can be embedded as a closed subgroup of the Cartesian product $G$ of its finite continuous images, in which case $\QZ^\infty(G) = G$ since $\QZ(G)$ is dense in $G$.  On the other hand $H$ can be embedded in a semidirect product $K \rtimes H$ (where $K$ is an infinite Cartesian product of copies of the free profinite group on $2$ generators, for instance, and $H$ permutes the terms of this product) of such a form that $\QZ^\infty(K \rtimes H) = \QZ(K \rtimes H) = 1$.\end{rem}

The following  question is natural. 

\begin{que}
If $H$ is a closed normal subgroup of $G$ contained in $\QZ^\infty(G)$, does it follow that $H$ is quasi-hypercentral in $G$?
\end{que}

We can at least say that $H$ is quasi-hypercentral in itself; in other words, given any proper closed normal subgroup $K$ of $H$, then $\QZ(H/K)$ is non-trivial.

\subsection{Other radicals}

We have seen that locally compact groups admit a unique largest quotient with trivial quasi-centre.  In a similar manner, for some classes of profinite group there is a unique largest quotient that has no non-trivial compact locally normal subgroups in the given class.  In particular, a motivation for eliminating virtually abelian locally normal subgroups in this way is the centraliser lattice described in Section~\ref{clatsect}.

\begin{defn}Let $\mcC$ be a class of profinite groups.  Say $\mcC$ is \defbold{stable}\index{stable class of profinite groups} if the following conditions hold:
\begin{enumerate}[(a)]
\item $\mcC$ contains all abelian profinite groups, as well as all finite simple groups;

\item Given a group $U \in \mcC$ and a closed normal subgroup $K$ of $U$, then $K \in \mcC$ and $U/K \in \mcC$;

\item Given a profinite group $U$ that is a (possibly non-direct) product of finitely many closed normal $\mcC$-subgroups, then   $U \in \mcC$;

\item $\mcC$ is stable under isomorphisms, i.e. it contains the whole isomorphism class of each of its elements.
\end{enumerate}

Given a profinite group $U$, write $\mcC(U)$ for the closed subgroup generated by all normal $\mcC$-subgroups of $U$.

Given a stable class $\mcC$, a \tdlc group $G$ is \defbold{$\mcC$-semisimple}\index{C-semisimple@$\mcC$-semisimple} if $\QZ(G)=1$ and the only locally normal subgroup of $G$ belonging to $\mcC$ is the identity subgroup.  For a profinite group $U$, a subgroup $K$ of $U$  is called \defbold{$\mcC$-regular} in $U$ if 
for every closed normal subgroup $L$ of $U$ not containing  $K$, the image of $K$ in $U/L$ contains a non-trivial locally normal $\mcC$-subgroup of $U/L$.  For a \tdlc group $G$ and closed subgroup $H$, say $H$ is \defbold{$\mcC$-regular}\index{C-regular@$\mcC$-regular} in $G$ if $U \cap H$ is $\mcC$-regular in $U$ for all open compact subgroups $U$ of $G$.

Write $\VA$\index{A@$\VA$}\index{A-regular@$\VA$-regular}\index{A-semisimple@$\VA$-semisimple} for the smallest stable class; note that by Fitting's Theorem, all groups in this class are virtually nilpotent.  However, the class of $\VA$-regular \tdlc groups is larger than just the locally $\VA$ groups; for example, all \tdlc groups with virtually soluble open compact subgroups are $\VA$-regular.  In a recent paper (\cite[Theorem 1.11]{Wes}), P. Wesolek has shown that every second-countable $\VA$-regular \tdlc group can be constructed from profinite groups and countable discrete groups by a (possibly transfinite) sequence of extensions and countable increasing unions. On the other hand, we shall see in Proposition~\ref{cstab} below that a \tdlc group is $\VA$-semisimple if and only if it is locally C-stable with trivial quasi-centre.
\end{defn}

Another example of a stable class (within the class of profinite groups) is the class of profinite groups that are topologically generated by pronilpotent normal subgroups and quasisimple subnormal subgroups; for this class $\mcC$, then $\mcC(U)$ is the \defbold{generalised pro-Fitting subgroup}\index{generalised pro-Fitting subgroup} of $U$.  The regular and semisimple classes associated to this class are discussed in \cite{ReiF}.

Here is the main theorem for this section.

\begin{thm}\label{radtdlc}
Let $G$ be a \tdlc group and let $\mcC$ be a stable class of topological groups.  Then $G$ has a uniquely defined closed normal subgroup $\R_\mcC(G)$\index{RC(G)@$\R_\mcC(G)$}\index{RA(G)@$\R_\VA(G)$}, the \defbold{$\mcC$-regular radical}\index{C-regular@$\mcC$-regular!radical}\index{radical!$\mcC$-regular} of $G$, that is characterised by either of the following properties:
\begin{enumerate}[(i)]
\item $\R_\mcC(G)$ is the unique largest  locally normal subgroup of $G$ that is $\mcC$-regular in $G$;
\item $G/\R_\mcC(G)$ is $\mcC$-semisimple, and given any closed normal subgroup $N$ of $G$ such that $G/N$ is $\mcC$-semisimple, then $N \ge \R_\mcC(G)$.\end{enumerate}
\end{thm}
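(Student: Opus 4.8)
The plan is to build, for the class $\mcC$, an exact analogue of the quasi-hypercentre theory of Theorem~\ref{qzinf} and Proposition~\ref{qzinfstab}, settling the compact case first and then globalising with the quasi-centraliser-modulo machinery of Lemma~\ref{qzradlem}. Write $\mcN(L)$ for the set of closed normal subgroups $N$ of a group $L$ with $L/N$ $\mcC$-semisimple, and set $\R_\mcC(L) = \bigcap_{N \in \mcN(L)} N$; since $L \in \mcN(L)$ this is well defined, closed and normal. The key structural fact, which I would prove first, is that $\mcC$-semisimplicity passes to the quotient by $\R_\mcC(L)$, by a projection argument valid for any \tdlc group $L$. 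If $x\R_\mcC(L)$ is quasi-central in $L/\R_\mcC(L)$ then, each $L/N$ with $N \in \mcN(L)$ being a quotient of $L/\R_\mcC(L)$, Lemma~\ref{lem:QZ}(iii) forces $x \in N$ for all such $N$, so $x \in \R_\mcC(L)$ and $\QZ(L/\R_\mcC(L))=1$. Similarly, if $M/\R_\mcC(L)$ were a non-trivial locally normal $\mcC$-subgroup, then for each $N \in \mcN(L)$ its image $MN/N$ is locally normal (a compact group carried by an open continuous map, normalised by an open image) and lies in $\mcC$ by stability property~(b), so $M \le N$ for all $N$, whence $M \le \R_\mcC(L)$ — a contradiction. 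This gives the quotient assertion of characterisation~(ii), with minimality holding by construction.

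The substantive work is matching this with the local notion in characterisation~(i), and this forces me through the compact case. For profinite $U$ the easy direction is that every $\mcC$-regular $K \le U$ lies in $\R_\mcC(U)$: otherwise, taking $L = \R_\mcC(U)$ in the definition of regularity would yield a non-trivial locally normal $\mcC$-subgroup of the $\mcC$-semisimple group $U/\R_\mcC(U)$. The hard direction is that $\R_\mcC(U)$ is \emph{itself} $\mcC$-regular. Given closed normal $L \not\ge \R_\mcC(U)$, set $N = \R_\mcC(U)\cap L$; minimality of $\R_\mcC(U)$ shows $U/N$ is not $\mcC$-semisimple, so it fails in one of two ways. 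If $U/N$ has a non-trivial locally normal $\mcC$-subgroup $M/N$, the projection argument forces $M \le \R_\mcC(U)$, and then $ML/L$ is a non-trivial locally normal $\mcC$-subgroup inside the image of $\R_\mcC(U)$ in $U/L$. If instead $\QZ(U/N)\neq 1$, choose $x$ with $xN$ non-trivial and quasi-central; Lemma~\ref{lem:QZ}(iii) gives $x \in \R_\mcC(U)$ and $x \notin L$, and the procyclic group $\overline{\langle xL\rangle}$ is abelian, non-trivial and normalised by the open centraliser of $xL$, hence a locally normal $\mcC$-subgroup — using that $\mcC$ contains all abelian profinite groups — lying in the image of $\R_\mcC(U)$. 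In both cases $\R_\mcC(U)$ is $\mcC$-regular, so it is the unique largest $\mcC$-regular subgroup of $U$.

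To globalise I would apply Lemma~\ref{qzradlem} to the assignment $U \mapsto \R_\mcC(U)$. Hypothesis~(b) is immediate, since $\mcC$-semisimplicity entails trivial quasi-centre. Hypothesis~(a), commensuration of the $\R_\mcC(U)$, reduces by the core argument of Proposition~\ref{qzinfstab}(ii) to the local stability $\R_\mcC(O) = \R_\mcC(U)\cap O$ for open normal $O \unlhd U$ of finite index: one inclusion holds because $O/(\R_\mcC(U)\cap O)$ is an open subgroup of the $\mcC$-semisimple group $U/\R_\mcC(U)$, hence $\mcC$-semisimple, and the reverse follows by checking that $U/\CC_U(O/\R_\mcC(O))$ is $\mcC$-semisimple (its finite-index normal subgroup $O/\R_\mcC(O)$ is $\mcC$-semisimple and self-centralising, which kills both the quasi-centre and every finite locally normal $\mcC$-subgroup). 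Lemma~\ref{qzradlem} then delivers a closed normal subgroup $K = \QC_G(U/\R_\mcC(U))$, independent of $U$, with $K\cap V = \R_\mcC(V)$ for all $V \in \mcB(G)$ and $\QZ(G/K)=1$; I would set $\R_\mcC(G):=K$. Now~(i) is clear: $K\cap V=\R_\mcC(V)$ is $\mcC$-regular in $V$, so $K$ is $\mcC$-regular in $G$, and any closed normal $\mcC$-regular $H$ has $H\cap V \le \R_\mcC(V)=K\cap V$ for all $V$, so $H\cap K$ is open in $H$ and $HK/K$ is a discrete normal subgroup of the quasi-centre-free group $G/K$, forcing $H\le K$ by Lemma~\ref{lem:QZ}(iv). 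For~(ii) one verifies that $G/K$ is $\mcC$-semisimple — any locally normal $\mcC$-subgroup lies in some $VK/K\cong V/\R_\mcC(V)$, which is $\mcC$-semisimple, hence is trivial — and minimal: if $G/N$ is $\mcC$-semisimple, applying the regularity of $\R_\mcC(V)$ to the subgroup $V\cap N$ shows $K\cap V\le N$ for every $V$, and the discrete-normal argument again gives $K\le N$.

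The main obstacle is the compact case, and within it the proof that $\R_\mcC(U)$ is $\mcC$-regular. This is the counterpart of the delicate part~(iii) of Theorem~\ref{qzinf}, and its novelty is that $\mcC$-semisimplicity can fail in two logically independent ways. The unifying device — converting a surviving quasi-central element into an abelian, hence $\mcC$, locally normal subgroup — is what allows both failure modes to feed the single regularity conclusion, and it is the one point where the axiom that $\mcC$ contains all abelian profinite groups is genuinely used. By comparison the local stability needed for commensuration is routine, being essentially a transcription of Proposition~\ref{qzinfstab} with $\R_\mcC$ in place of $\QZ^\infty$ and the honest centraliser-modulo (available because $O$ has finite index) in place of the quasi-version.
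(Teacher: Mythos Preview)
Your high-level strategy matches the paper's exactly: settle the profinite case by defining $\R_\mcC(U)$ as the intersection of $\mcN(U)$, show it is $\mcC$-regular and gives a $\mcC$-semisimple quotient, then globalise via Lemma~\ref{qzradlem}. The tactical implementation, however, diverges at the two substantive points.

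For the regularity of $\R_\mcC(U)$ in the profinite case, the paper does not split into your two cases. Instead, given closed normal $N \not\ge K$, it argues: either $KN/N$ is abelian (done), or one passes to the centraliser $L = \CC_U(KN/N)$ and invokes Lemma~\ref{ssnorm}(iii) to conclude that $KN/N$ contains a non-trivial normal $\mcC$-subgroup. Your approach --- projecting a locally normal $\mcC$-subgroup of $U/N$ down to the $\mcC$-semisimple quotients to force it into $\R_\mcC(U)$, and separately converting a quasi-central element into a procyclic abelian locally normal subgroup --- is more elementary and avoids the centraliser lemma entirely, at the cost of handling two cases that (by Lemma~\ref{ssnorm}(i)) are not in fact independent.

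For commensuration of the $\R_\mcC(U)$, the paper uses the general stability statement Lemma~\ref{ssnorm}(v), which tracks $\mcC$-regularity itself under passage to open subgroups and finite-index normal subgroups. You instead run the centraliser-modulo argument of Proposition~\ref{qzinfstab}(ii) directly, showing that $U/\CC_U(O/\R_\mcC(O))$ is $\mcC$-semisimple. This is correct, but your parenthetical sketch only explicitly kills the quasi-centre and \emph{finite} locally normal $\mcC$-subgroups; you should add one line for the infinite case (an infinite locally normal $\mcC$-subgroup of $U/C$ meets the finite-index subgroup $OC/C$ non-trivially, and the intersection is a non-trivial locally normal $\mcC$-subgroup of a $\mcC$-semisimple group). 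With that addition your argument is complete. The paper's route via Lemma~\ref{ssnorm} is more modular and yields reusable statements (parts (iv) and (v) are used again in Proposition~\ref{radtdlcstab}); yours is more self-contained.
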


The proof will be given at the end of the section. The following classical fact is well known. 

\begin{lem}[Dietzmann's Lemma]\index{Dietzmann's Lemma}\label{lem:Dietzmann} 
Let $G$ be a group and $\Sigma$ be a finite subset consisting of torsion elements. If $\Sigma$ is invariant under conjugation, then $\langle \Sigma \rangle$ is finite. 
\end{lem}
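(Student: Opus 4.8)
The plan is to prove that every element of $H := \langle \Sigma \rangle$ has a ``collected'' normal form built from boundedly many symbols of $\Sigma$, and then simply to count these forms. Write $\Sigma = \{a_1, \dots, a_n\}$ and let $d_i$ be the order of $a_i$, which is finite by hypothesis. Since $a_i^{-1} = a_i^{d_i-1}$, every element of $H$ is represented by a \emph{positive} word in the letters $a_1, \dots, a_n$ (no inverses are needed). The goal will be to show that every such word can be transformed, without changing the element of $H$ it represents, into the block form $a_1^{e_1} a_2^{e_2} \cdots a_n^{e_n}$ with $0 \le e_i < d_i$.

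To do this I would introduce two length-non-increasing rewriting moves on positive words: (R1) delete any block $a_i^{d_i}$ of $d_i$ consecutive equal letters; and (R2) replace an out-of-order adjacent pair $a_j a_i$ with $j > i$ by $a_i\,(a_i^{-1} a_j a_i)$. Move (R1) is valid because $a_i^{d_i} = 1$, and move (R2) is just the identity $a_j a_i = a_i(a_i^{-1} a_j a_i)$. The one essential point is that $a_i^{-1} a_j a_i$ again lies in $\Sigma$, because $\Sigma$ is invariant under conjugation (conjugation by $G$, hence in particular by the elements of $\Sigma$); this is precisely where the hypothesis enters, and it guarantees that the rewriting stays within positive words over the alphabet $\Sigma$.

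The decisive step is termination. I would order positive words first by length and then lexicographically, using $a_1 < \cdots < a_n$. Move (R1) strictly decreases length, while move (R2) preserves length but strictly decreases the word lexicographically (the first altered letter drops from $a_j$ to $a_i$ with $i<j$). Hence every move strictly decreases this well-founded (length, lexicographic) measure, so the process must terminate. A word to which neither move applies has non-decreasing indices and no block of length $\ge d_i$, so it is exactly of the form $a_1^{e_1} \cdots a_n^{e_n}$ with $0 \le e_i < d_i$. There are only $\prod_{i=1}^n d_i$ such words, and every element of $H$ equals one of them, so $H$ is finite.

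The one genuine obstacle, which I would flag explicitly, is that applying (R2) can \emph{reintroduce} a low-index letter $a_k = a_i^{-1} a_j a_i$ that one had apparently already collected to the front, so a naive single-pass collection argument does not obviously halt. The (length, lexicographic) well-ordering argument is designed precisely to sidestep this: no matter how many reintroductions occur, each individual move strictly lowers a well-founded measure, which forces the rewriting to reach a true normal form regardless of the order in which moves are applied.
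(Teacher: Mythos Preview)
Your proof is correct. The paper does not actually prove Dietzmann's Lemma: its entire proof is a citation to \cite{Dietzmann} and \cite[p.~154]{Kurosh}. Your rewriting argument, with termination guaranteed by the strictly decreasing (length, lexicographic) measure, is a clean self-contained version of the classical collection proof; in particular, your explicit acknowledgment that move (R2) may reintroduce low-index letters, and your resolution of this via a well-founded measure rather than a naive left-to-right sweep, is exactly the point that makes the argument rigorous. So you have supplied more than the paper does, and by essentially the standard route found in the cited references.
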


\begin{proof}
See \cite{Dietzmann} or \cite[p. 154]{Kurosh}. 
\end{proof}

For the remainder of this section, $\mcC$ is a stable class of profinite groups.  We need to check that the local and global definitions of $\mcC$-regularity are not in conflict; this will be ensured by part (v) of the following lemma.

\begin{lem}\label{ssnorm}Let $U$ be a profinite group.
\begin{enumerate}[(i)]
\item Suppose that $\mcC(U)=1$.  Then $U$ is $\mcC$-semisimple.

\item Let $\mcN$ be the set of all closed normal subgroups $N$ of $U$ such that $U/N$ is $\mcC$-semisimple, and define $K = \R(U)$ to be the intersection of $\mcN$.  Then $U/K$ is $\mcC$-semisimple.

\item Let $N$ be a non-trivial closed normal subgroup of $U$ such that $N$ does not contain any non-trivial closed normal $\mcC$-subgroups of $U$.  Then $U/\CC_U(N)$ is a non-trivial $\mcC$-semisimple image of $U$.

\item Let $V \ge K \ge L$ be closed normal subgroups of $U$ and suppose $K$ is $\mcC$-regular in $U$.  Then $L$ is $\mcC$-regular in $U$ and $K$ is $\mcC$-regular in $V$. 

\item Let $K$ be a closed normal subgroup of $U$, let $V$ be an open subgroup of $U$ and let $L \leq K$ be a closed normal subgroup of $U$ that has finite index in $K$.  Then the following are equivalent: (a) $K$ is $\mcC$-regular in $U$; (b) $L$ is $\mcC$-regular in $U$; (c) $K \cap V$ is $\mcC$-regular in $V$.\end{enumerate}\end{lem}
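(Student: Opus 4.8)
The plan is to treat (a)$\Leftrightarrow$(b) and (a)$\Leftrightarrow$(c) separately, with part (iv) and the finite-index hypothesis as the main levers. The implication (a)$\Rightarrow$(b) is immediate from part (iv) applied with ambient normal subgroup $U$ itself: from $U \unlhd U \ge K \ge L$ and $K$ being $\mcC$-regular in $U$, part (iv) gives that $L$ is $\mcC$-regular in $U$. Thus the content of the first equivalence lies in (b)$\Rightarrow$(a), which is where finiteness of $[K:L]$ is used.

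For (b)$\Rightarrow$(a) I would fix a closed normal subgroup $N \unlhd U$ with $K \not\le N$ and produce a non-trivial locally normal $\mcC$-subgroup of $U/N$ inside the image of $K$. If $L \not\le N$, then as $L$ is $\mcC$-regular and $L \le K$, the image of $L$ already furnishes such a subgroup inside the image of $K$. The essential case is $L \le N$ while $K \not\le N$: here $K \cap N \ge L$, so $K \cap N$ has finite index in $K$ and the image $KN/N \cong K/(K \cap N)$ is a non-trivial finite normal subgroup of $U/N$. I would then pick a minimal non-trivial closed normal subgroup $M$ of $U/N$ contained in $KN/N$; minimality makes $M$ characteristically simple for the (continuous, hence finite-image) conjugation action of $U/N$, so $M$ is a direct product of isomorphic finite simple groups. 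By axioms (a) and (c) of a stable class, $M \in \mcC$, and $M$ is non-trivial, locally normal (being normal), and contained in the image of $K$. This completes (a)$\Leftrightarrow$(b).

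For (a)$\Leftrightarrow$(c) I would first replace the open subgroup $V$ by its core $W = \Core_U(V)$, which is open and normal in $U$ with $W \le V$. Since $K \cap W$ has finite index in $K$, and (as $W$ has finite index in $V$) $K \cap W$ has finite index in $K \cap V$, two applications of the already-established equivalence (a)$\Leftrightarrow$(b)---first in $U$ with $L = K \cap W$, then in $V$ with $K \cap V$ in place of $K$ and $K \cap W$ in place of $L$---reduce (a) to ``$K \cap W$ is $\mcC$-regular in $U$'' and (c) to ``$K \cap W$ is $\mcC$-regular in $V$''. Everything then follows from a Core Lemma: if $W$ is open normal in a profinite group $P$ and $K_0 \le W$ is closed normal in $P$, then $K_0$ is $\mcC$-regular in $P$ if and only if $K_0$ is $\mcC$-regular in $W$. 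Applying this with $P = U$ and with $P = V$ exhibits both reformulated conditions as equivalent to ``$K \cap W$ is $\mcC$-regular in $W$'', giving (a)$\Leftrightarrow$(c).

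The backward direction of the Core Lemma is routine: for $N \unlhd P$ with $K_0 \not\le N$, regularity of $K_0$ in $W$ applied to $N \cap W$ yields a non-trivial locally normal $\mcC$-subgroup of $W/(N \cap W) \cong WN/N$ inside the image of $K_0$, and since $WN/N$ is open in $P/N$ the local normality is inherited. The forward direction is the main obstacle, since a closed normal subgroup $N' \unlhd W$ need not be normal in $P$, so regularity of $K_0$ in $P$---which only quantifies over $P$-normal subgroups---does not restrict directly. I would get around this by setting $N^* = \Core_P(N')$, a finite intersection of conjugates of $N'$ because $[P:W] < \infty$, and applying regularity of $K_0$ in $P$ to the $P$-normal subgroup $N^*$ to obtain a non-trivial locally normal $\mcC$-subgroup $M$ of $P/N^*$ inside the image of $K_0$. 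The key point is that $N'/N^*$ has trivial core in $P/N^*$ by construction, so the $P/N^*$-conjugates of $M$ cannot all be contained in $N'/N^*$; after replacing $M$ by a suitable conjugate (which stays inside the image of $K_0$ and inside $W/N^*$) I may assume $M \not\le N'/N^*$. Its image in $W/N' \cong (W/N^*)/(N'/N^*)$ is then a non-trivial locally normal $\mcC$-subgroup contained in the image of $K_0$, as required.
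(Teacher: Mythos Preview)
Your argument is correct and follows essentially the same route as the paper: the case split for (b)$\Rightarrow$(a) is a minor repackaging of the paper's finite/infinite dichotomy for $KM/M$, and your Core Lemma together with the reduction to $K\cap W$ is exactly the paper's reduction to the case $K\le V$ followed by passage through the core $W$. The one redundancy is that the forward direction of your Core Lemma (regular in $P$ implies regular in $W$) is precisely the second conclusion of part (iv) applied with $V=W$, so you could simply invoke (iv) there rather than rerunning the conjugate-of-$M$ argument.
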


\begin{proof}(i) Let $H$ be a non-trivial finite locally normal subgroup of $U$.  Then by Dietzmann's Lemma, $H$ is contained in a finite non-trivial normal subgroup $K$ of $U$; there is then a characteristic subgroup $M$ of $K$ that is a direct power of a finite simple group, so that $M$ is a normal $\mcC$-subgroup of $U$.  Thus the condition $\mcC(U)=1$ ensures that every non-trivial locally normal subgroup of $U$ is infinite.

Let $H$ be an infinite locally normal $\mcC$-subgroup of $U$, and let $V$ be a closed normal subgroup of $U$ of finite index that is contained in $N_U(H)$.  By intersecting with $V$, we may assume $H \le V$.  Let $K$ be the smallest closed normal subgroup of $U$ containing $H$.  Then $K$ is topologically generated by the set of conjugates of $H$ in $U$, which is a finite set of normal subgroups of $K$ (since $K \le V$), so $K$ is a normal $\mcC$-subgroup of $U$, contradicting the assumption that $\mcC(U)=1$.

It remains to show that $\QZ(U)=1$.  If not, then there is some open normal subgroup $V$ of $U$ such that $\CC_U(V) > 1$.   Then $\CC_U(V)$ is normal in $U$, hence infinite, so that  $\Z(V) = \CC_U(V) \cap V$ is infinite as well.  But then $\Z(V)$ is a non-trivial abelian normal subgroup of $U$, so $1 < \Z(V) \le \mcC(U)$, a contradiction.

(ii) We may assume $K=1$. By (i) it suffices to show that $\mcC(U)=1$.  Let $H$ be a non-trivial closed normal subgroup of $U$ such that $H \in \mcC$.  Then there is some $N \in \mcN$ such that $H \not\le N$.  But then the image of $H$ in $U/N$ is a non-trivial normal $\mcC$-subgroup, contradicting the hypothesis that $U/N$ is $\mcC$-semisimple.

(iii) Let $K = \CC_U(N)$.  We must have $\Z(N)=1$, since otherwise $\Z(N)$ would be a non-trivial normal $\mcC$-subgroup of $U$; thus $K \cap N = 1$.  As a consequence, $U/K$ has a normal subgroup $NK/K$ isomorphic to $N$, and in particular $U/K \not= 1$; moreover, $\CC_{U/K}(NK/K)=1$.  Let $H/K$ be a non-trivial normal $\mcC$-subgroup of $U/K$.  Then $H/K \cap NK/K$ is non-trivial since $NK/K$ has trivial centraliser.  Since $H > K$, we then have $L > 1$ where $L = H \cap N$.  Now $L$ is isomorphic to $(H \cap NK)/K$, so $L$ is a normal $\mcC$-subgroup of $U$ contained in $N$, a contradiction.  Thus no such subgroup $H/K$ of $U/K$ exists, so $\mcC(U/K)=1$; by part (i), it follows that $U/K$ is $\mcC$-semisimple.

(iv) If $L$ is not $\mcC$-regular in $U$, then there is some closed normal subgroup $M$ of $U$ not containing $L$ such that $LM/M$ does not contain any non-trivial closed normal $\mcC$-subgroups of $U/M$.  It follows by part (iii) that there is a closed normal subgroup $R$ of $U$ containing $M$ but not  $L$ such that $U/R$ is $\mcC$-semisimple.  But then every $\mcC$-regular closed normal subgroup of $U$ must be contained in $R$, so $K$ is not $\mcC$-regular in $U$. This confirms that $L$ is $\mcC$-regular in $U$ as soon as $K$ is so. 

Let $N$ be a closed normal $\mcC$-subgroup of $V$ that does not contain $K$, and let $R$ be the core of $N$ in $U$.  Then $KR/R$ contains a non-trivial locally normal $\mcC$-subgroup $S/R$ of $U/R$.  Since $S > R$, there is some $u \in U$ such that $S \not \leq  uNu^{-1}$.  Since $S \leq KR \unlhd U$, we infer that $u\inv S  u \leq KR \unlhd V$. Thus $u^{-1}SuN/N$ is a non-trivial locally normal $\mcC$-subgroup of $V/N$ that is contained in $KN/N$. This confirms that $K$ is $\mcC$-regular in $V$. 

(v) We have already seen in (iv) that (a) implies (b).  In the other direction, suppose that (b) holds and let $M$ be a closed normal subgroup of $U$ that does not contain $K$.  If $KM/M$ is finite, then it certainly contains a non-trivial normal $\mcC$-subgroup of $U/M$.  If $KM/M$ is infinite, then $LM/M$ is also infinite and so contains a non-trivial normal $\mcC$-subgroup of $U/M$.  Thus (a) and (b) are equivalent.

Given the equivalence of (a) and (b), from now on we may assume $K \le V$.  Suppose $K$ is $\mcC$-regular in one of $U$ and $V$.  Then $K$ is $\mcC$-regular in the core $W$ of $V$ in $U$ by (iv).  Let $Y$ be either $U$ or $V$.  Given a closed normal $\mcC$-subgroup $R$ of $Y$ not containing $K$, then $R \cap W$ is a normal $\mcC$-subgroup of $W$ not containing $K$. It follows that $K(R \cap W)/(R \cap W)$ contains a non-trivial locally normal $\mcC$-subgroup $S/(R \cap W)$ of $W/(R \cap W)$, so $KR/R$ contains the non-trivial locally normal $\mcC$-subgroup $SR/R$ of $W/R$; note that $SR/R$ is also locally normal in $Y/R$.  Thus $K$ is $\mcC$-regular in $Y$, so (a) and (c) are equivalent.\end{proof}

 \begin{proof}[Proof of Theorem~\ref{radtdlc}]Assume for the moment that $G$ is profinite.
 
Let $\mcN$ be the set of all closed normal subgroups of $G$ such that $G/N$ is $\mcC$-semisimple, and let $K$ be the intersection of $\mcN$.  Then $K$ satisfies property (ii) of the theorem by Lemma~\ref{ssnorm} (ii).  It is clear from the construction that $K$ contains every subgroup of $G$ that is $\mcC$-regular in $G$.

Let $N$ be a closed normal subgroup of $G$ not containing $K$.  If $KN/N$ is abelian, then it is a locally normal $\mcC$-subgroup of $G/N$.  If $KN/N$ is non-abelian, let $L = \CC_G(KN/N)$ be the centraliser of $KN$ modulo $N$. Note that $L$ does not contain $K$, so by the definition of $K$, the quotient $G/L$ cannot be $\mcC$-semisimple.  It then follows by Lemma~\ref{ssnorm} (iii) that $KN/N$ contains a non-trivial closed normal $\mcC$-subgroup of $G/N$.  Thus $K$ is $\mcC$-regular in $G$.

We have proved the theorem in the profinite case.  Now let $G$ be an arbitrary \tdlc group.  We have already obtained a $\mcC$-regular radical for the compact subgroups of $G$.  Moreover, the $\mcC$-regular radicals of the open compact subgroups satisfy the hypotheses of Lemma~\ref{qzradlem}: this is clear from Lemma~\ref{ssnorm} (v) and the fact that the $\mcC$-regular radical is the unique largest $\mcC$-regular normal subgroup.  Let $U$ be an open compact subgroup of $G$ and define $R = \R_\mcC(G) := \QC_G(U/\R_\mcC(U))$.  Then by Lemma~\ref{qzradlem}, $R$ is a closed normal subgroup of $G$ that does not depend on the choice of $U$, such that $\QZ(G/R)=1$ and $R \cap U = \R_\mcC(U)$.

The open compact subgroups of $G/R$ are of the form $UR/R$ for $U$ an open compact subgroup of $G$; $UR/R$ is isomorphic to $U/U \cap R = U/\R_\mcC(U)$, so $UR/R$ is $\mcC$-semisimple.  Hence $G/R$ is $\mcC$-semisimple; in particular $\R_\mcC(G/R)=1$.

Let $N$ be a closed normal subgroup of $G$ such that $G/N$ is $\mcC$-semisimple.  Then by the profinite case of the theorem, we have $N \ge \R_\mcC(U)$ for every open compact subgroup $U$ of $G$; this ensures that $RN/N$ is a discrete normal subgroup of $G/N$.  But $G/N$ has trivial quasi-centre, so all discrete normal subgroups are trivial.  Thus $N \ge R$.  We have now shown that property (ii) holds, which is clearly a characterisation of the $\mcC$-regular radical.

Given an open compact subgroup $V$ of $G$, then the $V \cap R$ is $\mcC$-regular in $V$ by the profinite case of the theorem.  Hence $R$ is $\mcC$-regular in $G$.  The fact that $G/R$ is $\mcC$-semisimple then ensures that $R$ is the largest $\mcC$-regular  locally normal subgroup of $G$:  given $K \le G$ such that $K \not\le R$ and $K$ is locally normal in $G$, then $KR/R$ is non-discrete (since $\QZ(G/R)=1$), so $KR/R \cap UR/R$ is non-trivial for every open compact subgroup $U$ of $G$, but clearly $KR/R$ does not contain any non-trivial locally normal $\mcC$-subgroup of $UR/R$.
\end{proof}

Theorem~\ref{intro:radical} now follows immediately from Theorems \ref{qzinf} and \ref{radtdlc}.

Theorem~\ref{radtdlc} also has the following consequence:

\begin{cor}Let $G$ be a \tdlc group.  Then at least one of the following holds:
\begin{enumerate}[(i)]
\item $G$ is $\VA$-regular;
\item there is a closed characteristic normal subgroup $R$ of $G$ such that $G/R$ is non-discrete and $\VA$-semisimple.\end{enumerate}\end{cor}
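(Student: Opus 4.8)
The plan is to let $R = \R_\VA(G)$ be the $\VA$-regular radical furnished by Theorem~\ref{radtdlc}, applied to the stable class $\mcC = \VA$, and then to split into cases according to whether the quotient $G/R$ is discrete. At the outset I would record three facts about $R$: it is closed and normal in $G$ by Theorem~\ref{radtdlc}; the quotient $G/R$ is $\VA$-semisimple by property~(ii) of that theorem; and $R$ is in fact \emph{characteristic}, since it is the unique largest subgroup of $G$ that is $\VA$-regular in $G$ (property~(i)), and $\VA$-regularity is manifestly preserved by every topological automorphism of $G$. Finally, I would recall from the proof of Theorem~\ref{radtdlc} the identity $R \cap U = \R_\VA(U)$, valid for every open compact subgroup $U$ of $G$.

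If $G/R$ is non-discrete, then conclusion~(ii) holds verbatim: $R$ is a closed characteristic normal subgroup with $G/R$ non-discrete and $\VA$-semisimple.

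In the complementary case, where $G/R$ is discrete, I would show that conclusion~(i) holds, i.e.\ that $G$ is $\VA$-regular. Since $G/R$ is discrete, the subgroup $R$ is open in $G$, so for every open compact $U$ the subgroup $R \cap U = \R_\VA(U)$ is open in $U$ and therefore of finite index (as $U$ is compact). I would then invoke Lemma~\ref{ssnorm}(v) with $K = U$ and $L = \R_\VA(U)$: since $\R_\VA(U)$ is $\VA$-regular in $U$ and has finite index in $U$, the equivalence of parts~(a) and~(b) forces $U$ itself to be $\VA$-regular in $U$. As this holds for every open compact subgroup $U$, the group $G$ is $\VA$-regular by definition, giving~(i).

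The whole argument is a short case analysis resting on the radical theory already developed, so I do not anticipate a genuine obstacle. The only points demanding a little care are the verification that $R$ is characteristic rather than merely normal (which follows from the automorphism-invariance of the universal characterisation in Theorem~\ref{radtdlc}), and the observation that the finite-index hypothesis of Lemma~\ref{ssnorm}(v) becomes available exactly when $R$ is open, i.e.\ when $G/R$ is discrete.
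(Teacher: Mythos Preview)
Your argument is correct and is precisely the natural way to derive the corollary from Theorem~\ref{radtdlc}; the paper states the corollary without proof, and your case split on whether $G/\R_\VA(G)$ is discrete, together with the application of Lemma~\ref{ssnorm}(v) to pass from $\R_\VA(U)$ to $U$ in the discrete case, is exactly what is intended.
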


In particular, consider a topologically characteristically simple \tdlc group $G$.  Then $G$ is either $\VA$-regular or $\VA$-semisimple.

As in the case of the quasi-hypercentre (see Proposition~\ref{qzinfstab}), the $\mcC$-regular radical enjoys   stability properties with respect to certain closed subgroups.

\begin{prop}\label{radtdlcstab}Let $G$ be a \tdlc group and let $\mcC$ be a stable class of profinite groups.
\begin{enumerate}[(i)]
\item Let $H$ be a closed normal subgroup of $G$.  Then $\R_\mcC(H) \ge \R_\mcC(G) \cap H$.
\item Let $H$ be an open subgroup of $G$.  Then $\R_\mcC(H) = \R_\mcC(G) \cap H$.
\end{enumerate}
\end{prop}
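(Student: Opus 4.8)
The plan is to imitate closely the proof of Proposition~\ref{qzinfstab} for the quasi-hypercentre, replacing throughout the condition ``trivial quasi-centre'' by ``$\mcC$-semisimple'' and the facts about $\QZ^\infty$ by their counterparts from Theorem~\ref{radtdlc}. Two elementary observations will be used repeatedly, so I would record them first. First, an open subgroup of a $\mcC$-semisimple \tdlc group is again $\mcC$-semisimple: the quasi-centre only shrinks on passing to open subgroups by Lemma~\ref{lem:QZ}(ii), and any locally normal $\mcC$-subgroup of an open subgroup is locally normal in the ambient group. Second, in the course of proving Theorem~\ref{radtdlc} one obtains, for every \tdlc group $L$ and every $V \in \mcB(L)$, the identities $\R_\mcC(L) = \QC_L(V/\R_\mcC(V))$ and $\R_\mcC(L) \cap V = \R_\mcC(V)$.

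For part (i), with $H$ closed and normal, the subgroup $\R_\mcC(H)$ is characteristic in $H$ and hence normal in $G$. I would set $D = \R_\mcC(H)$ and $K = \CC_G(H/D)$, which is closed and normal in $G$. Since $H/D$ is $\mcC$-semisimple it is centre-free, so $K \cap H = \CC_H(H/D) = D$; consequently $HK/K \cong H/D$ is a $\mcC$-semisimple normal subgroup of $G/K$. A short commutator computation (using $H \unlhd G$ together with $K \cap H = D$) shows that $HK/K$ has trivial centraliser in $G/K$. The whole of part (i) then reduces to the following claim, which is the heart of the matter: \emph{if $\bar G$ is a \tdlc group with a closed normal subgroup $M$ that is $\mcC$-semisimple and satisfies $\CC_{\bar G}(M)=1$, then $\bar G$ is $\mcC$-semisimple.} Granting it, $G/K$ is $\mcC$-semisimple, so $K \ge \R_\mcC(G)$ by Theorem~\ref{radtdlc}, whence $\R_\mcC(G) \cap H \le K \cap H = \R_\mcC(H)$, as required.

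Proving this claim is the step I expect to be the main obstacle. The vanishing of $\QZ(\bar G)$ is easy and parallels the quasi-centre case: $\QZ(\bar G) \cap M \le \QZ(M) = 1$, and since both are normal they commute, forcing $\QZ(\bar G) \le \CC_{\bar G}(M)=1$. The delicate point is ruling out non-trivial locally normal $\mcC$-subgroups $S$. Choosing $U \in \mcB(\bar G)$ with $S \unlhd U$ (Lemma~\ref{lem:LN:basic}(iv)), I would note that $S \cap M$ is a closed normal $\mcC$-subgroup (by the closure properties of $\mcC$) of the open subgroup $M \cap U$ of $M$; since $M \cap U$ is $\mcC$-semisimple by the first recorded fact, it has no non-trivial normal $\mcC$-subgroup, so $S \cap M = 1$. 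Then $S$ and $M \cap U$ are normal in $U$ with trivial intersection, so $S$ centralises $M \cap U$. Finally, for $s \in S$, conjugation by $s$ is an automorphism of $M$ fixing the open subgroup $M \cap U$ pointwise; as the centraliser in $M$ of any open subgroup lies in $\QZ(M)=1$, Lemma~\ref{bewlem} forces this automorphism to be trivial, i.e. $s \in \CC_{\bar G}(M)=1$. This contradiction gives $S=1$.

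For part (ii), with $H$ open, I would avoid the centraliser construction entirely and compute directly. Fixing $V \in \mcB(H) \subseteq \mcB(G)$ and using the identities recorded above, the chain
\[ \R_\mcC(G) \cap H = \QC_G(V/\R_\mcC(V)) \cap H = \QC_H(V/\R_\mcC(V)) = \R_\mcC(H) \]
yields the result. The only non-formal equality is the middle one, which holds because $H$ is open, so that $\mcB(H) = \{W \cap H \mid W \in \mcB(G)\}$ and the quasi-centralisers computed in $H$ and in $G$ agree after intersecting with $H$. This is routine once the global identity $\R_\mcC(G)=\QC_G(V/\R_\mcC(V))$ from Theorem~\ref{radtdlc} is in hand.
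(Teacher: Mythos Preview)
Your proof is correct, and for part (ii) it coincides with the paper's argument. For part (i), however, you take a genuinely different route.

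The paper argues part (i) on the \emph{regularity} side rather than the \emph{semisimplicity} side. It fixes $U \in \mcB(G)$, observes via Lemma~\ref{ssnorm}(iv) that $\R_\mcC(U) \cap H$ is $\mcC$-regular in $U \cap H$ and hence lies in $\R_\mcC(U \cap H)$, and then chains the global quasi-centraliser formula $\R_\mcC(-) = \QC_{-}(V/\R_\mcC(V))$ through the inclusion $\R_\mcC(U) \cap H \le \R_\mcC(U \cap H)$. Your argument instead mirrors Proposition~\ref{qzinfstab} directly: you pass to $G/\CC_G(H/\R_\mcC(H))$ and prove the auxiliary claim that a \tdlc group containing a $\mcC$-semisimple closed normal subgroup with trivial centraliser is itself $\mcC$-semisimple. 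That claim is an interesting statement in its own right (and your proof of it, via $S \cap M = 1$ and then Lemma~\ref{bewlem} or equivalently Lemma~\ref{bewcor}(i), is clean). The trade-off is that the paper's route is shorter and avoids any new lemma, exploiting the fact that Lemma~\ref{ssnorm}(iv) already does the local work; your route is more self-contained conceptually and keeps the parallel with the quasi-hypercentre case explicit, at the cost of proving the inheritance-from-normal-subgroup claim from scratch.
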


\begin{proof}(i) Let $U$ be an open compact subgroup of $G$.  Then $\R_\mcC(U)$ is $\mcC$-regular in $U$, so $\R_\mcC(U) \cap H$ is $\mcC$-regular in $U \cap H$ by Lemma~\ref{ssnorm} (iv).  Thus $\R_\mcC(U) \cap H \le \R_\mcC(U \cap H)$.  At the global level, we have $\QC_H(G/(\R_\mcC(U) \cap H)) = \QC_H(G/\R_\mcC(U))$ since $H$ is normal in $G$, so $[g,h] \in H$ whenever $g \in G$ and $h \in H$.  Hence we have
$$
\begin{array}{rcl}
 \R_\mcC(H) & = & \QC_H(G/\R_\mcC(U \cap H)) \\
 &\ge & \QC_H(G/(\R_\mcC(U) \cap H)) \\
 &= & \QC_H(G/\R_\mcC(U))\\
 & =& \R_\mcC(G) \cap H.
 \end{array}
 $$

(ii) We can obtain both $\R_\mcC(H)$ and $\R_\mcC(G) \cap H$ as $\QC_H(U/\R_\mcC(U))$, where $U$ is an open compact subgroup of $G$ that is contained in $H$.\end{proof}

We finish this section by linking it with our previous discussion of locally C-stable \tdlc groups.

Note the following variant of Lemma~\ref{bewlem}, giving a condition under which a locally normal subgroup has trivial centraliser in $G$.

\begin{lem}\label{bewlarge}
Let $(G,U)$ be a Hecke pair such that $\QZ(G) = 1$ and let $L$ be a locally normal subgroup of $G$ such that $\CC_U(L)$ is finite.  Then:
\begin{enumerate}[(i)]
\item $\CC_G(L) = 1$.
\item Let $H$ and $K$ be subgroups of $G$ containing $L$, such that $H \cap K$ contains a finite index subgroup $V$ of $U$.  Suppose $\theta: H \rightarrow K$ is an isomorphism such that $\theta(x) = x$ for all $x \in L$ and such that $\theta(V)$ is commensurate with $V$.  Then $H = K$ and $\theta = \mathrm{id}_H$.
\end{enumerate}
\end{lem}

\begin{proof}Observe that $\CC_U(L)$ is finite and normalised by a finite index subgroup of $U$, so in fact $\CC_U(L) \le \QZ(U) = 1$.

We now prove (ii). 
We may assume $V \le \N_G(L)$.  Choose a finite index subgroup $W$ of $V$ such that $W\theta(W)$ is contained in $V$.  Let $g \in W$ and let $x \in L$.  Then
\[ gxg^{-1} = \theta(gxg^{-1}) = \theta(g)\theta(x)\theta(g^{-1}) = \theta(g)x\theta(g^{-1}),\]
so $g^{-1}\theta(g)$ centralises $x$. This shows that  $g^{-1}\theta(g) \in \CC_V(L) = 1$.  Thus $g = \theta(g)$, that is, $\theta$ fixes $W$ pointwise.

We may now apply Lemma~\ref{bewlem}, since $\CC_G(W \cap g^{-1}Wg) \le \QZ(G) =1$ for all $g \in G$.  Thus $H = K$ and $\theta = \mathrm{id}_H$. Part (ii) follows.

By applying (ii) to the automorphisms of $G$ induced by conjugation by elements of $\CC_G(L)$, we infer that $\CC_G(L) \le \Z(G) = 1$, thereby confirming (i).
\end{proof}

\begin{prop}\label{cstab}
Let $G$ be a \tdlc group such that $\QZ(G)=1$.  The following are equivalent:
\begin{enumerate}[(i)]
\item $G$ is locally C-stable;
\item For every compact locally normal subgroup $H$ of $G$ and open compact subgroup $U$ of $G$ we have $\CC_G(H\CC_U(H))=1$;
\item $G$ has no non-trivial abelian locally normal subgroups;
\item $G$ is $\VA$-semisimple.
\end{enumerate}
\end{prop}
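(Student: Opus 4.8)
The plan is to treat condition (iii) as the hub and establish its equivalence with each of the other three conditions separately. The equivalence (i)$\Leftrightarrow$(iii) requires no new work: it is precisely the content of Proposition~\ref{prop:locallyQstable}, which already characterises local C-stability as the conjunction $\QZ(G)=1$ together with the absence of non-trivial abelian locally normal subgroups. So I can concentrate entirely on fitting conditions (ii) and (iv) around (iii).

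For (iii)$\Leftrightarrow$(iv), the easy direction is (iv)$\Rightarrow$(iii): since $\VA$ is a stable class it contains all abelian profinite groups, so every abelian locally normal subgroup is in particular a locally normal $\VA$-subgroup, and $\VA$-semisimplicity (which by definition includes $\QZ(G)=1$) yields (iii) at once. For the converse I would argue contrapositively, producing from any non-trivial locally normal $\VA$-subgroup $K$ a non-trivial abelian locally normal subgroup. First $K$ cannot be finite, as a finite locally normal subgroup lies in $\QZ(G)=1$ by Lemma~\ref{lem:QZ}(iv); hence $K$ is infinite. Since all groups in $\VA$ are virtually nilpotent, $K$ has an open (hence infinite) nilpotent subgroup $M$, which is again locally normal by Lemma~\ref{lem:LN:basic}(ii). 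A non-trivial nilpotent profinite group has non-trivial centre, and $\Z(M)$ is characteristic in $M$, so $\N_G(\Z(M)) \supseteq \N_G(M)$ is open and $\Z(M)$ is a non-trivial abelian locally normal subgroup, contradicting (iii).

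For (ii)$\Rightarrow$(iii) I would again argue by contradiction. If $A$ is a non-trivial abelian locally normal subgroup, pick $U \in \mcB(G)$ containing $A$ as a normal subgroup (Lemma~\ref{lem:LN:basic}(iv)). Since $A$ is abelian, $A \le \CC_U(A)$, so $A\CC_U(A) = \CC_U(A)$ and $A$ centralises $\CC_U(A)$; thus $A \le \CC_G(A\CC_U(A))$, which is trivial by (ii), forcing $A=1$ and giving (iii). The reverse implication (iii)$\Rightarrow$(ii) is the technical heart of the proposition, because (ii) quantifies over \emph{all} open compact $U$, not merely those normalising $H$, so I cannot directly reuse the computation inside the proof of Proposition~\ref{prop:locallyQstable}, which assumed $H \unlhd U$. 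The plan is to fix a locally normal $H$ and arbitrary $U \in \mcB(G)$, set $K = H\CC_U(H)$ (a compact, hence locally normal, subgroup, since $H$ and $\CC_U(H)$ normalise one another), and reduce the vanishing of $\CC_G(K)$ to the vanishing of $\CC_V(K)$ for a single small $V$ by Lemma~\ref{bewlarge}(i), which is available since $\QZ(G)=1$. Choosing $V \in \mcB(G)$ with $V \le U$, a short computation collapses $\CC_V(K) = \CC_V(H)\cap\CC_V(\CC_U(H))$ to $\Z(\CC_U(H))\cap V$; as $\CC_U(H)$ is locally normal by Lemma~\ref{lem:LN:basic}(iii), its centre $\Z(\CC_U(H))$ is an abelian locally normal subgroup, hence trivial by (iii), so $\CC_V(K)=1$ and Lemma~\ref{bewlarge}(i) delivers $\CC_G(H\CC_U(H))=1$.

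The main obstacle is this last direction (iii)$\Rightarrow$(ii): the non-compactness of $\CC_G(H)$ blocks any naive passage between $U$-centralisers and $G$-centralisers, and the decisive observation is that centralising the \emph{whole} product $H\CC_U(H)$ inside a small $V \le U$ forces one into the abelian group $\Z(\CC_U(H))$, exactly where hypothesis (iii) bites. Once the local centraliser is shown to vanish, the promotion to the global centraliser through Lemma~\ref{bewlarge} is routine, and the four equivalences close up around (iii).
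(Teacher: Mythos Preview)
Your proof is correct and uses essentially the same ingredients as the paper's (Lemma~\ref{bewlarge}, the $\Z(\CC_U(H))$ computation, and the virtual nilpotence of $\VA$-groups), just organised as a hub-and-spoke around (iii) rather than the paper's cycle (i)$\Rightarrow$(ii)$\Rightarrow$(iii)$\Rightarrow$(iv)$\Rightarrow$(i). Your version is in fact more explicit at the two points where the paper is terse---the reduction $\CC_V(H\CC_U(H)) \le \Z(\CC_U(H))$ in (iii)$\Rightarrow$(ii), and the extraction of an abelian locally normal subgroup from a $\VA$-group in (iii)$\Rightarrow$(iv)---but the underlying argument is the same.
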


\begin{proof}Suppose $G$ is locally C-stable.  Then (ii) follows  from Lemma~\ref{bewlarge}(i).

Suppose (ii) holds and let $H$ be an abelian locally normal subgroup of $G$.  Let $U$ be any open compact subgroup of $G$ and let $K = H \cap U$.  Then $K \le \CC_G(K\CC_U(K)) = 1$, so $H$ is discrete.  Since $\QZ(G)=1$, we conclude that $H$ is trivial.

Suppose (iii) holds.  If $G$ is not $\VA$-semisimple, then there is an open compact subgroup $U$ of $G$ such that $U$ has a non-trivial normal $\VA$-subgroup $H$, by Lemma~\ref{ssnorm}(i).  But we see by hypothesis that $U$ has no non-trivial finite or abelian locally normal subgroups, so by the construction of $\VA$, there is no way to obtain a normal $\VA$-subgroup.  Thus $G$ is $\VA$-semisimple.

Now suppose that $G$ is $\VA$-semisimple.  In particular, $\QZ(G)=1$ and there are no non-trivial compact abelian locally normal subgroups.  Then $G$ is locally C-stable by Theorem~\ref{thm:locallyCstable}.
\end{proof}

The following illustration of the above discussion also provides a justification for the choice of terminology. \index{p-adic Lie group@$p$-adic Lie group}

\begin{prop}\label{prop:Lieradical}
Let $G$ be a $p$-adic Lie group with Lie algebra $\mathfrak g = \mathrm{Lie}(G)$. Let also $R = \R_{[A]}(G)$ be the the $[A]$-regular radical of $G$. 

Then $\mathrm{Lie}(R)$ coincides with   the solvable radical  $\mathfrak r$ of $\mathfrak g$, and $R$ is the largest closed normal subgroup of $G$ with Lie algebra $\mathfrak r$. It coincides with the kernel of the $G$ action on $\mathfrak g/ \mathfrak r$. 

In particular $R$ is locally soluble, and $G$ is $\VA$-semisimple if and only if its Lie algebra  $\mathfrak g$ is semisimple.
\end{prop}

\begin{proof}
{By Proposition~\ref{prop:p-adicLieCorrespondence}, every ideal of $\mathfrak g$ is the Lie algebra of a compact locally normal subgroup of $G$.} In particular $G$ has a compact locally normal subgroup $K$ whose Lie algebra is the soluble radical $\mathfrak r$ of $\mathfrak g$. By \cite[Ch.~III, \S9, Cor. to Prop.~6]{Bbki}, the group $K$ has a soluble open subgroup, say $K_0$. By Lemma~\ref{lem:LN:basic} there is a compact open subgroup $U$ of $G$ containing $K_0$ as a normal subgroup. 

By Theorem~\ref{radtdlc} and Proposition~\ref{radtdlcstab}(ii), we have $K_0 = \mathrm{Lie}(K_0) \le R_{[A]}(U) = R \cap U$.

Since $U/K_0$ is a compact $p$-adic Lie group whose Lie algebra $\mathfrak g/ \mathfrak r$ is a direct product of simple (non-abelian) Lie algebras, it follows from  Proposition~\ref{prop:p-adicLieCorrespondence} that $U$ is virtually a direct product of hereditarily just finite profinite groups that are not virtually abelian. In particular $U$ has trivial quasi-centre (see Proposition~5.1 in \cite{BEW}) and no non-trivial abelian locally normal subgroup. Therefore $U$ is $[A]$-semisimple by Proposition~\ref{cstab}, so that $K_0 = \mathrm{Lie}(K_0) \le R_{[A]}(U) = R \cap U$. In particular $K_0$ is an open subgroup of $R$ that is soluble, and we have  $\mathfrak r =  \mathrm{Lie}(K_0)=  \mathrm{Lie}(R)$.  

Any closed normal subgroup of $G$ with Lie algebra $\mathfrak r$ is locally solvable by \cite[Ch.~III, \S9, Cor. to Prop.~6]{Bbki}, hence $[A]$-regular, and thus contained in $R$ by Theorem~\ref{radtdlc}. Thus $R$ is the largest such subgroup. It follows that $R$ is the kernel of the $G$-action on $\mathfrak g/ \mathfrak r$ (a short and complete argument showing this can be found in the proof of Lemma~6.4 in \cite{CCLTV}).

The remaining assertions are now clear.
\end{proof}

The $\VA$-regular radical has been used recently to analyse the global structure of \tdlc group admitting an open subgroup which has a continuous faithful finite-dimensional representation over a local field: by \cite[Theorem~R.1]{CStu}, the $\VA$-regular radical of such a group is the largest closed normal subgroup that is locally soluble (i.e. that has a soluble open subgroup). 

\subsection{Canonical completions of Hecke pairs}

The radical theory developed so far in this section is specific to \tdlc groups, as the topology plays an important role.  However, results about \tdlc groups naturally have consequences for Hecke pairs in general, due to the existence of a universal `completion' for Hecke pairs that generalises the well-known profinite completion of a group.  A construction is given in \cite{Bel} and a proof of universality in \cite{ReiPL}.

\begin{thm}\label{tdlctop:mainthm}Let $(G,U)$ be a Hecke pair, such that $G$ is a topological group (possibly discrete) and $U$ is closed in $G$.  Then there is an essentially unique group homomorphism $\theta: G \rightarrow \hat{G}_U$, the \defbold{localised profinite completion!of a Hecke pair of $G$ at $U$}\index{localised profinite completion}, where $\hat{G}_U$\index{GhatU@$\hat G_U$} is a topological group, such that the following properties hold:
\begin{enumerate}[(i)]
\item The image of $\theta$ is dense, the restriction $\theta|_U$ of $\theta$ to $U$ is continuous, and $\overline{\theta(U)}$ is an open profinite subgroup of $\hat{G}_U$.
\item Let $R$ be a topological group and let $\phi: G \rightarrow R$ be a group homomorphism.  Suppose that $\phi|_U$ is continuous and $\overline{\phi(U)}$ is profinite.  Then there is a unique continuous homomorphism $\psi: \hat{G}_U \rightarrow R$ such that $\phi = \psi\theta$.\end{enumerate}\end{thm}

By composing with the appropriate quotient map, we also have canonical homomorphisms with related universal properties for \tdlc groups with trivial quasi-centre, respectively locally C-stable \tdlc groups with trivial quasi-centre, that follow immediately from the results we have so far.

\begin{cor}Let $(G,U)$ be a Hecke pair, such that $G$ is a topological group (possibly discrete) and $U$ is closed in $G$.  Let
$$\theta_1: G \rightarrow \hat{G}_U/\QZ^\infty(\hat{G}_U) \text{ and } \theta_2: G \rightarrow \hat{G}_U/\R_\VA(\hat{G}_U)$$
be the canonical homomorphisms obtained from Theorem~\ref{tdlctop:mainthm}, Theorem~\ref{qzinf} and Theorem~\ref{radtdlc}.  Set $G_1 =\hat{G}_U/\QZ^\infty(\hat{G}_U)$ and $G_2 = \hat{G}_U/\R_\VA(\hat{G}_U)$. Then the homomorphisms $\theta_i$ are characterised by the following properties:
\begin{enumerate}[(i)]
\item The image of $\theta_i$ is dense, the restriction $\theta_i|_U$ of $\theta_i$ to $U$ is continuous, and $\overline{\theta_i(U)}$ is an open profinite subgroup of $G_i$.  Moreover, $\QZ(G_1)=\QZ(G_2) = 1$, and $G_2$ is locally C-stable.
\item Let $R$ be a \tdlc group and let $\phi: G \rightarrow R$ be a group homomorphism.  Suppose that $\phi|_U$ is continuous and $\overline{\phi(U)}$ is profinite.  If $\QZ(R)=1$, then there is a unique continuous homomorphism $\psi_1: G_1 \rightarrow R$ such that $\phi = \psi_1\theta_1$.  If in addition $R$ is locally C-stable, then there is a unique continuous homomorphism $\psi_2: G_2 \rightarrow R$ such that $\phi = \psi_2\theta_2$.
\end{enumerate}
\end{cor}

Let $(G,U)$ be a Hecke pair, equipped with the discrete topology, such that $U$ is residually finite.  Two questions arise at this point:

\begin{que}\label{que:qzembed}If $(G,U)$ has trivial quasi-centre, is $\theta_1: G \rightarrow \hat{G}_U/\QZ^\infty(\hat{G}_U)$ necessarily injective?  Equivalently, does $(G,U)$ necessarily admit a dense embedding into a \tdlc group with trivial quasi-centre?\end{que}

\begin{que}\label{que:cstabembed}If $(G,U)$ is locally C-stable with trivial quasi-centre, is $\theta_2: G \rightarrow \hat{G}_U/\R_\VA(\hat{G}_U)$ necessarily injective?  Equivalently, does $(G,U)$ necessarily admit a dense embedding into a locally C-stable \tdlc group with trivial quasi-centre?\end{que}

Both questions are of a local nature: if $(G,U)$ has trivial quasi-centre, then by Lemma~\ref{lem:QZ:discrete}, every non-trivial normal subgroup of $G$ has infinite intersection with $U$.  Thus $\theta_i$ is injective for $i\in \{1,2\}$ if and only if $\theta_i|_U$ is injective.  The questions can thus be restated as follows:

\begin{que}Let $U$ be a residually finite group with trivial quasi-centre.  Can $U$ be densely embedded in a profinite group $P$, such that $P$ has trivial quasi-centre?\end{que}

\begin{que}Let $U$ be a residually finite group with no non-trivial virtually abelian normal subgroups.  Can $U$ be densely embedded in a profinite group $P$, such that $P$ has no non-trivial virtually abelian normal subgroups?\end{que}

\section{Restrictions on the group topology}

\subsection{The structure lattice and refinements of the group topology}\label{subsec:refine}

In this section, we investigate the structural significance of fixed points in the action of a \tdlc group $G$ on its structure lattice.  Of course, if $G$ has a closed normal subgroup $K$ that is neither discrete nor open, then $[K]$ is a non-trivial fixed point in $\lnorm(G)$.  Especially interesting, however, is the question of whether a compactly generated, topologically simple \tdlc group can have non-trivial fixed points in $\lnorm(G)$. We  shall return to this question in the second paper \cite{CRW-Part2}, elaborating on the  results of the present chapter.  Most results we shall obtain here  are   for \tdlc groups that are first-countable or $\sigma$-compact; it is known (see \cite{KK}) that a compactly generated (or even $\sigma$-compactly generated) \tdlc group without non-trivial compact normal subgroups is second-countable (and hence also first-countable and $\sigma$-compact).

Our first goal is to prove Theorem~\ref{intro:refine}, giving a general connection between the structure lattice and group topologies for first-countable \tdlc groups.

\begin{defn}
Let $G$ be a \tdlc group and let $\mu$ be a commensurability class (or subset of a commensurability class) of compact subgroups of $G$.  The \defbold{localisation}  of $G$ at $\mu$ is the \tdlc group $H$, unique up to isomorphism, that admits a continuous embedding $\phi:  H \rightarrow G$, such that for some (any) $K \in \mu$, $\phi(H) = \Comm_G(K)$ and $\phi^{-1}(K)$ is an open compact subgroup of $H$. This group  topology on $H$ is called the \textbf{localised topology}\index{localised topology} at $\mu$; it is   denoted by $\mathcal T_{(\mu)}$\index{Tmu@$\mathcal T_{(\mu)}$}. 
\end{defn}

Thus, as an abstract group, the localisation of $G$ at $\mu$ coincides with $\Comm_G(K)$ so that the localisation of $G$ at $\mu$ is nothing but $(\Comm_G(K), \mathcal T_{(\mu)})$. 


Starting with a \tdlc group $G$, one sees that the \tdlc groups with the same group structure as $G$ but a finer topology are precisely the groups of the form $(G, \mcT_{(\mu)})$, where $\mu$ is a $G$-invariant commensurability class of compact subgroups of $G$.  If $G$ is first-countable, we can in fact classify these topological refinements of $G$ using the structure lattice: this is the purpose of Theorem~\ref{intro:refine}.

First we recall a result of Caprace--Monod \cite{CM}.  Recall that a \tdlc group $G$ is \defbold{residually discrete}\index{residually discrete} if the intersection of all open normal subgroups of $G$ is trivial.

\begin{prop}[\cite{CM} Corollary 4.1]\label{capmon}Let $G$ be a compactly generated \tdlc group.  Then the following are equivalent:
\begin{enumerate}[(i)]
\item $G$ is residually discrete;
\item $G$ has a base of neighbourhoods of the identity consisting of subsets that are invariant under conjugation;
\item $G$ has a base of neighbourhoods of the identity consisting of open compact normal subgroups.\end{enumerate}\end{prop}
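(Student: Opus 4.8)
The plan is to reduce the cycle to a single substantial implication, disposing of the rest formally. Two implications are immediate and use neither compact generation nor residual discreteness. For (iii)$\Rightarrow$(ii), a normal subgroup is in particular invariant under conjugation, so a base of open compact \emph{normal} subgroups is a fortiori a base of conjugation-invariant identity neighbourhoods. For (iii)$\Rightarrow$(i), a base $\{N_i\}$ of open compact normal subgroups consists of open normal subgroups with $\bigcap_i N_i = 1$ (they form a neighbourhood base in the Hausdorff group $G$), so the intersection of \emph{all} open normal subgroups is already trivial and $G$ is residually discrete.

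The implication (ii)$\Rightarrow$(iii) is the key soft observation, and again avoids compact generation. Given any $U \in \mcB(G)$ (which exists by van Dantzig's theorem), choose by (ii) a conjugation-invariant identity neighbourhood $V \subseteq U$. Invariance gives $g^{-1}Vg = V \subseteq U$, hence $V \subseteq gUg^{-1}$, for every $g \in G$, so $V$ is contained in the core $\Core_G(U) = \bigcap_{g \in G} gUg^{-1}$. Thus $\Core_G(U)$ is a normal subgroup containing the open set $V$, whence open, and being contained in the compact group $U$ it is compact. Letting $U$ range over a neighbourhood base yields the base required by (iii). Consequently (ii) and (iii) are equivalent by soft arguments, and it remains only to prove (i)$\Rightarrow$(iii), the one implication genuinely requiring that $G$ be compactly generated.

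I would split (i)$\Rightarrow$(iii) into an easy assembly step and a hard existence step. For the assembly, suppose we already know that $G$ possesses \emph{one} compact open normal subgroup $N_0$. Because $G$ is residually discrete, the family $\{N \cap N_0 : N \trianglelefteq G \text{ open}\}$ is a filtering family of open (hence finite-index, closed) normal subgroups of the profinite group $N_0$, with trivial intersection $\Res(G) \cap N_0 = 1$; by compactness of $N_0$ such a family is automatically a neighbourhood base of the identity. Each $N \cap N_0$ is an open compact subgroup that is normal in $G$ (being the intersection of two open normal subgroups), so these groups furnish the base demanded by (iii). Thus everything reduces to producing a single compact open normal subgroup.

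The hard part, and the place where compact generation is indispensable, is exactly this existence step; I expect it to be the crux of the whole proof. The route I would take is to first show that residual discreteness forces $G$ to be \emph{uniscalar}: for any $g \in G$ the contraction group $\{x : g^n x g^{-n} \to 1\}$ lies in every open normal subgroup $N$ (if $g^n x g^{-n} \to 1$ then eventually $g^n x g^{-n} \in N$, and normality of $N$ returns $x \in N$), hence is trivial by (i), so no element contracts any compact open subgroup. One then invokes the structure theory of compactly generated uniscalar \tdlc groups, by which such a group admits a compact open normal subgroup; it is precisely here that the finitely many generators beyond a compact open subgroup are used to pin down the descending chain of conjugates and prevent the kernel of the action on a Cayley--Abels graph from being merely a shrinking family of local stabilisers. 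With that structural input in hand, the assembly step of the previous paragraph completes the argument, and the remaining implications are formal.
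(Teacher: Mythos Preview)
The paper does not supply its own proof of this proposition; it is quoted from \cite{CM}. Your reductions (iii)$\Rightarrow$(ii), (iii)$\Rightarrow$(i), and (ii)$\Rightarrow$(iii) are correct, as is the observation that residual discreteness kills all contraction groups and hence forces $G$ to be uniscalar. The gap lies in the next step: it is \emph{not} true that a compactly generated uniscalar \tdlc group must admit a compact open normal subgroup. Explicit counterexamples exist (for instance those of Bhattacharjee--Macpherson): compactly generated \tdlc groups in which every element normalises some compact open subgroup, yet no single compact open subgroup is normal. Equivalently, ``uniscalar'' is strictly weaker than ``residually discrete'' among compactly generated \tdlc groups, so the detour through scale theory cannot close the cycle of implications.

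The argument actually used in \cite{CM} for (i)$\Rightarrow$(iii) is direct and avoids the scale entirely. Fix $U \in \mcB(G)$ and a finite symmetric set $F$ with $G = \langle U \cup F\rangle$. By residual discreteness the compact subgroups $N \cap U$, as $N$ ranges over open normal subgroups of $G$, have trivial intersection in the profinite group $U$ and hence, by compactness, form a neighbourhood base of the identity there. Choose $N$ with $N \cap U \subseteq \bigcap_{f \in F} fUf^{-1}$. For each $f \in F$ one then has $f^{-1}(N\cap U)f \subseteq U$ by the inclusion just arranged, and $f^{-1}(N \cap U)f \subseteq N$ by normality of $N$, so $f^{-1}(N\cap U)f \subseteq N \cap U$; symmetry of $F$ upgrades this to equality. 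Thus both $F$ and $U$ normalise the compact open subgroup $N \cap U$, which is therefore normal in $G = \langle U \cup F\rangle$ and contained in $U$. Varying $U$ already gives the base required by (iii), so your separate assembly step is not needed.
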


\begin{cor}\label{resuni}Let $G$ be a residually discrete locally compact group and let $x \in G$.  Then the open subgroups of $G$ normalised by $x$ form a base of neighbourhoods of the identity.\end{cor}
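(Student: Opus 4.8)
The plan is to reduce to the compactly generated case so that the Caprace--Monod criterion (Proposition~\ref{capmon}) applies. First I would observe that a residually discrete locally compact group is automatically totally disconnected: every open subgroup is also closed and hence contains the identity component, so the connected component of the identity lies in the intersection of all open normal subgroups, which is trivial by hypothesis. Thus $G$ is a \tdlc group, and van Dantzig's theorem supplies a compact open subgroup $U \in \mcB(G)$.

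Next I would form the subgroup $O = \langle x, U \rangle$. Since $O$ contains the open subgroup $U$, it is open in $G$ (hence closed and locally compact in the subspace topology), and since it is generated by the compact set $U \cup \{x, x^{-1}\}$, it is compactly generated. The essential preliminary step is to check that $O$ is itself residually discrete. For this, take any nontrivial $g \in O$; residual discreteness of $G$ yields an open normal subgroup $N \trianglelefteq G$ with $g \notin N$, and then $N \cap O$ is an open normal subgroup of $O$ not containing $g$ (it is normal in $O$ because $O$ preserves both $N$ and $O$ under conjugation). Intersecting over all such $N$ shows that the intersection of the open normal subgroups of $O$ is contained in $(\bigcap_N N) \cap O = 1$, so $O$ is residually discrete.

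I would then apply Proposition~\ref{capmon} to the compactly generated, residually discrete group $O$: it has a base of identity neighbourhoods consisting of open compact normal subgroups of $O$. Because $x \in O$, every such normal subgroup is normalised by $x$; and because $O$ is open in $G$, each of these subgroups is open in $G$ and the family is simultaneously a base of identity neighbourhoods of $G$. Concretely, given any identity neighbourhood $W$ of $G$, the set $W \cap O$ is an identity neighbourhood of $O$, so it contains an open compact normal subgroup $V$ of $O$; this $V$ is open in $G$, is normalised by $x$, and satisfies $V \subseteq W$. This is exactly the assertion of the corollary.

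I expect the main obstacle to be the inheritance of residual discreteness by the open subgroup $O$, which is the one place where a small argument is needed rather than a direct quotation of earlier results. Once that is established, the remainder is a direct appeal to Proposition~\ref{capmon}, combined with the elementary facts that $x$ normalises all normal subgroups of the group $O$ containing it, and that a neighbourhood base of the identity in an open subgroup is also one in the ambient group.
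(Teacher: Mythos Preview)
Your argument is correct and is precisely the intended derivation: the paper states the corollary without proof immediately after Proposition~\ref{capmon}, and the reduction you carry out (pass to the open, compactly generated subgroup $O=\langle x,U\rangle$, check it inherits residual discreteness, then apply Proposition~\ref{capmon}) is the natural way to make that implication explicit. Your preliminary remark that residual discreteness forces total disconnectedness is also correct and fills in the small gap between the hypothesis ``locally compact'' in the corollary and the \tdlc setting of Proposition~\ref{capmon}.
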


\begin{lem}\label{profcomm}
Let $U$ be a first-countable profinite group and let $K$ be a closed subgroup of $U$.  Then $U$ commensurates $K$ if and only if $U$ normalises an open subgroup of $K$.
\end{lem}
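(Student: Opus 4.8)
The plan is to prove the two implications separately, the forward direction being routine and the converse carrying the content.

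First I would dispatch the direction in which $U$ normalises an open subgroup $K_0$ of $K$. Since $K_0$ has finite index in the profinite group $K$, for every $g \in U$ we have $gK_0g^{-1} = K_0 \le K \cap gKg^{-1}$, and $K_0$ is open (hence of finite index) in both $K$ and $gKg^{-1}$; therefore $K \cap gKg^{-1}$ has finite index in each of $K$ and $gKg^{-1}$, so $U$ commensurates $K$. For the converse, assume $U$ commensurates $K$; the goal is to produce an open subgroup of $K$ normalised by all of $U$. Here first-countability enters through the existence of a descending chain $V_1 \ge V_2 \ge \cdots$ of open normal subgroups of $U$ with $\bigcap_n V_n = 1$ forming a base of identity neighbourhoods. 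For each $n$ I would set
\[ F_n = \{ g \in U : K \cap V_n \le gKg^{-1} \} = \bigcap_{x \in K \cap V_n} \{ g \in U : g^{-1}xg \in K\}, \]
and check three things: each $F_n$ is closed (an intersection of preimages of the closed set $K$ under the continuous maps $g \mapsto g^{-1}xg$); the chain $(F_n)$ is increasing; and $\bigcup_n F_n = U$. The last point uses commensuration: for any $g$, the subgroup $K \cap gKg^{-1}$ is closed and of finite index in $K$, hence open in $K$, so it contains $K \cap V_m$ for some $m$, giving $g \in F_m$.

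The key step is then a Baire category argument. As $U$ is compact Hausdorff, hence a Baire space, and is a countable union of the closed sets $F_n$, some $F_N$ has non-empty interior and therefore contains a coset $g_0 V_k$. Writing $J = g_0^{-1}(K \cap V_N)g_0 = (g_0^{-1}Kg_0)\cap V_N$, the condition $g_0 v \in F_N$ for all $v \in V_k$ translates into $v^{-1}Jv \le K$ for all $v \in V_k$, while commensuration (together with normality of $V_N$) shows that $J$ is itself an open subgroup of $K$. I would then symmetrise: the subgroup $P = \langle v^{-1}Jv : v \in V_k \rangle$ is contained in $K$, contains the open subgroup $J$ and so is open in $K$, and is normalised by $V_k$. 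Finally, since $V_k$ is open and normal of finite index in $U$ and normalises $P$, the group $P$ has only finitely many $U$-conjugates $g_1Pg_1^{-1},\dots,g_rPg_r^{-1}$, one for each coset of $V_k$; their intersection $\Core_U(P) = \bigcap_i g_iPg_i^{-1}$ is then normalised by $U$, contained in $K$, and still open in $K$, the latter because each $g_iPg_i^{-1} \cap K$ has finite index in $K$ by commensuration and a finite intersection of open subgroups is open. This $\Core_U(P)$ is the desired $U$-invariant open subgroup of $K$.

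The main obstacle is precisely that the normal core $\bigcap_{g \in U} gKg^{-1}$ is an intersection of possibly infinitely many finite-index subgroups of $K$, which a priori need not itself be of finite index, so the naive approach of directly taking the core fails. First-countability is exactly what lets one express $U$ as a countable union of the closed sets $F_n$, so that Baire category upgrades pointwise commensuration to uniform control over a whole coset, i.e.\ over a neighbourhood of some point; from that uniformity an honestly $U$-invariant open subgroup of $K$ can be manufactured by the two successive averaging steps above.
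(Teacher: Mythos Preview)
Your proof is correct, but it takes a different route from the paper's.  Both arguments hinge on the Baire Category Theorem, applied to a countable cover of $U$ by closed sets, but the covers are different.  The paper first passes to the localisation $U_{(\mu)}$, observes that it is residually discrete, and invokes the Caprace--Monod result (Corollary~\ref{resuni}) to deduce that every $x \in U$ normalises some open subgroup of $K$; thus $U = \bigcup_{L \le_o K} \N_U(L)$, a countable union of closed \emph{subgroups}, and Baire gives one $\N_U(L)$ of finite index, after which a single core suffices.  Your cover by the sets $F_n = \{g : K \cap V_n \le gKg^{-1}\}$ is more elementary---it uses only commensuration and first-countability of $U$, with no appeal to localisation or to Proposition~\ref{capmon}---but the $F_n$ are not subgroups, so after Baire produces a coset $g_0V_k \subseteq F_N$ you need the extra symmetrisation step (form $P = \langle v^{-1}Jv : v \in V_k\rangle$) before coring.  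In short, the paper trades a heavier external input for a cleaner endgame, while your argument is self-contained at the cost of a slightly longer construction; both are valid and the key idea (Baire upgrading pointwise commensuration to uniform control) is the same.
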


\begin{proof}Certainly, if $U$ normalises an open subgroup of $K$ then $U$ commensurates $K$.  Conversely, suppose $U$ commensurates $K$.  Let $\mu$ be the commensurability class of $K$.  Then $U_{(\mu)}$ is residually discrete, so by Corollary~\ref{resuni}
$$U = \bigcup \{ \N_U(L) \mid L \le_o K\}.$$
Now $K$ is first-countable, so it has countably many open subgroups; thus we have expressed $U$ as a union of countably many subgroups, each of which is closed for the given  topology on $U$.  By the Baire Category Theorem, at least one of these, say $\N_U(L)$, must be open in $U$.  Thus $L$ has finitely many conjugates in $U$, and we have $U = \Comm_U(L)$ since $L$ is commensurate with $U$.  Hence $\bigcap_{u \in U}uLu^{-1}$ is a closed subgroup of $L$ of finite index and thus an open subgroup of $K$.\end{proof}

\begin{proof}[Proof of Theorem~\ref{intro:refine}]Let $\mcT$ be the given topology of $G$ and let $\mcT'$ be a refinement of $\mcT$ compatible with the group structure.  By van Dantzig's theorem, under $\mcT'$ there is a subgroup $K$ of $G$ that is open and compact; it follows that $K$ is compact under $\mcT$.  Moreover, the set of such subgroups forms a commensurability class $\mu$ that is invariant under conjugation in $G$.  In particular, given a subgroup $U$ of $G$ that is open and compact under $\mcT$, then by Lemma~\ref{profcomm}, $U$ normalises a subgroup $L \in \mu$.  Thus $\mu$ corresponds to an element $\alpha = [L]$ of $\lnorm(G)$; we have $G = G_\alpha$ since $\mu$ is $G$-invariant, and see that the topology $\mcT'$ is exactly the topology  $\mcT_{(\alpha)}$.

Conversely, let $\alpha \in \lnorm(G)$ that is fixed by $G$.  Then $G$ admits a locally compact topological refinement by declaring those representatives of $\alpha$ that are closed in $\mcT$ to be open, in other words, the group $(G, \mcT_{(\alpha)})$, which by definition has the same underlying group structure as $G$.\end{proof}

Note that none of the group topologies in Theorem~\ref{intro:refine} are $\sigma$-compact, except possibly the original topology of $G$, since a closed subgroup of a profinite group is either open or of uncountable index.

\subsection{A group topology derived from the centraliser lattice}

Let $\mcA$ be a Boolean algebra, and equip $\Aut(\mcA)$ with the coarsest group topology such that the stabiliser of $\alpha$ is open, for all $\alpha \in \mcA$.  Then $\Aut(\mcA)$ is a totally disconnected group.  Given a group $G$ acting faithfully on $\mcA$, write $\overline{G}_{\mcA}$ for the closure of $G$ in $\Aut(\mcA)$, equipped with the subspace topology.

Of particular interest is the case in which an infinite commensurated subgroup $U$ of $G$ (perhaps $G$ itself) has finite orbits.  In this case, we see that $U$ is residually finite and $\overline{U}_\mcA$ is profinite, so the group $\langle G, \overline{U}_\mcA \rangle$ can be equipped with a \tdlc topology that extends the topology of $\overline{U}_\mcA$.

\begin{thm}\label{thm:lc_completion}Let $(G,U)$ be a Hecke pair with trivial quasi-centre and let $\mcA$ be a Boolean algebra on which $G$ has a faithful, weakly decomposable action.  Let $E$ be the subgroup of $\Aut(\mcA)$ generated by $G$ and $\overline{U}_\mcA$.  Then $E$ commensurates $\overline{U}_\mcA$, so that $E$ can be equipped with a \tdlc group topology by extending the topology of $\overline{U}_\mcA$ (so that $\overline{U}_\mcA$ is embedded as an open subgroup).  Moreover, the following holds:

\begin{enumerate}[(i)]
\item As a \tdlc group, $E$ is locally C-stable and faithful weakly decomposable.
\item As a locally compact topological group, $E$ is characterised as follows:

Given a locally compact group $L$ and an injective homomorphism $\theta: G \rightarrow L$ with dense image such that $\overline{\theta(U)}$ is an open subgroup of $L$, then there exists a unique continuous surjective homomorphism $\phi: L \rightarrow E$ such that the composition $\phi \theta$ is the embedding of $G$ in $E$.
\item If $|G:U|$ is countable, the following characterisation also applies:

Given a topological group $L$ that is a Baire space and an injective homomorphism $\theta: G \rightarrow L$ such that $L = \langle \theta(G), \overline{\theta(U)} \rangle$, then there exists a unique continuous homomorphism $\phi: L \rightarrow E$ such that the composition $\phi \theta$ is the embedding of $G$ in $E$.
\end{enumerate}
\end{thm}

\begin{proof}Let $V = \overline{U}_\mcA$.  Given $g \in G$, we see that $gUg\inv$ contains a finite index subgroup of $U$, which ensures that $gVg\inv$ contains an open subgroup of $V$.  Thus $E$ can be equipped with a \tdlc group topology by extending the topology of $\overline{U}_\mcA$.

Certainly $E$ acts faithfully on $\mcA$, and the action of $E$ on $\mcA$ is weakly decomposable with respect to $V$.  Given $x \in \QZ(E)$, then $x$ centralises a finite index subgroup of $U$, and hence $x\rist_U(\alpha)x\inv$ is commensurate with $\rist_U(\alpha)$, for all $\alpha \in \mcA$.  This implies that $x$ acts trivially on $\mcA$, so $x = 1$.  Thus $E$ is locally C-stable by Theorem~\ref{latdecomp}.

Let us suppose we have a homomorphism $\theta: G \rightarrow L$ as in either (ii) or (iii), with $(G,U)$ and $\mcA$ satisfying the necessary hypotheses.  We will prove (ii) and (iii) together, except where it is necessary to divide into cases.

Given $\alpha \in \mcA$, let $R_\alpha = \theta(\rist_G(\alpha))$ and let $K_\alpha = \CC_L(R_{\alpha^\bot})$.  Then $K_\alpha$ is a closed subgroup of $L$, for all $\alpha \in \mcA$.  Moreover, we have $R_\alpha \le K_\alpha$, but $K_\alpha \cap R_{\alpha^\bot}$ is trivial.  This ensures that $K_\alpha \cap \theta(G) = R_\alpha$; in particular, given $\alpha,\beta \in \mcA$ such that $\alpha \neq \beta$, then $R_\alpha \neq R_\beta$ since the action of $G$ is weakly decomposable, so $K_\alpha \neq K_\beta$.  Now set $N_\alpha = \N_L(K_\alpha)$.  Then $N_\alpha$ is closed in $L$, since $K_\alpha$ is closed.  Moreover, $N_\alpha \cap \theta(G) = \N_{\theta(G)}(R_\alpha) = \theta(\N_G(\rist_G(\alpha)))$.  We see that $N_\alpha \cap \theta(U)$ has finite index in $\theta(U)$, so $N_\alpha \cap P$ has finite index in $P$, where $P = \overline{\theta(U)}$.

In case (ii), it follows that $N_\alpha \cap P$ is open in $P$, and hence $N_\alpha$ is open in $L$, since $P$ is open in $L$.  In case (iii), $N_\alpha$ is a closed subgroup of $L$ of countable index, and hence open by the Baire category theorem.  In either case, we conclude that $N_\alpha$ is open in $L$, as is any finite intersection of subgroups $N_\alpha$.  Therefore the set $\{ K_\alpha \mid \alpha \in \mcA\}$ ordered by inclusion forms a Boolean algebra $\mcA'$ on which $L$ acts smoothly, so that $\theta(U)$ is orbit-equivalent to $P$ on the finite subsets of $\mcA'$.  The natural map $K_\alpha \mapsto \alpha$ induces a continuous homomorphism $\psi: L \rightarrow \Aut(\mcA)$ such that $\psi\theta$ is the given action of $G$ on $\mcA$.  In case (ii), $\psi(P)$ has a dense subgroup in common with $\overline{U}_\mcA$, and both groups are compact, so $\psi(P) = \overline{U}_\mcA$.  In case (iii), $U_\mcA$ is dense in $\psi(P)$, so $\psi(P) \le \overline{U}_\mcA$.  In either case, $\psi(L)$ is generated by $\psi\theta(G)$ and $\psi(P)$, so $\psi(L) \le E$, and hence $\psi$ restricts to a continuous homomorphism $\phi: L \rightarrow E$ with the required properties (in particular, $\phi$ is surjective in case (ii)).  Moreover, in both cases the required properties specify $\phi$ uniquely, since the condition on $\phi \theta$ ensures that $\phi$ is uniquely specified on a dense subset of $L$.
\end{proof}

We see from Theorem~\ref{thm:lc_completion}(i) that the embedding $G \rightarrow E$ is not dependent on the choice of $\mcA$, only on $G$ and the commensurability class of $U$.  Thus if $(G,U)$ is any faithful weakly decomposable Hecke pair, we may define $E$ to be the \defbold{centraliser lattice completion}\index{centraliser lattice completion} ($\lcent$-completion) of $(G,U)$, denoted $\hat{G}_{\lcent}$.  For example, if $G$ is a branch group, regarded as a Hecke pair $(G,G)$, then $\hat{G}_{\lcent}$ is the \defbold{congruence completion}\index{congruence completion} of $G$ in the sense of \cite{BSZ}.  

Theorem~\ref{thm:lc_completion} is of interest even if $(G,U)$ is \tdlc, as it has implications for the topologies that are compatible with the group structure of $G$.  In this case, $G = \hat{G}_{\lcent}$ as topological groups.  Notice that if $G$ is second-countable, then $|G:U|$ is countable.

In particular, we can prove a slightly more general form of Theorem~\ref{intro:rigid}.

\begin{cor}\label{refpropcent}Let $G$ be a \tdlc group that is faithful weakly decomposable.  Let $L$ be a topological group and let $\theta:L \rightarrow G$ be an abstract group isomorphism, such that one of the following conditions holds:
\begin{enumerate}[(i)]
\item $L$ is a \tdlc group, and there exists an open compact subgroup $U$ of $L$ such that $\theta(U)$ is open and compact in $G$.
\item $G$ is second-countable and the topology of $L$ is a Baire space.
\end{enumerate}

Then $\theta$ is continuous.

In particular, if $G$ is second-countable, then the topology of $G$ is the unique $\sigma$-compact locally compact topology compatible with the group structure (in particular, it is preserved by every automorphism of $G$ as an abstract group), and the set of all locally compact group topologies of $G$ is in natural bijection with the set of fixed points of $G$ acting on $\lnorm(G)$ by conjugation.\end{cor}

\begin{proof}By applying Theorem~\ref{thm:lc_completion} to the group isomorphism $\theta^{-1}: G \rightarrow L$, we see that there exists a continuous group homomorphism $\phi$ such that $\phi\theta^{-1} = \mathrm{id}_G$.  Clearly $\phi = \theta$.

In particular, if $G$ is second-countable, then the topology of $G$ is the coarsest group topology on $G$ that forms a Baire space, so in particular it is the coarsest locally compact Hausdorff group topology on $G$.  The remaining assertions follow from Theorem~\ref{intro:refine}.\end{proof}

\subsection{Subgroups abstractly generated by compact locally normal subgroups}

Let $G$ be a locally compact group. Recall that the group $\Aut(G)$ of homeomorphic automorphisms carries a natural Hausdorff group topology, called the \textbf{Braconnier topology}\index{Braconnnier topology} (see Appendix~I in \cite{CM}). Basic identity neighbourhoods in $\Aut(G)$ are provided by the sets
$$
\mathfrak A(K, U) = \big\{ \alpha \in \Aut(G) \; | \; \forall x \in K, \ \alpha(x) \in Ux \text{ and } \alpha\inv(x) \in Ux\big\},
$$
where $K$ runs over compact subsets of $G$ and $U$ runs over identity neighbourhoods of $G$. In particular, for each compact open subgroup $U < G$, the stabiliser $\{ \alpha \in \Aut(G) \; | \; \alpha(U) = U\}$ is an identity neighbourhood in $\Aut(G)$.

In what follows, automorphism groups of locally compact groups will always be implicitly endowed with the Braconnier topology.

\begin{lem}\label{lem:Brac:ContinuityCriterion}
Let $G$ be a \tdlc group, let $I$ be a directed set, and let $(U_n)_{n \in I}$ be a net of open compact subgroups in $G$ forming a base of identity neighbourhoods, such that $U_i < U_j$ whenever $i > j$.
Let $(\alpha_n)_{n \in I}$ be a net in $\Aut(G)$ such that $\lim_{n \in I} \alpha_n(x) = x$ for all $x \in G$. 

If for each $m \in I$, there is some $n_0 \in I$ such that $\alpha_n(U_m) = U_m$ for all $n \geq n_0$,  then $(\alpha_n)_{n \in I}$  converges  to the identity with respect to the Braconnier topology. 
\end{lem}

\begin{proof}
Let $K \subset G$ be a compact subset and $U < G$ an identity neighbourhood. Choose $U_m \subset U$. We can find a finite subset $\{x_1, \dots, x_k\} \subset K$ such that $K \subset \bigcup_{i=1}^k U_m x_i$. Since $(\alpha_n)$ converges pointwise to the identity, so does the sequence $(\alpha_n\inv)$. Hence we may find $n_0$ such that $\alpha_n(x_i) \in U_m x_i$ and $\alpha_n\inv(x_i) \in U_m x_i$ for all $n \geq n_0$. 

Let now $x \in K$. Choose $i$ such that $U_m x = U_m x_i$. Thus we have 
$$
\alpha_n(x) \in \alpha_n(U_m x) =  \alpha_n(U_m x_i) = U_m \alpha_n(x_i) = U_m x_i = U_m x.
$$
Similarly, we get $\alpha_n\inv(x) \in U_m x$. 
This shows that $\alpha_n$ belongs to $\mathfrak A(K, U_m) \subset \mathfrak A(K, U)$ for all $n \geq n_0$. Thus $(\alpha_n)$ converges to the identity with respect to the Braconnier topology. 
\end{proof}

\begin{defn}Let $G$ be a \tdlc group and let $\alpha \in \lnorm(G)$.  Define the \defbold{compact locally normal closure}\index{compact locally normal closure} $\LN(G,\alpha)$\index{LN(G,Valpha)@$\LN(G,\alpha)$} of $\alpha$ in $G$ to be the group generated by all compact locally normal representatives of $\alpha$.\end{defn}

We remark that $\LN(G,\alpha)$ is a (not necessarily closed) subgroup of $G$ with open normaliser, and if $\alpha$ is fixed by $G$ then $\LN(G,\alpha)$ is normal in $G$.  If $G$ is locally C-stable with trivial quasi-centre and $\alpha \in \lcent(G)$, then it follows from Proposition~\ref{prop:globalcent} that $\LN(G,\alpha)$ is an open subgroup of $\QC_G(\alpha^\bot)$, which in turn is a closed subgroup of $G_\alpha$, and the normaliser of $\LN(G,\alpha)$ is exactly $G_\alpha$.

\begin{prop}\label{prop:ContinuousBrac}
Let $G$ be a \tdlc group and let $\alpha$ be a fixed point of $G$ in $\lnorm(G)$.  Let $D= \LN(G,\alpha)$. 
Then the natural homomorphism $G \to \Aut(D, \mcT_{(\alpha)})$ induced by the $G$-action by conjugation is continuous.
\end{prop}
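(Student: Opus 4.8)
We want to show that the conjugation homomorphism $\rho\colon G \to \Aut(D,\mcT_{(\alpha)})$ is continuous, where $D = \LN(G,\alpha)$ carries the localised topology $\mcT_{(\alpha)}$ making the representatives of $\alpha$ open compact. Since $G$ is first-countable, continuity at the identity can be checked sequentially, and $\Aut(D,\mcT_{(\alpha)})$ carries the Braconnier topology, so Lemma~\ref{lem:Brac:ContinuityCriterion} is tailor-made for this: I need a basis $U_1 > U_2 > \dots$ of identity neighbourhoods in $(D,\mcT_{(\alpha)})$ consisting of compact open subgroups, and I must verify the two hypotheses of that lemma for any sequence $(g_n)$ in $G$ converging to the identity, acting by $\rho(g_n) = \mathrm{conj}_{g_n}$.

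Let me plan the main steps.

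First I would fix a basis of identity neighbourhoods $V_1 > V_2 > \dots$ of $G$ consisting of open compact subgroups (possible as $G$ is first-countable). For each $m$, I would like to produce a locally normal representative $H_m \in \alpha$ that is normalised by $V_m$; since $\alpha$ is $G$-fixed, each $V_m$ commensurates every representative of $\alpha$, and by passing to an open subgroup of a representative (using Lemma~\ref{lem:LN:basic} and the commensuration structure) I can arrange $H_m \unlhd V_m$ with the $H_m$ forming a descending chain that is a neighbourhood basis of the identity in $\mcT_{(\alpha)}$. These $H_m$ are the compact open subgroups of $(D,\mcT_{(\alpha)})$ that will play the role of the $U_m$ in Lemma~\ref{lem:Brac:ContinuityCriterion}.

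Second, I would verify pointwise convergence: for each $x \in D$, I must show $g_n x g_n^{-1} \to x$ in $\mcT_{(\alpha)}$. Here $x$ lies in $\langle H_{i_1}, \dots, H_{i_k}\rangle$ for finitely many representatives of $\alpha$, and since $g_n \to 1$ in $G$, eventually $g_n$ normalises a fixed compact open subgroup containing these, which should force $g_n x g_n^{-1}$ and $x$ to agree modulo arbitrarily small $H_m$. Third, and this is where the Braconnier criterion bites, I must show that for each $m$ there is $n_0$ with $g_n H_m g_n^{-1} = H_m$ for all $n \geq n_0$: since $g_n \to 1$, eventually $g_n \in \N_G(H_m)$ (which is open because $H_m$ is locally normal), giving exactly $g_n H_m g_n^{-1} = H_m$. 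Granting pointwise convergence and this stabilisation, Lemma~\ref{lem:Brac:ContinuityCriterion} yields $\rho(g_n) \to \id$ in the Braconnier topology, proving continuity.

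\textbf{The main obstacle.} The delicate point is not the stabilisation condition (that follows cleanly from openness of normalisers) but rather reconciling the two topologies: I must confirm that pointwise convergence $g_n x g_n^{-1} \to x$ holds \emph{with respect to $\mcT_{(\alpha)}$}, not merely the ambient topology of $G$. The subtlety is that $D$ is generally not closed in $G$ and $\mcT_{(\alpha)}$ is strictly finer than the subspace topology, so an estimate "$g_n x g_n^{-1} \in x V_m$" in $G$ does not immediately give "$g_n x g_n^{-1} \in x H_m$" in $D$. The plan for this is to use that once $g_n$ normalises $H_m$ (from the stabilisation step), the commutator $[g_n, x]$ lands in $D$ and can be controlled inside $H_m$ directly: for $x$ a product of elements of representatives normalised by $g_n$, one checks $g_n x g_n^{-1} x^{-1} \in H_m$ by a normality argument internal to $D$. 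Carefully threading the finitely-generated-support of each $x$ through the descending chain $(H_m)$, and ensuring the two conditions of Lemma~\ref{lem:Brac:ContinuityCriterion} hold simultaneously for the \emph{same} tail of the sequence, is the technical heart of the argument.
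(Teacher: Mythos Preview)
Your overall architecture is exactly the paper's: reduce to sequences, build a descending basis $H_m$ of compact open subgroups of $(D,\mcT_{(\alpha)})$ from locally normal representatives, and feed the two hypotheses of Lemma~\ref{lem:Brac:ContinuityCriterion}. You are also right that the stabilisation condition $g_n H_m g_n^{-1}=H_m$ for large $n$ is immediate from openness of $\N_G(H_m)$, and that pointwise convergence in $\mcT_{(\alpha)}$ is the real issue.

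The gap is in your plan for that issue. A ``normality argument internal to $D$'' is not enough: knowing that $g_n$ normalises $H_m$ and the finitely many $L_i$ supporting $x$ only gives $[g_n,x]\in L_1\cdots L_k\,(L_1\cdots L_k)^{-1}$, which is a $\mcT_{(\alpha)}$-compact set, not that $[g_n,x]\in H_m$. There is no purely algebraic reason for the latter. What actually closes the argument is the comparison of the two topologies: since $(g_n)\to 1$ in $G$ one has $[g_n,x]\to 1$ in the ambient topology of $G$, and since the commutators eventually live in a fixed $\mcT_{(\alpha)}$-compact set $K$, the continuous bijection $(K,\mcT_{(\alpha)})\to (K,\mcT_G)$ is a homeomorphism, so $[g_n,x]\to 1$ in $\mcT_{(\alpha)}$ as well. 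Equivalently (and this is how the paper phrases it), extract a $\mcT_{(\alpha)}$-convergent subsequence with limit $z$; if $z\neq 1$, choose a small representative $L\in\alpha$ with $z\notin L$, note that $Lz$ sits inside a $G$-coset $Oz$ of a compact open $O\le G$ with $z\notin O$, and obtain the contradiction $[g_{f(n)},x]\notin O$ while $[g_n,x]\to 1$ in $G$.

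So the step you flagged as the main obstacle genuinely needs the $G$-topology, not just normality in $D$; once you insert the compactness/topology-comparison argument above, your sketch becomes the paper's proof.
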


\begin{proof}
Since $G$ commensurates the open compact subgroups of $(D, \mcT_{(\alpha)})$, it follows that the homomorphism  $\varphi \colon G \to \Aut(D, \mcT_{(\alpha)})$ induced by the conjugation action indeed takes its values in the group of homeomorphic automorphisms of $(D, \mcT_{(\alpha)})$.  Let $(g_n)_{n \in I} \subset G$ be a net converging to the identity in $G$. 

Let us first show that $\varphi(g_n)$ converges pointwise to the trivial automorphism. Let $x \in D$. Let $L_1, \dots, L_k$ be {compact locally normal representatives of $\alpha$} such that $x \in L_1 L_2 \dots L_k$. Then the group $\bigcap_{i=1}^k \N_G(L_i)$ is open in $G$, and therefore contains $g_n$ for all sufficiently large $n$. In particular, for such a large $n$, we have $\varphi(g_n).x = g_n x g_n\inv \in L_1 L_2 \dots L_k$. The latter product being compact with respect to the localised topology $\mcT_{(\alpha)}$, it follows that the net of commutators $(g_n x g_n\inv x\inv)_{n \in I}$ is bounded in $(D, \mcT_{(\alpha)})$.  Therefore, all we need to show is that $1$ is its only accumulation point.

Suppose for a contradiction that some subnet $([g_{f(n)} , x])_{n \in I'}$ converges  to $z \neq 1$. Choose a compact locally normal representative $L$ of $\alpha$ which is sufficiently small so that $z \not \in L$. We have  $[g_{f(n)} , x] \in Lz$ for all large $n$. In particular $[g_{f(n)}, x]  \in Oz$, where $O$ is any open compact subgroup of $G$ containing $L$. Since $z \not\in L$, we may choose $O$ small enough so that it does not contain $z$. Then $O$ and $Oz$ are disjoint so that  $[g_{f(n)}, x]  \not \in O$ for all large $n$.  Since $O$ is an open subgroup of $G$, we deduce that $([g_{f(n)}, x])_{n \in I'}$ does not converge to the identity in $G$, which is absurd. 

Let now $L$ be a compact locally normal representative of $\alpha$ and let $(U_n)_{n \in I}$ be as in Lemma~\ref{lem:Brac:ContinuityCriterion}; we may assume $U_n \le \N_G(L)$ for all $n \in I$. Then the groups $L_m = L \cap U_m$ form a basis of identity neighbourhoods in $(D, \mcT_{(\alpha)})$. Clearly, for each $m \geq 0$, we have $g_n \in \N_G(L_m)$ for all sufficiently large $n$. Lemma~\ref{lem:Brac:ContinuityCriterion} now implies that $\varphi(g_n) $ converges to the identity with respect to the Braconnier topology.
\end{proof}

\begin{cor}\label{cor:Brac:ContinuousBis}
Let $G$ be a \tdlc group, and {let $\alpha$ be a fixed point of $G$ in $\lnorm(G)$.  Let $D$ be an abstract normal subgroup of $G$ generated by compact locally normal representatives of $\alpha$, and let $H = \Gamma.D$ where $\Gamma$ is a discrete normal subgroup of $G$.}  
Then the natural homomorphism $G \to \Aut(H, \mcT_{(\alpha)})$ induced by the $G$-action by conjugation is continuous.
\end{cor}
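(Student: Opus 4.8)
The plan is to deduce this from Proposition~\ref{prop:ContinuousBrac} by treating the discrete part $\Gamma$ and the locally normal part $D$ separately, and then to verify the hypotheses of Lemma~\ref{lem:Brac:ContinuityCriterion} for the group $(H,\mcT_{(\alpha)})$. Since $G$ is first-countable, it suffices to show that for every sequence $(g_n)$ in $G$ converging to the identity, the induced conjugation automorphisms $\varphi(g_n)$ converge to the identity in the Braconnier topology on $\Aut(H,\mcT_{(\alpha)})$. As in the proof of Proposition~\ref{prop:ContinuousBrac}, $\varphi$ is well-defined: $H=\Gamma D$ is normal in $G$, and $G$ commensurates the representatives of $\alpha$, which are precisely the compact open subgroups of $(H,\mcT_{(\alpha)})$. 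Note that $D$ contains such a representative $L$ (being generated by representatives), so $D$ is an open subgroup both of $(H,\mcT_{(\alpha)})$ and of $(\LN(G,\alpha),\mcT_{(\alpha)})$; these three groups share the same identity neighbourhood filter, namely the representatives of $\alpha$ contained in $L$.

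First I would establish pointwise convergence $\varphi(g_n)(x)\to x$ for every $x\in H$. Write $x=\gamma d$ with $\gamma\in\Gamma$ and $d\in D$. Since $\Gamma$ is a discrete normal subgroup of $G$, Lemma~\ref{lem:QZ}(iv) gives $\Gamma\le\QZ(G)$, so $\CC_G(\gamma)$ is open and hence $g_n\gamma g_n\inv=\gamma$ for all large $n$. For the factor $d$, I would apply Proposition~\ref{prop:ContinuousBrac} to $\LN(G,\alpha)\supseteq D$ to get $g_n d g_n\inv\to d$ in $(\LN(G,\alpha),\mcT_{(\alpha)})$; because $D$ is normal in $G$ the conjugates $g_n d g_n\inv$ stay in the open subgroup $D$, so this convergence already takes place in $(D,\mcT_{(\alpha)})$, and a fortiori in $(H,\mcT_{(\alpha)})$. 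Multiplying gives $\varphi(g_n)(x)=(g_n\gamma g_n\inv)(g_n d g_n\inv)\to\gamma d=x$.

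Next I would produce a basis of identity neighbourhoods of $(H,\mcT_{(\alpha)})$ that the $g_n$ eventually normalise. Fixing a representative $L\le D$ of $\alpha$, choosing a basis $U_0>U_1>\cdots$ of compact open subgroups of $G$, and setting $V_m=L\cap U_m$, each $V_m$ is an open subgroup of $L$, hence a representative of $\alpha$ and a compact open subgroup of $(H,\mcT_{(\alpha)})$, and the $V_m$ form a basis of identity neighbourhoods there. Each $V_m$ is locally normal in $G$ by Lemma~\ref{lem:LN:basic}, so $\N_G(V_m)$ is open and $g_n\in\N_G(V_m)$ for all large $n$; since $V_m\le H$, this means $\varphi(g_n)(V_m)=V_m$ eventually. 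Combining this with the pointwise convergence above, Lemma~\ref{lem:Brac:ContinuityCriterion} applied to $(H,\mcT_{(\alpha)})$ yields $\varphi(g_n)\to\id$ in the Braconnier topology, which gives the continuity of $\varphi$.

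The delicate analytic content --- the boundedness of the commutator sequence and the uniqueness of its accumulation point --- is already carried out in Proposition~\ref{prop:ContinuousBrac} and is reused rather than repeated. The points that must be handled carefully here are purely bookkeeping: the observation that $\Gamma\le\QZ(G)$, so the discrete part contributes nothing in the limit, and the transfer of the localised topology among $D$, $\LN(G,\alpha)$ and $H$, which is legitimate precisely because $D$ is open in each of them. I expect the only genuine subtlety to be ensuring that convergence established in $(D,\mcT_{(\alpha)})$ really does transfer to $(H,\mcT_{(\alpha)})$, which is guaranteed by the openness of $D$ in $H$.
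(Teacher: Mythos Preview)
Your argument is correct. Both you and the paper reduce to the decomposition $H=\Gamma D$ and use that $\Gamma\le\QZ(G)$ (via Lemma~\ref{lem:QZ}(iv)) to dispose of the discrete factor, but the two proofs diverge in how they establish Braconnier convergence on $(H,\mcT_{(\alpha)})$. The paper works directly from the definition: it takes an arbitrary $\mcT_{(\alpha)}$-compact $K\subseteq H$ and compact open $V\le H$, reduces to $K=\gamma C$ with $C\subseteq D$ compact, computes $[g_n,\gamma C]=\gamma[g_n,C]\gamma^{-1}$ once $g_n$ centralises $\gamma$, and then invokes Proposition~\ref{prop:ContinuousBrac} as a black box to force $[g_n,C]$ into the conjugated neighbourhood $\gamma^{-1}V\gamma$. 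You instead re-run the template of the proof of Proposition~\ref{prop:ContinuousBrac} itself: verify pointwise convergence on $H$ (using Proposition~\ref{prop:ContinuousBrac} only for the $d$-factor) and eventual stabilisation of a basis $V_m=L\cap U_m$, then feed both into Lemma~\ref{lem:Brac:ContinuityCriterion}. Your route is slightly more self-contained and avoids the commutator manipulation with $\gamma$; the paper's route is shorter because it never needs to re-establish pointwise convergence or construct a basis, trading that for a direct check of the $\mathfrak A(K,V)$-neighbourhoods. The one point worth making explicit in your write-up is why the $V_m$ really form a basis of identity neighbourhoods in $(H,\mcT_{(\alpha)})$: this holds because the inclusion $(H,\mcT_{(\alpha)})\hookrightarrow G$ restricts to a homeomorphism on the compact group $L$, so the subspace topologies on $L$ from $G$ and from $(H,\mcT_{(\alpha)})$ agree.
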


\begin{proof}
Note that $\Gamma$ is a discrete normal subgroup of $(H, \mcT_{(\alpha)})$, and we have $H  = \Gamma. D $.  Note also that $D $ is an open subgroup of $\LN(G,\alpha)$ with respect to the localised topology $\mcT_{(\alpha)}$, so the Braconnier topology on $\Aut(D, \mcT_{(\alpha)})$ is compatible with the Braconnier topology on $\Aut(\LN(G,\alpha), \mcT_{(\alpha)})$.

Let $(g_n)$ be a net tending to the identity in $G$; let $K \subset H$ be $\mcT_{(\alpha)}$-compact and $V \leq   H$ be a $\mcT_{(\alpha)}$-compact open subgroup. We need to show that $[g_n, K] \cup [g_n\inv, K] $ is contained in $V$ for all sufficiently large $n$. 
Since $H = \Gamma. D $, it suffices to deal with the case when $K = \gamma C$ with $\gamma \in \Gamma$ and $C \subset D$ is $\mcT_{(\alpha)}$-compact. For $n$ large enough, we have $[g_n, \gamma] = 1$ since $\Gamma$ is contained in the quasi-center of $G$. Therefore $[g_n, \gamma C] = \gamma [g_n, C] \gamma\inv$. 
Observe that $W = \gamma\inv V \gamma$  is a compact open subgroup of $(H, \mcT_{(\alpha)})$. By Proposition~\ref{prop:ContinuousBrac}, we have $[g_n, C] \subset W$ for all sufficiently large $n$. Hence $[g_n, \gamma C] \subset V$ for all large $n$. The argument with $g_n\inv$ instead of $g_n$ is similar. 
\end{proof}

In general, a compact subgroup of a \tdlc group that is commensurate with a locally normal subgroup need not itself be locally normal.  For example, if $G$ has trivial quasi-centre, then the only finite locally normal subgroup is the trivial one, but $G$ may still have non-trivial finite subgroups.  Proposition~\ref{prop:ContinuousBrac} leads to an interesting criterion for compact subgroups to be locally normal.

\begin{prop}\label{prop:Brac:commensurate}
Let $G$ be a \tdlc group and let $\alpha \in \lnorm(G)$.  Let $L$ be a compact subgroup of $G$ such that $[L] = \alpha$.  Then $L$ is locally normal in $G$ if and only if $L \le \LN(G,\alpha)$.
\end{prop}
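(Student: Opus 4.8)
The forward implication is immediate: if $L$ is locally normal with $[L]=\alpha$, then $L$ is itself a locally normal representative of $\alpha$, hence one of the generators of $\LN(G,\alpha)$, so $L\le\LN(G,\alpha)$. The content is in the converse, and the plan is to move the problem into the localised topology $\mcT_{(\alpha)}$, where $L$ becomes a genuine compact open subgroup, and then feed the Braconnier-continuity of the conjugation action into the standard fact that the stabiliser of a compact open subgroup is an identity neighbourhood of the automorphism group. Write $D=\LN(G,\alpha)$ and assume $L\le D$.

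First I would reduce to the case that $\alpha$ is fixed by $G$. For any representative $K$ of $\alpha$ one has $G_\alpha=\Comm_G(K)$, which is open, and every locally normal representative $K'$ of $\alpha$ lies in $G_\alpha$, since each of its elements normalises $K'$ and hence fixes $[K']=\alpha$; thus $D\le G_\alpha$, and in particular $L\le G_\alpha$. Moreover if $gLg^{-1}=L$ then $g$ fixes $[L]=\alpha$, so $\N_G(L)=\N_{G_\alpha}(L)$; as $G_\alpha$ is open, $\N_G(L)$ is open in $G$ if and only if it is open in $G_\alpha$. Replacing $G$ by the (again first-countable) open subgroup $G_\alpha$, I may therefore assume $\alpha$ is $G$-fixed, so that $D$ is a normal subgroup of $G$ and Proposition~\ref{prop:ContinuousBrac} is available.

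Next I would equip $D$ with the localised topology $\mcT_{(\alpha)}$, in which a representative $K$ of $\alpha$ is open compact and the subspace topology on $K$ coincides with the original one. The key claim is that $L$ is a compact open subgroup of $(D,\mcT_{(\alpha)})$. Since $[L]=[K]=\alpha$ with $L,K$ both compact in $G$, they are commensurate, so $L\cap K$ is closed in $G$, hence closed and of finite index in the profinite group $K$, hence open in $K$; thus $L\cap K$ is $\mcT_{(\alpha)}$-open and compact. As $L\cap K$ also has finite index in $L$, the subgroup $L$ is a finite union of $\mcT_{(\alpha)}$-cosets of $L\cap K$, so it is $\mcT_{(\alpha)}$-compact, and it is $\mcT_{(\alpha)}$-open because it contains the open subgroup $L\cap K$.

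Finally, the conjugation homomorphism $\varphi\colon G\to\Aut(D,\mcT_{(\alpha)})$ is continuous by Proposition~\ref{prop:ContinuousBrac}. Because $L$ is a compact open subgroup of $(D,\mcT_{(\alpha)})$, its stabiliser $\{\beta\in\Aut(D,\mcT_{(\alpha)}):\beta(L)=L\}$ is an identity neighbourhood in the Braconnier topology, hence open; its preimage under $\varphi$ is exactly $\N_G(L)$, which is therefore open, so $L$ is locally normal as required. The main obstacle is precisely the third step: one must verify that passing to the finer topology $\mcT_{(\alpha)}$ upgrades the merely commensurate subgroup $L$ into a compact open subgroup of $(D,\mcT_{(\alpha)})$, since this is what allows the Braconnier continuity of the conjugation action to be converted into openness of $\N_G(L)$; the reduction to a $G$-fixed $\alpha$ and the final continuity argument are then routine bookkeeping.
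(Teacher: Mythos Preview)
Your proof is correct and follows essentially the same route as the paper: reduce to the case $G=G_\alpha$, observe that $L$ becomes a compact open subgroup of $(D,\mcT_{(\alpha)})$, and then pull back the Braconnier-open stabiliser of $L$ along the continuous map of Proposition~\ref{prop:ContinuousBrac}. Your write-up is in fact more explicit than the paper's at the point where one checks that $L$ is compact and open in the localised topology; the paper simply asserts this, whereas you spell out the commensurability argument via $L\cap K$.
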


\begin{proof}
If $L$ is locally normal, then $L \le \LN(G,\alpha)$ by definition. 

Assume conversely that $L \le \LN(G,\alpha)$. Since $G_\alpha$ is open, it follows that a subgroup of $G$ contained in $G_\alpha$ is locally normal in $G$ if and only if it is locally normal in $G_\alpha$. Moreover, every locally normal representative of $\alpha$ is contained in $G_\alpha$. Thus we have $\LN(G,\alpha) = \LN(G_\alpha, \alpha)$, and there is no loss of generality in assuming that $G = G_\alpha$.  

Let $D$ be the group $\LN(G,\alpha)$ equipped with the localised topology $\mcT_{(\alpha)}$.  Note that $L$ is compact with respect to  $\mcT_{(\alpha)}$.  Thus  $L$ is an open compact subgroup of $D$.   In particular the collection $\mathfrak A(L, L)$ of automorphisms of $D$ which stabilise $L$ is an identity neighbourhood with respect to the Braconnier topology on $\Aut(D)$. Since the natural map $G \to \Aut(D)$ is continuous by Proposition~\ref{prop:ContinuousBrac}, there must be an open subgroup  of $G$ that normalises $L$.
\end{proof}

If $U$ is an open compact subgroup of $G$ that fixes $\alpha$, there is a useful description of $\LN(G,\alpha) \cap U$.

\begin{prop}
Let $G$ be a \tdlc group, let $\alpha \in \lnorm(G)$ and let $U$ be an open compact subgroup of $G$ such that $U$ fixes $\alpha$.  Let $L$ be a closed subgroup of $U$ such that $[L] \le \alpha$.  Then $L \le \LN(G,\alpha)$ if and only if there is a closed normal subgroup $M$ of $U$ such that $L \le M$ and $[M] = \alpha$.
\end{prop}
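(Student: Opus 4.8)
The plan is to prove the two implications separately, the reverse one being immediate. If $M$ is a closed normal subgroup of $U$ with $L\le M$ and $[M]=\alpha$, then $M$ is compact and normalised by the open subgroup $U$, hence locally normal, so $M$ is a locally normal representative of $\alpha$ and therefore $M\le\LN(G,\alpha)$ by definition; consequently $L\le M\le\LN(G,\alpha)$. For the forward implication I would first reduce to the case where $\alpha$ is fixed by all of $G$: since $\N_G(K)\le G_\alpha$ for any locally normal representative $K$ of $\alpha$, the stabiliser $G_\alpha$ is open, it contains both $U$ and $L$, we have $\LN(G,\alpha)=\LN(G_\alpha,\alpha)$, and local normality in $G$ and in $G_\alpha$ agree for subgroups of the open group $G_\alpha$. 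Thus I may replace $G$ by $G_\alpha$ and assume $D:=\LN(G,\alpha)$ is a (possibly non-closed) normal subgroup of $G$, so that Proposition~\ref{prop:ContinuousBrac} applies. I would then extract a representative of $\alpha$ that is genuinely normal in $U$: for any locally normal $K\in\alpha$ the subgroup $K_0=K\cap U$ is a closed subgroup of the first-countable profinite group $U$ commensurated by $U$, so by Lemma~\ref{profcomm} some open subgroup $N\le K_0$ is normalised by $U$, giving $N\unlhd U$ with $[N]=\alpha$.

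The crux is to build a class-$\alpha$ normal subgroup of $U$ that actually contains $L$. The first observation is that, because $[L]\le\alpha=[N]$ and $L$ is compact, $L\cap N$ has finite index in $L$; since $N$ is compact open in the localised group $(D,\mcT_{(\alpha)})$ and the two topologies agree on $N$, this makes $L$ a $\mcT_{(\alpha)}$-compact subset of $D$. The product $P:=NL=LN$ is then a subgroup (as $N\unlhd U\ni L$), it is compact, it contains $N$ and $L$, and $[P]=[N]=\alpha$ since $P/N\cong L/(L\cap N)$ is finite. The second observation is that $P$ has open normaliser in $U$: by Proposition~\ref{prop:ContinuousBrac} the conjugation homomorphism $\varphi\colon G\to\Aut(D,\mcT_{(\alpha)})$ is continuous, and since $L$ is $\mcT_{(\alpha)}$-compact and $N$ is an identity neighbourhood of $D$, the Braconnier basic neighbourhood $\mathfrak{A}(L,N)$ pulls back to an open subgroup $W=U\cap\varphi^{-1}(\mathfrak{A}(L,N))$. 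For $w\in W$ one has $wLw^{-1}\subseteq NL=P$, and as $wNw^{-1}=N$ this yields $wPw^{-1}=P$, so $W\le\N_U(P)$.

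Finally I would assemble $M$ from the finitely many $U$-conjugates of $P$. Because $\N_U(P)$ is open, $P$ has only finitely many conjugates $P_1,\dots,P_r$ under $U$; each contains $N$ (as $N\unlhd U$) and is commensurate with $N$ (as $U$ fixes $\alpha$), so each image $\overline{P}_i=P_i/N$ is a finite subgroup of the profinite group $U/N$. The set $\Sigma=\bigcup_i\overline{P}_i$ is finite, consists of torsion elements, and is invariant under $U$-conjugation since $U$ permutes the $P_i$; Dietzmann's Lemma~\ref{lem:Dietzmann} then shows $\langle\Sigma\rangle$ is finite. Hence $M:=\langle P_1,\dots,P_r\rangle$ satisfies $M/N=\langle\Sigma\rangle$ finite, so $M$ is a finite extension of the closed subgroup $N$ and is thus closed and compact with $[M]=\alpha$; it contains $P_1=NL\supseteq L$; and it is normalised by $U$ because $U$ permutes its generators $P_i$. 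This $M$ has all the required properties.

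The main obstacle is precisely the forward direction, namely controlling the subgroup of $U$ generated by the $U$-conjugates of $L$: a priori this normal closure could have class strictly larger than $\alpha$, since commensurate compact subgroups can generate far bigger subgroups. The two devices that prevent this are (i) the Braconnier continuity of Proposition~\ref{prop:ContinuousBrac}, which forces $P=NL$ to have open normaliser and hence only finitely many conjugates, and (ii) Dietzmann's Lemma applied inside $U/N$, which keeps the group generated by those finitely many conjugates finite modulo $N$. Producing the genuinely $U$-normal representative $N$ of $\alpha$ via Lemma~\ref{profcomm} is what makes the quotient $U/N$ available in the first place, and is the step that uses first-countability.
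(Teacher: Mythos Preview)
Your proof is correct and follows essentially the same strategy as the paper: obtain a $U$-normal representative $N\in\alpha$ via Lemma~\ref{profcomm}, use Braconnier continuity to show $P=NL$ has open normaliser in $U$, and then apply Dietzmann's Lemma in $U/N$ to control the normal closure. The only difference is packaging: the paper replaces $L$ by $LN$ and invokes Proposition~\ref{prop:Brac:commensurate} directly to conclude that $LN$ is locally normal (since $[LN]=\alpha$ and $LN\le\LN(G,\alpha)$), whereas you unwrap that proposition and redo the Braconnier argument by hand. Both arrive at the same Dietzmann step, and your explicit reduction to $G=G_\alpha$ is exactly what the proof of Proposition~\ref{prop:Brac:commensurate} does internally.
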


\begin{proof}
By Lemma~\ref{profcomm}, there is a normal subgroup $N$ of $U$ such that $[N] = \alpha$; by definition, $N \le \LN(G,\alpha)$.

Let $L$ be a closed subgroup of $U$ such that $[L] \le \alpha$.  Suppose $L \le \LN(G,\alpha)$.  By replacing $L$ with $LN$ we may assume $N \le L$.  By Proposition~\ref{prop:Brac:commensurate}, $L$ is locally normal in $G$.  In particular, $L/N$ is a finite locally normal subgroup of $U/N$, so $\N_{U/N}(L/N)$ has finite index in $U/N$, and hence $\CC_{U/N}(L/N)$ has finite index in $U/N$.  Now by Dietzmann's Lemma, it follows that the normal closure $M/N$ of $L/N$ in $U/N$ is finite and hence closed, so $M$ is a closed normal subgroup of $U$ such that $[M] = \alpha$.
\end{proof}

\begin{cor}\label{lem:normal_representative}
Let $U$ be a profinite group and let $L$ be a closed subgroup.  Assume that $L$ is commensurated and locally normal in $U$. Then the abstract normal closure of $L$ in $U$ is closed, and contains $L$ as a finite index subgroup.  
\end{cor}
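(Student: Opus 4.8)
The plan is to reduce the statement to a finite-group computation and then invoke Dietzmann's Lemma (Lemma~\ref{lem:Dietzmann}). First I would use local normality: since $L$ is locally normal in the compact group $U$, its normaliser $\N_U(L)$ is open, hence of finite index, so $L$ has only finitely many $U$-conjugates $L = L_1, \dots, L_k$. The commensuration hypothesis then enters: because $U = \Comm_U(L)$, each $L_i$ is commensurate with $L$, so the core $C := \bigcap_{u \in U} uLu^{-1} = \bigcap_{i=1}^k L_i$ is a finite intersection of subgroups of finite index in $L$, hence a closed subgroup of finite index in $L$. By construction $C$ is $U$-invariant, so $C$ is a closed normal subgroup of $U$.

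Next I would pass to the profinite quotient $\bar U = U/C$. Since $|L : C| < \infty$, the image $\bar L = L/C$ is a finite subgroup of $\bar U$, and it still has only finitely many conjugates. Hence the set $\Sigma$ of all $\bar U$-conjugates of elements of $\bar L$ is a finite, conjugation-invariant set of torsion elements. Dietzmann's Lemma then yields that $\langle \Sigma \rangle$ is finite. But $\langle \Sigma \rangle$ is exactly the normal closure of $\bar L$ in $\bar U$, which equals $M/C$, where $M$ denotes the abstract normal closure of $L$ in $U$ (note $C \le L \le M$, so $C \le M$). Thus $M/C$ is finite.

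Finally I would translate this back: since $M/C$ is finite, $M$ is a union of finitely many cosets of the closed subgroup $C$, each of which is closed, so $M$ is closed in $U$; and $|M : L| = |M : C| / |L : C|$ is finite, giving that $L$ has finite index in its normal closure $M$. The only real subtlety, and the place where both hypotheses are genuinely needed, is the reduction step producing the closed normal core $C$ of finite index in $L$: local normality supplies finiteness of the conjugacy class of $L$, while commensuration supplies that these finitely many conjugates meet $L$ in finite index, and both are indispensable for $C$ to have finite index in $L$. Once $C$ is in hand the remaining assertion is purely a statement about the finite group $\bar L \le \bar U$, dispatched by Dietzmann's Lemma; I note in passing that first-countability of $U$ plays no role in this particular argument, the compactness of $U$ being what is actually used.
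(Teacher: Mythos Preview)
Your argument is correct and is in fact more elementary than the paper's. The paper derives the corollary from the preceding proposition, which in turn relies on Lemma~\ref{profcomm} (to produce a closed \emph{normal} representative $N$ of $[L]$) and on Proposition~\ref{prop:Brac:commensurate} (hence on the Braconnier-topology machinery of Proposition~\ref{prop:ContinuousBrac}); only then is Dietzmann's Lemma applied in the quotient $U/N$. You bypass all of this by observing that the normal core $C=\bigcap_{u\in U} uLu^{-1}$ is already a closed normal subgroup of finite index in $L$, because local normality makes the intersection finite and commensuration makes each term have finite index in $L$. This direct construction of $C$ replaces the appeal to Lemma~\ref{profcomm}, and the remainder (Dietzmann in $U/C$) is then identical in spirit to what the paper does in $U/N$.

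Your closing remark is also correct and worth emphasising: the paper's route genuinely uses first-countability (it enters both in Lemma~\ref{profcomm}, via the Baire Category Theorem applied to a countable cover, and in Proposition~\ref{prop:ContinuousBrac}, via sequential arguments), whereas your argument uses only compactness of $U$. So you have actually proved the corollary under the weaker hypothesis that $U$ is an arbitrary profinite group.
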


\begin{proof}
Let $N$ be the abstract normal closure of $L$. Since $\alpha = [L]$ is fixed by $G$ by hypothesis, we have $N \leq \LN(G,\alpha)$. The previous proposition therefore implies that $\alpha$ has a representative $M$ that is normal in $U$ and contains $L$. Thus $M$ is closed, the index of $L$ in $M$ is finite and we have $L \leq N \leq M$. Therefore $N$ is closed and contains $L$ as a finite index subgroup. 
\end{proof}

\addcontentsline{toc}{section}{\protect\numberline{}Index}
\printindex

\end{document}